\numberwithin{equation}{section}
\newcommand{\matdev}{\partial^{\bullet}}
\newcommand{\normdev}{\partial^{\circ}}
\newcommand{\pioladev}{\partial^{*}}
\newcommand{\ddt}[1]{\frac{\partial #1}{\partial t}}
\newcommand{\gradg}{\nabla_{\Gamma}}
\newcommand{\D}{\underline D}
\newcommand{\lapg}{\Delta_{\Gamma}}
\newcommand{\divg}{\gradg \cdot}
\newcommand{\Ech}{E^{\mathrm{CH}}}
\newcommand{\Echd}{E^{\mathrm{CH},\delta}}
\newcommand{\Sperp}{\mathcal{S}^{\perp}}
\newcommand{\PK}{P_{\mathcal{K}}}
\newcommand{\PKperp}{P_{\mathcal{K}^\perp}}
\newcommand{\Xint}[1]{\mathchoice
	{\XXint\displaystyle\textstyle{#1}}%
	{\XXint\textstyle\scriptstyle{#1}}%
	{\XXint\scriptstyle\scriptscriptstyle{#1}}%
	{\XXint\scriptscriptstyle\scriptscriptstyle{#1}}%
	\!\int}
\newcommand{\XXint}[3]{{\setbox0=\hbox{$#1{#2#3}{\int}$ }
		\vcenter{\hbox{$#2#3$ }}\kern-.6\wd0}}
\newcommand{\dashint}{\Xint-}
\newcommand{\mval}[2]{\dashint_{#2} #1}
\DeclareMathOperator*{\esssup}{ess\,sup}
\newcommand{\mbf}[1]{\mathbf{#1}}
\newcommand{\mbb}[1]{\mathbb{#1}}
\newcommand{\ut}{\mbf{u}_T}
\newcommand{\hatut}{\hat{\mbf{u}}_T}
\newcommand{\divfree}[1]{\mbf{V}_\sigma(#1)}
\newcommand{\Hdivfree}[1]{\mbf{H}_\sigma(#1)}
\newcommand{\fd}{f^\delta}
\theoremstyle{plain}
\newtheorem{theorem}{Theorem}[section]
\newtheorem{proposition}[theorem]{Proposition}
\newtheorem{lemma}[theorem]{Lemma}
\newtheorem{remark}[theorem]{Remark}
\newtheorem{assumption}[theorem]{Assumption}
\begin{document}

%opening
\title{Navier-Stokes-Cahn-Hilliard equations on evolving surfaces}
\author{Charles M. Elliott\thanks{Mathematics Institute, Zeeman Building, University of Warwick, Coventry CV4 7AL, UK, \href{mailto:c.m.elliott@warwick.ac.uk}{c.m.elliott@warwick.ac.uk}}\\
	Thomas Sales\thanks{Mathematics Institute, Zeeman Building, University of Warwick, Coventry CV4 7AL, UK, \href{mailto:tom.sales@warwick.ac.uk}{tom.sales@warwick.ac.uk}}}
\date{}

\maketitle

\begin{abstract}
	We derive a system of equations which can be seen as an evolving surface version of the diffuse interface ``Model H'' of Hohenberg and Halperin (1977).
	We then consider the well-posedness for the corresponding (tangential) system when one prescribes the evolution of the surface.
	Well-posedness is proved for smooth potentials in the Cahn-Hilliard equation with polynomial growth, and also for a thermodynamically relevant singular potential.

\end{abstract}

\section{Introduction}
The Navier-Stokes-Cahn-Hilliard system on a sufficiently smooth, closed, oriented, evolving hypersurface, $(\Gamma(t))_{t \in [0,T]} \subset \mbb{R}^3$, %is given by
\begin{gather} 
	\rho \matdev \mbf{u} = -\gradg p + pH \boldsymbol{\nu} + \gradg \cdot (2 \eta(\varphi) \mbb{E}(\mbf{u}) ) - \varepsilon \gradg \cdot (\gradg \varphi \otimes \gradg \varphi) +\mathbf{F}\label{NSCH1},\\
	\gradg \cdot \mathbf{u} = 0 \label{NSCH2},\\
	\matdev \varphi = \gradg \cdot (M(\varphi) \gradg \mu),\label{NSCH3}\\
	\mu = -\varepsilon\lapg \varphi + \frac{1}{\varepsilon}F'(\varphi)\label{NSCH4}
\end{gather}
 is  derived in analogy  to the ``Model H'' of Hohenberg and Halperin as in \cite{hohenberg1977theory} and as a thin film limit of the relevant system in a thin evolving Cartesian domain as in \cite{miura2018singular}. In addition we provide a well posedness result in the case of a prescribed evolving surface.\\

Here
\[ \mbb{E}(\mbf{u}) = \frac{1}{2} \left(\gradg \mbf{u} +  (\gradg \mbf{u})^T \right) \]
denotes the rate of strain tensor.
We assume a matched density and  denote the constant density by $\rho$. The velocity of the surface is denoted by $\mbf{u}$ which will often be decomposed as $\mbf{u} = \mbf{u}_\nu + \mbf{u}_T$, where $\mbf{u}_\nu, \mbf{u}_T$ are the normal and tangential components respectively.
The associated pressure is denoted as $p$, $\eta(\cdot)$ is the variable viscosity which depends on $\varphi$, and $\mbf{F}$ is some external force.
We have split the fourth order Cahn-Hilliard equation into two second order equations \eqref{NSCH3}, \eqref{NSCH4} for $\varphi,\mu$ where $\mu$ is the associated chemical potential.
In our derivations we consider a non-constant mobility function $M(\cdot)$, but our analysis will consider a constant mobility $M(\cdot) \equiv 1$.
Lastly, $\boldsymbol{\nu}$ denotes the outer unit normal to $\Gamma(t)$, and $H = \gradg \cdot \boldsymbol{\nu}$ the mean curvature of $\Gamma(t)$ --- with the convention that a sphere has positive mean curvature.
The differential operators used will be discussed below.\\

As in \cite{JanOlsReu18,OlsReuZhi22}, one may be interested in this system where the normal component of the surface evolution is known a priori.
That is, $\mathbf{u}_\nu = V_N \boldsymbol{\nu} $ for some, sufficiently smooth, known function $V_N$.
For our theory, it is sufficient to assume that $V_N$ is a $C^3$ function, and so we are working on a $C^3$ evolving surface, $\Gamma(t)$.
We assume that this surface is such that $|\Gamma(t)| = |\Gamma_0|$, which is equivalent to assuming that
\[ \int_{\Gamma(t)} HV_N = 0, \]
for all $t \in [0,T]$.
In this case one obtains the tangential Navier-Stokes-Cahn-Hilliard system
\begin{gather}
	\rho \left( \mathbb{P} \normdev \ut + (\gradg \ut)\ut + V_N \mathbb{H} \ut - \frac{1}{2} \gradg V_N^2 \right) = -\gradg \tilde{p} + \mbb{P} \gradg \cdot (2 \eta(\varphi) \mbb{E}(\ut) ) + \mu \gradg \varphi + \mbf{F}_T\label{TNSCH1},\\
	\gradg \cdot \ut = -H V_N \label{TNSCH2},\\
	\normdev \varphi + \gradg \varphi \cdot \ut = \gradg \cdot (M(\varphi) \gradg \mu),\label{TNSCH3}\\
	\mu = -\varepsilon\lapg \varphi + \frac{1}{\varepsilon}F'(\varphi).\label{TNSCH4}
\end{gather}
Here $\tilde{p} = p + \frac{\varepsilon}{2}|\gradg \varphi|^2 + \frac{1}{\varepsilon} F(\varphi)$ is a modified pressure.
In both systems the pressure, and modified pressure, are unknown due to the incompressibility constraint.\\

Variants of this model have been considered on (mainly stationary) surfaces in \cite{arroyo2009relaxation,bachini2023interplay, olshanskii2022comparison,palzhanov2021decoupled,sun2022modeling,yang2020phase}.
The main focus in the existing literature is on the derivation and numerical simulation of such a system, but there has been little consideration for the well-posedness thus far.
As such, our work considers the well-posedness of a somewhat simpler model (surface evolution notwithstanding) which still captures the main features of the coupling of the Navier-Stokes equations with the Cahn-Hilliard equation.
Lastly we note that the model in \cite{bachini2023derivation} considers the influence of a physically relevant bending energy, and the model in \cite{palzhanov2021decoupled} consider a variable density --- in accordance with the derivation in \cite{abels2012thermodynamically}.\\

{\textbf{Some geometric differential notation}}
The evolving surface, $\Gamma(t)$, is assumed to be sufficiently smooth with a normal, $\boldsymbol{\nu}$, and normal velocity, $V_N \boldsymbol{\nu}$.
Geometric quantities and differential operators  are defined by
\begin{gather*}
	\mbb{P} = \mbb{I} - \boldsymbol{\nu} \otimes \boldsymbol{\nu}, \\
	\gradg \phi = \mbb{P} \nabla \phi^e,\quad(\gradg \phi)_i=\D_i\phi,\quad
	\gradg \mbf{v} = \mbb{P} \nabla \mbf{v}^e \mbb{P},\quad(\gradg \mbf{v})_{i,j}:=\D_jv_i\\
(\gradg  \mbf{v}\cdot \mbf{w})_i=\D_jv_iw_j,\quad	\gradg \cdot \mbf{v} = \text{tr}(\gradg \mbf{v}),\quad \mbb{H} = \gradg \boldsymbol{\nu},
\end{gather*}
where we have used Einstein summation convention.
Here $\phi$ and $\mbf{v}, \mbf{w}$ are scalar  and vector fields respectively,  $\mbb{I}$ denotes the identity matrix, and $(\cdot)^e$ denotes an extension onto a neighbourhood of $\Gamma(t)$.
These expressions are independent of the specific choice of extension.
Likewise we use the following notation for the normal time derivatives of scalar and vector fields,
\begin{gather*}
\normdev \phi = \ddt{\phi^e} + V_N\nabla \phi^e\cdot \boldsymbol{\nu},\quad	\normdev \mbf{v} = \ddt{\mbf{v}^e} + V_N\nabla \mbf{v}^e\cdot\boldsymbol{\nu}
	%\matdev \mbf{u} = \normdev \mbf{u} + (\gradg \mbf{u}) \mbf{u}_T,
\end{gather*}
which are again independent of choice of extension. Note that this is the time derivative along a trajectory evolving in the normal direction. 
We use $\matdev$ to note the time derivative which follows also the physical tangential flow  $\ut$, 
\begin{gather*}
\matdev \phi = \normdev \phi + \mbf{u}_T\cdot \gradg \phi,\quad	\matdev \mbf{v} =\normdev \mbf{v} + \gradg \mbf{v}\cdot \mbf{u}_T,
\end{gather*}

On the other hand if we wish to use another tangential flow for a velocity field $\mbf{w}=V_N\boldsymbol{\nu}+\mbf{w}_T$ with a  tangential vector field $\mbf{w}_T$, we  write
\[ \partial_{\mbf{w}}^\bullet \phi =  \normdev \phi + \gradg \phi \cdot \mbf{w}_T,\quad \partial_{\mbf{w}}^\bullet \mbf{v} =  \normdev \mbf{v} + \gradg \mbf{v}\cdot \mbf{w}_T. \]
Throughout we will use the convention that a vector quantity will be denoted in bold, e.g. $\mbf{v}$, and a tensor quantity will be denoted in blackboard bold, e.g. $\mbb{P}$.

\subsection*{Applications}

On a stationary Euclidean domain, the Navier-Stokes-Cahn-Hilliard equations has found many applications, for example in studying thermocapillary flows \cite{antanovskii1995phase}, and spinodal decomposition \cite{gurtin1996two}.
For further details and applications we refer the reader to \cite{anderson1998diffuse}.
A more recent application to a modified version of this system has been to the study of chemotaxis, for example in modelling tumour growth \cite{lam2018thermodynamically}.
In this case there are suitable changes to allow for a transfer of mass - adding further complications.\\

Another biological application which has been of interest in recent years is in studying lipid bilayer membranes.
It is known that the curvature of the domain influences the dynamics of lipid membranes and enters through a bending energy, for example the J\"ulicher-Lipowsky energy
\[ E^\text{H}[\varphi] = \int_{\Gamma}\frac{1}{2}\kappa(\varphi)(H-H_0(\varphi))^2, \]
as presented in \cite{julicher1996shape}.
Here $\kappa(\cdot)$ is the bending stiffness, and $H_0(\cdot)$ is the spontaneous curvature which depends on the diffuse interface - an example being $H_0(\varphi) = \Lambda \varphi$, for some curvature coefficient $\Lambda \in \mbb{R}$.
We refer the reader to \cite{deserno2015fluid,mcmahon2005membrane} for a discussion of these mechanisms, and to \cite{hatcher2020phase} for discussion and analysis of diffuse interface models for phase separation on biological membranes.
In \cite{fan2010hydrodynamic} the authors consider a model for the kinetics of a lipid bilayer membrane, coupling the Cahn-Hilliard equation to the stationary Stokes equations in a planar domain.
This model has been extended to surfaces in the recent works \cite{bachini2023derivation,bachini2023interplay} where the authors also include the contributions of the relevant bending energy, as well as considering the full time-dependent Navier-Stokes equations for the hydrodynamics.
We also refer the reader to \cite{arroyo2009relaxation} and the references within.\\

Lastly we mention a different model of interest \cite{barrett2017finite}, in which the authors consider the Cahn-Hilliard equations coupled with the Navier-Stokes equations on a free surface determined by the Navier-Stokes equation in a bulk domain.
This work also proposes semi-discrete and fully-discrete numerical schemes, and contains numerical examples.

\subsection*{Contributions and outline}
The contributions of this paper are to provide two equivalent derivations of a diffuse interface model coupling the Navier-Stokes equations and the Cahn-Hilliard equation on an evolving surface, and to extend existing analysis for the analogous system on a stationary, Euclidean domain to an evolving surface.
The main results are showing existence and uniqueness of weak solutions. The system is derived in Section \ref{derivations}.  Some necessary notation , functional analysis and useful inequalities are provided in Section \ref{Preliminaries}. Statements of existence and uniqueness are provided in Section \ref{Posedness} for both smooth and logarithmic Cahn-Hilliard potentials. Existence is proved in Section \ref{Exist} and uniqueness in Section \ref{Unique}. Existence and uniqueness of a mixed formulation involving the  pressure  is shown in Section \ref{Pressure}. Section \ref{Conc} contains some concluding remarks about future directions.   Appendix \ref{evolvinglaplace}  and Appendix \ref{inversestokes} concern analytic results for the Laplace operator and inverse Stokes-type operator.

\section{Derivation of the surface Navier-Stokes-Cahn-Hilliard system}
\label{derivations}
In this section we provide two derivations \eqref{TNSCH1}-\eqref{TNSCH4} - one by surface balance laws, and the other by considering a thin film limit.

\subsection{Derivation by balance laws}
We follow a similar presentation to that of \cite{gurtin1996two} and derive \eqref{NSCH1}-\eqref{NSCH4} by using a balance of microstresses.
Consider a binary mixture of a fluid, with constituent densities $\rho_1, \rho_2$.
The total density
\[\rho := \rho_1 + \rho_2,\]
is assumed to be constant.
We define $c_i := \frac{\rho_i}{\rho}$ to be the corresponding concentration, so that $c_1 + c_2 = 1$.
Following the assumption of \cite{gurtin1996two}, we assume that the momenta and kinetic energies of the constituent components are negligible when computed relative to the gross motion of the fluid.
As such, we consider the gross velocity, $\mathbf{u}$, in our derivation instead of the velocities of each component.
By considering the total momentum of the fluid in an arbitrary region it is clear to see that
\[ \mathbf{u} = c_1 \mathbf{u}_1 + c_2 \mathbf{u}_2,\]
where $\mathbf{u}_i$ is the velocity of component with density $\rho_i$.
Throughout we consider an arbitrary material portion $\Sigma(t) \subset \Gamma(t)$
whose  boundary, $\partial \Sigma(t)$, moves with conormal material  velocity of the surface fluid $\ut \cdot \boldsymbol{\nu}_\Sigma$, where $\boldsymbol{\nu}_\Sigma$ denotes the unit conormal vector for $\partial \Sigma(t)$ (the outward unit normal vector which is tangential to $\Sigma(t)$).\\

Since $\rho$ is constant, conservation of mass within the material region $\Sigma(t)$ yields
\[0=\frac{d}{dt} \int_{\Sigma(t)} \rho  = \rho \frac{d}{dt} \int_{\Sigma(t)}  1.\]
Applying the transport theorem we obtain that
\[0= \int_{\Sigma(t)} \gradg \cdot \mathbf{u},\]
and since  $\Sigma$ is  arbitrary this yields
\[\gradg \cdot \mathbf{u} = 0,  \ \mathrm{on} \ \Gamma(t).\]
This shows that the material surface $\Gamma(t)$ has the property of \textit{local  inextensiblity}, that is to say $\frac{d}{dt}|\Sigma(t)| = 0$ for all $\Sigma(t) \subset \Gamma(t)$ such that the boundary moves with conormal material velocity $\ut \cdot \boldsymbol{\nu}_\Sigma$.
As a consequence %one finds that the solution of \eqref{NSCH1}-\eqref{NSCH4} is such that
we have the property that the total area is preserved
\[ |\Gamma(t)| = |\Gamma_0|,\]
for all $t \in [0,T]$.\\

For each component, $u_i$, we consider the mass balance
\[\frac{d}{dt} \int_{\Sigma(t)}  c_i = - \int_{\partial \Sigma(t)} \mathbf{q}_i \cdot \boldsymbol{\nu}_\Sigma,\]
where $\mathbf{q}_i$ is some flux vector to be determined.
The normal component of $\mathbf{q}_i$ does not contribute to the flux, and hence we assume $\mathbf{q}_i$ is purely tangential.
Using the transport theorem, along with the incompressibility above we find that
\[\frac{d}{dt} \int_{\Sigma(t)} c_i = \int_{\Sigma(t)} \matdev c_i.\]
Using integration by parts on the boundary integral, one obtains
\[\int_{\partial \Sigma(t)} \mathbf{q}_i \cdot \boldsymbol{\nu}_\Sigma = \int_{\Sigma(t)} \gradg \cdot \mathbf{q}_i - \int_{\Sigma(t)} \mathbf{q}_i \cdot \boldsymbol{\nu} H = \int_{\Sigma(t)} \gradg \cdot \mathbf{q}_i,\]
where $H$ denotes the mean curvature.
Thus we obtain an equation for $u_i$,
\[\matdev c_i = - \gradg \cdot \mbf{q}_i, \quad \text{on } \Gamma(t).\]
We define quantities $\varphi = c_1 - c_2, \mbf{q} = \mbf{q}_1 - \mbf{q}_2$, and observe that
\begin{align}
	\matdev \varphi = - \gradg \cdot \mbf{q}, \label{transporteqn}
\end{align}
where we choose $\mbf{q}$ later.
Since $c_1 + c_2 = 1$, we can revert back to the individual concentrations by
\[c_1 = \frac{1+\varphi}{2}, \quad c_2 = \frac{1-\varphi}{2} .\]

Lastly, one considers the linear momentum balance for the gross momentum, that is
\[\frac{d}{dt} \int_{\Sigma(t)} \rho \mathbf{u} = \int_{\partial \Sigma(t)} \mathbb{T} \boldsymbol{\nu}_{\Sigma}+ \int_{\Sigma(t)} \mbf{F},\]
where $\mathbb{T}$ is the Cauchy stress tensor describing the stresses across the surface.
Integrating by parts together with $\Sigma(t)$ being arbitrary to 
\begin{align}
	\rho \matdev \mbf{u} = \gradg \cdot \mbb{T}+ \mbf{F}, \label{linmombalance}
\end{align}
where again there is no term involving $\boldsymbol{\nu}$ coming from the integration by parts\footnote{However there is, as we emphasise later, still a component of this equation in the normal direction.} as we assume $\mbb{T}$ maps onto the tangent space of $\Gamma(t)$.
Similarly, one can apply standard arguments to show that the balance of angular momentum yields
\begin{align}
	\mbb{T} = \mbb{T}^T. \label{angmombalance}
\end{align}
Next we consider a local dissipation inequality for the energy.
For a region $\Sigma(t)$, the energy is given by
\[\int_{\Sigma(t)} E(\varphi, \gradg \varphi, \mbf{u}) = \int_{\Sigma(t)} E_1(\mbf{u}) + E_2(\varphi,\gradg \varphi)\]
where
\[E_1(\mbf{z}) = \frac{\rho}{2} |\mbf{z}|^2, \quad E_2(\eta, \mbf{z}) = \frac{\varepsilon}{2}|\mbf{z}|^2 + \frac{1}{\varepsilon} F(\eta).\]
We assume, as in \cite{abels2012thermodynamically,gurtin1996two}, that there is a local dissipation inequality given by
\begin{align} \label{dissineq}
	\frac{d}{dt} \int_{\Sigma(t)} E \leq \int_{\partial \Sigma(t)} \mbb{T} \boldsymbol{\nu}_\Sigma \cdot \mbf{u} + \int_{\partial \Sigma(t)} \matdev \varphi \, \boldsymbol{\xi} \cdot \boldsymbol{\nu}_\Sigma - \int_{\partial \Sigma(t)} \mu \, \mbf{q} \cdot \boldsymbol{\nu}_\Sigma+ \int_{\Gamma(t)} \mbf{F} \cdot \mbf{u},
\end{align}
where $\mu$ is the difference of the chemical potentials, $\mu_i$, of each component, and $\boldsymbol{\xi}$ is a stress (which we assume exists as in \cite{abels2012thermodynamically,gurtin1996two}) characterising the microforces across the boundary of a region - and acts only in the tangential direction.
This inequality is understood as being an appropriate form of the second law of thermodynamics.
The boundary terms correspond to the work done by the macroscopic stresses in the fluid, the work done by the microscopic stresses, and the change of potential energy respectively.\\

As $\Sigma$ is arbitrary, we find that on $\Gamma(t)$ one has
\[\matdev E - \gradg \cdot \left( \mbb{T} \mbf{u} \right) - \gradg \cdot \left( \matdev \varphi \boldsymbol{\xi} \right) + \gradg \cdot \left( \mu \mbf{q} \right)  - \mbf{F} \cdot \mbf{u}\leq 0,\]
where we have used \eqref{angmombalance}.
We use \eqref{linmombalance} and the form of $E$ to see that this implies
\begin{multline*}
	\rho \matdev \mbf{u} \cdot \mbf{u} + \varepsilon \gradg \varphi \cdot \matdev \gradg \varphi + \frac{1}{\varepsilon} F'(\varphi)\matdev \varphi - \rho \matdev \mbf{u} \cdot \mbf{u} - \mbb{T} : \gradg \mbf{u} - \gradg \cdot \left( \matdev \varphi \boldsymbol{\xi} \right) + \gradg \cdot \left( \mu \mbf{q} \right)\\
	- \mbf{F} \cdot \mbf{u}\leq 0.
\end{multline*}

The stress $\boldsymbol{\xi}$ is understood to have a microforce balance on an arbitrary region $\Sigma(t)$, given by
\[\int_{\partial\Sigma(t)} \boldsymbol{\xi} \cdot \boldsymbol{\nu}_\Sigma = \int_{\Sigma(t)} \sigma,\]
from which we obtain
\begin{align}
	\gradg \cdot \boldsymbol{\xi} + \sigma = 0, \text{ on } \Gamma(t), \label{microforcebalance}
\end{align}
where $\sigma$ is a scalar function representing the internal forces in the surface.
Lastly we write $\mbb{S} = \mbb{T} + p \mbb{P}$, where $\mbb{P}$ is the projection tensor, and ${p = -\frac{1}{2} \mathrm{tr}(\mbb{T})}$ is the pressure.
We note that $\mbb{S}$ maps onto tangent vectors, is trace-free, and we find
\[\mbb{P} : \gradg \mbf{u} = \mbb{I}: \gradg \mbf{u} - \boldsymbol{\nu} \otimes \boldsymbol{\nu} : \gradg \mbf{u} = \gradg \cdot \mbf{u} = 0.\]
Next by using \eqref{transporteqn}, \eqref{linmombalance}, \eqref{microforcebalance} in \eqref{dissineq} it is straightforward to see that
\[(\varepsilon \gradg \varphi - \boldsymbol{\xi}) \cdot \matdev \gradg \varphi + \left( \sigma + \frac{1}{\varepsilon} F'(\varphi) - \mu \right)\matdev \varphi  - \left(\mbb{S} + \gradg \varphi \otimes \boldsymbol{\xi} \right): \gradg \mbf{u} + \mbf{q} \cdot \gradg \mu  - \mbf{F} \cdot \mbf{u}\leq 0,\]
where we have also used
\[\gradg (\matdev \varphi) = \mbb{P} \matdev \gradg \varphi + (\gradg \mbf{u})^T \gradg \varphi.\]
As noted in \cite{abels2012thermodynamically, gurtin1996two} the quantity
\[ \underbrace{-(\varepsilon \gradg \varphi - \boldsymbol{\xi}) \cdot \matdev \gradg \varphi - \left( \sigma + \frac{1}{\varepsilon} F'(\varphi) - \mu \right)\matdev \varphi  + \left(\mbb{S} + \gradg \varphi \otimes \boldsymbol{\xi} \right): \gradg \mbf{u} - \mbf{q} \cdot \gradg \mu + \mbf{F} \cdot \mbf{u}}_{=: \mathcal{D}},\]
represents the dissipation, and hence the assumed inequality is equivalent to $\mathcal{D} \geq 0$.\\

One then argues as in \cite{abels2012thermodynamically,gurtin1996two} to show that if one allows $\mbb{S}, \mbf{q}, \boldsymbol{\xi}, \sigma$ to depend on $\varphi, \gradg \varphi$, $\mu, \gradg \mu$, $\mbb{E}(\mbf{u})$ arbitrarily then the assumed dissipation inequality can fail to hold.
The argument requires one to assume the presence of general forces and external mass supplies.
In particular one finds that necessarily
\begin{align} \label{microforces}
	\varepsilon \gradg \varphi - \boldsymbol{\xi} = 0, \quad \sigma + \frac{1}{\varepsilon} F'(\varphi) - \mu = 0.
\end{align}
We then assume, as in \cite{caetano2021cahn}, that the mass flux takes the form
\[\mbf{q} = -M(\varphi) \gradg \mu,\]
for some mobility function $M(\cdot)$.
Similarly, motivated by Newton's rheological law we assume
\[\mbb{S} + \varepsilon \gradg \varphi \otimes \gradg \varphi  = 2 \eta(\varphi) \mbb{E}(\mbf{u}),\]
where
\[\mbb{E}(\mbf{u}) = \frac{1}{2} \left(\gradg \mbf{u} +  (\gradg \mbf{u})^T \right)\]
is the rate of strain tensor, and $\eta(\cdot)$ is a variable viscosity (depending on the concentration).
As noted in \cite{abels2012thermodynamically}, $\mbb{S} + \varepsilon \gradg \varphi \otimes \gradg \varphi $ represents the change in energy due to friction in the fluids, and is referred to as the viscous strain tensor.
In summary, this allows us to observe that
\begin{align}
	\mathbb{T} = \underbrace{-p\mathbb{P} + 2 \eta(\varphi) \mathbb{E}(\mathbf{u})}_{\text{Boussinesq-Scriven term}} - \underbrace{\varepsilon \gradg \varphi \otimes \gradg \varphi}_{\text{Korteweg term}}, \label{stresstensor}
\end{align}
where the first term is like the Boussinesq-Scriven ansatz seen in \cite{brandner2022derivations}, but we allow variable viscosity.
Then by combining \eqref{microforcebalance}, \eqref{microforces} we find that
\[\mu = -\varepsilon \lapg \varphi + \frac{1}{\varepsilon} F'(\varphi) ,\]
and similarly combining \eqref{transporteqn}, and the assumption on $\mbf{q}$ we obtain
\[\matdev \varphi = \gradg \cdot (M(\varphi) \gradg \mu),\]
which are the relations for $\varphi,\mu$ as in \cite{caetano2021cahn}.
Lastly we observe that
\[\gradg \cdot (p \mbb{P}) = \gradg p - pH\boldsymbol{\nu},\]
and hence by using \eqref{linmombalance}, \eqref{stresstensor} we see
\[\rho \matdev \mbf{u} = -\gradg p + pH \boldsymbol{\nu} + \gradg \cdot (2 \eta(\varphi) \mbb{E}(\mbf{u}) ) - \varepsilon \gradg \cdot (\gradg \varphi \otimes \gradg \varphi) +\mathbf{F}.\]
Lastly by recalling that $\gradg \cdot \mbf{u}= 0$, we observe that we have derived \eqref{NSCH1} - \eqref{NSCH4}.\\

To obtain the tangential Navier-Stokes-Cahn-Hilliard system we assume that the geometric motion of $\Gamma(t)$ is defined by the normal velocity field
$V_N\boldsymbol{\nu}$ so material continuity in the normal direction implies the equation
\[\mathbf{u}\cdot\boldsymbol{\nu} =V_N.\]
To find the unknown $\ut$ one considers the projection of this momentum equation.
For this we note that (as in \cite{OlsReuZhi22})
\begin{gather*}
	\matdev \varphi = \normdev \varphi + \gradg \varphi \cdot \ut,\\
	\mathbb{P}\matdev \mathbf{u} = \mathbb{P} \normdev \mathbf{u}_T + (\gradg \mathbf{u}_T)\mbf{u}_T + V_N \mathbb{H} \mathbf{u}_T - \frac{1}{2} \gradg V_N^2.
\end{gather*}
Similarly we compute
\begin{align*}
	\gradg \cdot (\gradg \varphi \otimes \gradg \varphi))  = \lapg \varphi \gradg \varphi + \frac{1}{2} \gradg |\gradg \varphi|^2 - \left( \gradg \varphi \cdot \mbb{H} \gradg \varphi \right) \boldsymbol{\nu},
\end{align*}
and by using \eqref{NSCH4} we see
\[-\varepsilon \lapg \varphi \gradg \varphi = \left(\mu - \frac{1}{\varepsilon} F'(\varphi)\right)\gradg \varphi.\]
Thus defining the modified pressure to be
\[\tilde{p} = p + \frac{\varepsilon}{2}|\gradg \varphi|^2 + \frac{1}{\varepsilon} F(\varphi),\]
and recalling that $\gradg \cdot \mathbf{u}_T = - HV_N$ we obtain the tangential Navier-Stokes-Cahn-Hilliard equations \eqref{TNSCH1} - \eqref{TNSCH4}.
The obvious modification to the calculations in \cite{OlsReuZhi22} also yields an equation for the normal component
\begin{multline}
	\rho \matdev V_N = -2 \eta(\varphi)\left( \text{tr}(\mbb{H} \gradg \ut ) + \rho V_N \text{tr}(\mbb{H}^2) \right) + \rho \ut \cdot \mbb{H} \ut - \ut \cdot \gradg V_N + pH\\
	- \varepsilon \gradg \varphi \cdot \mbb{H} \gradg \varphi+F_\nu, \label{normalcomponent}
\end{multline}
where $F_\nu=\mathbf{F}\cdot \nu$, which  must be satisfied and  coupled with \eqref{TNSCH1}-\eqref{TNSCH4} .
\begin{remark}
    \begin{enumerate}
        \item \eqref{NSCH1}-\eqref{NSCH4} is a simplified form of the system derived in \cite{bachini2023derivation}, where the authors also consider the effects of bending/friction.
    It is useful to note that these authors consider the modified pressure throughout.
    Hence neglecting the effects of bending/friction terms and changing notation suitably one finds the two systems are identical.
    \item One may also be able to derive a related model by considerations similar to \cite{zimmermann2019isogeometric}.
    We leave this for future work.
    \end{enumerate}
\end{remark}

\subsection{Derivation by a thin film limit}
In this subsection we consider the thin film limit of relevant Navier-Stokes-Cahn-Hilliard equations on an evolving Cartesian domain.
This approach has been considered for the heat equation \cite{miura2017zero}, the Navier-Stokes equations \cite{brandner2022derivations,miura2018singular}, and the Ginzburg-Landau equation \cite{miura2024thin}.
Besides use in derivation of a suitable system of surface Navier-Stokes-Cahn-Hilliard equations there has been interest in using a thin film approximation numerically \cite{yang2020phase} to study the limiting surface equations.\\

We assume throughout that $\Gamma(t)$ does not undergo a change in topology.
Indeed, in the presence of a change in topology the modelling of this phenomenon is different and so one expects the systems \eqref{NSCH1}-\eqref{NSCH4} and \eqref{TNSCH1}-\eqref{TNSCH4} won't necessarily make sense.
We discuss this more in Remark \ref{topology change}.
We also ignore the effect of the external force, $\mbf{F}$, for brevity.\\

As before we still consider a closed oriented evolving surface, $\Gamma(t)$, with a prescribed normal velocity $V_N$.
We define $\Omega_\gamma(t)$ by
\[ \Omega_\gamma(t) := \{ x \in \mbb{R}^3 \mid |d(x,t)| < \gamma \}, \]
where $d(x,t)$ is the signed distance function of $\Gamma(t)$, and $\gamma > 0$ is sufficiently small.
We then define the (noncylindrical) space-time domain
\[ Q_{\gamma,T} := \bigcup_{t \in [0,T]} \Omega_{\gamma}(t) \times \{t\},\]
where we pose our problem.
We consider a Navier-Stokes-Cahn-Hilliard system on $Q_{\gamma,T}$,
\begin{gather}
	\rho\left(\frac{\partial \mbf{u}^\gamma}{\partial t} + (\mbf{u}^\gamma \cdot \nabla) \mbf{u}^\gamma\right) = -\nabla p^\gamma + \nabla \cdot (2\eta(\varphi^\gamma) \mbb{E}_{\Omega} (\mbf{u}^\gamma)) - \varepsilon \nabla \cdot (\nabla \varphi^\gamma \otimes \nabla \varphi^\gamma),\label{bulkeqn1}\\
	\nabla \cdot \mbf{u}^\gamma = 0,\label{bulkeqn2}\\
	\ddt{\varphi^\gamma} - \nabla \cdot \left(M(\varphi^\gamma) \nabla \mu^\gamma\right) + \mbf{u}^\gamma \cdot \nabla \varphi^\gamma = 0,\label{bulkeqn3}\\
	\mu^\gamma = -\varepsilon \Delta \varphi^\gamma + \frac{1}{\varepsilon}F'(\varphi^\gamma),\label{bulkeqn4}
\end{gather}
equipped with boundary conditions
\begin{gather}
	\mbf{u}^\gamma \cdot \boldsymbol{\nu}^\gamma = V_N^\gamma, \label{bulkBC1}\\
	\left[ \mbb{E}_{\Omega}(\mbf{u}^\gamma) \boldsymbol{\nu}^\gamma  \right]_{\text{tan}} = 0, \label{bulkBC2}\\
	\nabla \varphi^\gamma \cdot \boldsymbol{\nu}^\gamma = 0,\label{bulkBC3}\\
	\nabla \mu^\gamma \cdot \boldsymbol{\nu}^\gamma  = 0,\label{bulkBC4}
\end{gather}
on the lateral boundary $\partial_\ell Q_{\gamma,T}$ defined as
\[ \partial_\ell Q_{\gamma,T} := \bigcup_{t \in [0,T]} \partial \Omega_{\gamma}(t) \times \{t\} . \]
Here the normal velocity of the bulk domain, $\Omega_\gamma(t)$, is $V_N^\gamma(x,t) := V_N(\pi(x,t),t)$.
We are using the notation $\mbb{E}_\Omega := \frac{1}{2}(\nabla \mbf{u}^\gamma + \left(\nabla \mbf{u}^\gamma)^T\right)$ for the rate of strain tensor in $\Omega_\gamma$, and $[\cdot]_{\text{tan}}$ denoting the tangential component to $\partial \Omega_\gamma(t)$ of a vector in $\mbb{R}^3$.
The condition \eqref{bulkBC2} is sometimes referred to as the perfect slip condition, and appears as a natural boundary condition.
One may expect different boundary conditions for $\varphi^\gamma, \mu^\gamma$, similar to the Robin-type condition seen for the heat equation in \cite{miura2017zero}.
However the usual Neumann conditions are still sufficient for our setting, and retain the mass conservation property.
To see this we use the Reynolds transport theorem so that
\begin{align*}
	\frac{d}{dt} \int_{\Omega_{\gamma}(t)} \varphi^\gamma  &= \int_{\Omega_{\gamma}(t)} \ddt{\varphi^\gamma} + \int_{\partial \Omega_{\gamma}(t)}\varphi^\gamma V_N^\gamma\\
	&=  \int_{\Omega_\gamma(t)} \left(\Delta \mu^\gamma - \mbf{u}^\gamma \cdot \nabla \varphi^\gamma \right) + \int_{\partial \Omega_{\gamma}(t)}\varphi^\gamma V_N^\gamma\\
	& = \int_{\partial \Omega_{\gamma}(t)}\left(	\nabla \mu^\gamma \cdot \boldsymbol{\nu}^\gamma - \varphi^\gamma \mbf{u}^\gamma \cdot \boldsymbol{\nu}^\gamma + \varphi^\gamma V_N^\gamma \right)\\
	&= \int_{\partial \Omega_{\gamma}(t)} \nabla \mu^\gamma \cdot \boldsymbol{\nu}^\gamma ,
\end{align*}
where we have used \eqref{bulkBC1} for the final equality.
The Neumann condition for $\varphi^\gamma$ follows in the usual way, without any need for Reynolds transport theorem.
We now expand $\varphi^\gamma, \mu^\gamma, \mbf{u}^\gamma, p^\gamma$ in terms of the signed distance function as:
\begin{gather*}
	\varphi^\gamma(x,t) = \varphi^0(\pi(x,t),t) + d(x,t)\varphi^1(\pi(x,t),t) + d(x,t)^2 \varphi^2(\pi(x,t),t) + \mathcal{O}(d(x,t)^3),\\
	\mu^\gamma(x,t) = \mu^0(\pi(x,t),t) + d(x,t)\mu^1(\pi(x,t),t) + d(x,t)^2 \mu^2(\pi(x,t),t) + \mathcal{O}(d(x,t)^3),\\
	\mbf{u}^\gamma(x,t) = \mbf{u}^0(\pi(x,t),t) + d(x,t)\mbf{u}^1(\pi(x,t),t) + d(x,t)^2 \mbf{u}^2(\pi(x,t),t) + \mathcal{O}(d(x,t)^3),\\
	p^\gamma(x,t) = p^0(\pi(x,t),t) + d(x,t)p^1(\pi(x,t),t) + d(x,t)^2 p^2(\pi(x,t),t) + \mathcal{O}(d(x,t)^3),
\end{gather*}
where here $\pi(x,t)$ is the closest point projection of $x$ onto $\Gamma(t)$.
This is uniquely defined on a small tubular neighbourhood, $\mathcal{N}(\Gamma(t))$, of $\Gamma(t)$.
Hence we have a requirement on $\gamma$ being sufficiently small so that
\[ Q_{\gamma,T} \subseteq \bigcup_{t \in [0,T]} \mathcal{N}(\Gamma(t)) \times \{t\} .\]
Before considering the thin film limit we recall some preliminary results.
Firstly, we recall that
\[ \nabla d(x,t) = \boldsymbol{\nu}(\pi(x,t),t), \qquad \ddt{d(x,t)} = -V_N(\pi(x,t),t),\]
and from these one can show the following results.
\begin{lemma}[\cite{miura2018singular}, Lemma 2.7]
	\label{miura differentiation}
	Let $f$ be a scalar of vector valued function on $\mathcal{G}_T$.
	Then the spatial/temporal derivatives of the composite function $f(\pi(x,t),t)$ are such that
	\begin{gather*}
		\nabla (f(\pi(x,t),t)) = \gradg f(\pi(x,t),t) + d(x,t)\left(\mbb{H}\gradg f\right)(\pi(x,t),t) + \mathcal{O}(d(x,t)^2),\\
		\ddt{f(\pi(x,t),t)} = \normdev f(\pi(x,t), t) + d(x,t)((\gradg V_N \cdot \gradg)f)(\pi(x,t),t) + \mathcal{O}(d(x,t)^2),
	\end{gather*}
	for $(x,t) \in Q_{\gamma,T}$.
\end{lemma}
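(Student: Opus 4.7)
The plan is to prove both identities as Taylor expansions in the signed distance, by applying the chain rule to the composition $g(x,t) := f(\pi(x,t),t)$. Throughout I would exploit the preamble identities $\nabla d(x,t) = \boldsymbol{\nu}(\pi(x,t),t)$ and $\partial_t d(x,t) = -V_N(\pi(x,t),t)$, together with the representation $\pi(x,t) = x - d(x,t)\nabla d(x,t)$, so that derivatives of $\pi$ reduce to derivatives of $d$ and of the extended normal.

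For the spatial identity I would compute
\[
D\pi(x,t) = \mathbb{I} - \nabla d\otimes \nabla d - d\,\nabla^2 d = \mathbb{P}(\pi(x,t),t) - d(x,t)\,\nabla^2 d(x,t),
\]
and use that on $\Gamma(t)$ the Hessian $\nabla^2 d$ agrees, up to sign convention, with the Weingarten map $\mathbb{H}$, so that $D\pi = \mathbb{P} \pm d\,\mathbb{H} + O(d^2)$ evaluated at $(\pi(x,t),t)$. Combined with the chain rule $\nabla g(x,t) = (D\pi)^T \gradg f(\pi(x,t),t)$ and the identities $\mathbb{P}\gradg f = \gradg f$ and $\mathbb{H}^T = \mathbb{H}$, this yields the first assertion.

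For the temporal identity I would differentiate $\pi = x - d\,\nabla d$ in $t$, using $\partial_t \nabla d = \nabla(\partial_t d) = -\nabla\bigl(V_N\circ \pi\bigr)$ and then invoking the already proved spatial identity applied to the scalar $V_N$ to expand $\nabla(V_N\circ\pi) = \gradg V_N(\pi,t) + O(d)$. This gives
\[
\partial_t \pi(x,t) = V_N\,\boldsymbol{\nu}(\pi,t) + d\,\gradg V_N(\pi,t) + O(d^2),
\]
up to the same sign convention. The chain rule then produces $\partial_t g = \partial_t f(\pi,t) + \gradg f(\pi,t)\cdot \partial_t \pi$; the $V_N\,\boldsymbol{\nu}$ piece drops out because $\gradg f$ is tangential, leaving precisely $d\,(\gradg V_N\cdot \gradg)f$ at first order in $d$.

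The main obstacle, in my view, is interpretive rather than computational: the symbol $\partial_t f(\pi(x,t),t)$ needs an unambiguous meaning when $f$ is defined only on the evolving surface bundle. I would resolve this by working throughout with the \emph{normal extension} $f^e(x,t) := f(\pi(x,t),t)$, which by construction satisfies $\nabla f^e\cdot \boldsymbol{\nu}(\pi(x,t),t) = 0$. The extension-independent definition $\normdev f = \partial_t f^e + V_N\,\nabla f^e\cdot \boldsymbol{\nu}$ then collapses to $\partial_t f^e$ at surface points, giving the identification of $\partial_t f(\pi,t)$ with $\normdev f(\pi,t)$ that is needed to interpret the chain-rule step. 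Once this identification is in place the remaining content is two Taylor expansions in $d$, with the only real care being to keep the sign convention for $\mathbb{H}$ consistent with that for $\nabla^2 d$.
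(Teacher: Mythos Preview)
The paper does not give a proof of this lemma; it is simply quoted from \cite{miura2017singular} as a preliminary result, so there is no ``paper's own proof'' to compare against. Your sketch is the standard argument and is essentially what one finds in Miura's paper: write $\pi(x,t)=x-d(x,t)\nabla d(x,t)$, differentiate, and Taylor expand in $d$ using $\nabla d=\boldsymbol{\nu}\circ\pi$ and $\partial_t d=-V_N\circ\pi$.

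One comment on the sign issue you flag. With the paper's conventions ($\boldsymbol{\nu}$ outward, $\mathbb{H}=\gradg\boldsymbol{\nu}$, sphere has $H>0$) one gets $D\pi=\mathbb{P}(\pi)-d\,\nabla^2 d=\mathbb{P}(\pi)-d\,\mathbb{H}(\pi)+\mathcal{O}(d^2)$, so the first-order coefficient in the spatial expansion carries a \emph{minus} sign, not the plus sign printed in the statement. This is harmless for the thin-film derivation that follows (only the zeroth-order terms in $d$ are ever used there), and it is indeed a convention issue---Miura's $A$ (the Weingarten map) differs in sign from the paper's $\mathbb{H}$. Your decision to carry the $\pm$ through rather than commit is sensible; just be aware that with the present paper's sign conventions the displayed formula is off by a sign at first order. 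Your handling of the temporal identity, including the identification of $\partial_t f^e|_{\Gamma}$ with $\normdev f$ via the normal extension, is correct.
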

\begin{lemma}[\cite{miura2018singular}, Lemma 2.8]
	\label{miura's lemma}
	Let $\mbb{S}^0, \mbb{S}^1$ be $3 \times 3$ matrix valued functions on $\Gamma(t)$ for each $t \in (0,T)$.
	Then for $x \in \Omega_{\gamma}(t)$ set
	\[\mbb{S}(x) = \mbb{S}^0(\pi(x,t)) + d(x,t) \mbb{S}^1(\pi(x,t)) + \mathcal{O}(d(x,t)^2).\]
	Then we have
	\[ \nabla \cdot \mbb{S}(x) = \divg \mbb{S}^0(\pi(x,t),t) + (\mbb{S}^1(\pi(x,t),t))^T \boldsymbol{\nu}(\pi(x,t),t) + \mathcal{O}(d(x,t)), \]
	for $x \in \Omega_\gamma(t)$.
\end{lemma}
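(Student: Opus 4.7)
The plan is to compute $\nabla \cdot \mathbb{S}$ termwise from the given expansion, invoking Lemma \ref{miura differentiation} componentwise together with the product rule, while tracking the order of the remainder throughout. Throughout I would interpret the matrix divergence via $(\nabla \cdot \mathbb{S})_j = \partial_i \mathbb{S}_{ij}$, which is the convention that makes the normal contribution come out as $(\mathbb{S}^1)^T\boldsymbol{\nu}$ rather than $\mathbb{S}^1\boldsymbol{\nu}$.

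For the leading term $\mathbb{S}^0(\pi(x,t))$, applying Lemma \ref{miura differentiation} entrywise gives $\nabla \mathbb{S}^0_{ij}(\pi(x,t)) = \gradg \mathbb{S}^0_{ij}(\pi(x,t)) + \mathcal{O}(d(x,t))$, so contracting the first index produces $(\divg \mathbb{S}^0)(\pi(x,t))$ up to $\mathcal{O}(d)$, recovering the first term in the conclusion. For the middle term $d(x,t)\mathbb{S}^1(\pi(x,t))$, the product rule together with $\nabla d(x,t) = \boldsymbol{\nu}(\pi(x,t),t)$ gives
\[ \partial_i\bigl(d(x,t)\mathbb{S}^1_{ij}(\pi(x,t))\bigr) = \nu_i(\pi(x,t),t)\,\mathbb{S}^1_{ij}(\pi(x,t)) + d(x,t)\,\partial_i \mathbb{S}^1_{ij}(\pi(x,t)), \]
whose second summand is $\mathcal{O}(d)$ by the boundedness part of Lemma \ref{miura differentiation}, and whose first summand contracts on $i$ to give $((\mathbb{S}^1)^T\boldsymbol{\nu})_j$. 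Finally, writing the quadratic remainder as $d(x,t)^2 \mathbb{R}(x,t)$ with $\mathbb{R}$ bounded along with its spatial derivatives, the product rule gives a divergence of size $2d(x,t)(\mathbb{R}\boldsymbol{\nu})_j + d(x,t)^2(\nabla\cdot\mathbb{R})_j = \mathcal{O}(d)$. Summing the three contributions yields the claim.

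The only subtle point is bookkeeping for the higher-order remainder: one must assume, as is implicit in writing $\mathcal{O}(d(x,t)^2)$, enough regularity of the remainder that a single spatial differentiation costs at most one power of $d$. With that hypothesis fixed at the outset, the argument reduces to a routine componentwise application of Lemma \ref{miura differentiation}, the product rule, and the identity $\nabla d = \boldsymbol{\nu}\circ\pi$; I do not anticipate any genuine analytical obstruction.
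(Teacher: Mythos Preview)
Your proof is correct. The paper itself does not prove this lemma at all; it is simply quoted from \cite{miura2017singular}, Lemma 2.8, with no argument given. Your approach --- applying Lemma \ref{miura differentiation} componentwise to $\mbb{S}^0(\pi(x,t))$, using the product rule together with $\nabla d = \boldsymbol{\nu}\circ\pi$ on the $d\,\mbb{S}^1$ term, and checking that differentiating the $\mathcal{O}(d^2)$ remainder loses at most one power of $d$ --- is exactly the natural proof, and the convention check $(\nabla\cdot\mbb{S})_j=\partial_i\mbb{S}_{ij}$ is consistent with how the paper computes $\gradg\cdot(p\mbb{P})$ elsewhere.
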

\begin{theorem}
	Let $(\varphi^\gamma, \mu^\gamma, \mbf{u}^\gamma, p^\gamma)$ solve \eqref{bulkeqn1}-\eqref{bulkeqn4} with boundary conditions \eqref{bulkBC1}-\eqref{bulkBC4}.
	Then $(\varphi^0, \mu^0, \mbf{u}^0, p^0, p^1)$ from the corresponding expansion in terms of the signed distance functions solve
	\begin{gather}
		\rho \matdev \mbf{u}^0 = -\gradg p^0 + p^1 \boldsymbol{\nu} + \gradg \cdot (2 \eta(\varphi^0) \mbb{E}(\mbf{u}^0) ) - \varepsilon \gradg \cdot (\gradg \varphi^0 \otimes \gradg \varphi^0) \label{thinfilmlimit1},\\
		\gradg \cdot \mathbf{u}^0 = 0 \label{thinfilmlimit2},\\
		\matdev \varphi^0 = \gradg \cdot (M(\varphi^0) \gradg \mu^0),\label{thinfilmlimit3}\\
		\mu^0 = -\varepsilon\lapg \varphi^0 + \frac{1}{\varepsilon}F'(\varphi^0),\label{thinfilmlimit4}
	\end{gather}
	on $\mathcal{G}_T$, such that $\mbf{u}^0 \cdot \boldsymbol{\nu} = V_N$.
\end{theorem}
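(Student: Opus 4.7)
The plan is to substitute the signed-distance expansions into the bulk equations \eqref{bulkeqn1}-\eqref{bulkeqn4} and the boundary conditions \eqref{bulkBC1}-\eqref{bulkBC4}, apply Lemmas \ref{miura differentiation} and \ref{miura's lemma} to expand the derivatives, and then match coefficients of $d^0$ on $\Gamma(t)$.

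First I would extract preliminary information from the boundary conditions at $d = \pm\gamma$. Since $\boldsymbol{\nu}^\gamma = \pm\boldsymbol{\nu}\circ\pi$ and $V_N^\gamma = V_N\circ\pi + O(\gamma)$, condition \eqref{bulkBC1} gives $\mbf{u}^0 \cdot \boldsymbol{\nu} = V_N$ on $\Gamma(t)$. Applying Lemma \ref{miura differentiation} to $\varphi^\gamma$ yields $\nabla\varphi^\gamma\cdot\boldsymbol{\nu}^\gamma = \pm\varphi^1 + O(\gamma)$, so \eqref{bulkBC3} forces $\varphi^1 = O(\gamma)$, and similarly $\mu^1 = O(\gamma)$ via \eqref{bulkBC4}; both vanish in the limit $\gamma\to 0$. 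The perfect-slip condition \eqref{bulkBC2} pins down the tangential-normal components of the linear-in-$d$ coefficient of $\mbf{u}^\gamma$, which is needed later.

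Next I would substitute the expansions into the scalar equations. For \eqref{bulkeqn3}, Lemma \ref{miura differentiation} gives $\partial_t\varphi^\gamma = \normdev\varphi^0 + O(d)$ and $\nabla\varphi^\gamma = \gradg\varphi^0 + O(d)$ (the $\varphi^1\boldsymbol{\nu}$ piece drops), while Lemma \ref{miura's lemma} handles the flux divergence, producing at order $d^0$
\[ \normdev \varphi^0 + \mbf{u}_T \cdot \gradg \varphi^0 = \gradg \cdot (M(\varphi^0)\gradg\mu^0), \]
which is \eqref{thinfilmlimit3} upon rewriting $\matdev = \normdev + \mbf{u}_T\cdot\gradg$. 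For \eqref{bulkeqn4}, expanding $\nabla\varphi^\gamma$ to order $d$ produces a normal piece $2\varphi^2\boldsymbol{\nu}$; the boundary condition \eqref{bulkBC3} forces $\varphi^2 = O(\gamma)$, so that $\Delta\varphi^\gamma = \lapg\varphi^0 + O(\gamma)$ and we recover \eqref{thinfilmlimit4}. Equation \eqref{bulkeqn2}, combined with the constraint on $\mbf{u}^1\cdot\boldsymbol{\nu}$ extracted from \eqref{bulkBC1}, reduces to \eqref{thinfilmlimit2}.

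The hardest step will be the momentum equation \eqref{bulkeqn1}. Expanding $p^\gamma$ via Lemma \ref{miura differentiation} gives $\nabla p^\gamma = \gradg p^0 + p^1\boldsymbol{\nu} + O(d)$, which is the source of the $p^1\boldsymbol{\nu}$ term in \eqref{thinfilmlimit1} and plays the role of the Lagrange-multiplier-type normal stress $pH\boldsymbol{\nu}$ in \eqref{NSCH1}. The Korteweg stress $\varepsilon\nabla\varphi^\gamma\otimes\nabla\varphi^\gamma$, using $\varphi^1 = 0$, reduces to $\varepsilon\gradg\varphi^0\otimes\gradg\varphi^0 + O(d)$, and Lemma \ref{miura's lemma} delivers $\varepsilon\gradg\cdot(\gradg\varphi^0\otimes\gradg\varphi^0)$ at leading order. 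The viscous term is the most delicate: one must compute both the leading and linear-in-$d$ coefficients of $\mbb{E}_\Omega(\mbf{u}^\gamma)$ and use \eqref{bulkBC2} to kill the spurious normal contribution that Lemma \ref{miura's lemma} would otherwise produce, leaving $\gradg\cdot(2\eta(\varphi^0)\mbb{E}(\mbf{u}^0))$ on $\Gamma(t)$. The inertial contributions $\rho(\partial_t\mbf{u}^\gamma + (\mbf{u}^\gamma\cdot\nabla)\mbf{u}^\gamma)$ recombine into $\rho\matdev\mbf{u}^0$ as in the scalar case. The main technical burden throughout is the bookkeeping of normal-versus-tangential components at each order in $d$, but once the boundary conditions have eliminated $\varphi^1$, $\mu^1$, and constrained the relevant parts of $\mbf{u}^1$, the reduction is essentially mechanical.
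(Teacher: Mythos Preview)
Your outline is correct and matches the paper's approach: expand in $d$, invoke Lemmas \ref{miura differentiation}--\ref{miura's lemma}, and use the boundary data at $d=\pm\gamma$ to eliminate the unwanted normal coefficients. One caution on the inertial terms in \eqref{bulkeqn1}: unlike your scalar argument (which relied on $\varphi^1=0$), here $\mbf{u}^1$ does \emph{not} vanish, and the mechanism is instead the cancellation of the $\mp V_N\mbf{u}^1$ contributions between $\partial_t\mbf{u}^\gamma$ and $(\mbf{u}^\gamma\cdot\nabla)\mbf{u}^\gamma$; similarly, \eqref{bulkBC2} is needed not only to kill $(\widetilde{\mbb S}^1)^T\boldsymbol{\nu}$ but also to reduce the leading-order stress $\widetilde{\mbb S}^0$ (which a priori contains $\boldsymbol{\nu}\otimes\mbf{u}^1+\mbf{u}^1\otimes\boldsymbol{\nu}$) to $\eta(\varphi^0)\mbb{E}(\mbf{u}^0)$.
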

\begin{proof}
	We abbreviate $\pi(x,t)$ to $\pi$, and $d(x,t)$ to $d$ throughout this proof.
	Firstly by considering \eqref{bulkBC1} and the expansion for $\mbf{u}^\gamma$ (on the boundary $d = \pm \gamma$) we have\footnote{Here the $\pm \gamma$ corresponds to the boundary of $\Omega_\gamma(t)$ consisting of two disjoint sets, $\{ x \in \mbb{R}^3 \mid d(x,t) = \gamma \},$ and $\{ x \in \mbb{R}^3 \mid d(x,t) = -\gamma \}$.}
	\[\mbf{u}^0(\pi,t)\cdot \boldsymbol{\nu}(\pi,t) \pm \gamma \mbf{u}^1(\pi,t)\cdot \boldsymbol{\nu}(\pi,t) + \gamma^2 \mbf{u}^2(\pi,t)\cdot \boldsymbol{\nu}(\pi,t) +\mathcal{O}(\gamma^3) = V_N(\pi, t),\]
	and so equating terms of order $\gamma^k$ for $k =0,1,2$, one finds that
	\begin{gather*}
		\mbf{u}^0(\pi,t)\cdot \boldsymbol{\nu}(\pi,t) = V_N(\pi, t),\\
		\mbf{u}^1(\pi,t)\cdot \boldsymbol{\nu}(\pi,t) =0,\\
		\mbf{u}^2(\pi,t)\cdot \boldsymbol{\nu}(\pi,t)=0.
	\end{gather*}
	Taking the gradient of $\mbf{u}^\gamma$, we find
	\begin{multline}
		\nabla \mbf{u}^\gamma(x,t) = \gradg \mbf{u}^0(\pi, t) + \boldsymbol{\nu}(\pi, t) \otimes \mbf{u}^1(\pi, t)\\
  + d \left( (\mbb{H}\gradg \mbf{u}^0)(\pi, t) + \gradg \mbf{u}^1(\pi, t) + 2(\boldsymbol{\nu} \otimes \mbf{u}^2)(\pi, t) \right)+ \mathcal{O}(d^2), \label{thinfilmpf1}
	\end{multline}
	and hence taking the trace of the above, using \eqref{bulkeqn2} and $\mbf{u}^1 \cdot \boldsymbol{\nu} = 0$, we obtain
	\[ \nabla \cdot \mbf{u}^\gamma = \divg \mbf{u}^0 + \mathcal{O}(d), \]
	from which the zeroth order terms yield \eqref{thinfilmlimit2}.
	Similar calculations let us verify that
	\begin{multline*}
		\nabla \varphi^\gamma(x,t) = \gradg \varphi^0(\pi, t) + \varphi^1(\pi,t) \boldsymbol{\nu}(\pi, t)\\
		+ d \left( (\mbb{H}\gradg \varphi^0)(\pi, t) + \gradg \varphi^1(\pi, t)+ 2(\varphi^2\boldsymbol{\nu})(\pi, t) \right) + \mathcal{O}(d^2),
	\end{multline*}
	and
	\begin{align}
		\nabla p^\gamma(x,t) = \gradg p^0(\pi, t) + p^1(\pi,t) \boldsymbol{\nu}(\pi, t) + \mathcal{O}(d). \label{thinfilmpf2}
	\end{align}
	Similarly, by considering the transpose of \eqref{thinfilmpf1} one finds
	\begin{align}
		\mbb{E}_{\Omega}(\mbf{u}^\gamma)(x,t) = \mbb{S}^0(\pi,t) + d \mbb{S}^1(\pi, t) + \mathcal{O}(d^2), \label{thinfilmpf3}
	\end{align}
	where
	\begin{gather*}
		\mbb{S}^0= \mbb{E}(\mbf{u}^0) + \frac{\boldsymbol{\nu}\otimes \mbf{u}^1 +  \mbf{u}^1 \otimes \boldsymbol{\nu}}{2},\\
		\mbb{S}^1 = \frac{\mbb{H} \gradg \mbf{u}^0 + (\mbb{H} \gradg \mbf{u}^0)^T}{2} + \mbb{E}(\mbf{u}^1) + \boldsymbol{\nu} \otimes \mbf{u}^2 + \mbf{u}^2 \otimes \boldsymbol{\nu}.
	\end{gather*}
	Now, by our smoothness assumption on $\eta(\cdot)$ we may use Taylor's theorem to write
	\[ \eta(\varphi^\gamma(x,t)) = \eta(\varphi^0(\pi,t)) + d\eta'(\varphi^*(\pi,t))\varphi^1(\pi, t) + \mathcal{O}(d^2), \]
	where $\varphi^*(\pi, t)$ is some function valued between $\varphi^0(\pi, t)$ and $\varphi^\gamma(x,t)$ which arises from the remainder term in Taylor's theorem.
	Using this, we find that
	\[ \eta(\varphi^\gamma(x,t))\mbb{E}_{\Omega}(\mbf{u}^\gamma)(x,t) = \widetilde{\mbb{S}}^0(\pi,t) + d \widetilde{S}^1(\pi, t) + \mathcal{O}(d^2), \]
	where
	\begin{align*}
		\widetilde{\mbb{S}}^0= \eta(\varphi^0)\left(\mbb{E}(\mbf{u}^0) + \frac{\boldsymbol{\nu}\otimes \mbf{u}^1 +  \mbf{u}^1 \otimes \boldsymbol{\nu}}{2}\right),
	\end{align*}
	and
	\begin{multline*}
		\widetilde{\mbb{S}}^1 = \eta'(\varphi^*)\left(\mbb{E}(\mbf{u}^0) + \frac{\boldsymbol{\nu}\otimes \mbf{u}^1 +  \mbf{u}^1 \otimes \boldsymbol{\nu}}{2}\right)\\
		+ \eta(\varphi^0) \left( \frac{\mbb{H} \gradg \mbf{u}^0 + (\mbb{H} \gradg \mbf{u}^0)^T}{2} + \mbb{E}(\mbf{u}^1) + \boldsymbol{\nu} \otimes \mbf{u}^2 + \mbf{u}^2 \otimes \boldsymbol{\nu}\right).
	\end{multline*}
	Hence using Lemma \ref{miura's lemma} we see
	\[ \nabla \cdot (2\eta(\varphi^\gamma) \mbb{E}_{\Omega}(\mbf{u}^\gamma)) = 2\divg \widetilde{\mbb{S}}^0 + 2(\widetilde{\mbb{S}}^1)^T \boldsymbol{\nu} + \mathcal{O}(d),\]
	and so we check which of these terms vanish.
	To do this we firstly note that by rewriting \eqref{bulkBC2} one has
	\[ \mbb{P}(\pi,t ) \mbb{E}_{\Omega}(\mbf{u}^\gamma) \boldsymbol{\nu}(\pi, t) = 0, \quad x \in \partial \Omega_{\gamma}(t), \]
	and so by using \eqref{thinfilmpf3} one finds
	\[ \mbb{P}(\pi,t ) {\mbb{S}}^0(\pi, t) \boldsymbol{\nu}(\pi, t) \pm \gamma \mbb{P}(\pi,t ) {\mbb{S}}^1(\pi, t) \boldsymbol{\nu}(\pi, t) + \mathcal{O}(\gamma^2)  = 0, \]
	and hence
	\begin{gather*}
		\mbb{P}(\pi,t ){\mbb{S}}^0(\pi, t) \boldsymbol{\nu}(\pi, t) =0,\\
		\mbb{P}(\pi,t ) {\mbb{S}}^1(\pi, t) \boldsymbol{\nu}(\pi, t) = 0.
	\end{gather*}
	Then by using the form of $\mbb{S}^0$, and
	\[ (\boldsymbol{\nu} \otimes \mbf{u}^1)\boldsymbol{\nu} = 0, \quad (\mbf{u}^1 \otimes \boldsymbol{\nu}) \boldsymbol{\nu} = \mbf{u}^1, \quad \mbb{P}\mbf{u}^1 = \mbf{u}^1, \]
	in the above yields
	\[ \mbf{u}^1 = -2 \mbb{P}(\pi,t) \mbb{E}(\mbf{u}^0)\boldsymbol{\nu}(\pi,t). \]
	Thus we find
	\begin{align*}
		\widetilde{\mbb{S}}^0 &= \eta(\varphi^0)\mbb{E}(\mbf{u}^0) - \eta(\varphi^0)(\boldsymbol{\nu} \otimes \boldsymbol{\nu})\mbb{E}(\mbf{u}^0)\mbb{P} - \eta(\varphi^0) \mbb{P}\mbb{E}(\mbf{u}^0)(\boldsymbol{\nu} \otimes \boldsymbol{\nu})\\
		& = \eta(\varphi^0)\mbb{E}(\mbf{u}^0) - \eta(\varphi^0)(\boldsymbol{\nu} \otimes \boldsymbol{\nu})\mbb{E}(\mbf{u}^0)(\boldsymbol{\nu} \otimes \boldsymbol{\nu})\\
		& = \eta(\varphi^0)\mbb{E}(\mbf{u}^0),
	\end{align*}
	where we have used $\mbb{P} = \mbb{I} - (\boldsymbol{\nu} \otimes \boldsymbol{\nu})$ and $\gradg \mbf{u}^0 = \mbb{P}\gradg \mbf{u}^0\mbb{P}$.
	Hence we find
	\[ \divg \widetilde{\mbb{S}}^0 = \divg(\eta(\varphi^0) \mbb{E}(\mbf{u}^0)).\]
	We now show that $(\widetilde{\mbb{S}}^1)^T \boldsymbol{\nu}=0$.
	Firstly, we notice that from the above calculations it is clear that
	\[(\widetilde{\mbb{S}}^1)^T \boldsymbol{\nu} = \eta'(\varphi^*)\mbb{E}(\mbf{u}^0)\boldsymbol{\nu} + \eta(\varphi^0) \left( \frac{\mbb{H} \gradg \mbf{u}^0\boldsymbol{\nu} + (\mbb{H} \gradg \mbf{u}^0)^T\boldsymbol{\nu}}{2} + \mbb{E}(\mbf{u}^1)\boldsymbol{\nu} + (\boldsymbol{\nu} \otimes \mbf{u}^2)\boldsymbol{\nu} + (\mbf{u}^2 \otimes \boldsymbol{\nu})\boldsymbol{\nu}\right). \]
	Recalling that
	\[ \mbb{E}(\mbf{u}^0) \boldsymbol{\nu} = 0 = \mbb{E}(\mbf{u}^1) \boldsymbol{\nu}, \quad \mbb{H} \gradg \mbf{u}^0\boldsymbol{\nu} = 0 = (\mbb{H} \gradg \mbf{u}^0)^T\boldsymbol{\nu}, \quad (\boldsymbol{\nu} \otimes \mbf{u}^2)\boldsymbol{\nu}=0, \quad (\mbf{u}^2 \otimes \boldsymbol{\nu})\boldsymbol{\nu} = \mbf{u}^2, \]
	where we have used $\boldsymbol{\nu} \in \ker(\mbb{H})$, and $\mbf{u}^2 \cdot \boldsymbol{\nu}=0,$
	we find
	\[ (\widetilde{\mbb{S}}^1)^T \boldsymbol{\nu} = \eta(\varphi^0)\mbf{u}^2. \]
	Then using the form of $\mbb{S}^1$ in $\mbb{P}(\pi,t ) {\mbb{S}}^1(\pi, t) \boldsymbol{\nu}(\pi, t) = 0$ one finds that
	$\mbf{u}^2 = 0$, and so $(\widetilde{\mbb{S}}^1)^T \boldsymbol{\nu} = 0$.
	Hence
	\begin{align}
		\nabla \cdot (2\eta(\varphi^\gamma) \mbb{E}_{\Omega}(\mbf{u}^\gamma)) = \divg (2\eta(\varphi^0) \mbb{E}(\mbf{u}^0)) + \mathcal{O}(d). \label{thinfilmpf4}
	\end{align}
	
	The tensor product involving $\nabla \varphi^\gamma$ is dealt with similarly, where it is straightforward to see that
	\[ \nabla \varphi^\gamma \otimes \nabla \varphi^\gamma = \gradg \varphi^0 \otimes \gradg \varphi^0 + d (\gradg \varphi^1 \otimes \gradg \varphi^0 + \gradg \varphi^0 \otimes \gradg \varphi^1) + \mathcal{O}(d^2), \]
	and hence by using Lemma \ref{miura's lemma}
	\[ \nabla \cdot (\nabla \varphi^\gamma \otimes \nabla \varphi^\gamma) = \gradg \cdot (\gradg \varphi^0 \otimes \gradg \varphi^0) + (\gradg \varphi^0 \otimes \gradg \varphi^1 + \gradg \varphi^1 \otimes \gradg \varphi^0) \boldsymbol{\nu} + \mathcal{O}(d), \]
	where the latter term clearly vanishes as $\gradg \varphi^0, \gradg \varphi^1$ are tangential.
	Thus one obtains
	\begin{align}
		\varepsilon \nabla \cdot (\nabla \varphi^\gamma \otimes \nabla \varphi^\gamma) = \varepsilon \gradg \cdot (\gradg \varphi^0 \otimes \gradg \varphi^0) +\mathcal{O}(d). \label{thinfilmpf5}
	\end{align}
	For the momentum equation, all that remains to discuss is the time derivative and the advective term.
	The time derivative follows immediately from Lemma \ref{miura differentiation} as
	\begin{align}
		\ddt{\mbf{u}^\gamma}(x,t) = \normdev \mbf{u}^0(\pi,t) - V_N(\pi,t) \mbf{u}^1(\pi,t) + \mathcal{O}(d), \label{thinfilmpf6}
	\end{align}
	which we want to turn into a material time derivative by considering the advective term.
	For the advective term we write $(\mbf{u}^\gamma \cdot \nabla) \mbf{u}^\gamma = (\mbf{u}^\gamma)^T \nabla \mbf{u}^\gamma$.
	Hence from the expansion of $\mbf{u}^\gamma$ and using Lemma \ref{miura differentiation} one finds
	\[ (\mbf{u}^\gamma)^T \nabla \mbf{u}^\gamma = (\mbf{u}^0)^T \gradg \mbf{u}^0 + \mbf{u}^1 (\mbf{u}^0 \cdot \boldsymbol{\nu}) + \mathcal{O}(d) = (\mbf{u}^0 \cdot \gradg ) \mbf{u}^0 + V_N\mbf{u}^1 + \mathcal{O}(d). \]
	Hence using this expression for the advection with \eqref{thinfilmpf6} one finds
	\begin{align}
		\rho\left(\ddt{\mbf{u}^\gamma} + (\mbf{u}^\gamma \cdot \nabla) \mbf{u}^\gamma \right) = \rho \left( \normdev \mbf{u}^0 + (\mbf{u}^0 \cdot \gradg ) \mbf{u}^0 \right)+ \mathcal{O}(d) = \rho\matdev \mbf{u}^0 + \mathcal{O}(d), \label{thinfilmpf7}
	\end{align}
	where here we understand $\matdev$ to mean the derivative along the velocity field given by $V_N \boldsymbol{\nu} + \mbf{u}^0$ - that is the tangential velocity is only considered up to the zeroth order term.
	This point is made clearer by the notation of \cite{miura2018singular} where one would write this as $\partial^{\bullet}_{\mbf{u}^0}$.
	Now by combining \eqref{thinfilmpf2}, \eqref{thinfilmpf4}, \eqref{thinfilmpf5}, \eqref{thinfilmpf7} in \eqref{bulkeqn1} one obtains
	\[ \rho \matdev \mbf{u}^0 = -\gradg p^0 + p^1 \boldsymbol{\nu} + \gradg \cdot (2 \eta(\varphi^0) \mbb{E}(\mbf{u}^0) ) - \varepsilon \gradg \cdot (\gradg \varphi^0 \otimes \gradg \varphi^0) + \mathcal{O}(d), \]
	and as the functions $(\varphi^0, \mu^0, \mbf{u}^0, p^0, p^1)$ are independent of $d$ one obtains \eqref{thinfilmlimit1} from the zeroth order terms.\\
	
	It remains to show that \eqref{bulkeqn3},\eqref{bulkeqn4} give \eqref{thinfilmlimit3}, \eqref{thinfilmlimit4} at zeroth order.
	This is largely the same, so we skim the details.
	The advective term in \eqref{bulkeqn3} is the main point of interest here.
	Using the expansions for $\varphi^\gamma, \mbf{u}^\gamma$ and Lemma \ref{miura differentiation} one finds
	\begin{align}
		\mbf{u}^\gamma \cdot \nabla \varphi^\gamma = \mbf{u}^0 \cdot \gradg \varphi^0 + \mbf{u}^0 \cdot \boldsymbol{\nu} \varphi^1 + \mathcal{O}(d). \label{thinfilmpf8}
	\end{align}
	For the time derivative one uses Lemma \ref{miura differentiation} as before so that
	\[ \ddt{\varphi^\gamma}(x,t) = \normdev \varphi^0(\pi,t) - V_N(\pi,t) \varphi^1(\pi,t) + \mathcal{O}(d). \]
	We combine this with \eqref{thinfilmpf8}, recalling that $\mbf{u}^0 \cdot \boldsymbol{\nu} = V_N$, so that
	\begin{align}
		\ddt{\varphi^\gamma} + \mbf{u}^\gamma \cdot \nabla \varphi^\gamma = \normdev \varphi^0 + \mbf{u}^0 \cdot \gradg \varphi^0 + \mathcal{O}(d) = \matdev \varphi^0 + \mathcal{O}(d). \label{thinfilmpf9}
	\end{align}
	It remains to consider the term $\nabla \cdot (M(\varphi^\gamma) \nabla \mu)$, which is dealt with almost identically to the term $\nabla \cdot (2 \eta(\varphi^\gamma) \mbb{E}_{\Omega}(\mbf{u}^\gamma))$.
	From Lemma \ref{miura differentiation} we see
	\[ \nabla \mu^\gamma(x,t) = \gradg \mu^0(\pi,t) + \mu^1(\pi,t) \boldsymbol{\nu}(\pi,t) + d\left( \gradg \mu^1(\pi,t) + 2 \mu^2(\pi,t) \boldsymbol{\nu}(\pi,t) \right) + \mathcal{O}(d^2), \]
	and by Taylor's theorem (assuming $M(\cdot)$ is sufficiently smooth)
	\[M(\varphi^\gamma(x,t)) = M(\varphi^0(\pi,t)) + dM'(\varphi^*(\pi,t)) \varphi^1(\pi, t) + \mathcal{O}(d^2),\]
	where we have abused notation and reused the $\varphi^*$ for the intermediate point arising in the remainder - which is different from the $\varphi^*$ before, but this does not matter.
	Combining these we find
	\begin{align*}
		M(\varphi^\gamma) \nabla \mu^\gamma &= M(\varphi^0) \gradg \mu^0 + M(\varphi^0) \mu^1 \boldsymbol{\nu}\\
		&+ d \left( M'(\varphi^*)\varphi^1\gradg \mu^0 + M'(\varphi^*)\varphi^1 \mu^1 \boldsymbol{\nu} + M(\varphi^0) \gradg \mu^1 + 2 M(\varphi^0)\mu^2 \boldsymbol{\nu} \right)\\
		&+ \mathcal{O}(d^2).
	\end{align*}
	Considering the gradient of this expression one finds
	\begin{multline*}
		\nabla(M(\varphi^\gamma) \nabla \mu^\gamma) = \gradg (M(\varphi^0) \gradg \mu^0) + \gradg(M(\varphi^0)\mu^1) \otimes \boldsymbol{\nu} + M(\varphi^0)\mu^1 \mbb{H} + M'(\varphi^*)\varphi^1\boldsymbol{\nu} \otimes \gradg \mu^0\\
		+ M'(\varphi^*)\varphi^1 \mu^1 \boldsymbol{\nu} \otimes\boldsymbol{\nu} + M(\varphi^0) \boldsymbol{\nu} \otimes\gradg \mu^1 + 2 M(\varphi^0)\mu^2 \boldsymbol{\nu} \otimes\boldsymbol{\nu} + \mathcal{O}(d), 
	\end{multline*}
	and taking the trace of the above yields
	\begin{multline*}
		\nabla \cdot (M(\varphi^\gamma) \nabla \mu^\gamma) = \divg (M(\varphi^0) \gradg \mu^0) + \gradg(M(\varphi^0)\mu^1) \cdot \boldsymbol{\nu} + M(\varphi^0)\mu^1 H + M'(\varphi^*)\varphi^1\boldsymbol{\nu} \cdot \gradg \mu^0\\
		+ M'(\varphi^*)\varphi^1 \mu^1 \boldsymbol{\nu} \cdot \boldsymbol{\nu} + M(\varphi^0) \boldsymbol{\nu} \cdot \gradg \mu^1 + 2 M(\varphi^0)\mu^2 \boldsymbol{\nu} \cdot \boldsymbol{\nu} + \mathcal{O}(d),
	\end{multline*}
	which simplifies to
	\begin{align}
		\nabla \cdot (M(\varphi^\gamma) \nabla \mu^\gamma) = \divg (M(\varphi^0) \gradg \mu^0) + M(\varphi^0)\mu^1 H + M'(\varphi^*)\varphi^1 \mu^1 + 2 M(\varphi^0)\mu^2 + \mathcal{O}(d). \label{thinfilmpf10}
	\end{align}
	Now from the boundary condition \eqref{bulkBC4} one finds
	\[ 0 = \nabla \mu^\gamma \cdot \boldsymbol{\nu}^\gamma = \gradg \mu^0 \cdot \boldsymbol{\nu} + \mu^1 \boldsymbol{\nu} \cdot \boldsymbol{\nu} \pm \gamma \left( \gradg \mu^1 \cdot \boldsymbol{\nu} + 2\mu^2 \boldsymbol{\nu} \cdot \boldsymbol{\nu} \right) + \mathcal{O}(\gamma^2). \]
	Thus equating terms of the same order in $\gamma$ one finds
	\begin{gather*}
		\gradg \mu^0 \cdot \boldsymbol{\nu} + \mu^1 = 0,\\
		\gradg \mu^1 \cdot \boldsymbol{\nu} + 2\mu^2 = 0,
	\end{gather*}
	from which one concludes $\mu^1 = 0 = \mu^2$ and hence \eqref{thinfilmpf10} becomes
	\begin{align}
		\nabla \cdot (M(\varphi^\gamma) \nabla \mu^\gamma) = \divg (M(\varphi^0) \gradg \mu^0) + \mathcal{O}(d). \label{thinfilmpf11}
	\end{align}
	By combining \eqref{thinfilmpf9}, \eqref{thinfilmpf11} in \eqref{bulkeqn3} one obtains \eqref{thinfilmlimit3} by considering the zeroth order terms.\\
	
	We skip the derivation of \eqref{thinfilmlimit4} from \eqref{bulkeqn4} and \eqref{bulkBC3} as it follows the same arguments as we have used so far.
	The only point worth mentioning is that we assume the potential $F(\cdot)$ is $C^2$ so that one may indeed use Taylor's theorem as we have for previous terms.
\end{proof}
\begin{remark}
	\label{topology change}
	\begin{enumerate}
		\item The key difference between the systems \eqref{NSCH1}-\eqref{NSCH4} and \eqref{thinfilmlimit1}-\eqref{thinfilmlimit4} is that the latter system has two Lagrange multipliers to be determined, $p^0$ which is understood as enforcing the divergence free constraint, and $p^1$ which is understood as enforcing the normal velocity constraint.
		This difference is the same as observed in a comparison of various derivations of the evolving surface Navier-Stokes equations in \cite{brandner2022derivations}.
		Moreover, this only occurs in the normal direction and has no bearing on our following analysis of the tangential system \eqref{TNSCH1}-\eqref{TNSCH4}.
		\item As remarked in \cite{miura2018singular} the equations \eqref{bulkeqn2} and \eqref{thinfilmlimit2} imply\footnote{Here we are using $|\Omega_{\gamma}(t)|$ to denote the $\mathscr{L}^3$ Lebesgue measure of $\Omega_\gamma(t)$, and $|\Gamma(t)|$ the $\mathcal{H}^2$ Hausdorff measure of $\Gamma(t)$.} that
		\begin{gather*}
			\frac{d}{dt} |\Omega_{\gamma}(t)| = 0,\\
			\frac{d}{dt} |\Gamma(t)| = 0,
		\end{gather*}
		respectively.
		However, one may also use a corollary of the coarea formula (see  \cite{evans2015measure} for example) proven in \cite{miura2018singular}, Appendix A, to see that
		\[ |\Omega_\gamma(t)| = \int_{\Omega_\gamma(t)} 1= \int_{-\gamma}^{\gamma} \int_{\Gamma(t)} J(t;x, r),  \]
		where $J(t;x,r)$ is a corresponding Jacobian of the form
		\[ J(t;x,r) = 1 - rH(t;x) + r^2 K(t;x), \]
		where $K(t;x) = \det(\mbb{H}(t;x))$ is the Gaussian curvature of $\Gamma(t)$, we refer to \cite{miura2018singular} for details.
		From this one finds
		\[ |\Omega_\gamma(t)| = 2\gamma |\Gamma(t)| + \frac{2\gamma^3}{3} \int_{\Gamma(t)} K(t;x), \]
		where one finds the mean curvature term vanishes by using the divergence theorem and the fact that $\Gamma(t)$ is closed.
		Hence assuming that $\frac{d}{dt}|\Gamma(t)| = 0$ is not sufficient for the existence of a solution to \eqref{thinfilmlimit1}-\eqref{thinfilmlimit4}, one also requires
		\[ \frac{d}{dt} \int_{\Gamma(t)} K(t;x) = 0.  \]
		As seen for the evolving surface Euler, and Navier-Stokes equations in \cite{miura2018singular} this can be assured by imposing that $\Gamma(t)$ does not change its topology as by the Gauss-Bonnet theorem, see \cite{jost2008riemannian},
		\[ \int_{\Gamma(t)} K(t;x) = 2 \pi \chi(\Gamma(t)), \]
		where $\chi(\Gamma(t))$ is the Euler characteristic of $\Gamma(t)$.
	\end{enumerate}
\end{remark}

\subsection{Equivalence of the derived systems}
In this subsection we discuss the equivalence of the systems \eqref{TNSCH1}-\eqref{TNSCH4} and \eqref{thinfilmlimit1}-\eqref{thinfilmlimit4}.
Firstly, when considering \eqref{TNSCH1}-\eqref{TNSCH4}, one notes that in prescribing the normal velocity one does not have to solve \eqref{normalcomponent} but this equation must still be resolved for the normal component to be given by the prescribed velocity.
Hence there must be some normal force $F_\nu \boldsymbol{\nu}$ such that the normal velocity one would obtain from \eqref{normalcomponent} is the prescribed normal velocity $V_N$.
With this in mind the form of \eqref{NSCH1}-\eqref{NSCH4} with a prescribed normal velocity becomes
\begin{gather}
	\rho \matdev \mbf{u} = -\gradg p + pH \boldsymbol{\nu} + \gradg \cdot (2 \eta(\varphi) \mbb{E}(\mbf{u}) ) - \varepsilon \gradg \cdot (\gradg \varphi \otimes \gradg \varphi) + F_\nu \boldsymbol{\nu}, \label{NSCHequiv1}\\
	\gradg \cdot \mathbf{u} = 0,\label{NSCHequiv2}\\
	\matdev \varphi = \gradg \cdot (M(\varphi) \gradg \mu),\label{NSCHequiv3}\\
	\mu = -\varepsilon\lapg \varphi + \frac{1}{\varepsilon}F'(\varphi).\label{NSCHequiv4}
\end{gather}
Taking the normal component of \eqref{NSCHequiv1} one finds that
\begin{multline}
	F_\nu = \rho \matdev V_N + 2 \eta(\varphi)\left( \text{tr}(\mbb{H} \gradg \ut ) - V_N \text{tr}(\mbb{H}^2) \right) - \rho \ut \cdot \mbb{H} \ut + \rho \ut \cdot \gradg V_N\\
 - pH + \varepsilon \gradg \varphi \cdot \mbb{H} \gradg \varphi, \label{normalcomponent2}
\end{multline}
which one can find directly from after solving \eqref{TNSCH1}-\eqref{TNSCH4}.
This calculation is done in detail for the evolving surface Navier-Stokes equations in \cite{JanOlsReu18}.\\

Similarly, by considering the normal component of \eqref{thinfilmlimit1}-\eqref{thinfilmlimit4} one finds (up to a change of notation)
\begin{align*}
	p^1 = F_\nu + pH.
\end{align*}
Thus one finds that we may express \eqref{NSCHequiv1}-\eqref{NSCHequiv4} as
\begin{gather*}
	\rho \matdev \mbf{u} = -\gradg p + p^1 \boldsymbol{\nu} + \gradg \cdot (2 \eta(\varphi) \mbb{E}(\mbf{u}) ) - \varepsilon \gradg \cdot (\gradg \varphi \otimes \gradg \varphi),\\
	\gradg \cdot \mathbf{u} = 0,\\
	\matdev \varphi = \gradg \cdot (M(\varphi) \gradg \mu),\\
	\mu = -\varepsilon\lapg \varphi + \frac{1}{\varepsilon}F'(\varphi),
\end{gather*}
which is precisely the form of \eqref{thinfilmlimit1}-\eqref{thinfilmlimit4} from the thin film limit (up to a change of notation).
This equivalence will also hold in the presence of some tangential force, $\mbf{F}_T$.

\section{Notation, Function Spaces and Inequalities}
\label{Preliminaries}
\subsection{Notation}
Next we introduce some notation which will be used throughout.
%\begin{notation}
	For a $\mathcal{H}^2 -$measurable set, $X \subset \mathbb{R}^{3}$ and  a function $f \in L^1(X)$, we denote  
the $\mathcal{H}^2$ measure of $X$ and the mean value of $f$ on $X$ by	
	
\[|X| := \mathcal{H}^2(X), \quad \mval{f}{X} := \frac{1}{|X|} \int_X f .\]

	The components of the tangential gradient are denoted by
\[\gradg \phi = (\underline{D}_1 \phi, \underline{D}_2 \phi, \underline{D}_3 \phi ).\]
The (scalar) Sobolev spaces on $\Gamma(t)$ are defined by
\begin{gather*}
	H^{k,p}(\Gamma(t)) = \left\{ \phi \in L^p(\Gamma(t)) \mid \underline{D}_i \phi \in H^{k-1,p}(\Gamma(t)), \ i=1,2,3 \right\},
\end{gather*}
and $H^{0,p}(\Gamma(t)) := L^p(\Gamma(t))$.
We refer the reader to \cite{DecDziEll05} for further details.
We also use the following notation for tangential vector-valued Sobolev spaces,
\begin{gather*}
	\mbf{L}^p(\Gamma(t)) = \left\{\boldsymbol{\phi} \in L^p(\Gamma(t))^3 \mid \boldsymbol{\phi} \cdot \boldsymbol{\nu} = 0 \text{ almost everywhere}\right\},\\
	\mbf{H}^{k,p}(\Gamma(t)) = \left\{ \boldsymbol{\phi} \in \mbf{L}^p(\Gamma(t)) \mid \underline{D}_i \boldsymbol{\phi} \in \mbf{H}^{k-1,p}(\Gamma(t)) \ i=1,2,3 \right\},
\end{gather*}
where $\underline{D}_i \boldsymbol{\phi}$ denotes the `$i$'th column of $\gradg \boldsymbol{\phi}$, and $\mbf{H}^{0,p}(\Gamma(t)) := \mbf{L}^p(\Gamma(t))$.
As is standard, in the case $p=2$ we omit the $p$, and write $\mbf{H}^k(\Gamma(t))$.
Similarly we write $\mbf{H}^{-1}(\Gamma(t))$ for the dual space of $\mbf{H}^1(\Gamma(t))$.\\

\subsection{Pushforward map and compatible time dependent spaces}
From our assumptions we obtain the existence of a $C^3$ diffeomorphism
\[\Phi_t^n : \Gamma_0 \rightarrow \Gamma(t),\]
which is defined as $\Phi_t^n(x_0) = x(t)$, where $x(t)$ solves
\[ \frac{dx}{dt} = V_N(x(t),t) \boldsymbol{\nu}(x(t),t), \quad x(0)= x_0. \]
We denote the corresponding inverse as $\Phi_{-t}^n: \Gamma(t) \rightarrow \Gamma_0$.
We then may use the framework established in \cite{AlpCaeDju23,AlpEllSti15a}, where we use the normal pushforward map defined by $\Phi_t^n \phi = \phi \circ \Phi_t^n$, and the pullback $\Phi_{-t}^n \psi = \psi \circ \Phi_{-t}^n$, for some functions $\phi, \psi$ on $\Gamma_0$ and $\Gamma(t)$ respectively.\\

It can then be shown that we have compatibility of the pairs $(H^{k,p}(\Gamma(t)), \Phi_t^n)$ and  $(\mbf{H}^{k,p}(\Gamma(t)), \Phi_t^n)$ for $k = 0,1,2$, $p \in [1,\infty]$, in the sense of \cite{AlpCaeDju23}.
However, a known issue with the associated pushforward map is that is doesn't necessarily preserve the divergence free properties of solenoidal vector fields on $\Gamma_0$.
We remedy this by using the Piola transform as in \cite{DjuGraHer23,OlsReuZhi22}.

\subsection{The Piola transform and time differentiation}
It is clear that the differentials $D \Phi_t^n(p) : T_p \Gamma_0 \rightarrow T_{\Phi_t^n(p)} \Gamma(t)$ are invertible.
We introduce the notation $J(p,t) = \det( D \Phi_t^n(p))$, $J^{-1}(x,t) = \det( D \Phi_{-t}^n(x)) = J(t, \Phi_{-t}^n x)^{-1}$, $\mbb{D}(p,t) =  D \Phi_t^n(p) \mbb{P}(p,0)$, and $\mbb{D}^{-1}(x,t) =  D \Phi_{-t}^n(x) \mbb{P}(x,t)$.
These matrices are such that $\mbb{D}\mbb{D}^{-1} = \mbb{D}^{-1} \mbb{D} = \mbb{P}$.
One then defines the operator
\[\mbb{A}(p,t) := J^{-1}(\Phi_t^n(p), t) \mbb{D}(p,t) + \boldsymbol{\nu}(\Phi_t^n(p),t)\otimes\boldsymbol{\nu}(p,0),\]
for $p \in \Gamma_0$, $t \in [0,T]$.
One can readily observes that
\[\mbb{A}(p,t)|_{T_p\Gamma_0} : T_p\Gamma_0 \rightarrow T_{\Phi_t^n(p)} \Gamma(t), \quad \mbb{A}(p,t)|_{T_p \Gamma_0^\perp} : T_p\Gamma_0^\perp \rightarrow T_{\Phi_t^n(p)} \Gamma(t)^\perp. \]
We then define the Piola pushforward map, for a vector field $\boldsymbol{\psi}$ on $\Gamma_0$ as
\[\mathcal{P}_t \boldsymbol{\psi}(x) = \mbb{A}(\Phi_{-t}^n(x),t)\boldsymbol{\psi}(\Phi_{-t}^n(x)),\]
where it is known that for some sufficiently smooth, tangential vector field, $\boldsymbol{\psi}$, on $\Gamma_0$, then $\divg \boldsymbol{\psi} = 0$ almost everywhere $\Gamma_0$ if, and only if, $\divg \mathcal{P}_t \boldsymbol{\psi} = 0$ almost everywhere on $\Gamma(t)$.\\

One similarly defines an inverse operator $\mbb{A}^{-1}$ by
\[\mbb{A}^{-1}(x,t) := J(\Phi_{-t}^n(x), t) \mbb{D}^{-1}(x,t) + \boldsymbol{\nu}(\Phi_{-t}^n(x),0)\otimes\boldsymbol{\nu}(x,t), \]
for $x \in \Gamma(t)$, $t \in [0,T]$.
The Piola pullback is defined as one would expect, 
\[\mathcal{P}_{-t} \boldsymbol{\psi}(p) = \mbb{A}^{-1}(\Phi_{t}^n(p),t)\boldsymbol{\psi}(\Phi_{t}^n(p)),\]
for a vector field $\boldsymbol{\psi}$ on $\Gamma(t)$.
As one would expect (and hope) this is such that $\divg \boldsymbol{\psi} = 0$ almost everywhere on $\Gamma(t)$ if, and only if, $\divg \mathcal{P}_{-t} \boldsymbol{\psi} = 0$ almost everywhere on $\Gamma_0$.
We now recall the following result.
\begin{lemma}[\cite{OlsReuZhi22}, Lemma 3.1]
We have that $\mbb{D}, \mbb{A} \in C^2(\Gamma_0 \times [0,T])$, and $\mbb{D}^{-1}, \mbb{A}^{-1} \in C^2(\mathcal{G}_T)$, and are hence uniformly bounded in space and time.
\end{lemma}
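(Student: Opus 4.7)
The plan is to prove that each of the building blocks in the definitions of $\mbb{D}$, $\mbb{A}$, $\mbb{D}^{-1}$, $\mbb{A}^{-1}$ has $C^2$ regularity, and then assemble the conclusion by invoking that sums, products, tensor products and compositions of $C^2$ maps are $C^2$, together with the fact that the reciprocal of a $C^2$ function that is bounded away from zero is again $C^2$. Uniform boundedness will then follow from compactness of $\Gamma_0 \times [0,T]$ (and correspondingly of $\mathcal{G}_T$).

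First I would establish the joint $C^3$ regularity of the normal flow map $\Phi_t^n: \Gamma_0 \to \Gamma(t)$. Since $V_N \in C^3$ and $\Gamma(t)$ is a $C^3$ evolving surface, the normal field $\boldsymbol{\nu}$ admits a $C^2$ extension to a tubular neighbourhood, and hence the driving vector field $(x,t) \mapsto V_N(x,t)\boldsymbol{\nu}(x,t)$ has the regularity required for the classical ODE dependence-on-data theorem to give $\Phi_t^n \in C^3(\Gamma_0 \times [0,T])$ and $\Phi_{-t}^n \in C^3(\mathcal{G}_T)$. Differentiating once in the spatial variable yields $D\Phi_t^n, D\Phi_{-t}^n \in C^2$. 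The projection $\mbb{P}(\cdot,0)$ on $\Gamma_0$ and $\mbb{P}(\cdot,t)$ on $\Gamma(t)$ are $C^2$ because the normals themselves are $C^2$ (a $C^3$ hypersurface has a $C^2$ Gauss map). Therefore $\mbb{D}(p,t) = D\Phi_t^n(p)\, \mbb{P}(p,0)$ and $\mbb{D}^{-1}(x,t) = D\Phi_{-t}^n(x)\, \mbb{P}(x,t)$ are $C^2$ on $\Gamma_0 \times [0,T]$ and $\mathcal{G}_T$ respectively.

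Next I would handle the Jacobian factor. The determinant $J(p,t) = \det(D\Phi_t^n(p))$ is a polynomial in the entries of $D\Phi_t^n$, hence $C^2$. Because $\Phi_t^n$ is a $C^3$ diffeomorphism, $J$ is continuous and nowhere zero on the compact set $\Gamma_0 \times [0,T]$, so $J$ is bounded away from zero uniformly, and consequently $J^{-1}$ and $J^{-1} \circ \Phi_t^n$ inherit $C^2$ regularity. The composition $\boldsymbol{\nu}(\Phi_t^n(p),t)$ is $C^2$ as a composition of a $C^2$ map (the normal on $\Gamma(t)$) with the $C^3$ diffeomorphism $\Phi_t^n$, and likewise for $\boldsymbol{\nu}(\Phi_{-t}^n(x),0)$. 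Tensor products of $C^2$ vector fields are $C^2$, and thus
\[\mbb{A}(p,t) = J^{-1}(\Phi_t^n(p),t)\,\mbb{D}(p,t) + \boldsymbol{\nu}(\Phi_t^n(p),t) \otimes \boldsymbol{\nu}(p,0)\]
is $C^2$ on $\Gamma_0 \times [0,T]$; the same reasoning applied to the pullback data gives $\mbb{A}^{-1} \in C^2(\mathcal{G}_T)$.

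Finally, continuity on the compact sets $\Gamma_0 \times [0,T]$ and $\mathcal{G}_T$ (closed, bounded in $\mathbb{R}^3 \times \mathbb{R}$) yields uniform bounds on each of $\mbb{D}, \mbb{A}, \mbb{D}^{-1}, \mbb{A}^{-1}$ together with their first and second derivatives. The only step that requires any care is verifying the uniform lower bound on $|J|$; this follows because $J$ is continuous and strictly positive on the compact domain, as $\Phi_t^n$ remains a diffeomorphism for every $t \in [0,T]$ by construction from the ODE flow. With that, all remaining assertions reduce to the standard calculus of $C^2$ functions on compact sets.
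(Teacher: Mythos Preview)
The paper does not give its own proof of this lemma; it simply cites \cite{olshanskii2022tangential}, Lemma 3.1, and remarks afterwards that the regularity here is one order higher than in that reference because the present paper assumes $\Phi_t^n$ is a $C^3$-diffeomorphism rather than $C^2$. Your proposal supplies exactly the kind of argument one would expect behind such a citation: regularity of the flow from ODE dependence-on-data, $C^2$ regularity of $\mbb{P}$ and $\boldsymbol{\nu}$ from the $C^3$ surface assumption, closure of $C^2$ under products, compositions, determinants and reciprocals bounded away from zero, and finally uniform bounds from compactness of $\Gamma_0\times[0,T]$ and $\mathcal{G}_T$. This is correct and is the standard route; there is nothing further to compare.
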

We note that we have improved regularity compared to the result in \cite{OlsReuZhi22}, as we assume $\Phi_t^n$ are $C^3$-diffeomorphisms instead of $C^2$.\\

This result is then used to show compatibility, in the sense of \cite{AlpEllSti15a}, of the pairs $(\mbf{H}^k(t), \mathcal{P}_t)$ ($k=0,1,2$), and the divergence free space $(\divfree{t}, \mathcal{P}_t)$, which we discuss later.
We refer the reader to \cite{OlsReuZhi22} for details.
With this compatibility of spaces in hand one can refer to a derivative associated to the Piola transform pushforward/pullback maps, as in the sense of \cite{AlpEllSti15a}, defined by
\[\pioladev \boldsymbol{\psi} = \mathcal{P}_t \left( \frac{d}{dt} \mathcal{P}_{-t} \boldsymbol{\psi} \right).\]
We refer to this as the strong Piola derivative.
The utility of this choice of derivative is that for a sufficiently smooth vector field, $\boldsymbol{\psi}$, on $\mathcal{G}_T$ we have
\[\boldsymbol{\psi}(t) \cdot \boldsymbol{\nu} = 0\Rightarrow \pioladev \boldsymbol{\psi}\cdot \boldsymbol{\nu} = 0, \qquad \divg \boldsymbol{\psi}(t) = 0 \Rightarrow \divg \pioladev \boldsymbol{\psi}(t) = 0.\]
The corresponding weak Piola derivative is defined in the same way as the weak material/normal derivative (see \cite{AlpEllSti15a}).
The normal derivative, $\normdev$, and the Piola derivative, $\pioladev$, are related through the following.
\begin{lemma}[\cite{OlsReuZhi22}, Lemma 3.6]
	For sufficiently smooth $\boldsymbol{\psi}$ we have
	\begin{gather}
	\normdev \boldsymbol{\psi} = \pioladev \boldsymbol{\psi} - \mbb{A}(\normdev \mbb{A}^{-1}) \boldsymbol{\psi}, \label{derivatives1}\\
	\mbb{P} \normdev \boldsymbol{\psi} = \pioladev \boldsymbol{\psi} - \mbb{A}\mbb{P}(\normdev \mbb{A}^{-1}) \boldsymbol{\psi}. \label{derivatives2}
	\end{gather}
\end{lemma}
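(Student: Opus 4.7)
My plan is to obtain \eqref{derivatives1} directly from the definition $\pioladev\boldsymbol{\psi} = \mathcal{P}_t\bigl(\tfrac{d}{dt}\mathcal{P}_{-t}\boldsymbol{\psi}\bigr)$ by the product rule, and then to derive \eqref{derivatives2} by tangentially projecting \eqref{derivatives1} and exploiting the explicit structure of $\mbb{A}$.

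For \eqref{derivatives1}, I would first unpack $\mathcal{P}_{-t}\boldsymbol{\psi}(p) = \mbb{A}^{-1}(\Phi_t^n(p),t)\boldsymbol{\psi}(\Phi_t^n(p),t)$ for $p \in \Gamma_0$. The flow $t \mapsto \Phi_t^n(p)$ is purely in the normal direction with speed $V_N$, so the ordinary derivative $\tfrac{d}{dt}$ at fixed $p$ corresponds, after composition with $\Phi_t^n$, to the normal derivative $\normdev$ on $\mathcal{G}_T$. Applying the product rule to $\mbb{A}^{-1}\boldsymbol{\psi}$ and then pushing forward by $\mathcal{P}_t$ (which pointwise amounts to premultiplication by $\mbb{A}$ at the base point $p = \Phi_{-t}^n(x)$) produces
\[\pioladev\boldsymbol{\psi} = \mbb{A}(\normdev\mbb{A}^{-1})\boldsymbol{\psi} + \mbb{A}\mbb{A}^{-1}\normdev\boldsymbol{\psi}.\]
Using $\mbb{A}\mbb{A}^{-1} = \mbb{I}$ and rearranging then yields \eqref{derivatives1}.

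For \eqref{derivatives2} I would apply $\mbb{P}$ to \eqref{derivatives1}. Since $\boldsymbol{\psi}\cdot\boldsymbol{\nu}=0$ on $\mathcal{D}$ and $\pioladev$ preserves tangentiality (as noted just before the lemma), we have $\mbb{P}\pioladev\boldsymbol{\psi} = \pioladev\boldsymbol{\psi}$. What remains is to commute $\mbb{P}$ past $\mbb{A}$; this follows from the explicit block-diagonal form $\mbb{A} = J^{-1}\mbb{D} + \boldsymbol{\nu}(\Phi_t^n(\cdot),t)\otimes\boldsymbol{\nu}(\cdot,0)$, in which the first summand has image in $T\Gamma(t)$ (and so is unaffected by $\mbb{P}$ acting on the left) while the rank-one second summand sends normals at $p$ to normals at $\Phi_t^n(p)$. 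Together these give $\mbb{P}\mbb{A} = \mbb{A}\mbb{P}$ when the projections are read at matching base points, and substituting delivers \eqref{derivatives2}.

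The main obstacle I anticipate is purely the bookkeeping: $\mbb{A}$ is a function on $\Gamma_0 \times [0,T]$ while $\mbb{A}^{-1}, \boldsymbol{\psi}$ and $\mbb{P}$ naturally live on $\mathcal{G}_T$, so one must track evaluation points carefully in order to ensure $\mbb{A}\mbb{A}^{-1} = \mbb{I}$ and the commutation $\mbb{A}\mbb{P} = \mbb{P}\mbb{A}$ are applied at the correct pairs of points. Once this convention is pinned down, both identities collapse to a one-line product-rule computation followed by a tangential projection.
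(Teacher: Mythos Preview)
The paper does not supply its own proof of this lemma; it is quoted from \cite{olshanskii2022tangential} without argument, so there is nothing in the present paper to compare against. Your outline is the standard derivation and is correct: the product rule applied to $\mathcal{P}_{-t}\boldsymbol{\psi}=\mbb{A}^{-1}\boldsymbol{\psi}$ along the normal flow, followed by left multiplication by $\mbb{A}$ and the identity $\mbb{A}\mbb{A}^{-1}=\mbb{I}$, gives \eqref{derivatives1}; tangential projection together with the block structure of $\mbb{A}$ (yielding $\mbb{P}(\Phi_t^n(p),t)\,\mbb{A}(p,t)=\mbb{A}(p,t)\,\mbb{P}(p,0)=J^{-1}\mbb{D}$) then gives \eqref{derivatives2}. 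Your caveat about tracking evaluation points is exactly the only subtlety, and you have identified it correctly.
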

From here on we define $\bar{\mbb{A}} := \mbb{A}\mbb{P}(\normdev \mbb{A}^{-1}) \in C^1(\mathcal{G}_T)$.
We note that from this lemma we may uncontroversially consider either $\pioladev$ or $\normdev$ when we discuss bounds on the derivative of a vector-valued function.\\

From this compatibility of spaces one may now define the evolving Bochner spaces, $L^p_X$, for $p \in [1,\infty]$ and a family of Banach spaces.
We denote a pushforward/pullback map as ${\Phi}_{-t}$ and $\Phi_t$ respectively, and for our purposes these will be either $\Phi_{-t}^n$ and $\Phi_t^n$ or $\mathcal{P}_{-t}$ and $\mathcal{P}_t$.
The evolving Bochner space $L^p_X$ is
\[L^p_X = \left\{ u : [0,T] \rightarrow \bigcup_{t \in [0,T]} X(t) \times \{t\}, t \mapsto (\bar{u}(t), t) \mid \Phi_{-t} \bar{u} \in L^p(0,T;X(0)) \right\},\]
where we identify $u(t)$ with $\bar{u}(t)$.
This is a Banach space when equipped with norm
\[ \| u \|_{L^p_X} := \begin{cases}
    \left( \int_0^T \| u(t) \|_{X(t)}^p \, dt \right)^{\frac{1}{p}}, & p \in [1,\infty),\\
    \esssup_{t \in [0,T]} \|u(t) \|_{X(t)}, & p = \infty,
\end{cases} \]
and a Hilbert space for $p = 2$ and $X(t)$ a family of Hilbert spaces.\\

For a family of Hilbert spaces, $X$, we define the evolving Sobolev-Bochner space $H^{1}_{X'}$ to be
\[ H^1_{X'} = \left\{ u \in L^2_{X} \mid \matdev u \in L^2_{X'} \right\}, \]
where $\matdev u$ is the weak material derivative of $u$ associated with the maps $\Phi_{-t}, \Phi_t$.
As in \cite{AlpCaeDju23,AlpEllSti15a} we have identified $L^2_{X'} \cong (L^2_X)'$.
We refer the reader to \cite{AlpCaeDju23, AlpEllSti15a} for further details and properties of these spaces.

\subsection{Preliminary rewriting of the system}

In order to set up the weak formulation we rewrite the system \eqref{TNSCH1}-\eqref{TNSCH4} in such a way that the unknown $\ut$ is divergence free and we may eliminate the pressure.
To do this we consider the unique solution, $\Psi$, of the elliptic PDE
\[-\lapg \Psi(t) = H(t)V_N(t),\]
on $\Gamma(t)$, subject to the constraint $\mval{\Psi}{\Gamma(t)}=0$, for all $t \in [0,T]$.
Note that this is well-defined as $\int_{\Gamma(t)} HV_N = 0$ for all $t \in [0,T]$.
We then define $\widetilde{\ut} = \gradg \Psi$, from which we see that $\widetilde{\ut} \in \mbf{H}^{2,p}(\Gamma(t))$ for all $p \in [1,\infty)$, and $-\gradg \cdot \widetilde{\ut} = HV_N$.
Then defining $\hatut := \ut - \widetilde{\ut}$ we find that

\begin{gather}
	\begin{split}
	\mathbb{P} \normdev \hatut + (\gradg \hatut)\hatut + V_N \mathbb{H} \hatut - \frac{1}{2} \gradg V_N^2 = -\gradg \tilde{p} + \mbb{P} \gradg \cdot (2 \eta(\varphi) \mbb{E}(\hatut) ) + \mu \gradg \varphi + \mbf{F}_T\\
	- \mathfrak{D}_1(\varphi, \ut, \widetilde{\ut})
	\end{split} \label{TNSCH5},\\
	\gradg \cdot\hatut = 0 \label{TNSCH6},\\
	\normdev \varphi + \gradg \varphi \cdot \hatut = \lapg \mu - \mathfrak{D}_2(\varphi, \widetilde{\ut}),\label{TNSCH7}\\
	\mu = -\varepsilon\lapg \varphi +  \frac{1}{\varepsilon}F'(\varphi),\label{TNSCH8}
\end{gather}
where
\begin{multline*}
	\mathfrak{D}_1(\varphi, \ut, \widetilde{\ut}) = \mbb{P} \normdev \widetilde{\ut} +(\gradg \widetilde{\ut})\widetilde{\ut} + \gradg(\widetilde{\ut}) \hatut + \gradg(\hatut)\widetilde{\ut} + V_N \mbb{H} \widetilde{\ut}\\- \mbb{P} \divg (2 \eta(\varphi)\mbb{E}(\widetilde{\ut})),
\end{multline*}
and
\begin{align*}
	\mathfrak{D}_2(\varphi, \widetilde{\ut}) = \gradg \varphi \cdot \widetilde{\ut}.
\end{align*}
This suggests a new ``body force'', $\mbf{B}$, which is defined as
\[\mbf{B} = \mbf{F}_T - (\gradg \widetilde{\ut})\widetilde{\ut} - \mbb{P} \normdev \widetilde{\ut} - V_N \mbb{H} \widetilde{\ut}.\]
From Appendix \ref{evolvinglaplace} we know $\widetilde{\ut} \in C^0_{\mbf{H}^{2,p}}\cap C^1_{\mbf{L}^p}$, for all $p \in [1,\infty)$, and so for $\mbf{F}_T \in L^2_{\mbf{L}^2}$ one can readily show that $\mbf{B} \in L^2_{\mbf{L}^2}$.\\

The above reformulation is formal, but for sufficiently smooth $\widetilde{\ut}$ we find that this holds in a weak setting.
We show the necessary regularity properties of $\Psi$ (and $\widetilde{\ut}$) in Appendix \ref{evolvinglaplace}, and will discuss this later.
From here on we will now denote $\hatut$ as $\ut$ and treat this as the unknown velocity.
This formulation allows us to work in the space of divergence free test functions, as is typical in the analysis of the Navier-Stokes equations.
As such we introduce some notation for a suitable space of divergence free functions,
\[\mbf{V}_\sigma(t) := \left\{ \boldsymbol{\phi} \in \mbf{H}^1(\Gamma(t)) \mid \divg \boldsymbol{\phi} = 0 \right\}.\]
Similarly we define $\mbf{H}_\sigma(t)$ to be the following closure in the $\|\cdot\|_{\mbf{L}^2(\Gamma(t))}$ norm,
\[\mbf{H}_\sigma(t) := \left\{ \boldsymbol{\phi} \in C^1(\Gamma(t))^3 \mid \divg \boldsymbol{\phi} = 0, \boldsymbol{\phi}\cdot \boldsymbol{\nu} = 0 \right\}^{\|\cdot\|_{\mbf{L}^2(\Gamma(t))}}.\]
Moreover, we have compact, dense embeddings
\[ \divfree{t} {\hookrightarrow} \mbf{H}_\sigma(t) {\hookrightarrow} \divfree{t}'.\]

The appropriate weak formulation follows from multiplying by a sufficiently smooth, solenoidal test function, $\boldsymbol{\phi}$, in \eqref{TNSCH5} and a sufficiently smooth test function, $\phi$, in \eqref{TNSCH7}, \eqref{TNSCH8} and integrate over $\Gamma(t)$.
This yields \eqref{weakTNSCH1}-\eqref{weakTNSCH3} below.
Notice now that, by using the divergence theorem, one finds the pressure term vanishes as
 \[0 = \int_{\Gamma(t)} \divg (\tilde{p} \boldsymbol{\phi}) = \int_{\Gamma(t)} \gradg \tilde{p} \cdot \boldsymbol{\phi} + \int_{\Gamma({t})} \tilde{p} \divg \boldsymbol{\phi}.\]

\subsection{Some bilinear and trilinear forms}
\label{bilinear forms}
Here we introduce some bilinear/trilinear forms to be used in our weak formulation later.
\begin{gather*}
	m(t;\phi, \psi) = \int_{\Gamma(t)} \phi \psi,\\
	\mbf{m}(t;\boldsymbol{\phi}, \boldsymbol{\psi}) = \int_{\Gamma(t)} \boldsymbol{\phi}\cdot \boldsymbol{\psi},\\
	m_*(t;\Lambda, \phi) = \langle \Lambda, \phi \rangle_{H^{-1}(\Gamma(t), H^1(\Gamma(t)))},\\
	\mbf{m}_*(t;\boldsymbol{\Lambda}, \boldsymbol{\phi}) = \langle \boldsymbol{\Lambda}, \boldsymbol{\phi} \rangle_{\divfree{t}', \divfree{t}},\\
	a(t;\phi, \psi) = \int_{\Gamma(t)} \gradg \phi \cdot \gradg \psi,\\
	\mbf{a}(t;\boldsymbol{\phi}, \boldsymbol{\psi}) = 2\int_{\Gamma(t)} \mbb{E}(\boldsymbol{\phi}): \mbb{E}(\boldsymbol{\psi}),\\
	\hat{\mbf{a}}(t;\phi, \boldsymbol{\psi},\boldsymbol{\chi}) = 2\int_{\Gamma(t)} \phi \mbb{E}(\boldsymbol{\psi}): \mbb{E}(\boldsymbol{\chi}),\\
	\mbf{c}_1(t;\boldsymbol{\phi},\boldsymbol{\psi},\boldsymbol{\chi}) = \int_{\Gamma(t)} (\gradg \boldsymbol{\phi}) \boldsymbol{\psi} \cdot \boldsymbol{\chi},\\
	\mbf{c}_2(t;\phi, \psi, \boldsymbol{\chi}) = \int_{\Gamma(t)} \phi \gradg \psi \cdot \boldsymbol{\chi},\\
	\mbf{l}(t;\boldsymbol{\phi}, \boldsymbol{\psi}) = \mbf{m}(t;V_N \mbb{H}\boldsymbol{\phi}, \boldsymbol{\psi}),\\
	\mbf{d}_1(t;\boldsymbol{\phi}, \boldsymbol{\psi}) = \mbf{c}_1(t;\boldsymbol{\phi}, \widetilde{\ut}, \boldsymbol{\psi}) + \mbf{c}_1(t;\widetilde{\ut},\boldsymbol{\phi}, \boldsymbol{\psi}),\\
	\mbf{d}_2(t;\phi, \boldsymbol{\psi}) = \hat{\mbf{a}}(t;\phi, \boldsymbol{\widetilde{\ut}},\boldsymbol{\psi}),
\end{gather*}
for sufficiently smooth scalar functions $\phi, \psi$, vector functions $\boldsymbol{\phi}, \boldsymbol{\psi}, \boldsymbol{\chi}$, and linear functionals $\Lambda \in H^{-1}(\Gamma(t)), \boldsymbol{\Lambda} \in \divfree{t}'$.
We will omit the $t$ argument throughout, as above. We note the following antisymmetry properties of the trilinear forms $\mbf{c}_1, \mbf{c}_2$, which one can readily verify by using the divergence theorem,
\begin{gather*}
	\mbf{c}_1(\boldsymbol{\phi},\boldsymbol{\phi},\boldsymbol{\chi}) = -\mbf{c}_1(\boldsymbol{\chi},\boldsymbol{\phi},\boldsymbol{\phi}),\\
	\mbf{c}_1(\boldsymbol{\phi},\boldsymbol{\phi},\boldsymbol{\phi}) =0,\\
	\mbf{c}_2(\phi, \psi, \boldsymbol{\chi}) = -\mbf{c}_2(\psi, \phi, \boldsymbol{\chi}),
\end{gather*}
for $\phi, \psi \in H^1(\Gamma(t))$ and $\boldsymbol{\phi}, \boldsymbol{\chi} \in \divfree{t}$.
We will use these throughout.\\

We relate some of these bilinear forms to the normal derivatives using the transport theorem.
\begin{lemma}[Transport theorem] \label{transport theorem}
	\begin{enumerate}
		\item 	Let $\phi, \psi \in H^1_{H^{-1}} \cap L^2_{L^2}$, then
		\[\frac{d}{dt} m(\phi, \psi) = m_*(\normdev \phi, \psi) + m_*(\normdev \psi, \phi) + m(\phi, \psi HV_N).\]
		Moreover, if we have that $\phi, \psi \in H^1_{H^1}$ then
		\[\frac{d}{dt} a(\phi, \psi) =a(\normdev \phi, \psi) + a(\normdev \phi, \psi) + b(\phi, \psi),\]
		where 
		\[ b(t;\phi, \psi) := \int_{\Gamma(t)} V_N(H\mbb{I} - 2\mbb{H})\gradg \phi \cdot \gradg \psi.\]
		\item 	Let $\boldsymbol{\phi}, \boldsymbol{\psi} \in H^1_{\mbf{V}_\sigma'} \cap L^2_{\mbf{V}_\sigma}$, then
		\[\frac{d}{dt} \mbf{m}(\boldsymbol{\phi}, \boldsymbol{\psi}) = \mbf{m}_*(\normdev \boldsymbol{\phi}, \boldsymbol{\psi}) + \mbf{m}_*(\normdev \boldsymbol{\psi}, \boldsymbol{\phi}) + \mbf{m}(\boldsymbol{\phi}, \boldsymbol{\psi} HV_N).\]
		Moreover, if we have that $\boldsymbol{\phi}, \boldsymbol{\psi} \in H^1_{\mbf{V}_\sigma}$ then
		\[\frac{d}{dt} \mbf{a}(\boldsymbol{\phi}, \boldsymbol{\psi}) =\mbf{a}(\normdev \boldsymbol{\phi}, \boldsymbol{\psi}) + \mbf{a}(\boldsymbol{\phi}, \normdev \boldsymbol{\psi}) + \mbf{b}(\boldsymbol{\phi}, \boldsymbol{\psi}),\]
		where  $\mbf{b}(t;\cdot,\cdot)$, is a uniformly bounded in $t$, bilinear form $\mbf{H}^1(\Gamma(t)) \times \mbf{H}^1(\Gamma(t)) \rightarrow \mbb{R}$.
	\end{enumerate}

\end{lemma}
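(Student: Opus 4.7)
The plan is to reduce everything to the classical Reynolds transport identity
\[\frac{d}{dt}\int_{\Gamma(t)} f = \int_{\Gamma(t)} \normdev f + f\, HV_N,\]
valid for sufficiently smooth $f$, and then extend to the stated weak regularity classes by density.

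\textbf{Scalar $L^2$ identity.} For smooth $\phi,\psi$, applying Reynolds' theorem to $f=\phi\psi$ and using the Leibniz rule $\normdev(\phi\psi)=\phi\normdev\psi+\psi\normdev\phi$ immediately yields
\[\frac{d}{dt}m(\phi,\psi) = m(\normdev\phi,\psi) + m(\phi,\normdev\psi) + m(\phi,\psi HV_N).\]
To extend to $\phi,\psi\in H^1_{H^{-1}}\cap L^2_{L^2}$, I would use the normal pushforward $\Phi_t^n$ to pull back to the reference surface $\Gamma_0$ (where compatibility of $(H^{k,p}(\Gamma(t)),\Phi_t^n)$ was recorded earlier), approximate in the pullback space by smooth functions, pass to the limit in the smooth identity, and identify the resulting boundary pairings with $m_*(\normdev\phi,\psi)$ and $m_*(\normdev\psi,\phi)$ using the duality between $H^{-1}(\Gamma(t))$ and $H^1(\Gamma(t))$.

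\textbf{Scalar $H^1$ identity.} Now take $f=\gradg\phi\cdot\gradg\psi$. Reynolds gives
\[\frac{d}{dt}a(\phi,\psi)=\int_{\Gamma(t)}\normdev(\gradg\phi\cdot\gradg\psi)+HV_N\,\gradg\phi\cdot\gradg\psi.\]
The crucial step is the commutator identity for tangential gradients and the normal time derivative: for a smooth scalar $\phi$,
\[\normdev\gradg\phi=\gradg(\normdev\phi)-V_N\mbb{H}\gradg\phi-(\gradg V_N\cdot\gradg\phi)\boldsymbol{\nu}.\]
Since $\gradg\psi$ is tangential, the $\boldsymbol{\nu}$-component drops out when we pair with $\gradg\psi$. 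Using symmetry of $\mbb{H}$ and summing the $\phi,\psi$ contributions yields
\[\normdev(\gradg\phi\cdot\gradg\psi)=\gradg(\normdev\phi)\cdot\gradg\psi+\gradg\phi\cdot\gradg(\normdev\psi)-2V_N\,\mbb{H}\gradg\phi\cdot\gradg\psi,\]
and combining with the $HV_N$ term produces exactly $b(\phi,\psi)=\int V_N(H\mbb{I}-2\mbb{H})\gradg\phi\cdot\gradg\psi$. The extension to $\phi,\psi\in H^1_{H^1}$ is again by smooth approximation through $\Phi_t^n$.

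\textbf{Vector versions.} For $\mbf{m}$, the proof is line-for-line the scalar case applied componentwise, except that now the pullback is done by the Piola map and the duality pairing is with $\divfree{t}'$; here I would invoke the Piola-derivative relations (\ref{derivatives1})--(\ref{derivatives2}) to switch between $\normdev$ and $\pioladev$ when approximating divergence-free vector fields by smooth divergence-free vector fields. For $\mbf{a}$, the same Reynolds-plus-commutator strategy works: one derives the tangential-vector analogue of the commutator formula for $\normdev\gradg\mbf{v}$ (differentiating the extension and projecting), observes that normal components drop out when paired with the tangential quantity $\mbb{E}(\boldsymbol{\psi})$ or $\gradg\boldsymbol{\psi}$, and collects the geometric terms into $\mbf{b}$.

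\textbf{Main obstacle.} The routine part is the smooth calculation; the technical core is making the density/approximation step rigorous in the stated weak classes. In particular for the vector $L^2$ identity, the function $\boldsymbol{\psi}$ lives only in $\divfree{t}$, so smoothing must preserve both tangentiality and the divergence-free constraint — this is where the compatibility of $(\divfree{t},\mathcal{P}_t)$ from the Piola framework is essential, since it allows approximation by smooth solenoidal fields on $\Gamma_0$ and transfer back via $\mathcal{P}_t$ without breaking the structure needed to interpret the pairings $\mbf{m}_*(\normdev\boldsymbol{\phi},\boldsymbol{\psi})$.
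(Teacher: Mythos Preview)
Your proposal is correct and follows essentially the same route as the paper: the paper does not give a full proof but merely records the commutator identity $\normdev \gradg \boldsymbol{\phi} = \gradg (\normdev \boldsymbol{\phi}) - (\gradg (V_N \boldsymbol{\nu}))^T \gradg \boldsymbol{\phi}$ together with $\gradg(V_N\boldsymbol{\nu})=V_N\mbb{H}+\gradg V_N\otimes\boldsymbol{\nu}$ and the vanishing of the normal contributions, which is exactly the computation you carry out. Your added discussion of the density step via the $\Phi_t^n$ and Piola compatibility is the standard way to make this rigorous in the weak classes and is consistent with the framework the paper cites from \cite{AlpEllSti15a,AlpCaeDju23}.
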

The relevant form for the bilinear form $\mbf{b}$ can be deduced noting that
\begin{gather*}
    \normdev \gradg \boldsymbol{\phi} = \normdev (\mbb{P} \nabla\boldsymbol{\phi}^e \mbb{P})= \normdev \mbb{P} \nabla \boldsymbol{\phi}^e \mathbb{P} + \mbb{P} \normdev \nabla \boldsymbol{\phi}^e \mbb{P} + \mbb{P} \nabla \boldsymbol{\phi}^e \normdev \mathbb{P},\\
    \normdev \nabla \boldsymbol{\phi}^e = \nabla (\normdev \boldsymbol{\phi}^e) - \nabla V_N \otimes (\nabla \boldsymbol{\phi}^e \boldsymbol{\nu}) - V_N \nabla \boldsymbol{\phi}^e \mbb{H},
\end{gather*}
for sufficiently smooth $\boldsymbol{\phi}$.
We do not give an explicit expression for $\mbf{b}$ as it is sufficiently long, and requires new notation (which would not reappear) to be written succinctly.
We do note that the smoothness assumptions on $\Gamma(t)$ allow one can show a uniform bound in the $\mbf{H}^1$ norm.

\subsection{Inequalities}
We end this section by recalling some useful inequalities.
The following results on Sobolev spaces are proven in \cite{Aub82,hebey2000nonlinear}.
\begin{theorem}
	\label{sobolevembedding}
	\
	\begin{enumerate}
		\item
		{[Poincar\'e inequality]} \ \\
		There exists a constant $C_P > 0$, independent of $t \in [0,T]$, such that for $f \in H^1(\Gamma(t))$ we have
		\[\left\| f - \mval{f}{\Gamma(t)} \right\|_{L^2(\Gamma(t))} \leq C_P \| \gradg f \|_{L^2(\Gamma(t))}.\]
		\item {[Sobolev Embeddings]} 
		\begin{enumerate}
			\item Let $0 \leq l \leq k$ be two integers, and $1\leq p < q$ be two real numbers such that $\frac{1}{q} = \frac{1}{p} - \frac{k-l}{2}$.
			Then we have continuous embedding \[H^{k,p}(\Gamma(t)) \hookrightarrow H^{l,q}(\Gamma(t)).\]
			\item If $\frac{k-r - \alpha}{2} \geq \frac{1}{p}$, where $\alpha \in (0,1)$, then we have continuous embedding
			\[H^{k,p}(\Gamma(t)) \hookrightarrow C^{r + \alpha}(\Gamma(t)).\]
		\end{enumerate}
	\end{enumerate}
	
\end{theorem}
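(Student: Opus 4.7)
The plan is to reduce both parts to the standard Sobolev theory on the fixed reference manifold $\Gamma_0$ (as given in \cite{Aub82,hebey2000nonlinear}) and then transport the inequalities to $\Gamma(t)$ via the normal pushforward map $\Phi_t^n$. The entire challenge lies in keeping the constants independent of $t \in [0,T]$; applied to each fixed $t$ the bounds are classical on the compact Riemannian surface $\Gamma(t)$, so only the uniformity requires a real argument.

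For the Poincaré inequality, I would use the variational characterisation
\[ \left\| f - \mval{f}{\Gamma(t)} \right\|_{L^2(\Gamma(t))} = \inf_{c\in\mbb{R}} \| f - c \|_{L^2(\Gamma(t))}, \]
so it suffices to bound $\|f-c\|_{L^2(\Gamma(t))}$ for a well-chosen $c$. Pulling back by $\Phi_t^n$, set $g := f \circ \Phi_t^n \in H^1(\Gamma_0)$ and let $c = \mval{g}{\Gamma_0}$. The standard Poincaré inequality on $\Gamma_0$ gives $\|g-c\|_{L^2(\Gamma_0)} \le C_0 \|\gradg g\|_{L^2(\Gamma_0)}$. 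Using the change of variables formula, $\|f-c\|_{L^2(\Gamma(t))}^2 = \int_{\Gamma_0} |g-c|^2 J(\cdot,t)$ and similarly for the gradient (with $\mbb{D}^{-1}$ relating $\gradg f$ and $\gradg g$). The uniform $C^1$ bounds on $J$, $J^{-1}$, $\mbb{D}$, $\mbb{D}^{-1}$ over $\Gamma_0 \times [0,T]$ (which follow from the $C^3$ regularity of $\Phi_t^n$ and compactness of $[0,T]$) then yield a constant $C_P$ independent of $t$.

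For the Sobolev embeddings, I would proceed analogously. Fix $k,l,p,q$ as in (2)(a). The pushforward/pullback maps are compatible with $(H^{k,p}(\Gamma(t)), \Phi_t^n)$ in the sense recalled earlier, and the $C^3$ regularity of $\Phi_t^n$ guarantees that for $k \le 2$ the norm equivalence between $H^{k,p}(\Gamma(t))$ and $H^{k,p}(\Gamma_0)$ (via pullback) holds with constants uniform in $t$; one tracks this by writing the tangential gradient on $\Gamma(t)$ in terms of the gradient on $\Gamma_0$ through $\mbb{D}^{-1}$ and its derivatives, all uniformly bounded. Apply the classical embedding of \cite{Aub82,hebey2000nonlinear} on $\Gamma_0$, and then push back to $\Gamma(t)$. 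Part (2)(b) is identical with the additional observation that the Hölder seminorm of $f$ on $\Gamma(t)$ is equivalent to that of $f\circ\Phi_t^n$ on $\Gamma_0$, again with a constant controlled by $\|\Phi_t^n\|_{C^1(\Gamma_0)}$ and $\|\Phi_{-t}^n\|_{C^1(\Gamma(t))}$ uniformly in $t$.

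The main obstacle is just the uniformity in $t$, and in particular tracking how many derivatives of $\Phi_t^n$ are needed: the norm equivalence for $H^{k,p}$ on $\Gamma(t)$ versus $\Gamma_0$ requires $k$ derivatives of $\Phi_t^n$ to be uniformly bounded over $[0,T]$. Since the paper only uses $k \le 2$ and $\Phi_t^n$ is assumed $C^3$ in $x$ (with continuous $t$-dependence on a compact interval), one gets uniform bounds on all of $J^{\pm 1}, \mbb{D}^{\pm 1}$ up to the needed order, and the theorem follows. No new ideas beyond compatibility and standard manifold Sobolev theory are needed.
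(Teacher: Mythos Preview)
Your approach is correct and matches the paper's treatment. In fact the paper does not give a proof of this theorem at all: it simply cites \cite{Aub82,hebey2000nonlinear} for the fixed-time statements and remarks afterwards that the constants can be taken independent of $t$ ``which follows from the fact that we consider a compact time interval and sufficiently smooth evolution of $\Gamma(t)$''---precisely the pullback-to-$\Gamma_0$ argument you outline.
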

Notice in particular that this implies
\[H^1(\Gamma(t)) \hookrightarrow L^p(\Gamma(t)),\]
for all $p\in[1, \infty)$, and in fact these embeddings are compact.
It can be shown that the operator norm of the above continuous injections are independent of time - which follows from the fact that we consider a compact time interval and sufficiently smooth evolution of $\Gamma(t)$.
We obtain analogous Sobolev embeddings for the spaces $\mbf{H}^{k,p}(\Gamma(t))$.
We also recall the following inequalities which are used throughout the analysis.
\begin{lemma}[Ladyzhenskaya's interpolation inequality, \cite{OlsReuZhi22}, Lemma 3.4]
For $\phi \in H^1(\Gamma(t))$ and $\boldsymbol{\phi} \in \mbf{H}^1(\Gamma(t))$ we have
\begin{gather}
	\| \phi \|_{L^4(\Gamma(t))} \leq C \|\phi\|_{L^2(\Gamma(t))}^{\frac{1}{2}} \|\phi\|_{H^1(\Gamma(t))}^{\frac{1}{2}}, \label{ladyzhenskaya1}\\
	\| \boldsymbol{\phi} \|_{\mbf{L}^4(\Gamma(t))} \leq C \|\boldsymbol{\phi}\|_{\mbf{L}^2(\Gamma(t))}^{\frac{1}{2}} \|\boldsymbol{\phi}\|_{\mbf{H}^1(\Gamma(t))}^{\frac{1}{2}},\label{ladyzhenskaya2}
\end{gather}
for a constant $C$ independent of $t$.
	
\end{lemma}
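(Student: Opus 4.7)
The plan is to reduce both inequalities to a single scalar Gagliardo--Nirenberg inequality on a fixed reference surface and then transport to $\Gamma(t)$ via the pushforward $\Phi_t^n$, using $C^3$-regularity to obtain uniformity in $t$. First I would observe that \eqref{ladyzhenskaya2} follows from \eqref{ladyzhenskaya1} applied componentwise (or to $|\boldsymbol{\phi}|$, which lies in $H^1(\Gamma(t))$ with $|\gradg|\boldsymbol{\phi}|| \leq |\gradg \boldsymbol{\phi}|$ almost everywhere), so the whole statement reduces to the scalar case.

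Next I would fix an atlas $\{(U_\alpha, \chi_\alpha)\}$ on the reference surface $\Gamma_0$ together with a subordinate smooth partition of unity $\{\eta_\alpha\}$, and prove
\[
\|f\|_{L^4(\Gamma_0)} \leq C_0 \|f\|_{L^2(\Gamma_0)}^{1/2} \|f\|_{H^1(\Gamma_0)}^{1/2} \qquad \forall f \in H^1(\Gamma_0).
\]
On each chart, writing $f_\alpha = \eta_\alpha f$ and pulling back by $\chi_\alpha^{-1}$, I would invoke the two-dimensional Euclidean Gagliardo--Nirenberg inequality $\|g\|_{L^4(\mathbb{R}^2)} \leq c \|g\|_{L^2(\mathbb{R}^2)}^{1/2} \|\nabla g\|_{L^2(\mathbb{R}^2)}^{1/2}$ applied to the zero extension of $f_\alpha \circ \chi_\alpha^{-1}$. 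Summing in $\alpha$ (via $|f|^4 \leq C \sum_\alpha |f_\alpha|^4$ from the finite partition) and using the equivalence of norms induced by the smooth metric $g_0$ with the Euclidean norms in charts yields the reference-surface inequality with a constant $C_0$ depending only on the atlas, the partition of unity, and the $C^1$-bounds on the charts.

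To transfer to $\Gamma(t)$ uniformly in $t$, I would use the pushforward $\Phi_t^n : \Gamma_0 \to \Gamma(t)$. Since $\Phi_t^n \in C^3(\Gamma_0 \times [0,T])$, both $\Phi_t^n$ and $\Phi_{-t}^n$ have $C^1$-norms bounded uniformly on $[0,T]$, so the associated Jacobians $J(\cdot,t), J^{-1}(\cdot,t)$ and the tangential differentials $\mathbb{D}(\cdot,t), \mathbb{D}^{-1}(\cdot,t)$ are uniformly bounded above and below on $\Gamma_0 \times [0,T]$. Applying the change-of-variables formula to $\phi \in H^1(\Gamma(t))$ with $\tilde\phi = \phi \circ \Phi_t^n$, one gets
\[
\|\phi\|_{L^p(\Gamma(t))} \leq C \|\tilde\phi\|_{L^p(\Gamma_0)}, \qquad \|\tilde\phi\|_{H^1(\Gamma_0)} \leq C \|\phi\|_{H^1(\Gamma(t))},
\]
with reverse inequalities holding with constants independent of $t$. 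Substituting into the reference-surface inequality and using $\|\tilde\phi\|_{L^p(\Gamma_0)} \leq C \|\phi\|_{L^p(\Gamma(t))}$ gives \eqref{ladyzhenskaya1} with a time-independent constant.

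The main obstacle is producing a constant that is truly uniform in $t$: one must be careful that the constants coming from the charts, the partition of unity, and the change-of-variables between $\Gamma_0$ and $\Gamma(t)$ are controlled only by quantities available on the compact set $[0,T]$. This is where the $C^3$-diffeomorphism assumption on $\Phi_t^n$ (and hence uniform $C^1$ bounds on $\mathbb{D}, \mathbb{D}^{-1}, J, J^{-1}$ from the lemma preceding this one) is essential; once these uniform bounds are in place, the rest is a direct combination of Euclidean Gagliardo--Nirenberg with standard pullback estimates.
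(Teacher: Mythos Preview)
The paper does not prove this lemma itself; it is simply quoted from \cite{olshanskii2022tangential}. Your strategy---establish the Gagliardo--Nirenberg inequality on the fixed reference surface $\Gamma_0$ via local charts, then pull back by $\Phi_{-t}^n$ and use uniform bounds on $J$, $J^{-1}$, $\mathbb{D}$, $\mathbb{D}^{-1}$ to make the constant time-independent---is correct and is precisely the mechanism the paper itself employs when it does prove such transfer results (see the proof of Lemma~\ref{brezisgallouet}, and the remark following \eqref{kornLp}). So your approach is not merely an alternative: it is the intended one.

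One small point worth tightening in the reduction of \eqref{ladyzhenskaya2} to \eqref{ladyzhenskaya1}: the norm on $\mbf{H}^1(\Gamma(t))$ is defined through the covariant gradient $\gradg\boldsymbol{\phi}=\mbb{P}\nabla\boldsymbol{\phi}^e\mbb{P}$, not the componentwise scalar gradients $\underline{D}_j\phi_i$. For a tangential field the two differ by a zeroth-order term $\mbb{H}\boldsymbol{\phi}$ (differentiate $\boldsymbol{\phi}\cdot\boldsymbol{\nu}=0$), which is bounded uniformly in $t$ since $\Gamma(t)$ is $C^3$. Once you note this, applying the scalar inequality to each ambient component $\phi_i$ and summing gives \eqref{ladyzhenskaya2} with a time-independent constant.
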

\begin{lemma}[Korn's inequality, \cite{OlsReuZhi22}, Lemma 3.2]
	For $\boldsymbol{\phi} \in \mbf{H}^1(\Gamma(t))$ we have
	\begin{gather}
		\|\boldsymbol{\phi}\|_{\mbf{H}^1(\Gamma(t))} \leq C\left( \|\boldsymbol{\phi}\|_{\mbf{L}^2(\Gamma(t))} + \|\mbb{E}(\boldsymbol{\phi})\|_{\mbf{L}^2(\Gamma(t))} \right), \label{korn1}
	\end{gather}
	where the constant $C$ is independent of $t$.
\end{lemma}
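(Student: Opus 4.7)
The plan is to establish the inequality pointwise in $t$ with a constant depending only on the curvature of $\Gamma(t)$, and then use the $C^3$ regularity of the evolution together with compactness of $[0,T]$ to pass to a uniform constant. The starting point is the pointwise algebraic identity
\begin{equation*}
2|\mbb{E}(\boldsymbol{\phi})|^2 = |\gradg \boldsymbol{\phi}|^2 + \gradg \boldsymbol{\phi} : (\gradg \boldsymbol{\phi})^T,
\end{equation*}
valid for any tangential field $\boldsymbol{\phi} \in \mbf{H}^1(\Gamma(t))$. After integrating over $\Gamma(t)$, the task reduces to controlling the cross term $\int_{\Gamma(t)} \gradg \boldsymbol{\phi} : (\gradg \boldsymbol{\phi})^T$ in terms of $(\divg \boldsymbol{\phi})^2$ plus contributions involving $\|\boldsymbol{\phi}\|_{\mbf{L}^2}$.

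To treat the cross term I would integrate by parts on $\Gamma(t)$, shifting one tangential derivative off the test components. The key subtlety is that the tangential derivatives $\underline{D}_i$ do not commute, and the commutator $[\underline{D}_i,\underline{D}_j]$ acting on the components of $\boldsymbol{\phi}$ produces terms linear in $\boldsymbol{\phi}$ with coefficients polynomial in the entries of the Weingarten map $\mbb{H}=\gradg \boldsymbol{\nu}$. Since $\Gamma(t)$ is closed, no boundary terms appear, and one obtains a surface Korn identity of the form
\begin{equation*}
\int_{\Gamma(t)} \gradg \boldsymbol{\phi} : (\gradg \boldsymbol{\phi})^T = \int_{\Gamma(t)} (\divg \boldsymbol{\phi})^2 + \int_{\Gamma(t)} Q(\mbb{H}) \boldsymbol{\phi} \cdot \boldsymbol{\phi},
\end{equation*}
where $Q(\mbb{H})$ is polynomial in $\mbb{H}$. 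Combining this with the pointwise identity above and dropping $(\divg \boldsymbol{\phi})^2 \geq 0$ gives
\begin{equation*}
\|\gradg \boldsymbol{\phi}\|^2_{\mbf{L}^2(\Gamma(t))} \leq 2\|\mbb{E}(\boldsymbol{\phi})\|^2_{\mbf{L}^2(\Gamma(t))} + C(t)\|\boldsymbol{\phi}\|^2_{\mbf{L}^2(\Gamma(t))}
\end{equation*}
with $C(t)$ depending only on $\|\mbb{H}(\cdot,t)\|_{L^\infty(\Gamma(t))}$.

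For uniformity in $t$ I would invoke the $C^3$ regularity of the evolution: the Weingarten map $\mbb{H}$ is jointly continuous in $(x,t)$, so by compactness of $[0,T]$ and of $\Gamma(t)$ one has $\sup_{t \in [0,T]} \|\mbb{H}(\cdot,t)\|_{L^\infty(\Gamma(t))} < \infty$. Hence $\sup_{t \in [0,T]} C(t) < \infty$, and the fixed-time bound lifts to a $t$-uniform estimate after taking square roots and adding $\|\boldsymbol{\phi}\|_{\mbf{L}^2(\Gamma(t))}$ to both sides. An alternative route would be to pull $\boldsymbol{\phi}$ back to the reference surface $\Gamma_0$ via $\Phi_t^n$, prove Korn once on $\Gamma_0$, and transfer back using uniform bounds on $D\Phi_t^n$ and $D\Phi_{-t}^n$ guaranteed by the $C^3$ regularity of the evolution.

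The main obstacle is the careful execution of the integration-by-parts step: because $\boldsymbol{\phi}$ is tangential and $\gradg$ is defined through an arbitrary extension, one must repeatedly project onto the tangent space and exploit the structural relations $\gradg \boldsymbol{\nu} = \mbb{H}$, $\mbb{H} \boldsymbol{\nu} = 0$, and the tangential product and divergence rules to arrive at the clean polynomial form of $Q(\mbb{H})$. The analogous calculation was carried out in \cite{olshanskii2022tangential} under $C^2$ regularity, and no essential changes are needed for the stronger $C^3$ evolution used here.
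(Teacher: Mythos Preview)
The paper does not actually prove this lemma; it is quoted from \cite{olshanskii2022tangential}, Lemma~3.2, with no proof given here. The only hint the paper offers about the underlying argument appears in the remark immediately following, which indicates that the $L^p$ version (and by implication the $L^2$ case) is obtained by passing to a local parametrisation of $\Gamma(t)$, invoking a Euclidean Korn inequality, and then transferring back; time-independence of the constant then follows from the smooth evolution.

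Your proposed route is different and more direct: you work intrinsically on $\Gamma(t)$ via the algebraic identity for $|\mbb{E}(\boldsymbol{\phi})|^2$ and an integration-by-parts computation of $\int_{\Gamma(t)} \gradg \boldsymbol{\phi} : (\gradg \boldsymbol{\phi})^T$, picking up curvature corrections from the commutator $[\underline{D}_i,\underline{D}_j]$. This is a legitimate and well-known alternative to the localisation argument, and your discussion of why the resulting constant depends only on $\|\mbb{H}\|_{L^\infty}$ and hence is uniform over $[0,T]$ is correct. The trade-off is that your approach yields an explicit constant in terms of curvature but requires the somewhat delicate tangential integration-by-parts bookkeeping you flag, whereas the localisation route used in the cited references is less explicit but reduces immediately to the classical Euclidean inequality. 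You also mention the pull-back-to-$\Gamma_0$ alternative, which is essentially the spirit of what the paper's remark and the cited reference do. Either route is acceptable; neither contains a gap.
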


In order to establish energy estimates we use the following nonlinear generalisation of the Gr\"onwall inequality.
\begin{lemma}[Bihari-LaSalle inequality, \cite{bihari1956generalization}]
\label{biharilasalle}
	Let $X, K : [0,T] \rightarrow \mbb{R}$, be non-negative continuous\footnote{By density this can be shown to extend to $K \in L^1([0,T])$.} functions, $\omega: \mbb{R}^+ \rightarrow \mbb{R}^+$ be a non-decreasing continuous function, and $k \geq 0$.
	Then if
	 \[X(t) \leq k + \int_0^s K(s) \omega(X(s)) \, ds ,\]
	holds for $t \in [0,T]$, and one can choose $y_0 > 0$ such that
    \[\Omega(k) + \int_0^T K(s) \, ds \in \mathrm{dom}(\Omega^{-1}), \quad \text{where} \quad \Omega(y) := \int_{y_0}^y \frac{1}{\omega(s)} \, ds,\]
    then one for $t \in [0,T]$
	\begin{align}
		X(t) \leq \Omega^{-1}\left( \Omega(k) + \int_0^t K(s) \, ds \right). \label{biharilasalle2}
	\end{align}
	
\end{lemma}
In fact the inequality \eqref{biharilasalle2} is independent of choice of $y_0$.
\begin{comment}
\begin{lemma}[\cite{caetano2021cahn}, Lemma A.1.]
    \label{generalised gronwall}
    Let $T \in (0,\infty)$, $t^* \in (0,T)$, and $X: [0,t^*] \rightarrow [0,\infty)$ such that
    \[ X'(t) \leq C\left( C_0 + X(t) + \gamma X(t)^{q+1}, \right)\]
    for $C > 0, C_0 \geq 0, q \in \mbb{N}$ independent of $M, t^*$.
    Then if $\gamma$ is small enough so that
    \[ \gamma e^{TCq}(X(0) + C_0)^q < 1,\]
    then
    \[ X(t) \leq \frac{(X(0) + C_0)e^{CT}}{\left( 1 - \gamma e^{TCq}(X(0) + C_0)^q \right)^{\frac{1}{q}}} - C_0. \]
\end{lemma}
\end{comment}
Finally we recall three results which will be used in proving uniqueness.
\begin{lemma}[\cite{elliott2015evolving}, Lemma 4.3]
	Let $z \in H^{-1}(\Gamma(t))$ be such that $m_*(z,1) = 0$.
	Define the inverse Laplacian $\mathcal{G}z \in H^1(\Gamma(t)) \in H^1(\Gamma(t))$ as the unique solution of
	\[ a(\mathcal{G}z, \phi) = m_*(z,\phi),\]
	for all $\phi \in H^1(\Gamma(t))$.	
	If $z \in H^1_{H^{-1}}$ then
	\[\|\mathcal{G}z||_{ H^1_{H^1}}\leq C\|z\|_{H^1_{H^{-1}}}.\] 
	\end{lemma}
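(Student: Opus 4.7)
The plan is to use Lax--Milgram on each time slice, first to construct $\mathcal{G}z(t)$, then to construct its normal time derivative $\normdev\mathcal{G}z(t)$ as the solution of the elliptic equation obtained by formally differentiating the defining identity in $t$. The $t$-uniform Poincar\'e constant on $\Gamma(t)$ from Theorem~\ref{sobolevembedding} together with the $C^3$-regularity of the surface evolution will then upgrade the pointwise estimates to the claimed Bochner-space bound.

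First I would handle the spatial regularity at each fixed $t$. Since $m_*(z(t),1)=0$, the equation $a(\mathcal{G}z,\phi)=m_*(z,\phi)$ is uniquely solvable on the closed mean-zero subspace of $H^1(\Gamma(t))$, on which $a(t;\cdot,\cdot)$ is coercive by Poincar\'e. Lax--Milgram yields a unique $\mathcal{G}z(t)$ of mean zero satisfying
\[\|\mathcal{G}z(t)\|_{H^1(\Gamma(t))}\leq C\|z(t)\|_{H^{-1}(\Gamma(t))},\]
with $C$ independent of $t$. Squaring and integrating in time yields $\mathcal{G}z\in L^2_{H^1}$ with control by $\|z\|_{L^2_{H^{-1}}}$.

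The key step is the construction of $\normdev\mathcal{G}z$. I would test the identity against the family of pushforward test functions $\phi(t)=\Phi_{-t}^n\phi_0$ for arbitrary $\phi_0\in H^1(\Gamma_0)$; these satisfy $\normdev\phi=0$ by construction. Differentiating the identity $a(\mathcal{G}z,\phi)=m_*(z,\phi)$ in $t$ and applying the transport formula of Lemma~\ref{transport theorem} on the left gives $a(\normdev\mathcal{G}z,\phi)+b(\mathcal{G}z,\phi)$, while the Bochner-space transport rule applied to the right (valid because $z\in H^1_{H^{-1}}$) gives $m_*(\normdev z,\phi)$ up to lower-order geometric corrections involving $HV_N$. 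Since $\Phi_{-t}^n$ maps $H^1(\Gamma_0)$ onto $H^1(\Gamma(t))$, I obtain, for a.e.\ $t$ and every $\phi\in H^1(\Gamma(t))$,
\[a(\normdev\mathcal{G}z,\phi)=m_*(\normdev z,\phi)-b(\mathcal{G}z,\phi)+\text{l.o.t.}\]
The mean-zero compatibility required for solvability follows from $a(\cdot,1)=b(\cdot,1)=0$ and $m_*(\normdev z,1)=\tfrac{d}{dt}m_*(z,1)=0$. Lax--Milgram on each time slice then produces $\normdev\mathcal{G}z(t)\in H^1(\Gamma(t))$ together with
\[\|\normdev\mathcal{G}z(t)\|_{H^1}\leq C\bigl(\|\normdev z(t)\|_{H^{-1}}+\|\mathcal{G}z(t)\|_{H^1}\bigr),\]
and squaring, integrating, and combining with the Step~1 estimate yields the claimed $H^1_{H^1}$-bound. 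A final verification within the compatible-pair framework of \cite{AlpCaeDju23,AlpEllSti15a} identifies this constructed quantity with the weak normal time derivative of $\mathcal{G}z$ in the Bochner space on $\mathcal{G}_T$.

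The main obstacle is the rigorous justification of the transport rule for the duality pairing $t\mapsto m_*(t;z(t),\phi(t))$ when $z$ has only $H^{-1}$ spatial regularity. I would handle this either by density, approximating $z$ by a sequence in $L^2_{L^2}$ for which the transport theorem is standard and passing to the limit using the $H^1_{H^{-1}}$-control, or by appealing directly to the abstract evolving-space transport identities developed in \cite{AlpCaeDju23,AlpEllSti15a}. The remaining steps are standard elliptic PDE manipulations made $t$-uniform by the smoothness of $V_N$ and the $C^3$-regularity of the evolution.
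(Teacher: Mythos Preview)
The paper does not prove this lemma; it simply cites \cite{elliott2015evolving}, Lemma~4.3, and remarks that inspecting that proof shows the hypothesis $z\in H^1_{H^1}$ there can be relaxed to $z\in H^1_{H^{-1}}$. Your outline is essentially the argument one finds in that reference: construct $\mathcal{G}z(t)$ slice-by-slice via Lax--Milgram with a $t$-uniform Poincar\'e constant, formally differentiate the defining identity to obtain an elliptic equation for the candidate $\normdev\mathcal{G}z$, solve that by Lax--Milgram again, and verify the candidate is the weak normal derivative in the abstract framework. So substantively you are on the standard track.

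Two small slips. First, your pushforward test functions should be $\phi(t)=\Phi_t^n\phi_0$, not $\Phi_{-t}^n\phi_0$; the latter is the pullback and acts on functions on $\Gamma(t)$, not $\Gamma_0$. Second, your compatibility check ``$m_*(\normdev z,1)=\tfrac{d}{dt}m_*(z,1)=0$'' drops the $HV_N$ correction: differentiating $m_*(z,1)$ with $\normdev 1=0$ gives $m_*(\normdev z,1)+m_*(z,HV_N)$, so in general $m_*(\normdev z,1)\neq 0$. This does not break the argument, because the full right-hand side of your differentiated equation (including the ``l.o.t.'' you mention) is precisely $m_*(\normdev z,\phi)+m_*(z,\phi HV_N)$, and testing \emph{that} against $\phi=1$ gives $\tfrac{d}{dt}m_*(z,1)=0$; you just need to carry the correction term consistently into the compatibility step rather than claiming $m_*(\normdev z,1)$ alone vanishes.
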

In fact in \cite{elliott2015evolving} it is assumed that $z \in H^1_{H^1}$ but examining the proof it is sufficient to assume $z \in H^1_{H^{-1}}$.

\begin{lemma}[\cite{li2016tropical}, Lemma 2.2]
	\label{gronwalltype}
	Let $m_1, m_2, S$ be non-negative functions on $(0,T)$ such that $m_1, S \in L^1(0,T)$ and $m_2 \in L^2(0,T)$, with $S > 0$ a.e. on $(0,T)$.
	Now suppose $f,g$ are non-negative functions on $(0,T)$, $f$ is absolutely continuous on $[0,T)$, such that\footnote{Here we are using the notation $\log^+(x) := \max(0,\log(x))$.}
	\begin{align}
		f'(t) + g(t) \leq m_1(t)f(t) + m_2(t)\left( f(t)g(t) \log^+\left( \frac{S(t)}{g(t)} \right) \right)^{\frac{1}{2}},
	\end{align}
	holds a.e. on $(0,T)$, and $f(0) = 0$.
	Then $f(t) \equiv 0$ on $[0,T)$.
\end{lemma}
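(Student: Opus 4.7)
The plan is to reduce the hypothesis to an integral inequality that can be handled by the Bihari-LaSalle lemma (Lemma~\ref{biharilasalle}) with an Osgood modulus of continuity; once established, the conclusion follows directly from $f(0) = 0$.

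First I apply Young's inequality $\sqrt{XY} \leq (X+Y)/2$ with $X = m_2^2(t) f(t) \log^+(S(t)/g(t))$ and $Y = g(t)$ to obtain, a.e. on $(0, T)$,
\begin{equation*}
f'(t) + \tfrac{1}{2} g(t) \leq m_1(t) f(t) + \tfrac{1}{2} m_2^2(t) f(t) \log^+\!\left(\frac{S(t)}{g(t)}\right).
\end{equation*}
Integrating from $0$ to $t$ and using $f(0) = 0$ yields
\begin{equation*}
f(t) + \tfrac{1}{2} \int_0^t g(s)\, ds \leq \int_0^t m_1(s) f(s)\, ds + \tfrac{1}{2} \int_0^t m_2^2(s) f(s) \log^+\!\left(\frac{S(s)}{g(s)}\right) ds.
\end{equation*}
The crucial step is to replace the $g$-dependence on the right-hand side by a function of $f$ alone. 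One exploits the pointwise bound $g \log^+(S/g) \leq S/e$ (coming from the maximum of $x \mapsto x \log^+(S/x)$ at $x = S/e$) to control the contribution to $\int m_2^2 f \log^+(S/g)$ coming from the regime $g \gtrsim f$, and a further Young-type inequality to absorb the contribution from $g \ll f$ into the $\tfrac{1}{2}\int_0^t g$ dissipation on the left. This produces an integral inequality
\begin{equation*}
f(t) \leq \int_0^t K(s)\, \omega(f(s))\, ds,
\end{equation*}
with $K \in L^1(0,T)$ and Osgood modulus $\omega(x) = x(1 + \log^+(A/x))$ for a constant $A$ depending on $\|S\|_{L^1}$ and on the norms of $m_1, m_2$. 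Since $\int_0^\epsilon dx/\omega(x) = +\infty$ for every $\epsilon > 0$, Lemma~\ref{biharilasalle} applied with $k = 0$ forces $f(t) \equiv 0$ on $[0, T]$.

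The principal obstacle is the intermediate step: constructing the precise Young-type inequality that converts the mixed $(f, g)$-dependence into a pure Osgood function of $f$ while keeping the effective forcing $K$ in $L^1(0, T)$ under only the hypothesised $L^1$-regularity of $m_1, S$ and $L^2$-regularity of $m_2$. The case split ``$g$ comparable to $f$'' versus ``$g \ll f$'' is the natural entry point, but one must verify that the absorbed terms and the loss incurred at each application of Young's inequality preserve the Osgood structure on the right. An alternative route is a contradiction-style argument: supposing $f(t_1) = 0$ with $f > 0$ on some interval $(t_1, t_2]$, divide the pointwise inequality by $f(t) > 0$, integrate $(\log f)'$ from $t_1 + \delta$ to $t$, and show that as $\delta \to 0^+$ the left-hand side contains $-\log f(t_1 + \delta) \to +\infty$ while the right-hand side remains uniformly bounded thanks again to $g \log^+(S/g) \leq S/e$; this variant also hinges on the same delicate bookkeeping of the $g$-dependent terms.
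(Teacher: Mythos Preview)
The paper does not prove this lemma; it is quoted from \cite{li2016tropical}, Lemma~2.2, without argument, so there is no in-paper proof to compare against.

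Your plan --- Young's inequality to reach $f' + g/2 \le m_1 f + \tfrac{1}{2} m_2^2 f \log^+(S/g)$, then reduction to an Osgood inequality in $f$ alone, then Bihari--LaSalle --- is the natural strategy, and the first step is correct. But the ``crucial step'' you yourself flag is a genuine gap, and the heuristics you offer do not close it. In the regime $g \gtrsim f$, invoking $g\log^+(S/g) \le S/e$ yields $m_2^2 f \log^+(S/g) \lesssim m_2^2 S$, which is independent of $f$ (hence cannot force $f\equiv 0$) and need not lie in $L^1$ from $m_2\in L^2$, $S\in L^1$ alone; the useful pointwise estimate there is rather $\log^+(S/g)\le \log^+(S/f)$. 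The harder regime is $g < f$: after the split $\log^+(S/g)\le \log^+(S/f)+\log^+(f/g)$ one is left with $\tfrac12 m_2^2 f\log^+(f/g)$, and there is no evident Young-type inequality that absorbs this into the $g/2$ on the left while leaving only an Osgood function of $f$ with an $L^1$ coefficient. Your contradiction route faces the same obstruction, since bounding $\int m_2^2 \log^+(S/g)$ uniformly as $\delta\to 0$ again requires lower control on $g$. To turn the sketch into a proof you will need the specific device used in \cite{li2016tropical}.
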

The final result we mention is an evolving surface analogue of the Brezis-Gallou\"et-Wainger inequality (which originates from work on the nonlinear Schr\"odinger equation, see \cite{brezis1979nonlinear}).

\begin{lemma}
	\label{brezisgallouet}
	For $\phi \in H^2(\Gamma(t))$, one has 
	\begin{align}
		\|\phi\|_{L^\infty(\Gamma(t))} \leq C\|\phi\|_{H^1(\Gamma(t))} \left( 1 + \log\left( 1 + \frac{C\|\phi\|_{H^2(\Gamma(t))}}{\|\phi\|_{H^1(\Gamma(t))}} \right)^{\frac{1}{2}} \right),
	\end{align}
	for constants $C$ independent of $t$.
\end{lemma}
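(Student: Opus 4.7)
The plan is to first establish the inequality on the reference surface $\Gamma_0$ (which is a closed $C^3$ two-dimensional manifold), and then transfer to $\Gamma(t)$ via the $C^3$ diffeomorphism $\Phi_t^n: \Gamma_0 \to \Gamma(t)$, exploiting uniform norm equivalences in $t \in [0,T]$.

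For the stationary case on $\Gamma_0$, I would invoke a finite atlas $(U_\alpha, \psi_\alpha)_{\alpha=1}^N$ together with a subordinate smooth partition of unity $\{\chi_\alpha\}$. Writing $\phi = \sum_\alpha \chi_\alpha \phi$, each localised piece $\chi_\alpha \phi$ has support in $U_\alpha$ and may be transported via $\psi_\alpha$ to a compactly supported function on $\mathbb{R}^2$. The classical Brezis-Gallou\"et-Wainger inequality in $\mathbb{R}^2$ (see \cite{brezis1979nonlinear}) then gives
\[
\|\chi_\alpha \phi\|_{L^\infty(\Gamma_0)} \leq C \|\chi_\alpha \phi\|_{H^1(\Gamma_0)} \left( 1 + \log\left( 1 + \frac{C\|\chi_\alpha \phi\|_{H^2(\Gamma_0)}}{\|\chi_\alpha \phi\|_{H^1(\Gamma_0)}} \right) \right)^{1/2},
\]
with a constant $C$ depending only on the atlas and cutoffs. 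Summing over $\alpha$ and using $\|\chi_\alpha \phi\|_{H^k(\Gamma_0)} \leq C \|\phi\|_{H^k(\Gamma_0)}$ for $k=1,2$, together with monotonicity and concavity of $s \mapsto \sqrt{1 + \log(1 + s)}$, yields the desired inequality on $\Gamma_0$. An alternative (and perhaps cleaner) route is to use the spectral expansion of $\phi$ in eigenfunctions of $-\lapg + I$: split into low/high frequency parts $\phi = \phi_N + \phi^N$ with threshold $N$, bound $\|\phi_N\|_{L^\infty} \lesssim \sqrt{\log N}\,\|\phi\|_{H^1}$ using the Weyl asymptotic $\lambda_k \sim k$ in dimension two, bound $\|\phi^N\|_{L^\infty} \lesssim N^{-1}\|\phi\|_{H^2}$ via Sobolev, and optimise in $N$.

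To transfer to $\Gamma(t)$, I would use that $\Phi_t^n$ is a $C^3$-diffeomorphism with $\Phi_t^n, \Phi_{-t}^n$ and their derivatives up to order $3$ uniformly bounded on $[0,T]$ (by compactness of the time interval and smoothness of the flow). Consequently, for $k=0,1,2$ there exist constants $c, C > 0$ independent of $t$ with
\[
c\,\|\Phi_{-t}^n \phi\|_{H^k(\Gamma_0)} \leq \|\phi\|_{H^k(\Gamma(t))} \leq C\,\|\Phi_{-t}^n \phi\|_{H^k(\Gamma_0)}, \qquad \|\phi\|_{L^\infty(\Gamma(t))} = \|\Phi_{-t}^n \phi\|_{L^\infty(\Gamma_0)}.
\]
Applying the stationary inequality to $\Phi_{-t}^n\phi$ and using these equivalences (the upper bound on $\|\cdot\|_{H^2}$ and lower bound on $\|\cdot\|_{H^1}$ to handle the ratio inside the logarithm) produces the stated inequality on $\Gamma(t)$, where the constant $C$ inside the log absorbs the factor $C/c$ arising from the ratio transfer.

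The main obstacle is guaranteeing time-independence of the constant. This requires care in checking that all norm-equivalence constants, the Euclidean BGW constant obtained through the atlas, and the partition-of-unity multipliers can be chosen uniformly in $t$ — which they can, thanks to the compactness of $[0,T]$ and the $C^3$ regularity of the prescribed normal velocity $V_N$ (and hence of $\Phi_t^n$). The rest of the argument is a bookkeeping exercise once the stationary inequality and the uniform pushforward estimates are in hand.
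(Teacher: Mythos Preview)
Your proposal is correct and follows essentially the same approach as the paper: establish the Brezis--Gallou\"et--Wainger inequality on the reference surface $\Gamma_0$, then pull back via $\Phi_{-t}^n$ and use the uniform (in $t$) equivalence of the $H^k$ norms together with the monotonicity of the logarithmic factor. The only difference is that the paper obtains the stationary inequality on $\Gamma_0$ by directly citing a result for compact Riemannian manifolds (G\'orka's version of BGW), whereas you sketch two constructive routes (localisation via an atlas and partition of unity, or a spectral low/high frequency split); either is a valid substitute for that citation.
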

\begin{proof}
	From \cite{gorka2008brezis}, Theorem 1.1 we see that for $\phi \in H^2(\Gamma_0)$ one has
	\[ \|\phi\|_{L^\infty(\Gamma_0)} \leq C\|\phi\|_{H^1(\Gamma_0)} \left( 1 + \log\left( 1 + \frac{\|\phi\|_{H^2(\Gamma_0)}}{\|\phi\|_{H^1(\Gamma_0)}} \right)^{\frac{1}{2}} \right), \] 
	where examining the proof one finds that our assumption that $\Gamma_0$ is $C^3$ is sufficient.
	To see that one can choose the constant independent of time we observe that $\| \phi \|_{L^\infty(\Gamma(t))} = \| \Phi_{-t}^n \phi \|_{L^\infty(\Gamma_0)}$.
	Hence by pulling back to $\Gamma_0$ and using the above inequality, the compatibility of the pairs $(H^k(\Gamma(t)), \Phi_t^n)$ in the sense of \cite{AlpEllSti15a}, and the monotonicity of $x \mapsto \log(1 + x)^\frac{1}{2}$, it is clear that the inequality holds on $\Gamma(t)$ with constants independent of $t$.
    We note that this also introduces a constant into the logarithmic term.
\end{proof}

\section{Weak formulation and well posedness theorems}\label{Posedness}
We are now in a position to discuss the weak formulation.
For simplicity we assume constant mobility, $M(\cdot) \equiv 1$ and a scaling such that $\rho = 1$.

\subsection{Regular potential}
We firstly consider a smooth potential, $F$, under the same assumptions as in \cite{caetano2021cahn}.
That is, we assume $F(r) = F_1(r) + F_2(r)$ for $F_1,F_2 \in C^2(\mathbb{R})$ such that
\begin{enumerate}
	\item $F(r) \geq \beta$,
	\item $F_1 \geq 0$ is convex,
	\item $\exists q \in [1, \infty)$ such that $|F_1'(r)| \leq \alpha |r|^q + \alpha$,
	\item $|F_1'(r)| + |rF_1'(r)| \leq \alpha F_1(r) + \beta$,
	\item $ |F_2'(r)| \leq \alpha |r| + \alpha$,
\end{enumerate}
where $\alpha$ denotes some non-negative constant, and $\beta$ some real constant.
We also assume the viscosity function, $\eta(\cdot)$ is Lipschitz continuous.
A typical example of a viscosity function is
\[ \eta(r) = \eta_1\frac{(1+r)}{2} + \eta_2 \frac{(1-r)}{2}, \quad r \in [-1,1], \]
for two positive constants $\eta_1, \eta_2$, which can then be suitably extended to a $C^2$, Lipschitz continuous function on $\mbb{R}$.
The previous section then allows the following weak formulation, using the notation introduced in Section \ref{bilinear forms}.\\

Given initial data $\varphi_0 \in H^1(\Gamma_0), \mbf{u}_{T,0} \in \Hdivfree{0}$, find $\varphi \in H^{1}_{H^{-1}} \cap L^2_{H^1}, \mu \in L^2_{H^1}, \ut \in H^1_{\mbf{V}_\sigma'} \cap L^2_{\mbf{V}_\sigma}$ such that
\begin{gather}
	\begin{split}
		\mbf{m}_*(\normdev \ut, \boldsymbol{\phi}) + \hat{\mbf{a}}(\eta(\varphi), \ut, \boldsymbol{\phi}) + \mbf{c}_1(\ut,\ut, \boldsymbol{\phi}) + \mbf{l}(\ut, \boldsymbol{\phi}) + \mbf{d}_1(\ut, \boldsymbol{\phi})+\mbf{d}_2(\eta(\varphi), \boldsymbol{\phi})\\
		= \mbf{m}(\mbf{B}, \boldsymbol{\phi}) +  \mbf{c}_2(\mu,\varphi,\boldsymbol{\phi})
	\end{split},\label{weakTNSCH1}\\
	m_*(\normdev \varphi, \phi) + a(\mu, \phi) + \mbf{c}_2(\phi,\varphi,\ut) + \mbf{c}_2(\phi,\varphi, \widetilde{\ut}) = 0,\label{weakTNSCH2}\\
	m(\mu,\phi) = \varepsilon a(\varphi, \phi) + \frac{1}{\varepsilon} m(F'(\varphi), \phi),\label{weakTNSCH3}
\end{gather}
for all $\phi \in H^1(\Gamma(t)), \boldsymbol{\phi} \in \divfree{t}$ for almost all $t \in [0,T]$, and such that $\varphi(0) = \varphi_0$, $\ut(0) = \mbf{u}_{T,0}$ almost everywhere on $\Gamma_0$.\\

\begin{theorem}\label{smooth existence}
	Let $\Gamma(t)$ be a $C^3$ evolving surface, $F$ a potential function satisfying the assumptions at the beginning of the section, and $\varphi_0 \in H^1(\Gamma(0)), \mbf{u}_{T,0} \in \Hdivfree{0}$ be initial data.
    Then there exists a solution triple $(\varphi,\mu,\ut)$ on $[0,T]$ such that
	\begin{gather*}
		\varphi \in L^\infty_{H^1}\cap L^2_{H^2} \cap H^1_{H^{-1}},\\
		\mu \in L^2_{H^1},\\
		\ut \in L^\infty_{\mbf{L}^2} \cap H^1_{\mbf{V}_{\sigma}'},
	\end{gather*}
	and solving \eqref{weakTNSCH1}-\eqref{weakTNSCH3} for all $\phi \in H^1(\Gamma(t)), \boldsymbol{\phi} \in \divfree{t}$ for almost all $t \in [0,T]$, and such that $\varphi(0) = \varphi_0$, $\ut(0) = \mbf{u}_{T,0}$ almost everywhere on $\Gamma_0$.
\end{theorem}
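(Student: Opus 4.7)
The plan is a Faedo--Galerkin approximation tailored to the evolving surface via the Piola and normal pushforwards, followed by uniform energy estimates and passage to the limit by an evolving-surface Aubin--Lions argument.

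\textbf{Galerkin setup.} I fix orthonormal bases $\{w_i^0\}_{i\geq 1} \subset \Hdivfree{\Gamma_0}$ and $\{y_i^0\}_{i\geq 1} \subset H^1(\Gamma_0)$ (with the constant function in the scalar basis to track mean values) and transport them to $w_i(t) = \mathcal{P}_t w_i^0 \in \divfree{t}$ and $y_i(t) = \Phi_t^n y_i^0 \in H^1(\Gamma(t))$. The Piola choice is crucial so that $w_i(t)$ stays tangential and divergence free. I then seek
\[
\ut^N(t) = \sum_{i=1}^N a_i^N(t) w_i(t), \quad \varphi^N(t) = \sum_{i=1}^N b_i^N(t) y_i(t), \quad \mu^N(t) = \sum_{i=1}^N c_i^N(t) y_i(t),
\]
satisfying \eqref{weakTNSCH1}--\eqref{weakTNSCH3} against each basis element. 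Substituting turns the system into an ODE for the coefficients with continuous-in-$t$ matrices (using the regularity of $\mbb{A}$, $\mbb{A}^{-1}$ and compatibility of the pairs $(\divfree{t},\mathcal{P}_t)$, $(H^1(\Gamma(t)),\Phi_t^n)$); the algebraic equation \eqref{weakTNSCH3} expresses $c_i^N$ in terms of $b_i^N$ and Cauchy--Lipschitz gives a local solution.

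\textbf{A priori estimates.} I test \eqref{weakTNSCH1} against $\ut^N$ and \eqref{weakTNSCH2} against $\mu^N$, and use \eqref{weakTNSCH3} to identify $\int \mu^N \normdev \varphi^N$ with the normal derivative of the Ginzburg--Landau energy via the transport theorem (Lemma \ref{transport theorem}). The antisymmetry $\mbf{c}_1(\ut^N,\ut^N,\ut^N)=0$ kills the convective term, and the two copies of $\mbf{c}_2(\mu^N,\varphi^N,\ut^N)$ appearing with opposite signs cancel exactly. This yields an identity of the form
\[
\frac{d}{dt}\!\left(\tfrac{1}{2}\|\ut^N\|_{\mbf{L}^2}^2 + \tfrac{\varepsilon}{2}\|\gradg \varphi^N\|_{L^2}^2 + \tfrac{1}{\varepsilon}\!\int_{\Gamma(t)}\! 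F(\varphi^N)\right) + 2\!\int_{\Gamma(t)}\!\eta(\varphi^N)|\mbb{E}(\ut^N)|^2 + \|\gradg \mu^N\|_{L^2}^2 = \mathcal{R},
\]
where $\mathcal{R}$ collects the curvature/transport terms (involving $V_N \mbb{H}$, $HV_N$), the body force $\mbf{B}$, the $\mbf{c}_2(\mu^N,\varphi^N,\widetilde{\ut})$ residual, the $\mbf{d}_1,\mbf{d}_2,\mbf{l}$ contributions from $\widetilde{\ut}$, and the bending energy. These are absorbed using Young's and Korn's inequality \eqref{korn1} (with $\eta \geq \eta_{\min}>0$), the $C^0_{\mbf{H}^{2,p}}$ regularity of $\widetilde{\ut}$ from Appendix \ref{evolvinglaplace}, and $F_1 \geq 0$ convex with $F \geq \beta$. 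Grönwall then gives uniform bounds $\ut^N \in L^\infty_{\mbf{H}_\sigma}\cap L^2_{\mbf{V}_\sigma}$, $\varphi^N \in L^\infty_{H^1}$, $\mu^N \in L^2_{H^1}$. Testing \eqref{weakTNSCH3} by $-\lapg \varphi^N$ and invoking the polynomial growth of $F_1'$ together with Ladyzhenskaya's inequality \eqref{ladyzhenskaya1} upgrades to $\varphi^N \in L^2_{H^2}$. Dual estimates against the equations deliver $\normdev \ut^N \in L^2_{\mbf{V}_\sigma'}$ and $\normdev \varphi^N \in L^2_{H^{-1}}$.

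\textbf{Passage to the limit and main obstacle.} Extract a subsequence converging weakly/weakly-$*$ in all of the above spaces to candidate limits $(\varphi,\mu,\ut)$. An evolving-surface Aubin--Lions lemma (obtained via pullback to $\Gamma_0$ using the compatibility of $(\mbf{H}^k,\mathcal{P}_t)$ and $(H^k,\Phi_t^n)$ in the sense of \cite{AlpCaeDju23,AlpEllSti15a}) yields strong convergence $\ut^N \to \ut$ in $L^2_{\mbf{L}^2}$, $\varphi^N \to \varphi$ in $L^2_{H^{2-\delta}}$ for any $\delta>0$, and thus $\gradg \varphi^N \to \gradg \varphi$ in $L^2_{\mbf{L}^2}$ and $\varphi^N \to \varphi$ in $L^2_{L^p}$ for every $p<\infty$. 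This is enough to pass to the limit in every nonlinearity: $\mbf{c}_1$ and $\mbf{c}_2$ by pairing strong $L^2$ convergence with weak gradient convergence; $\hat{\mbf{a}}(\eta(\varphi^N),\ut^N,\cdot)$ by Lipschitz continuity of $\eta$ and dominated convergence; and $F'(\varphi^N)$ by Vitali's theorem using the polynomial growth of $F_1'$ with the $L^\infty_{L^q}$ bound from the $H^1$ embedding. Initial conditions are recovered from $\ut \in C^0_{\mbf{V}_\sigma'}$ and $\varphi \in C^0_{H^{-1}}$. The principal obstacle I anticipate is the rigorous justification of the energy identity at the Galerkin level: since $\normdev w_i$ and $\normdev y_i$ do not themselves lie in the Galerkin subspace, the derivation must be carried out through the ODE system and via \eqref{derivatives1}--\eqref{derivatives2} to convert between $\normdev$ and $\pioladev$, carefully tracking the commutator contributions from $\bar{\mbb{A}}$; this must be done while keeping the polynomial-growth bound on $F_1'$ compatible with the $L^2_{H^2}$ closure of the estimate.
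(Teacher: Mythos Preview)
Your Galerkin setup and the energy-cancellation structure (testing \eqref{galerkinTNSCH1} with $\ut^N$ and substituting from \eqref{galerkinTNSCH2}, \eqref{galerkinTNSCH3}) match the paper exactly, and the obstacle you flag---commutators from converting $\normdev$ to $\pioladev$---is handled routinely via \eqref{derivatives2} in Lemma~\ref{galerkin existence}. The real difficulties lie elsewhere.

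\textbf{Gr\"onwall is insufficient.} To absorb the residual $\mbf{c}_2(\mu^N,\varphi^N,\widetilde{\ut})$ one integrates by parts to produce $m(\mu^N, HV_N\varphi^N)$, which requires an $L^2$ bound on $\mu^N$. Testing \eqref{galerkinTNSCH3} with $\mu^N$ and invoking the polynomial growth of $F_1'$ gives $\|\mu^N\|_{L^2}^2 \leq C + C\|\gradg\varphi^N\|_{L^2}^{2q}$, so the resulting differential inequality carries a power $(\Ech[\varphi^N;t])^q$ on the right. Linear Gr\"onwall does not close this; the paper instead uses the Bihari--LaSalle inequality (Lemma~\ref{biharilasalle}) with $\omega(y)=y+y^q$.

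\textbf{No uniform time-derivative bounds at the Galerkin level.} Your claim that dual estimates yield $\normdev\ut^N\in L^2_{\mbf{V}_\sigma'}$ and $\normdev\varphi^N\in L^2_{H^{-1}}$ uniformly in $N$, followed by Aubin--Lions, is exactly what the paper says fails: the Galerkin equations hold only against test functions in $V^M(t)$ and $\mbf{V}_\sigma^M(t)$, and the $\mbf{L}^2$-orthogonal projection onto these \emph{transported} spaces is not uniformly $\mbf{H}^1$-bounded (pushforward destroys any spectral orthogonality enjoyed on $\Gamma_0$). The same obstruction blocks your proposed test of \eqref{galerkinTNSCH3} against $-\lapg\varphi^N\notin V^M(t)$ to get $L^2_{H^2}$. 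The paper instead obtains strong $L^2_{L^2}$ convergence of $\varphi^M$ by an argument from \cite{caetano2021cahn} that needs no derivative bound, upgrades to $L^2_{L^p}$ via \cite{AlpCaeDju23}, and takes strong convergence of $\ut^M$ from a compact embedding noted in \cite{olshanskii2022tangential}; the bounds on $\normdev\varphi$, $\normdev\ut$ and the $L^2_{H^2}$ regularity of $\varphi$ are established only \emph{after} passing to the limit (Lemma~\ref{smooth derivatives bound} and \eqref{extra regularity}).
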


\subsection{Logarithmic potential}

In this section we consider the well-posedness theory for the singular logarithmic potential,
\[ F(r) = \frac{\theta}{2} \left((1+r)\log(1+r) - (1-r)\log(1-r)\right) +\frac{1-r^2}{2} =: \frac{\theta}{2} F_{\log}(r) + \frac{1-r^2}{2},\]
for $\theta \in (0,1)$.
Here $\theta$ can be understood as a temperature in the system, which we have scaled for notational simplicity.
As the equations \eqref{TNSCH1}-\eqref{TNSCH4} consider the derivative of $F$ we introduce some shorthand notation,
\[ f(r) := (F_{\log}(r))' = \log\left( \frac{1+r}{1-r}\right).\]
The corresponding version of \eqref{weakTNSCH1}-\eqref{weakTNSCH3} for the logarithmic potential is as follows.
Given initial data $\varphi_0 \in \mathcal{I}_0, \mbf{u}_{T,0} \in \Hdivfree{0}$, find $\varphi \in H^{1}_{H^{-1}} \cap L^2_{H^1}, \mu \in L^2_{H^1}, \ut \in H^1_{\mbf{V}_\sigma'} \cap L^2_{\mbf{V}_\sigma}$ such that
\begin{gather}
	\begin{split}
		\mbf{m}_*(\normdev \ut, \boldsymbol{\phi}) + \hat{\mbf{a}}(\eta(\varphi), \ut, \boldsymbol{\phi}) + \mbf{c}_1(\ut,\ut, \boldsymbol{\phi}) + \mbf{l}(\ut, \boldsymbol{\phi}) + \mbf{d}_1(\ut, \boldsymbol{\phi})+\mbf{d}_2(\eta(\varphi), \boldsymbol{\phi})\\
		= \mbf{m}(\mbf{B}, \boldsymbol{\phi}) +  \mbf{c}_2(\mu,\varphi,\boldsymbol{\phi})
	\end{split},\label{logweakTNSCH1}\\
	m_*(\normdev \varphi, \phi) + a(\mu, \phi) + \mbf{c}_2(\phi,\varphi,\ut) + \mbf{c}_2(\phi,\varphi, \widetilde{\ut}) = 0,\label{logweakTNSCH2}\\
	m(\mu,\phi) = \varepsilon a(\varphi, \phi) + \frac{\theta}{2\varepsilon} m(f(\varphi), \phi) - \frac{1}{\varepsilon} m(\varphi, \phi),\label{logweakTNSCH3}
\end{gather}
for all $\phi \in H^1(\Gamma(t)), \boldsymbol{\phi} \in \divfree{t}$ for almost all $t \in [0,T]$, and such that $\varphi(0) = \varphi_0$, $\ut(0) = \mbf{u}_{T,0}$ almost everywhere on $\Gamma_0$.
Here $\mathcal{I}_0$ denotes the set of admissible initial conditions, given by
\[\mathcal{I}_0 := \left\{ \eta \in H^1(\Gamma_0) \mid \Ech[\eta;0] < \infty, \ \left| \ \mval{\eta}{\Gamma_0}\right| < 1 \right\},\]
where $\Ech$ is the Ginzburg-Landau functional
\begin{align}
	\Ech[\varphi;t] := \int_{\Gamma(t)} \frac{\varepsilon}{2}|\gradg \varphi|^2 + \frac{1}{\varepsilon}F(\varphi). \label{ginzburglandau}
\end{align}
We note there is no modification to the choice of initial velocity, so we still choose $\mbf{u}_{T,0} \in \Hdivfree{0}$ - as in the case of a polynomial potential.\\

An advantage of this singular potential is that it necessarily has ``physical solutions'', that is $|\varphi(t)| < 1 $ almost everywhere on $\Gamma(t)$ for almost all $t \in [0,T]$, due to the logarithmic nonlinearity.
As in \cite{caetano2021cahn,caetano2023regularization} there is a constraint on the initial conditions which allows us to find such a solution.
As we assume $|\Gamma(t)| = |\Gamma_0|$ we obtain a condition purely about information at $t=0$.

\begin{theorem}\label{log existence}
	Let $\Gamma(t)$ be a $C^3$ evolving surface such that $|\Gamma(t)| = |\Gamma_0|$ for all $t \in [0,T]$, $F$ be the logarithmic  potential, and $\varphi_0 \in \mathcal{I}_0, \mbf{u}_{T,0} \in \Hdivfree{0}$ be initial data.
	Then there exists a solution triple $(\varphi,\mu,\ut)$ such that
	\begin{gather*}
		\varphi \in L^\infty_{H^1}\cap L^2_{H^2} \cap H^1_{H^{-1}},\\
		\mu \in L^2_{H^1},\\
		\ut \in L^\infty_{\mbf{L}^2} \cap H^1_{\mbf{V}_{\sigma}'},
	\end{gather*}
	 solving \eqref{logweakTNSCH1}-\eqref{logweakTNSCH3}
	for all $\phi \in H^1(\Gamma(t)), \boldsymbol{\phi} \in \divfree{t}$ for almost all $t \in [0,T]$, and such that $\varphi(0) = \varphi_0$, $\ut(0) = \mbf{u}_{T,0}$ almost everywhere on $\Gamma_0$.
\end{theorem}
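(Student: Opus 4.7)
The plan is to prove Theorem \ref{log existence} via a regularization strategy that reduces the singular case to the smooth case already handled in Theorem \ref{smooth existence}, followed by uniform estimates (in the regularization parameter) that exploit the constraint $m_{\varphi_0}(t) < 1$ encoded in $\mathcal{I}_0$, and finally a compactness/limit passage. Throughout, the notation $\phidm, \mudm, \utdm$ suggests a two-parameter approximation indexed by a potential smoothing parameter $\delta \in (0,1)$; for the purposes of the sketch I describe the dependence on $\delta$ only.

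First, I would construct a family of smooth potentials $F^\delta$ approximating $F$ by regularizing $f$ outside $[-1+\delta, 1-\delta]$ via a $C^2$ extension with polynomial (say quadratic) growth at infinity, so each $F^\delta$ satisfies the hypotheses of Theorem \ref{smooth existence}, while $F^\delta \to F$ pointwise on $(-1,1)$ and $F^\delta \geq F - C$ uniformly in $\delta$. Theorem \ref{smooth existence} then yields a solution triple $(\phidm, \mudm, \utdm)$ of \eqref{weakTNSCH1}--\eqref{weakTNSCH3} with $F$ replaced by $F^\delta$ and with initial data $(\varphi_0, \mbf{u}_{T,0}) \in \mathcal{I}_0 \times \Hdivfree{0}$.

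The second step is to derive uniform-in-$\delta$ estimates. Testing the momentum equation against $\utdm$, the Cahn-Hilliard equation against $\mudm$, and the chemical potential equation against $\normdev \phidm$, then combining via the transport identity of Lemma \ref{transport theorem}, gives the standard dissipation identity for $\Echd[\phidm] + \tfrac12 \|\utdm\|^2_{L^2}$ up to geometric lower order terms in $V_N, \mbb{H}$ that are absorbed by Gr\"onwall. This produces uniform bounds
\begin{gather*}
\phidm \in L^\infty_{H^1} \cap L^2_{H^2}, \qquad \mudm \in L^2_{H^1}, \qquad \utdm \in L^\infty_{L^2} \cap L^2_{\mbf{V}_\sigma},
\end{gather*}
together with $\normdev \phidm \in L^2_{H^{-1}}$ and $\normdev \utdm \in L^2_{\divfree{t}'}$ as usual via duality in the evolution equations.

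The main obstacle, and the step that distinguishes the singular case from the smooth case, is obtaining a uniform $L^2_{L^2}$ bound on $F^{\delta,\prime}(\phidm)$ (equivalently $f^\delta(\phidm)$), which is what ultimately forces $|\varphi| < 1$ a.e.\ in the limit. Following the strategy of \cite{caetano2021cahn,caetano2023regularization}, I would test \eqref{logweakTNSCH3} against $f^\delta(\phidm) - \overline{f^\delta(\phidm)}$ to gain control of $\|\gradg \phidm\|\cdot\|f^{\delta,\prime}(\phidm)^{1/2}\gradg\phidm\|$ and, crucially, bound the mean $|\overline{f^\delta(\phidm)}|$ separately by testing against $\phidm - \overline{\phidm}$: the mass conservation property $\overline{\phidm(t)}|\Gamma(t)| = \int_{\Gamma_0} \varphi_0$ together with the strict inequality $m_{\varphi_0}(t) < 1$ (uniform on $[0,T]$ by continuity and compactness) produces a positive distance of $\overline{\phidm}$ from the singular set $\{\pm 1\}$, which is exactly what is needed to bound $\overline{f^\delta(\phidm)}$ uniformly in $\delta$. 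This requires a careful monotonicity/convexity argument on $f^\delta$, and adapting the Euclidean version to the evolving-surface setting is the delicate point; the evolution of $|\Gamma(t)|$ is handled by the area conservation property established earlier.

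Finally, with these uniform bounds in hand, I would pass to the limit $\delta \to 0$ using an Aubin--Lions compactness theorem on evolving surfaces (e.g.\ via the pushforward maps $\Phi_t^n$ and $\mathcal{P}_t$ and the compatibility of the associated time-dependent spaces) to extract strong convergence $\phidm \to \varphi$ in $L^2_{L^2}$ and $\utdm \to \ut$ in $L^2_{\mbf{L}^2}$, along with the weak convergences corresponding to the bounds above. Strong convergence of $\phidm$, combined with the uniform $L^2_{L^2}$ bound on $f^\delta(\phidm)$, lets one identify the weak limit of $f^\delta(\phidm)$ with $f(\varphi)$ via a Minty/monotonicity argument (which also yields $|\varphi| < 1$ a.e.), while the nonlinear terms $\mbf{c}_1(\utdm,\utdm,\boldsymbol{\phi})$, $\hat{\mbf{a}}(\eta(\phidm),\utdm,\boldsymbol{\phi})$, and $\mbf{c}_2(\phi,\phidm,\utdm)$ pass to the limit by standard interpolation using strong $L^2_{L^2}$ convergence. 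The initial-data attainment and the regularity assertion follow from the weak-$*$ lower-semicontinuity of the norms and from the equations themselves.
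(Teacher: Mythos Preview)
Your overall strategy coincides with the paper's: regularize the logarithmic potential by a $C^2$ function $F^\delta$ with quadratic growth outside $[-1+\delta,1-\delta]$, invoke Theorem~\ref{smooth existence} for each $\delta$, derive uniform bounds exploiting $m_{\varphi_0}(t)<1$, and pass to the limit. The identification of the limit of $f^\delta(\varphi^\delta)$ with $f(\varphi)$ via a Minty-type argument is a legitimate alternative to the paper's route, which instead combines an explicit pointwise bound (\cite{caetano2021cahn}, Lemma~5.8) with Fatou's lemma to show first $|\varphi|\le 1$ and then that $\{|\varphi|=1\}$ is null.

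There is, however, a genuine gap in your energy-estimate step that is specific to the evolving-surface setting. You claim that testing in the usual way yields the dissipation identity ``up to geometric lower order terms in $V_N,\mbb{H}$ that are absorbed by Gr\"onwall''. This is not the case here: the cross term $\mbf{c}_2(\mu^\delta,\varphi^\delta,\widetilde{\ut})$ coming from the non-solenoidal correction $\widetilde{\ut}$ (equivalently from $\divg\ut=-HV_N$) is \emph{not} lower order---after integration by parts it produces $m(\mu^\delta, HV_N\varphi^\delta)$, which requires an $L^2$ bound on $\mu^\delta$ that you do not yet have. On a stationary surface this term is absent, which is why the standard argument closes there; on an evolving surface it does not. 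The paper resolves this by testing \eqref{regweakTNSCH3} against $\varphi^\delta HV_N$ and against $1$, and it is precisely at this stage---inside the energy estimate, not afterwards---that the constraint $m_{\varphi_0}(t)\le\alpha<1$ is used to control $|m(\mu^\delta,1)|$ in terms of $|m(\mu^\delta,\varphi^\delta)|$ and close the loop. Once the energy estimate holds (giving $\mu^\delta\in L^2_{L^2}$ uniformly), the $L^2_{L^2}$ bound on $f^\delta(\varphi^\delta)$ follows by the simple test $\phi=f^\delta(\varphi^\delta)$ in \eqref{regweakTNSCH3} and the sign $(f^\delta)'\ge 0$; your more elaborate test against $f^\delta(\varphi^\delta)-\overline{f^\delta(\varphi^\delta)}$ is then unnecessary. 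Finally, your appeal to ``the area conservation property'' is misplaced: in the prescribed-$V_N$ setting $|\Gamma(t)|$ is not conserved, only uniformly bounded by the $C^3$ assumption, and that is all one needs.
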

\section{Proof of existence}\label{Exist}
\subsection{The regular potential}
\subsubsection{Galerkin approximation}
We now show existence of a solution triple to \eqref{weakTNSCH1}-\eqref{weakTNSCH3} via the Faedo-Galerkin method.
For $M \in \mbb{N}$ we define the Galerkin approximations
\begin{gather*}
	\varphi^M(t) = \sum_{i=1}^M \alpha_i^M(t) \Phi_t^n\psi_i, \qquad \mu^M(t)= \sum_{i=1}^M \beta_i^M(t) \Phi_t^n\psi_i,\\
	\ut^M(t) = \sum_{i=1}^M \gamma_i^M(t) \mathcal{P}_t \boldsymbol{\chi}_i,
\end{gather*}
where $(\psi_i)_{i=1,...\infty}$ form a countable basis of $H^1(\Gamma_0)$, and $(\boldsymbol{\chi}_i)_{i=1,...\infty}$ form a countable basis of $\divfree{0}$, and hence pushforward onto countable bases of $H^1(\Gamma(t))$ and $\divfree{t}$ respectively.
For example, by Hilbert-Schmidt theory, one can choose $(\psi_i)_{i=1,...\infty}$ to be the eigenfunctions of the Laplace-Beltrami operator on $\Gamma_0$, and $(\boldsymbol{\chi}_i)_{i=1,...\infty}$ to be the eigenfunctions of the surface Stokes operator on $\Gamma_0$.
In particular, we choose $\psi_1 =1$ to retain the mass conservation property of $\varphi$ in the Galerkin approximation.
We define the following spaces:
\begin{gather*}
	V^M(t) := \text{span}\{ \Phi_t^n \psi_i \mid  i =1,..., M\} \subset H^1(\Gamma(t)),\\
	\mbf{V}_\sigma^M(t) := \text{span}\{ \mathcal{P}_t \boldsymbol{\chi}_i \mid  i =1,..., M\} \subset \divfree{t},
\end{gather*}
and the corresponding projections as $P^M_V(t):  H^1(\Gamma(t)) \rightarrow V^M(t), P^M_\mbf{V}(t): \Hdivfree{t} \rightarrow \mbf{V}_\sigma^M(t)$, which are defined by
\begin{gather*}
	(P^M_V(t) \phi, \psi)_{H^1(\Gamma(t))} = (\phi, \psi)_{H^1(\Gamma(t))},\ \forall \psi \in V^M(t),\\
	(P^M_\mbf{V}(t) \boldsymbol{\phi}, \boldsymbol{\psi})_{\mbf{L}^2(\Gamma(t))} = (\boldsymbol{\phi}, \boldsymbol{\psi})_{\mbf{L}^2(\Gamma(t))},\ \forall \boldsymbol{\psi} \in \mbf{V}_\sigma^M(t).
\end{gather*}
We emphasise that these are $H^1$ and $\mbf{L}^2$ projections respectively, as it is sufficient to choose initial data such that $\mbf{u}_{T,0} \in \Hdivfree{0}$, rather than $\divfree{0}$, but require $\varphi_0 \in H^1(\Gamma_0)$.

\begin{lemma}
	\label{galerkin existence}
	There exists a solution triple $(\varphi^M, \mu^M, \ut^M)$ on $[0,t^*)$, for some $t^* \leq T$ depending on $M$, solving
	\begin{gather}
		\begin{split}
			\mbf{m}(\normdev \ut^M, \boldsymbol{\phi}) + \hat{\mbf{a}}(\eta(\varphi^M), \ut^M, \boldsymbol{\phi}) + \mbf{c}_1(\ut^M,\ut^M, \boldsymbol{\phi}) +  \mbf{l}(\ut^M, \boldsymbol{\phi}) + \mbf{d}_1(\ut^M, \boldsymbol{\phi})+\mbf{d}_2(\eta(\varphi^M), \boldsymbol{\phi})\\
			= \mbf{m}(\mbf{B}, \boldsymbol{\phi}) +  \mbf{c}_2(\mu^M,\varphi^M,\boldsymbol{\phi}),
		\end{split}\label{galerkinTNSCH1}\\
		m(\normdev \varphi^M, \phi) + a(\mu^M, \phi) + \mbf{c}_2(\phi,\varphi^M,\ut^M) + \mbf{c}_2(\phi,\varphi^M, \widetilde{\ut}) = 0,\label{galerkinTNSCH2}\\
		m(\mu^M,\phi) = \varepsilon a(\varphi^M, \phi) + \frac{1}{\varepsilon} m(F'(\varphi^M), \phi),\label{galerkinTNSCH3}
	\end{gather}
	for all $\phi \in V^M(t), \boldsymbol{\phi} \in V^M_\sigma(t)$ for almost all $t \in [0,t^*)$, and such that $\varphi^M(0) = P_V^M \varphi_0$, $\ut(0) = P_\mbf{V}^M \mbf{u}_{T,0}$ almost everywhere on $\Gamma_0$.
\end{lemma}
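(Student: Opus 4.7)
The plan is to reduce (\ref{galerkinTNSCH1})--(\ref{galerkinTNSCH3}) to a system of ordinary differential equations for the coefficient vectors $\boldsymbol{\alpha}^M=(\alpha_i^M)$, $\boldsymbol{\beta}^M=(\beta_i^M)$, $\boldsymbol{\gamma}^M=(\gamma_i^M)$, and then invoke standard ODE theory for local existence. The structural fact powering this reduction is that by the very definition of the pushforward maps, $\normdev(\Phi_t^n\psi_i)=0$ and $\pioladev(\mathcal{P}_t\boldsymbol{\chi}_i)=0$; consequently
\[ \normdev\varphi^M = \sum_{i=1}^M \dot{\alpha}_i^M\,\Phi_t^n\psi_i, \qquad \pioladev\ut^M = \sum_{i=1}^M \dot{\gamma}_i^M\,\mathcal{P}_t\boldsymbol{\chi}_i. \]
Since the Galerkin test functions $\mathcal{P}_t\boldsymbol{\chi}_j$ are tangential, the identity (\ref{derivatives2}) lets me rewrite $\mbf{m}(\normdev\ut^M,\mathcal{P}_t\boldsymbol{\chi}_j) = \mbf{m}(\pioladev\ut^M,\mathcal{P}_t\boldsymbol{\chi}_j) - \mbf{m}(\bar{\mbb{A}}\ut^M,\mathcal{P}_t\boldsymbol{\chi}_j)$, transferring the time derivative onto the scalars $\dot\gamma_i^M$.

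Testing (\ref{galerkinTNSCH1}) against $\mathcal{P}_t\boldsymbol{\chi}_j$ and (\ref{galerkinTNSCH2}), (\ref{galerkinTNSCH3}) against $\Phi_t^n\psi_j$, and introducing the time-dependent mass and stiffness matrices
\[ M_\varphi(t)_{ij}=m(\Phi_t^n\psi_i,\Phi_t^n\psi_j),\quad M_{\mbf{u}}(t)_{ij}=\mbf{m}(\mathcal{P}_t\boldsymbol{\chi}_i,\mathcal{P}_t\boldsymbol{\chi}_j),\quad A_\varphi(t)_{ij}=a(\Phi_t^n\psi_i,\Phi_t^n\psi_j), \]
the first key step is that both mass matrices are symmetric positive definite and continuous in $t$: the bases $\{\Phi_t^n\psi_i\}$, $\{\mathcal{P}_t\boldsymbol{\chi}_i\}$ are images of linearly independent sets under bijective pushforwards, and time regularity follows from the $C^2$ regularity of $\mbb{A}, \mbb{D}$ recorded earlier. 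Equation (\ref{galerkinTNSCH3}) then reduces to the purely algebraic identity
\[ M_\varphi(t)\boldsymbol{\beta}^M = \varepsilon A_\varphi(t)\boldsymbol{\alpha}^M + \tfrac{1}{\varepsilon}\mathbf{N}(t,\boldsymbol{\alpha}^M),\qquad \mathbf{N}_j(t,\boldsymbol{\alpha}^M):=m(F'(\varphi^M),\Phi_t^n\psi_j), \]
which I invert to express $\boldsymbol{\beta}^M$ as a $C^1$ function of $(t,\boldsymbol{\alpha}^M)$ (using $F\in C^2$).

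Substituting this expression for $\boldsymbol{\beta}^M$ into the tested versions of (\ref{galerkinTNSCH1}), (\ref{galerkinTNSCH2}) and inverting $M_{\mbf{u}}(t)$, $M_\varphi(t)$ yields a closed system of the form $\dot{\boldsymbol{\alpha}}^M = G_1(t,\boldsymbol{\alpha}^M,\boldsymbol{\gamma}^M)$, $\dot{\boldsymbol{\gamma}}^M = G_2(t,\boldsymbol{\alpha}^M,\boldsymbol{\gamma}^M)$, with initial data $(P_V^M\varphi_0, P_\mbf{V}^M\mbf{u}_{T,0})$ resolved in the respective bases. The right-hand sides $G_1, G_2$ are measurable in $t$ (the only loss of continuity coming from $\mbf{B}\in L^2_{\mbf{L}^2}$) and locally Lipschitz in $(\boldsymbol{\alpha}^M,\boldsymbol{\gamma}^M)$, since the bilinear and trilinear forms depend polynomially on the coefficients while $F',\eta$ are $C^1$ respectively Lipschitz by assumption. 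A Carath\'eodory-type existence theorem then furnishes a unique local absolutely continuous solution on a maximal interval $[0,t^*)$.

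The main obstacle is not any deep estimate but careful bookkeeping: one must verify that the various time-dependent quantities ($\bar{\mbb{A}}$, $\widetilde{\ut}$, the matrices $M_\varphi, M_{\mbf{u}}, A_\varphi$, and the contributions of $\mbf{d}_1, \mbf{d}_2$ involving $\widetilde{\ut}$) inherit enough regularity in $t$ to place the system in the Carath\'eodory framework, which follows from the $C^2$ regularity of $\mbb{A}$ and the $C^0_{\mbf{H}^{2,p}}\cap C^1_{\mbf{L}^p}$ regularity of $\widetilde{\ut}$ flagged in Appendix \ref{evolvinglaplace}. No $M$-independent bounds are sought here, so local existence on $[0,t^*)$ is all that is claimed; the passage to $[0,T]$ and to the limit $M\to\infty$ will be carried out in subsequent energy estimates.
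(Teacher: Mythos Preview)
Your proposal is correct and follows essentially the same route as the paper: reduce the Galerkin system to an ODE for the coefficient vectors by exploiting $\normdev(\Phi_t^n\psi_i)=0$, $\pioladev(\mathcal{P}_t\boldsymbol{\chi}_i)=0$ together with the relation (\ref{derivatives2}), invert the algebraic constraint (\ref{galerkinTNSCH3}) for $\boldsymbol{\beta}^M$, and apply local ODE existence. If anything you are slightly more careful than the paper in explicitly invoking the Carath\'eodory framework to accommodate $\mbf{B}\in L^2_{\mbf{L}^2}$ and in spelling out the invertibility of the mass matrices.
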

\begin{proof}
	To begin we show the system \eqref{galerkinTNSCH1}-\eqref{galerkinTNSCH3} is equivalent to an ODE for the coefficients $\alpha, \beta, \gamma$.
	Firstly, by \eqref{derivatives2} we note that we may write
	\[\mbf{m}(\normdev \ut^M, \boldsymbol{\phi}) = \mbf{m}(\pioladev \ut^M, \boldsymbol{\phi}) + \mbf{m}(\bar{\mbb{A}} \ut^M, \boldsymbol{\phi}).\]
	Now we note that by definition of the strong derivative one finds
	\[\normdev \Phi_t^n \psi_i = 0, \qquad \pioladev \mathcal{P}_t \boldsymbol{\chi}_i = 0,\]
	and hence
	\begin{gather*}
		\normdev \varphi^M(t) = \sum_{i=1}^M (\alpha_i^M)'(t) \Phi_t^n\psi_i, \\
		\pioladev \ut^M(t) = \sum_{i=1}^M (\gamma_i^M)'(t) \mathcal{P}_t \boldsymbol{\chi}_i.
	\end{gather*}
	With this in hand, it is clear that testing \eqref{galerkinTNSCH1} with $\mathcal{P}_t \boldsymbol{\chi}_k$ yields
	\begin{multline*}
		\sum_{i=1}^M (\gamma_i^M)' \mbf{m}(\mathcal{P}_t \boldsymbol{\chi}_i, \mathcal{P}_t \boldsymbol{\chi}_k) + \sum_{i=1}^M \gamma_i^M \mbf{m}(\bar{\mbb{A}}\mathcal{P}_t \boldsymbol{\chi}_i, \mathcal{P}_t \boldsymbol{\chi}_k)\\
  + \sum_{i=1}^M \gamma_i^M \hat{\mbf{a}}\left(\eta\left( \sum_{j=1}^M \alpha_j^M \Phi_t^n\psi_j \right)\mathcal{P}_t \boldsymbol{\chi}_i, \mathcal{P}_t \boldsymbol{\chi}_k\right)
		+ \sum_{i=1}^M \sum_{j=1}^M \gamma_i^M \gamma_j^M \mbf{c}_1(\mathcal{P}_t\boldsymbol{\chi}_i,\mathcal{P}_t\boldsymbol{\chi}_j, \mathcal{P}_t\boldsymbol{\chi}_k)\\
  +\sum_{i=1}^M \gamma_i^M\mbf{l}(\mathcal{P}_t\boldsymbol{\chi}_i, \mathcal{P}_t\boldsymbol{\chi}_k) + \sum_{i=1}^M \gamma_i^M \mbf{d}_1(\mathcal{P}_t\boldsymbol{\chi}_i, \mathcal{P}_t\boldsymbol{\chi}_k)+\mbf{d}_2\left(\eta\left( \sum_{j=1}^M \alpha_j^M \Phi_t^n\psi_j \right), \mathcal{P}_t\boldsymbol{\chi}_k\right)\\
  = \mbf{m}(\mbf{B}, \mathcal{P}_t\boldsymbol{\chi}_k) + \sum_{i=1}^M \sum_{j=1}^M \alpha_j^M \beta_i^M \mbf{c}_2(\Phi_t^n \psi_i,\Phi_t^n \psi_j,\mathcal{P}_t\boldsymbol{\chi}_k).
	\end{multline*}
	Next testing \eqref{galerkinTNSCH2} and \eqref{galerkinTNSCH3} with $\Phi_t^n \psi_k$ we obtain
	\begin{multline*}
		\sum_{i=1}^M (\alpha_i^M)'m(\Phi_t^n \psi_i, \Phi_t^n \psi_k) + \sum_{i=1}^M \beta_i^Ma(\Phi_t^n \psi_i, \Phi_t^n \psi_i) + \sum_{i=1}^M \sum_{j=1}^M \alpha_i^M \gamma_j^M \mbf{c}_2(\Phi_t^n \psi_k,\Phi_t^n \psi_i,\mathcal{P}_t \boldsymbol{\chi}_j)\\
		+ \sum_{i=1}^M \alpha_i^M \mbf{c}_2(\Phi_t^n \psi_k,\Phi_t^n \psi_i, \widetilde{\ut}) = 0,
	\end{multline*}
	and
	\begin{gather*}
		\sum_{i=1}^M \beta_i^M m(\Phi_t^n \psi_i,\Phi_t^n \psi_k) = \varepsilon \sum_{i=1}^M \alpha_i^M a(\Phi_t^n \psi_i, \Phi_t^n \psi_k) + \frac{1}{\varepsilon} m\left(F'\left(\sum_{i=1}^M \alpha_i^M \Phi_t^n \psi_i\right), \Phi_t^n \psi_k\right).
	\end{gather*}
	By considering the system this generates for $k=1,...,M$ one obtain an ODE system for the vectors $\alpha^M(t), \beta^M(t), \gamma^M(t) \in \mbb{R}^M$,
    It is straightforward to see that the nonlinearities are locally Lipschitz, but we omit these details. Applying standard ODE theory one obtains the short time existence of a solution triple $(\alpha^M,\beta^M, \gamma^M)$.
\end{proof}
Next we establish existence on a time interval independent of $M$ by use of energy estimates.
To do this we recall the Ginzburg-Landau functional, \eqref{ginzburglandau}, and require the following assumptions.

\begin{assumption}
\begin{enumerate}
    \item We assume the basis $(\psi_i)_i$ of $H^1(\Gamma_0)$ is such that $\psi_1$ is constant on $\Gamma_0$.
    This guarantees that $1 \in V^M(t)$ for all $M \in \mbb{N}, t \in [0,T]$.
    \item We define $P_2^M(t): L^2(\Gamma(t)) \rightarrow V^M(t)$ to be the $L^2$ projection defined by
    \[(P^M_V(t) \phi, \psi)_{L^2(\Gamma(t))} = (\phi, \psi)_{L^2(\Gamma(t))},\ \forall \psi \in V^M(t).\]
    We assume that for $\eta \in H^1(\Gamma_0)$ that
    \[ \|P_2^M(0) \eta \|_{H^1(\Gamma_0)} \leq C \|\eta\|_{H^1(\Gamma_0)},\] 
    and given $\gamma > 0$ there exists $M^* \in \mbb{N}$ such that for $M > M^*$,
    \[\|P_2^M(0)\eta - \eta\|_{L^2(\Gamma_0)} \leq \gamma \|\eta\|_{H^1(\Gamma_0)}. \]
\end{enumerate}
\end{assumption}
These assumptions hold when we choose $(\psi_i)_i$ to be the eigenfunctions of the Laplace-Beltrami operator on $\Gamma_0$.
We notice that this second assumption implies that for $\eta \in H^1(\Gamma(t))$
\begin{align*}
   \|P_2^M(t)\eta - \eta\|_{L^2(\Gamma(t))} &\leq \|\Phi^n_{t}P_2^M(0) \Phi_{-t}^n\eta - \eta\|_{L^2(\Gamma(t))}\\
   &\leq C \|P_2^M(0) \Phi^n_{-t}\eta - \Phi^n_{-t} \eta\|_{L^2(\Gamma_0)}\\
   &\leq C \gamma \|\Phi_{-t}^n \eta\|_{H^1(\Gamma_0)} \leq C \gamma \|\eta\|_{H^1(\Gamma(t))},
\end{align*}
where the first inequality follows from the fact that $P_2^M(t)$ minimises the $L^2$ distance by definition.
Moreover, it is straightforward to see that
\[ \|P_2^M(t) \eta \|_{H^1(\Gamma(t))} \leq C \|\eta\|_{H^1(\Gamma(t))} \]

\begin{lemma}
    Given $t \in [0,T]$ such that a solution to \eqref{galerkinTNSCH1}-\eqref{galerkinTNSCH3} exists, then one has
    \[ \mval{\varphi^M}{\Gamma(t)} = \mval{\varphi_0}{\Gamma_0}\]
\end{lemma}
\begin{proof}
    We chose our basis such that $1 \in V^M(t)$, so we may test \eqref{galerkinTNSCH2} with $1$ for
    \[ m(\normdev \varphi^M, 1) + \mbf{c}_2(1, \varphi^M, \ut^M) + \mbf{c}_2(1,\varphi^M, \widetilde{\ut}) - 0. \]
    By integration by parts it is clear that
    \begin{gather*}
        \mbf{c}_2(1, \varphi^M, \ut^M) = - \mbf{c}_2(\varphi^M,1, \ut^M) = 0,\\
        \mbf{c}_2(1,\varphi^M, \widetilde{\ut}) = m(HV_N, \varphi^M)-\mbf{c}_2(\varphi^M,1, \widetilde{\ut}),
    \end{gather*}
    and so one finds 
    \[ \frac{d}{dt} m(\varphi^M, 1) =m(\normdev \varphi^M, 1) + m(\varphi^M, HV_N) = 0. \]
    Hence we have shown that
    \[ \int_{\Gamma(t)} \varphi^M = \int_{\Gamma_0} P_V^M \varphi_0 = (\varphi_0, 1)_{H^1(\Gamma_0)} = \int_{\Gamma_0} \varphi_0 \]
    where we have again used the definition of $P_V^M \varphi_0$, and the fact that $1 \in V_M(0)$.
    The equality for the mean values then follows since $|\Gamma(t)| = |\Gamma_0|$.
\end{proof}
The same logic applies to the solution of \eqref{weakTNSCH1}-\eqref{weakTNSCH3}.

\begin{lemma} \label{galerkin energy}
    For sufficiently large $M$, the solution triple $(\varphi^M, \mu^M, \ut^M)$ satisfies
	\begin{multline}
		\label{energyestimate}
		\sup_{t \in [0,T]} \left(\frac{1}{2}\| \ut^M\|_{\mbf{L}^2(\Gamma(t))}^2 + \Ech[\varphi^M;t] \right) + \frac{1}{2} \int_0^T \left( \eta_*\| \mbb{E}(\ut^M)\|_{\mbf{L}^2(\Gamma(t))}^2 + \|\gradg \mu^M \|_{L^2(\Gamma(t))}^2\right) \\
		\leq C,
	\end{multline}
	for a constant $C$ independent of $M$.
\end{lemma}
\begin{proof}
	To begin, we test \eqref{galerkinTNSCH1} with $\ut^M$ for
	\begin{multline*}
		\mbf{m}(\normdev \ut^M, \ut^M) + \hat{\mbf{a}}(\eta(\varphi^M), \ut^M, \ut^M)  = \mbf{m}(\mbf{B}, \ut^M) +  \mbf{c}_2(\mu^M,\varphi^M,\ut^M) - \mbf{l}(\ut^M, \ut^M)\\
  - \mbf{d}_1(\ut^M, \ut^M) -\mbf{d}_2(\eta(\varphi^M), \ut^M),
	\end{multline*}
	where we have used $\mbf{c}_1(\ut^M,\ut^M, \ut^M) = 0$.
	Next we notice that from \eqref{galerkinTNSCH2} that
	\begin{align*}
		\mbf{c}_2(\mu^M,\varphi^M,\ut^M) = -m(\normdev \varphi^M, \mu^M) - a(\mu^M, \mu^M) - \mbf{c}_2(\mu^M,\varphi^M, \widetilde{\ut}),
	\end{align*}
	and from \eqref{galerkinTNSCH3} we find
	\[m(\normdev \varphi^M, \mu^M) = \varepsilon a(\varphi^M, \normdev \varphi^M) + \frac{1}{\varepsilon} m(F'(\varphi^M), \normdev \varphi^M).\]
	Hence combining these three equalities it is clear that
	\begin{multline}
		\mbf{m}(\normdev \ut^M, \ut^M) + \hat{\mbf{a}}(\eta(\varphi^M), \ut^M, \ut^M) + \varepsilon a(\varphi^M, \normdev \varphi^M) + \frac{1}{\varepsilon} m(F'(\varphi^M), \normdev \varphi^M) + a(\mu^M, \mu^M) \\
		= \mbf{m}(\mbf{B}, \ut^M) -\mbf{c}_2(\mu^M,\varphi^M, \widetilde{\ut}) - \mbf{l}(\ut^M, \ut^M) - \mbf{d}_1(\ut^M, \ut^M) -\mbf{d}_2(\eta(\varphi^M), \ut^M). \label{energypf1}
	\end{multline}
	Recalling Lemma \ref{transport theorem} we find that
	\begin{gather*}
		\mbf{m}(\normdev \ut^M, \ut^M) = \frac{1}{2} \frac{d}{dt}\mbf{m}(\ut^M, \ut^M) - \frac{1}{2}\mbf{m}(\ut^M, H V_N \ut^M),\\
		a(\varphi^M, \normdev \varphi^M) = \frac{1}{2} \frac{d}{dt} a(\varphi^M,\varphi^M) - \frac{1}{2} b(\varphi^M, \varphi^M),\\
		m(F'(\varphi^M),\normdev \varphi^M) = \frac{d}{dt} m(F(\varphi^M),1) - m(F(\varphi^M), HV_N).
	\end{gather*}
	Next, by using the bound $\eta_* \leq \eta(\cdot)$ and the above we see
	\begin{multline}
		\frac{1}{2} \frac{d}{dt}\mbf{m}(\ut^M, \ut^M) + \frac{d}{dt}\Ech[\varphi^M;t] + \eta_*\mbf{a}(\ut^M, \ut^M) + a(\mu^M, \mu^M) = \mbf{m}(\mbf{B}, \ut^M)\\
		+ \frac{1}{2}\mbf{m}(\ut^M, H V_N \ut^M) + \frac{\varepsilon}{2}b(\varphi^M,\varphi^M)+ \frac{1}{\varepsilon}m(F(\varphi^M), HV_N) -\mbf{c}_2(\mu^M,\varphi^M, \widetilde{\ut})\\
  - \mbf{l}(\ut^M, \ut^M) - \mbf{d}_1(\ut^M, \ut^M) -\mbf{d}_2(\eta(\varphi^M), \ut^M). \label{energypf2}
	\end{multline}
	The focus now is bounding these terms on the right hand side.
	Firstly we find
	\begin{multline}
		\mbf{m}(\mbf{B}, \ut^M) + \frac{1}{2}\mbf{m}(\ut^M, H V_N \ut^M) + \mbf{l}(\ut^M,\ut^M) \leq \frac{1}{2} \|\mbf{B}\|_{\mbf{L}^2(\Gamma(t))}^2\\
		+\left( \frac{1}{2} + \frac{1}{2}\|HV_N\|_{L^\infty(\Gamma(t))}  + \|V_N \mbb{H}\|_{\mbf{L}^{\infty}(\Gamma(t))}\right) \|\ut^M\|_{\mbf{L}^2(\Gamma(t))}^2, \label{energypf3}
	\end{multline}
	where we note that $H,V_N \in C^1(\mathcal{G}_T), \mbb{H} \in (C^1(\mathcal{G}_T))^{3 \times 3} $ by our assumptions.
	Similarly, this smoothness assumption on $HV_N$ allows us to bound
	\begin{align}
		\frac{\varepsilon}{2}b(\varphi^M,\varphi^M) + \frac{1}{\varepsilon}m(F(\varphi^M), HV_N) \leq C \Ech[\varphi^M;t], \label{energypf4}
	\end{align}
	for a constant independent of $t$ and $M$.
	We now look at the terms introduced by the influence of $\widetilde{\ut}$.
	It is straightforward to see that
	\begin{align*}
		|\mbf{d}_1(\ut^M, \ut^M)|  \leq \|\widetilde{\ut}\|_{\mbf{L}^\infty(\Gamma(t))} \|\ut^M\|_{\mbf{H}^1(\Gamma(t))} \|\ut^M\|_{\mbf{L}^2(\Gamma(t))} + \|\widetilde{\ut}\|_{\mbf{H}^{1,\infty}(\Gamma(t))} \|\ut^M\|_{\mbf{L}^2(\Gamma(t))}^2
	\end{align*}
	where we have used the regularity result from Appendix \ref{evolvinglaplace} to bound $\widetilde{\ut}$.
	We then use \eqref{ladyzhenskaya2} to see that
	\[\|\ut^M\|_{\mbf{H}^1(\Gamma(t))} \|\ut^M\|_{\mbf{L}^2(\Gamma(t))} \leq C \|\ut^M\|_{\mbf{L}^2(\Gamma(t))}^{\frac{3}{2}}\|\mbb{E}(\ut^M)\|_{\mbf{L}^2(\Gamma(t))}^{\frac{1}{2}},\]
	and hence from Young's inequality we obtain
	\[\|\widetilde{\ut}\|_{\mbf{L}^\infty(\Gamma(t))} \|\ut^M\|_{\mbf{H}^1(\Gamma(t))} \|\ut^M\|_{\mbf{L}^2(\Gamma(t))} \leq \frac{\eta_*}{4}\|\mbb{E}(\ut^M)\|_{\mbf{L}^2(\Gamma(t))}^2 + C \|\widetilde{\ut}\|_{\mbf{L}^\infty(\Gamma(t))}^{\frac{4}{3}} \|\ut^M\|_{\mbf{L}^2(\Gamma(t))}^2.\]
	All in all, this yields a bound on $\mbf{d}_1(\ut^M, \ut^M)$ given by
	\begin{align}
		|\mbf{d}_1(\ut^M, \ut^M)| \leq \frac{\eta_*}{4}\|\mbb{E}(\ut^M)\|_{\mbf{L}^2(\Gamma(t))}^2 + \left( \|\widetilde{\ut}\|_{\mbf{H}^{1,\infty}(\Gamma(t))} +  C \|\widetilde{\ut}\|_{\mbf{L}^\infty(\Gamma(t))}^{\frac{4}{3}} \right)\|\ut^M\|_{\mbf{L}^2(\Gamma(t))}^2. \label{energypf5}
	\end{align}
	
	Likewise it is straightforward to see that
	\begin{align}
		|\mbf{d}_2(\eta(\varphi^M), \ut^M)| \leq \frac{\eta_*}{4} \|\mbb{E}(\ut^M)\|_{\mbf{L}^2(\Gamma(t))}^2 + \frac{2(\eta^*)^2}{\eta_*} \|\mbb{E}(\widetilde{\ut})\|_{\mbf{L}^2(\Gamma(t))}^2 \label{energypf6}.
	\end{align}
	We lastly consider the $\mbf{c}_2$ term and see that
	\[\mbf{c}_2(\mu^M,\varphi^M, \widetilde{\ut}) = \int_{\Gamma(t)} \mu^M \gradg \varphi^M \cdot \widetilde{\ut} = -\int_{\Gamma(t)}\varphi^M \gradg \mu^M \cdot \widetilde{\ut} - \int_{\Gamma(t)} \varphi^M \mu^M \divg \widetilde{\ut},\]
	which follows from integration by parts.
	By construction of $\widetilde{\ut}$ we now find that
	\[-\mbf{c}_2(\mu^M,\varphi^M, \widetilde{\ut}) = \mbf{c}_2(\varphi^M,\mu^M, \widetilde{\ut}) - m(\mu^M, HV_N \varphi^M).\]
	The difficulty now is in bounding $|m(\mu^M, HV_N \varphi^M)|$, for which we argue as in \cite{caetano2021cahn}.
    By definition we find that
    \[m(\mu^M, HV_N \varphi^M) = m(\mu^M, P_2^M(t) (HV_N\varphi^M)),\]
    where we observe that we can now test \eqref{galerkinTNSCH3} with $P_2^M(t) (HV_N\phi^M)$.
    This yields
    \[m(\mu^M, P_2^M(t) (HV_N\varphi^M)) = \varepsilon a(\varphi^M,P_2^M(t) (HV_N\varphi^M)) + \frac{1}{\varepsilon}m(F'(\varphi^M), P_2^M(t) (HV_N\varphi^M)),\]
    and hence we find that
    \begin{multline*}
    |m(\mu^M, HV_N \varphi^M)| \leq C \|\varphi^M\|_{H^1(\Gamma(t)}^2 + \frac{1}{\varepsilon}m(F'(\varphi^M),HV_N\varphi^M)\\
    + \frac{1}{\varepsilon}m(F'(\varphi^M), P_2^M(t) (HV_N\varphi^M) - HV_N \varphi^M).
    \end{multline*}
    Now recalling that assumptions on $F_1', F_2'$ and the assumptions on $P_2^M$ one finds that for sufficiently large $M$,
    \[ |m(\mu^M, HV_N \varphi^M)| \leq C + C \Ech[\varphi^M;t] + C \gamma \|\gradg \varphi^M\|_{L^2(\Gamma(t))}^{q+1}, \]
    where we have also used the Sobolev embedding $L^q(\Gamma(t)) \hookrightarrow H^1(\Gamma(t))$.
    We note that $\gamma \rightarrow 0$ as $M \rightarrow \infty$.
	From this bound one readily finds that
	\begin{align}
		|\mbf{c}_2(\mu^M,\varphi^M, \widetilde{\ut})| \leq C + \frac{1}{2}\|\gradg \mu^M\|_{L^2(\Gamma(t))}^2 + C \Ech[\varphi^M;t] + C \gamma \Ech[\varphi^M;t]^{\frac{q+1}{2}}, \label{energypf7}
	\end{align}
	for constants $C$ independent of $M$, and some small $\gamma > 0$.\\
	
	Finally, by using the estimates \eqref{energypf3}-\eqref{energypf7} in \eqref{energypf2} and integrating over $[0,t]$ we obtain an inequality of the form
	\begin{multline}
		\frac{1}{2}\| \ut^M\|_{\mbf{L}^2(\Gamma(t))}^2 + \Ech[\varphi^M;t] + \frac{\eta_*}{2} \int_0^t \| \mbb{E}(\ut^M)\|_{\mbf{L}^2(\Gamma(t))}^2 + \frac{1}{2}\int_0^t \|\gradg \mu^M \|_{L^2(\Gamma(t))}^2 \leq k\\
		+\int_0^t K(s) \left(\frac{1}{2}\| \ut^M\|_{\mbf{L}^2(\Gamma(s))}^2 + \Ech[\varphi^M;s] +  \gamma\left(\frac{1}{2}\| \ut^M\|_{\mbf{L}^2(\Gamma(s))}^2 + \Ech[\varphi^M;s]\right)^{\frac{q+1}{2}}\right) \, ds, \label{energypf8}
	\end{multline}
	where
	\begin{multline*}
		k = C + \frac{1}{2}\int_0^T \|\mbf{B}\|_{\mbf{L}^2(\Gamma(s))}^2 \, ds + \frac{2(\eta^*)^2}{\eta_*} \int_0^T \|\mbb{E}(\widetilde{\ut})\|_{\mbf{L}^2(\Gamma(s))}^2 \, ds + \frac{1}{2}\| P_\mbf{V}^M \mbf{u}_{T,0}\|_{\mbf{L}^2(\Gamma(0))}^2\\
		+ \Ech[P_V^M \varphi_0;0],
	\end{multline*}
	and
	\begin{align*}
		K(s) =  C + \frac{1}{2}\|HV_N\|_{L^\infty(\Gamma(s))}  + \|V_N \mbb{H}\|_{\mbf{L}^{\infty}(\Gamma(s))} +  \|\widetilde{\ut}\|_{\mbf{H}^{1,\infty}(\Gamma(s))} +  C \|\widetilde{\ut}\|_{\mbf{L}^\infty(\Gamma(s))}^{\frac{4}{3}}
	\end{align*}
	We can bound $k$ independently of $M$ by noting that
	\[ \|P_V^M \varphi_0\|_{H^1(\Gamma(0))} \leq \|\varphi_0\|_{H^1(\Gamma(0))}, \qquad \| P^M_\mbf{V} \mbf{u}_{T,0}\|_{\mbf{L}^2(\Gamma(0))} \leq \| \mbf{u}_{T,0}\|_{\mbf{L}^2(\Gamma(0))},\]
	and as above we can bound the potential term in $\Ech$ by using Sobolev embeddings.\\
	
	Lastly, we note that while $\Ech[\varphi^M;t]$ is not necessarily non-negative, it is bounded below.
	Hence we add some sufficiently large constant to \eqref{energypf8} so that the analogous inequality holds for the modified energy,
	\[\widetilde{\Ech}[\varphi;t] := \Ech[\varphi;t] + \tilde{\beta} \geq \frac{\varepsilon}{2}\|\gradg \varphi\|_{L^2(\Gamma(t))}^2.\]
	From our assumptions it is clear such a constant exists and depends only on the evolution of $\Gamma(t)$ and choice of $F$.
	Thus one obtains \eqref{energyestimate} by using Lemma \ref{biharilasalle}, with $\omega(s) = s + \gamma s^{\frac{q+1}{2}}$, where one can verify that for $q > 1$
    \[ \Omega(y) = \frac{2}{q-1} \log\left(\frac{\gamma + y_0^{\frac{1-q}{2}}}{\gamma + y^{\frac{1-q}{2}}} \right), \]
    for a suitable choice of $y_0 > 0$.
    In order to apply Lemma \ref{biharilasalle} we need that 
    \[\Omega(k) + \int_0^T K(s) \, ds \in \mathrm{dom}(\Omega^{-1}) = R(\Omega) = \left(-\infty, \frac{2}{q-1} \log\left( \frac{\gamma + y_0^{\frac{1-q}{2}}}{\gamma} \right)\right),\]
    which in turn follows if
    \[ -\frac{2}{q-1} \log\left({\gamma + k^{\frac{1-q}{2}}} \right) + \int_0^T K(s) \, ds \in \left( -\infty, \frac{-2}{q-1} \log(\gamma) \right).\]
    Taking $M$ sufficiently large, and hence $\gamma$ sufficiently small, we may now apply Lemma \ref{biharilasalle} to show \eqref{energyestimate}.
    In the case $q=1$ we may apply the usual Gr\"onwall inequality instead.
\end{proof}

From \eqref{energyestimate}, the growth conditions on $F'$, and the Sobolev embedding $H^1(\Gamma(t)) \hookrightarrow L^{2q}(\Gamma(t))$ one finds that $F'(\varphi^M) \in L^2_{L^2}$, and so it is clear that one obtains uniform $L^2_{H^1}$ bounds on $\mu^M$.
Likewise, using \eqref{korn1}, one can establish uniform $L^2_{\mbf{H}^1}$ bounds for $\ut^M$.

\subsubsection{Passage to the limit}
We have established uniform bounds for $\ut^M$ in $ L^\infty_{\mbf{H}_\sigma}$ and $ L^2_{\mbf{H^1}}$, for $\varphi^M$ in $L^\infty_{H^1}$, and for $\mu^M$ in $L^2_{H^1}$.
Thus, there exist limiting functions $\ut, \varphi, \mu$ such that
\begin{gather*}
	\ut^M \rightharpoonup \ut, \text{ weakly in } L^2_{\mbf{V}_\sigma},\\
	\ut^M \overset{*}{\rightharpoonup} \ut, \text{ weak-}* \text{ in } L^\infty_{\mbf{H}_\sigma},\\
	\varphi^M \overset{*}{\rightharpoonup} \varphi, \text{ weak-}* \text{ in } L^\infty_{H^1},\\
	\mu^M \rightharpoonup \mu, \text{ weakly in } L^2_{H^1}.
\end{gather*}
Moreover, arguing as in \cite{caetano2021cahn}, one can show that $\varphi^M \rightarrow \varphi$ strongly in $L^2_{L^2}$.
This is useful as one cannot use (a variant of) the Aubin-Lions theorem to show strong convergence of $\varphi, \ut$ because we do not have uniform estimates for $\varphi^M$ in $H^1_{H^{-1}}$ and $\ut^M$ in $H^1_{\mbf{V}_\sigma'}$.
Now by proceeding as in the proof of \cite{caetano2021cahn}, Proposition 4.10, one shows the existence of $\normdev \varphi$.
We do not do this in detail for the sake of brevity, but we outline the argument.
Firstly, one considers a sufficiently smooth test function so that one can pass the derivative onto this test function.
By careful choice of test function, one can pass to the limit, using the weak convergence of $\varphi^M, \mu^M$, and obtain an equation which characterises the weak material time derivative.
For explicit details we refer to \cite{caetano2021cahn} Proposition 4.10.
We compute a bound for $\normdev \varphi$ in $L^2_{H^{-1}}$ in Lemma \ref{smooth derivatives bound} below.\\

Recalling the compact embeddings
\[ {H^1}(\Gamma(t)) \hookrightarrow L^p(\Gamma(t)),\]
for all $p \in (1, \infty)$, by using \cite{AlpCaeDju23}, Theorem 6.2,  one obtains strongly convergent subsequences such that
\begin{gather*}
	\varphi^M \rightarrow \varphi, \text{ strongly in } L^2_{L^p}.
\end{gather*}
The case $p=4$ will be useful when we discuss the passage to the limit of $\ut^M$.\\

We now discuss passage to the limit in the nonlinear terms.
Firstly, it is shown in \cite{caetano2021cahn} that
\[F'(\varphi^M) \rightharpoonup F'(\varphi), \text{ weakly in } L^2_{L^2},\]
where the authors use a generalisation of the dominated convergence theorem.
To see that we can pass to the limit in the $\mbf{c}_1$ term one argues as in \cite{temam2001navier} Lemma 3.2.
Similarly, to see the convergence in the $\mbf{c}_2(\mu^M, \varphi^M, \boldsymbol{\phi})$ term in \eqref{galerkinTNSCH1} we write
\[\mbf{c}_2(\mu^M, \varphi^M, \boldsymbol{\phi}) = -\mbf{c}_2(\varphi^M, \mu^M, \boldsymbol{\phi}) = \mbf{c}_2(\varphi - \varphi^M, \mu^M, \boldsymbol{\phi}) - \mbf{c}_2(\varphi, \mu^M, \boldsymbol{\phi}).\]
Firstly, we notice that $\int_0^T \mbf{c}_2(\varphi, \cdot, \boldsymbol{\phi}) $ is an element of $(L^2_{H^1})'$ from the established bounds, and hence the weak convergence of $\mu^M$ in $L^2_{H^1}$ yields
\[\int_0^T \mbf{c}_2(\varphi, \mu^M, \boldsymbol{\phi}) \rightarrow \int_0^T \mbf{c}_2(\varphi, \mu, \boldsymbol{\phi})\]
as $M\rightarrow \infty$.
For the other term, we use the strong convergence $\varphi^M \rightarrow \varphi$ in $L^2_{L^4}$, and write
\[\left| \int_0^T \mbf{c}_2(\varphi - \varphi^M, \mu^M, \boldsymbol{\phi}) \right| \leq \int_0^T \|\varphi-\varphi^M\|_{L^4(\Gamma(t))} \|\gradg \mu^M \|_{L^2(\Gamma(t))} \|\boldsymbol{\phi}\|_{\mbf{L}^4(\Gamma(t))} \rightarrow 0.\]
Thus we find
\[\int_0^T \mbf{c}_2(\mu^M, \varphi^M, \boldsymbol{\phi}) \rightarrow -\int_0^T\mbf{c}_2(\varphi, \mu, \boldsymbol{\phi}) = \int_0^T \mbf{c}_2(\mu, \varphi, \boldsymbol{\phi}),\]
and by using standard localisation arguments one concludes that $\mbf{c}_2(\mu^M, \varphi^M, \boldsymbol{\phi}) \rightarrow \mbf{c}_2(\mu, \varphi, \boldsymbol{\phi})$ for almost all $t \in [0,T]$ and all $\boldsymbol{\phi} \in \divfree{t}$.
$\mbf{c}_2(\phi, \varphi^M, \ut^M)$ converges by similar logic.\\

The final nonlinear term to consider the convergence of is $\hat{\mbf{a}}(\eta(\varphi^M), \ut^M, \boldsymbol{\phi})$.
This follows essentially the same calculations as above, owing to the assumption that we have a Lipschitz continuous viscosity $\eta(\cdot)$.
One writes
\[\hat{\mbf{a}}(\eta(\varphi^M), \ut^M, \boldsymbol{\phi}) = \hat{\mbf{a}}(\eta(\varphi), \ut^M, \boldsymbol{\phi}) + \hat{\mbf{a}}(\eta(\varphi^M) - \eta(\varphi), \ut^M, \boldsymbol{\phi}),\]
and as above we find $\int_0^T \hat{\mbf{a}}(\eta(\varphi), \cdot, \boldsymbol{\phi})$ is an element of $L^2_{\mbf{V}_\sigma'}$.
Likewise, the integral of the second term over $[0,T]$ vanishes\footnote{Here one considers $\boldsymbol{\phi}$ sufficiently smooth so that $\mbb{E}(\boldsymbol{\phi}) \in L^\infty_{\mbf{L}^4}$ and extends to $\boldsymbol{\phi} \in L^2_{\divfree{t}}$ by density.} as $M\rightarrow \infty$ by using the Lipschitz property of $\eta(\cdot)$ and the strong convergence of $\varphi^M$ in $L^2_{L^4}$.\\

With these considerations, one argues as in \cite{OlsReuZhi22} to see that $\normdev \ut$ exists.
This argument is similar to the aforementioned proof that $\normdev \varphi$ exists, and consists of passing the time derivative onto some sufficiently smooth test function and carefully taking the limit $M \rightarrow \infty$.
We omit the details here as the main point of interest is in the nonlinear terms.
Now we know these derivatives exist, we may show the following bounds.
\begin{lemma}
	\label{smooth derivatives bound}
	We have $\normdev \varphi \in L^2_{H^{-1}}$ and $\normdev \ut \in L^2_{\mbf{V}_\sigma'}$.
\end{lemma}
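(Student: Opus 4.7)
The plan is to obtain uniform bounds on $\normdev \varphi^M$ and $\normdev \ut^M$ in the Galerkin approximations and then pass to the limit via weak lower semi-continuity of the dual norms. For $\normdev \varphi$, I would take $\phi \in H^1(\Gamma(t))$, project via $P^M_V$, and use \eqref{galerkinTNSCH2} to write
\[ m(\normdev \varphi^M, P^M_V \phi) = -a(\mu^M, P^M_V \phi) - \mbf{c}_2(P^M_V \phi, \varphi^M, \ut^M) - \mbf{c}_2(P^M_V \phi, \varphi^M, \widetilde{\ut}), \]
then use that $\normdev \Phi_t^n \psi_i = 0$ implies $\normdev \varphi^M \in V^M(t)$, so $m(\normdev \varphi^M, \phi - P^M_V \phi) = 0$. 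It remains to bound each term on the right by $\|\phi\|_{H^1(\Gamma(t))}$ times an $L^2_t$ function. The elliptic term is immediate, $|\mbf{c}_2(\phi, \varphi^M, \widetilde{\ut})| \le \|\widetilde{\ut}\|_{\mbf{L}^\infty} \|\phi\|_{L^2} \|\gradg \varphi^M\|_{L^2}$, and for the nonlinear transport I would use Hölder and Ladyzhenskaya \eqref{ladyzhenskaya2}:
\[ |\mbf{c}_2(\phi, \varphi^M, \ut^M)| \le \|\phi\|_{L^4} \|\gradg \varphi^M\|_{L^2} \|\ut^M\|_{\mbf{L}^4} \le C\|\phi\|_{H^1} \|\varphi^M\|_{H^1} \|\ut^M\|_{\mbf{L}^2}^{1/2} \|\ut^M\|_{\mbf{H}^1}^{1/2}. \]
Squaring, integrating, and invoking the uniform energy estimate (Lemma \ref{galerkin energy}) together with $\varphi^M \in L^\infty_{H^1}$ and $\ut^M \in L^\infty_{\mbf{H}_\sigma} \cap L^2_{\mbf{H}^1}$ yields $\|\normdev \varphi^M\|_{L^2_{H^{-1}}} \le C$ uniformly in $M$.

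For $\normdev \ut$ I would follow the same strategy but with a twist needed because $\normdev \ut^M$ is not necessarily divergence-free. Given $\boldsymbol{\phi} \in \mbf{V}_\sigma(t)$, split $\boldsymbol{\phi} = P^M_\mbf{V}\boldsymbol{\phi} + (\boldsymbol{\phi} - P^M_\mbf{V}\boldsymbol{\phi})$; using $\pioladev \ut^M \in \mbf{V}_\sigma^M(t)$ and \eqref{derivatives2} one has
\[ \mbf{m}(\normdev \ut^M, \boldsymbol{\phi} - P^M_\mbf{V} \boldsymbol{\phi}) = -\mbf{m}(\bar{\mbb{A}} \ut^M, \boldsymbol{\phi} - P^M_\mbf{V} \boldsymbol{\phi}), \]
which is controlled by $C\|\ut^M\|_{\mbf{L}^2}\|\boldsymbol{\phi}\|_{\mbf{L}^2}$ since $\bar{\mbb{A}} \in C^1(\mathcal{G}_T)$. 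The remaining piece $\mbf{m}(\normdev \ut^M, P^M_\mbf{V}\boldsymbol{\phi})$ is extracted from \eqref{galerkinTNSCH1}, and each forcing term on the right is estimated against $\|P^M_\mbf{V}\boldsymbol{\phi}\|_{\mbf{V}_\sigma}$ exploiting $H^1$-stability of the projection (as in \cite{olshanskii2022tangential}): the linear terms $\hat{\mbf{a}}, \mbf{l}, \mbf{d}_1, \mbf{d}_2, \mbf{m}(\mbf{B},\cdot)$ are direct; for $\mbf{c}_2(\mu^M, \varphi^M, \cdot)$ I would integrate by parts using $\divg \boldsymbol{\phi}=0$ and tangentiality to get $-\mbf{c}_2(\varphi^M, \mu^M, \boldsymbol{\phi})$, then bound by $\|\varphi^M\|_{L^4}\|\gradg \mu^M\|_{L^2}\|\boldsymbol{\phi}\|_{\mbf{L}^4}$.

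The main obstacle is the convective term $\mbf{c}_1(\ut^M, \ut^M, \boldsymbol{\phi})$, where a naive estimate in $L^4 \cdot L^2 \cdot L^4$ produces $\|\ut^M\|_{\mbf{H}^1}^{3/2}$ after Ladyzhenskaya, which is only $L^{4/3}_t$. I would circumvent this by first using the antisymmetry $\mbf{c}_1(\ut^M, \ut^M, \boldsymbol{\phi}) = -\mbf{c}_1(\boldsymbol{\phi}, \ut^M, \ut^M)$ so that the gradient falls on the test function,
\[ |\mbf{c}_1(\ut^M, \ut^M, \boldsymbol{\phi})| \le \|\gradg \boldsymbol{\phi}\|_{\mbf{L}^2} \|\ut^M\|_{\mbf{L}^4}^2 \le C\|\boldsymbol{\phi}\|_{\mbf{V}_\sigma} \|\ut^M\|_{\mbf{L}^2}\|\ut^M\|_{\mbf{H}^1}, \]
which, squared and integrated, is controlled by $\|\ut^M\|_{L^\infty_{\mbf{L}^2}}^2 \|\ut^M\|_{L^2_{\mbf{H}^1}}^2$. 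Collecting the estimates yields $\|\normdev \ut^M\|_{L^2_{\mbf{V}_\sigma'}} \le C$ uniformly in $M$. Finally, by the established weak convergences $\normdev \varphi^M \rightharpoonup \normdev \varphi$ in $L^2_{H^{-1}}$ and $\normdev \ut^M \rightharpoonup \normdev \ut$ in $L^2_{\mbf{V}_\sigma'}$ (identified along the lines of \cite{caetano2021cahn,olshanskii2022tangential}), weak lower semi-continuity of the dual norms transfers the bounds to the limits, giving the claimed regularity.
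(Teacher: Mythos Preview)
Your estimates for the individual bilinear/trilinear forms are correct and mirror the paper's exactly (in particular the antisymmetry trick $\mbf{c}_1(\ut,\ut,\boldsymbol{\phi})=-\mbf{c}_1(\boldsymbol{\phi},\ut,\ut)$ and the flip $\mbf{c}_2(\mu,\varphi,\boldsymbol{\phi})=-\mbf{c}_2(\varphi,\mu,\boldsymbol{\phi})$). The difference is \emph{where} you apply them, and that difference breaks your argument.

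The paper does not prove uniform bounds on $\normdev\varphi^M$ and $\normdev\ut^M$ at the Galerkin level. In fact it states explicitly, just after Lemma~\ref{galerkin energy}, that ``one cannot use (a variant of) the Aubin-Lions theorem \ldots\ because we do not have uniform estimates for $\varphi^M$ in $H^1_{H^{-1}}$ and $\ut^M$ in $H^1_{\mbf{V}_\sigma'}$''. Instead the paper first establishes that the limits $\normdev\varphi$ and $\normdev\ut$ \emph{exist} (by the arguments of \cite{caetano2021cahn} and \cite{olshanskii2022tangential}), and then bounds them directly from the limiting weak formulation \eqref{weakTNSCH1}--\eqref{weakTNSCH3}, where one may test with an arbitrary $\phi\in H^1(\Gamma(t))$ or $\boldsymbol{\phi}\in\divfree{t}$ without any projection.

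The concrete gap in your argument is the claim $m(\normdev\varphi^M,\phi-P^M_V\phi)=0$. Recall that $P^M_V$ is defined as the $H^1$-orthogonal projection onto $V^M(t)$, so $\phi-P^M_V\phi$ is $H^1$-orthogonal to $V^M(t)$, not $L^2$-orthogonal. Since $m(\cdot,\cdot)$ is the $L^2$ pairing, this term does not vanish, and you have no control on $\|\normdev\varphi^M\|_{L^2}$ or $\|\gradg\normdev\varphi^M\|_{L^2}$ to bound it otherwise. For the velocity the situation is slightly better because $P^M_{\mbf V}$ is an $\mbf L^2$-projection, so your splitting via $\pioladev\ut^M\in\mbf V_\sigma^M(t)$ is valid; however you then need the $\mbf H^1$-stability of $P^M_{\mbf V}$ to bound $\|P^M_{\mbf V}\boldsymbol{\phi}\|_{\mbf H^1}\leq C\|\boldsymbol{\phi}\|_{\mbf H^1}$ uniformly in $M$, which is not automatic for a generic basis and is not asserted in the present paper. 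The clean route is the paper's: pass to the limit first, then bound the derivatives from \eqref{weakTNSCH1}--\eqref{weakTNSCH2} using precisely the term-by-term inequalities you already wrote down.
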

\begin{proof}
	To see the bound for $\normdev \varphi^M$ observe from \eqref{weakTNSCH1} that
	\[|m(\normdev \varphi, \phi)| \leq |a(\mu, \phi)| + |\mbf{c}_2(\phi,\varphi,\ut)| + |\mbf{c}_2(\phi,\varphi, \widetilde{\ut})|,\]
	and we recall
	\begin{gather*}
		\mbf{c}_2(\phi,\varphi,\ut) = -\mbf{c}_2(\varphi,\phi,\ut),\\
		\mbf{c}_2(\phi,\varphi, \widetilde{\ut}) =  m(\phi, HV_N \varphi) - \mbf{c}_2(\varphi,\phi, \widetilde{\ut}).
	\end{gather*}
	From this it is clear to see that
	\begin{multline*}
		\frac{|m(\normdev \varphi, \phi)|}{\|\phi\|_{H^1(\Gamma(t))}} \leq \|\gradg \mu\|_{L^2(\Gamma(t))} + \|\varphi\|_{L^4(\Gamma(t))} \|\ut\|_{\mbf{L}^4(\Gamma(t))}+ \|H V_N\|_{L^\infty(\Gamma(t))} \|\varphi\|_{L^2(\Gamma(t))}\\
		+ \|\varphi\|_{L^4(\Gamma(t))} \|\widetilde{\ut}\|_{\mbf{L}^4(\Gamma(t))},
	\end{multline*}
	and hence using \eqref{ladyzhenskaya1}, \eqref{ladyzhenskaya2}, and the uniform bounds established by \eqref{energyestimate} we obtain the $L^2_{H^{-1}}$ bound.\\
	
	Similarly to bound $\normdev \ut$, we see from \eqref{galerkinTNSCH1} that
	\begin{multline*}
		|\mbf{m}(\normdev \ut, \boldsymbol{\phi})| \leq  |\hat{\mbf{a}}(\eta(\varphi), \ut, \boldsymbol{\phi})| + |\mbf{c}_1(\ut,\ut, \boldsymbol{\phi})| +  |\mbf{l}(\ut, \boldsymbol{\phi})| + |\mbf{d}_1(\ut, \boldsymbol{\phi})|\\
  +|\mbf{d}_2(\eta(\varphi), \boldsymbol{\phi})| + |\mbf{m}(\mbf{B}, \boldsymbol{\phi})| +  |\mbf{c}_2(\mu,\varphi,\boldsymbol{\phi})|.
	\end{multline*}
	Recall from properties of $\mbf{c}_1, \mbf{c}_2$ that
	\begin{gather*}
		|\mbf{c}_1(\ut,\ut, \boldsymbol{\phi})| = |\mbf{c}_1( \boldsymbol{\phi},\ut,\ut)| \leq \|\boldsymbol{\phi}\|_{\mbf{H}^1(\Gamma(t))} \|\ut\|_{\mbf{L}^4(\Gamma(t))}^2,\\
		|\mbf{c}_2(\mu,\varphi,\boldsymbol{\phi})| = |\mbf{c}_2(\varphi,\mu,\boldsymbol{\phi})|\leq C \|\varphi\|_{L^4(\Gamma(t))}\|\gradg \mu \|_{L^2(\Gamma(t))} \|\boldsymbol{\phi}\|_{\mbf{H}^1(\Gamma(t))},
	\end{gather*}
	where we have used the Sobolev embedding $\mbf{H}^1(\Gamma(t)) \hookrightarrow \mbf{L}^4(\Gamma(t))$ in the second inequality.
	The only other problematic term here is $|\mbf{d}_1(\ut, \boldsymbol{\phi})|$, which we bound as
	\begin{align*}
		|\mbf{d}_1(\ut, \boldsymbol{\phi})| &\leq |\mbf{c}_1(\ut, \widetilde{\ut}, \boldsymbol{\phi})| + |\mbf{c}_1(\widetilde{\ut},\ut, \boldsymbol{\phi})|\\
		&\leq |\mbf{c}_1(\boldsymbol{\phi}, \widetilde{\ut}, \ut)| + |\mbf{c}_1(\boldsymbol{\phi},\ut,\widetilde{\ut})| + 2|\mbf{m}(\ut HV_N, \boldsymbol{\phi})|\\
		&\leq C\left( \|\widetilde{\ut}\|_{\mbf{L}^4(\Gamma(t))} + 1 \right)\| \ut\|_{\mbf{L}^4(\Gamma(t))} \|\boldsymbol{\phi}\|_{\mbf{H}^1(\Gamma(t))}.
	\end{align*}
	From these inequalities it is straightforward to see that
	\begin{multline*}
		\frac{|\mbf{m}(\normdev \ut, \boldsymbol{\phi})|}{\|\boldsymbol{\phi}\|_{\mbf{H}^1(\Gamma(t))}} \leq \eta^* \|\mbb{E}(\ut)\|_{\mbf{L}^2(\Gamma(t))} + \|\ut\|_{\mbf{L}^4(\Gamma(t))}^2 + C \|\varphi\|_{L^4(\Gamma(t))}\|\gradg \mu \|_{L^2(\Gamma(t))}\\
		+ \| V_N \mbb{H} \|_{\mbf{L}^\infty(\Gamma(t))} \| \ut\|_{\mbf{L}^2(\Gamma(t))}+C \|\ut\|_{\mbf{L}^2(\Gamma(t))}+ C\| \ut\|_{\mbf{L}^4(\Gamma(t))} \|\widetilde{\ut}\|_{\mbf{L}^4(\Gamma(t))}\\
		+ \eta^*\|\mbb{E}(\widetilde{\ut})\|_{\mbf{L}^2(\Gamma(t))} + \| \mbf{B} \|_{\mbf{L}^2(\Gamma(t))}.
	\end{multline*}
	From this inequality it is straightforward to see how one obtains the $L^2_{\mbf{V}_\sigma'}$ bound by using \eqref{ladyzhenskaya2}, \eqref{energyestimate}, and Sobolev embeddings where necessary.
\end{proof}

Lastly it remains to discuss the initial conditions.
We note that $\ut^M(0) = P^M_\mbf{V} \mbf{u}_{T,0}$, and $\varphi^M(0) = P^M_V \varphi_0$.
These projections are such that $P^M_\mbf{V} \mbf{u}_{T,0} \rightarrow \mbf{u}_{T,0}$ strongly in $\Hdivfree{0}$, and $P^M_V \varphi_0 \rightarrow \varphi_0$ strongly in $H^1(\Gamma(0))$.
By using standard arguments (again we refer to \cite{caetano2021cahn,OlsReuZhi22}) one can then verify that $\ut(0) = \mbf{u}_{T,0}$, and $\varphi(0) = \varphi_0$.
All in all we have shown Theorem \ref{smooth existence}.

\subsection{The logarithmic potential}
As is common in the literature, see for example \cite{caetano2021cahn, elliott1991generalized}, we consider a regularised version of this potential and use the preceding theory to show existence.
To this end, we choose $\delta \in (0,1)$ and define a regularised function
\[ F_{\log}^\delta(r) = 
\begin{cases}
	(1-r)\log(\delta) + (1+r)\log(2-\delta) + \frac{(1-r)^2}{2 \delta} + \frac{(1+r)^2}{2(2-\delta)}, & r \geq 1 - \delta,\\
	F_{\log}(r), & r \in (-1+\delta, 1- \delta),\\
	(1+r)\log(\delta) + (1-r)\log(2-\delta) + \frac{(1+r)^2}{2 \delta} + \frac{(1-r)^2}{2(2-\delta)}, & r \leq -1 + \delta.
\end{cases}
\]
It is a straightforward calculation to see that $F_{\log}^{\delta} \in C^2(\mbb{R})$.
As before, we also introduce shorthand notation $\fd(r) = (F_{\log}^\delta)'(r)$.\\

We now focus on the following version of \eqref{logweakTNSCH1}-\eqref{logweakTNSCH3}.
We want to find $\varphi^\delta \in H^1_{H^{-1}} \cap L^\infty_{H^1}$, $\mu^\delta \in L^2_{H^1}$, and $\ut^\delta \in H^1_{\mbf{V}_\sigma'} \cap L^2_{\mbf{V}_\sigma}$ such that
\begin{gather}
	\begin{split}
		\mbf{m}_*(\normdev \ut^\delta, \boldsymbol{\phi}) + \hat{\mbf{a}}(\eta(\varphi^\delta), \ut^\delta, \boldsymbol{\phi}) + \mbf{c}_1(\ut^\delta,\ut^\delta, \boldsymbol{\phi}) + \mbf{l}(\ut^\delta, \boldsymbol{\phi}) + \mbf{d}_1(\ut^\delta, \boldsymbol{\phi})+\mbf{d}_2(\eta(\varphi^\delta), \boldsymbol{\phi})\\
		= \mbf{m}(\mbf{B}, \boldsymbol{\phi}) +  \mbf{c}_2(\mu^\delta,\varphi^\delta,\boldsymbol{\phi})
	\end{split},\label{regweakTNSCH1}\\
	m_*(\normdev \varphi^\delta, \phi) + a(\mu^\delta, \phi) + \mbf{c}_2(\phi,\varphi^\delta,\ut^\delta) + \mbf{c}_2(\phi,\varphi^\delta, \widetilde{\ut}),\label{regweakTNSCH2}\\
	m(\mu^\delta,\phi) = \varepsilon a(\varphi^\delta, \phi) + \frac{\theta}{2\varepsilon} m(\fd(\varphi^\delta), \phi) - \frac{1}{\varepsilon}m(\varphi^\delta, \phi),\label{regweakTNSCH3}
\end{gather}
for all $\phi \in H^1(\Gamma(t)), \boldsymbol{\phi} \in \divfree{t}$ and almost all $t \in [0,T]$, such that $\varphi^\delta(0) = \varphi_0 \in \mathcal{I}_0$ and $\ut^\delta(0) = \mbf{u}_{T,0} \in \Hdivfree{0}$.\\

Global existence then follows from Theorem \ref{smooth existence}, since the nonlinear term $\fd(\cdot)$ has linear growth ($q=1$) by construction.
However, as our polynomial conditions depend on our regularisation parameter, $\delta$, we now have to establish new energy estimates before passing to the limit $\delta \rightarrow 0$.
\begin{lemma}
	The solution triple $(\varphi^\delta, \mu^\delta, \ut^\delta)$ is such that
	\begin{multline}
		\label{regenergyestimate}
		\sup_{t \in [0,T]} \left(\frac{1}{2}\| \ut^\delta\|_{\mbf{L}^2(\Gamma(t))}^2 + \Echd[\varphi^\delta;t] \right) + \frac{\eta_*}{2} \int_0^T \| \mbb{E}(\ut^\delta)\|_{\mbf{L}^2(\Gamma(t))}^2 + \frac{1}{2}\int_0^T \|\gradg \mu^\delta \|_{L^2(\Gamma(t))}^2\\
		\leq C,
	\end{multline}
	for a constant $C$ independent of $\delta$.
\end{lemma}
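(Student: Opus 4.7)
The plan is to follow the same structure as the energy estimate in Lemma~\ref{galerkin energy} for the regular potential, but all arguments that previously relied on the polynomial growth of $F'$ must now be replaced by $\delta$-independent estimates that exploit the admissibility condition $\varphi_0\in\mathcal{I}_0$.

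First I would test \eqref{regweakTNSCH1} with $\ut^\delta$, \eqref{regweakTNSCH2} with $\mu^\delta$, and \eqref{regweakTNSCH3} with $\normdev\varphi^\delta$, then combine using the transport theorem exactly as in the proof of Lemma~\ref{galerkin energy}. This yields an identity of the form
\[
\tfrac{1}{2}\tfrac{d}{dt}\mbf{m}(\ut^\delta,\ut^\delta) + \tfrac{d}{dt}\Echd[\varphi^\delta;t] + \eta_*\mbf{a}(\ut^\delta,\ut^\delta) + a(\mu^\delta,\mu^\delta) \leq \text{RHS},
\]
where the right-hand side contains the terms $\mbf{m}(\mbf{B},\ut^\delta)$, $\mbf{l}$, $b$, $\mbf{d}_1$, $\mbf{d}_2$, the curvature-pressure terms, and $\mbf{c}_2(\mu^\delta,\varphi^\delta,\widetilde{\ut})$. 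All except the last are handled exactly as in the regular potential case, using Ladyzhenskaya, Korn, and the smoothness of $V_N$, $\mbb{H}$ and $\widetilde{\ut}$; in particular the potential term $\tfrac{1}{\varepsilon}m(F^\delta(\varphi^\delta),HV_N)$ is bounded by $C\Echd[\varphi^\delta;t]$ once $F^\delta$ is shifted to be non-negative (the shift constant can be chosen uniformly in $\delta$ because $F^\delta\geq F_{\log}-C$ on $(-1,1)$ and is at worst quadratic outside).

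The main obstacle is producing a $\delta$-independent $L^2$ bound on $\mu^\delta$ to treat $\mbf{c}_2(\mu^\delta,\varphi^\delta,\widetilde{\ut})$, since the polynomial-growth argument used for the smooth potential blows up as $\delta\to 0$. To replace it I would exploit the assumption $\varphi_0\in\mathcal{I}_0$ as follows. Mass conservation (obtained by testing \eqref{regweakTNSCH2} with $\phi\equiv 1$) together with the hypothesis $m_{\varphi_0}(t)<1$ on the compact interval $[0,T]$ gives a constant $\bar m<1$ independent of $\delta$ such that $|\bar\varphi^\delta(t)|\leq\bar m$ on $[0,T]$. A standard monotonicity lemma for $\fd$ (as in \cite{caetano2021cahn,caetano2023regularization}) then yields
\[
\int_{\Gamma(t)}|\fd(\varphi^\delta)|\leq C_1 + C_2\int_{\Gamma(t)}\fd(\varphi^\delta)(\varphi^\delta-\bar\varphi^\delta),
\]
with $C_1,C_2$ depending only on $\bar m$. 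Testing \eqref{regweakTNSCH3} with $\varphi^\delta-\bar\varphi^\delta$ bounds the right-hand side in terms of $\|\gradg\varphi^\delta\|_{L^2}$, $\|\varphi^\delta\|_{L^2}$ and $\|\gradg\mu^\delta\|_{L^2}$ (after using Poincaré on $\mu^\delta-\bar\mu^\delta$), so $\|\fd(\varphi^\delta)\|_{L^1}$ is controlled. Testing \eqref{regweakTNSCH3} with $\phi\equiv 1$ then gives $|\bar\mu^\delta|\leq C(1+\|\fd(\varphi^\delta)\|_{L^1})$, and one more Poincaré inequality produces
\[
\|\mu^\delta\|_{L^2(\Gamma(t))}^2 \leq \tfrac{1}{4}\|\gradg\mu^\delta\|_{L^2(\Gamma(t))}^2 + C\bigl(1+\Echd[\varphi^\delta;t]\bigr),
\]
which is the desired $\delta$-independent substitute for estimate \eqref{energypf7}.

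With this replacement in hand, the remaining $\mbf{c}_2$ term is absorbed exactly as before, using integration by parts against $\widetilde{\ut}$ and the resulting $\|\gradg\mu^\delta\|_{L^2}^2/2$ is absorbed into the left-hand side. The argument concludes by integrating in time and applying Grönwall's inequality (here the linear version suffices, since the nonlinearity that forced the use of Lemma~\ref{biharilasalle} in the smooth case came precisely from the polynomial growth of $F'$), giving the claimed bound with a constant $C$ independent of $\delta$.
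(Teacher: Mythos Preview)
Your overall strategy is sound and your use of the admissibility condition $\varphi_0\in\mathcal{I}_0$ is correct, but the displayed bound
\[
\|\mu^\delta\|_{L^2(\Gamma(t))}^2 \leq \tfrac14\|\gradg\mu^\delta\|_{L^2(\Gamma(t))}^2 + C\bigl(1+\Echd[\varphi^\delta;t]\bigr)
\]
does not follow from your argument. Tracing your steps, testing \eqref{regweakTNSCH3} with $\varphi^\delta-\bar\varphi^\delta$ and using Poincar\'e gives $\|\fd(\varphi^\delta)\|_{L^1}\leq C\bigl(1+\|\gradg\varphi^\delta\|_{L^2}^2+\|\gradg\mu^\delta\|_{L^2}\|\gradg\varphi^\delta\|_{L^2}\bigr)$, hence the same bound for $|\bar\mu^\delta|$. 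Squaring this produces a term $C\|\gradg\mu^\delta\|_{L^2}^2\|\gradg\varphi^\delta\|_{L^2}^2$ whose coefficient in front of $\|\gradg\mu^\delta\|_{L^2}^2$ is \emph{not} small, so it cannot be absorbed into the left-hand side of the energy identity. If instead you keep $|\bar\mu^\delta|$ unsquared and feed it into $|m(\mu^\delta,HV_N\varphi^\delta)|$, one Young inequality yields
\[
|\mbf{c}_2(\mu^\delta,\varphi^\delta,\widetilde{\ut})|\leq \tfrac12\|\gradg\mu^\delta\|_{L^2}^2 + C\bigl(1+(\Echd[\varphi^\delta;t])^2\bigr),
\]
which is absorbable but quadratic in $\Echd$. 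So your parenthetical remark that ``the linear version suffices'' is wrong for your route: you still need Lemma~\ref{biharilasalle} with $\omega(y)=y+y^2$. This does not break the proof, it only corrects the final step.

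For comparison, the paper takes a different path to bound this term: rather than estimating $\|\mu^\delta\|_{L^2}$ via $\|\fd\|_{L^1}$, it tests \eqref{regweakTNSCH3} directly with $HV_N\varphi^\delta$, uses $r\fd(r)\geq 0$ to reduce to $m(\fd(\varphi^\delta),\varphi^\delta)$, and then closes a \emph{linear} recursion between $m(\mu^\delta,\varphi^\delta)$ and $m(\mu^\delta,1)$ by exploiting the inverse Laplacian $\mathcal{G}$ together with the strict bound $m_{\varphi_0}(t)\leq\alpha<1$. This yields $|\mbf{c}_2(\mu^\delta,\varphi^\delta,\widetilde{\ut})|\leq \tfrac12\|\gradg\mu^\delta\|_{L^2}^2+C(1+\|\gradg\varphi^\delta\|_{L^2}^2)$ and hence a genuinely linear Gr\"onwall. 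Your monotonicity-lemma approach is the more standard one in the Cahn--Hilliard literature and is perfectly valid; the paper's route is slightly sharper here. Finally, note that the paper also verifies explicitly that $\Echd[\varphi_0;0]$ is bounded independently of $\delta$, which you should add as a short closing check.
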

\begin{proof}
	For the interest of brevity we try to recycle as much of the proof of the previous energy estimate.
	Examining the proof of Lemma \ref{galerkin energy} one finds that the analogues of \eqref{energypf1}-\eqref{energypf6} still hold.
	Continuing in the same way, we find that
	\[-\mbf{c}_2(\mu^\delta,\varphi^\delta, \widetilde{\ut}) = \mbf{c}_2(\varphi^\delta,\mu^\delta, \widetilde{\ut}) - m(\mu^\delta, HV_N \varphi^\delta).\]
	In particular, all we need to establish is an $L^2$ bound on $\mu^\delta$.
	To do this we test \eqref{regweakTNSCH3} against $\varphi^\delta H V_N \in H^1(\Gamma(t))$ for
	\begin{align}
		m(\mu^\delta, HV_N \varphi^\delta) = \varepsilon a(\varphi^\delta, HV_N \varphi^\delta) + \frac{\theta}{2 \varepsilon} m(\fd(\varphi^\delta), \varphi^\delta HV_N) - \frac{1}{\varepsilon} m(\varphi^\delta, \varphi^\delta HV_N). \label{energypf9}
	\end{align}
	We then notice that $r\fd(r) \geq 0$, and so
	\begin{align*}
		\frac{\theta}{2 \varepsilon} m(\fd(\varphi^\delta), \varphi^\delta HV_N) &\leq \frac{\theta}{2 \varepsilon}\|HV_N\|_{L^{\infty}(\Gamma(t))} \|\fd(\varphi^\delta)\varphi^\delta\|_{L^1(\Gamma(t))}\\
		&= \frac{\theta}{2 \varepsilon} \|HV_N\|_{L^{\infty}(\Gamma(t))} m(\fd(\varphi^\delta), \varphi^\delta),
	\end{align*}
	and using \eqref{regweakTNSCH3} we see
	\begin{align}
		\frac{\theta}{2 \varepsilon} m(\fd(\varphi^\delta), \varphi^\delta) = m(\mu^\delta, \varphi^\delta) - \varepsilon a(\varphi^\delta, \varphi^\delta) + \frac{1}{\varepsilon} m(\varphi^\delta, \varphi^\delta). \label{energypf10}
	\end{align}
	Now by using the inverse Laplacian, we see
	\begin{align*}
		m(\mu^\delta, \varphi^\delta) &= m\left(\mu^\delta, \varphi^\delta - \mval{\varphi^\delta}{\Gamma(t)} \right) + m\left(\mu^\delta, \mval{\varphi^\delta}{\Gamma(t)}\right)\\
		&= a\left(\mu^\delta, \mathcal{G} \left(\varphi^\delta - \mval{\varphi^\delta}{\Gamma(t)} \right)\right) + m\left(\mu^\delta, \mval{\varphi^\delta}{\Gamma(t)}\right).
	\end{align*}
	Now by using the definition of the inverse Laplacian, and Poincar\'e's inequality one finds
	\[ a\left(\mu^\delta, \mathcal{G} \left(\varphi^\delta - \mval{\varphi^\delta}{\Gamma(t)} \right)\right) \leq C \| \gradg \mu^\delta\|_{L^2(\Gamma(t))} \| \gradg \varphi^\delta\|_{L^2(\Gamma(t))}.\]
	Also, as we chose $\varphi_0 \in \mathcal{I}_0$ there exists some constant $\alpha < 1$ such that
	\[ m\left(\mu^\delta, \mval{\varphi^\delta}{\Gamma(t)}\right) = m(\mu^\delta, 1)\mval{\varphi^\delta}{\Gamma(t)} \leq |m(\mu^\delta, 1)| \left| \ \mval{\varphi_0}{\Gamma_0} \right| \leq \alpha  |m(\mu^\delta, 1)|,  \]
	and hence we find
	\begin{align}
		|m(\mu^\delta, \varphi^\delta)| \leq C \| \gradg \mu^\delta\|_{L^2(\Gamma(t))} \| \gradg \varphi^\delta\|_{L^2(\Gamma(t))} + \alpha  |m(\mu^\delta, 1)|. \label{energypf11}
	\end{align}
	Testing \eqref{regweakTNSCH2} against $\phi \equiv 1$ and noting that $\fd(r) \leq r \fd(r) + 1$,
	\[ m(\mu^\delta, 1) = \frac{\theta}{2 \varepsilon}m(\fd(\varphi^\delta),1) - \frac{1}{\varepsilon}m(\varphi^\delta, 1) \leq \frac{\theta}{2 \varepsilon} |\Gamma(t)| +  \frac{\theta}{2 \varepsilon}m(\fd(\varphi^\delta),\varphi^\delta) - \frac{1}{\varepsilon}m(\varphi^\delta, 1),\]
	thus using \eqref{energypf10} we see
	\begin{align}
		|m(\mu^\delta, 1)| \leq C + C \| \gradg \varphi^\delta \|_{L^2(\Gamma(t)}^2 + |m(\varphi^\delta, \mu^\delta)|, \label{energypf12}
	\end{align}
	where we have used Young's and Poincar\'e's inequalities as appropriate.
	Now by combining \eqref{energypf11}, \eqref{energypf12} we see
	\[|m(\mu^\delta, \varphi^\delta)| \leq C + C \|\gradg \varphi^\delta \|_{L^2(\Gamma(t))}^2 + C \| \gradg \mu^\delta\|_{L^2(\Gamma(t))} \| \gradg \varphi^\delta\|_{L^2(\Gamma(t))},\]
	for constants which depend on $\alpha$.
	Now using this and \eqref{energypf10} in \eqref{energypf9} one can show that
	\[|m(\mu^\delta, HV_N \varphi^\delta)| \leq  C + C \|\gradg \varphi^\delta \|_{L^2(\Gamma(t))}^2 + C \| \gradg \mu^\delta\|_{L^2(\Gamma(t))} \| \gradg \varphi^\delta\|_{L^2(\Gamma(t))},\]
	for constants which depend on $\sup_{t \in [0,T]}\|HV_N\|_{H^{1,\infty}(\Gamma(t))}$.
	Now by using Young's inequality it then follows that
	\[|\mbf{c}_2(\mu^\delta,\varphi^\delta, \widetilde{\ut})|\leq C + C \| \gradg \varphi^\delta\|_{L^2(\Gamma(t))}^2 + \frac{1}{2} \| \gradg \mu^\delta\|_{L^2(\Gamma(t))}^2,\]
	which is the analogue of the bound \eqref{energypf7}.
	Combining this with the analogues of \eqref{energypf1}-\eqref{energypf6} it is straightforward to see that
	\begin{multline}
		\frac{1}{2}\| \ut^M\|_{\mbf{L}^2(\Gamma(t))}^2 + \Echd[\varphi^M;t] + \frac{\eta_*}{2} \int_0^t \| \mbb{E}(\ut^M)\|_{\mbf{L}^2(\Gamma(t))}^2 + \frac{1}{2}\int_0^t \|\gradg \mu^M \|_{L^2(\Gamma(t))}^2\\
  \leq k +\int_0^t K(s) \left(\frac{1}{2}\| \ut^M\|_{\mbf{L}^2(\Gamma(s))}^2 + \Echd[\varphi^M;s]\right) \, ds, \label{energypf13}
	\end{multline}
 for
	\begin{multline*}
	    k = C + \frac{1}{2}\int_0^T \|\mbf{B}\|_{\mbf{L}^2(\Gamma(s))}^2 \, ds + \frac{2(\eta^*)^2}{\eta_*} \int_0^T \|\mbb{E}(\widetilde{\ut})\|_{\mbf{L}^2(\Gamma(s))}^2 \, ds + \frac{1}{2}\| \mbf{u}_{T,0}\|_{\mbf{L}^2(\Gamma(0))}^2\\
     + \Echd[\varphi_0;0],
	\end{multline*}
	and
	\begin{align*}
		K(s) = C +  \frac{1}{2}\|HV_N\|_{L^\infty(\Gamma(s))}  + \|V_N \mbb{H}\|_{\mbf{L}^{\infty}(\Gamma(s))} +  \|\widetilde{\ut}\|_{\mbf{H}^{1,\infty}(\Gamma(s))} +  C \|\widetilde{\ut}\|_{\mbf{L}^\infty(\Gamma(s))}^{\frac{4}{3}}.
	\end{align*}
	The claim then follows from Gr\"onwall's inequality once we establish that $\Echd[\varphi_0;0]$ is independent of $\delta$.
	To see this we find that
	\begin{multline*}
		\Echd[\varphi_0;0] = \int_{\Gamma(t)} \frac{\varepsilon|\gradg \varphi_0|^2}{2} + \frac{\theta}{2\varepsilon} F_{\log}^\delta(\varphi_0)\\
		= \frac{\varepsilon}{2}\|\varphi_0\|_{H^1(\Gamma_0)}^2 + \frac{1}{2 \varepsilon} \|1 - \varphi_0\|_{L^2(\Gamma_0)}^2+ \frac{\theta}{2\varepsilon} \int_{\{|\varphi_0|< 1 - \delta\}} F_{\log}(\varphi_0)\\
		+ \frac{\theta}{2\varepsilon} \int_{\{\varphi_0 \geq 1- \delta\}} F_{\log}^\delta(\varphi_0)+ \frac{\theta}{2\varepsilon} \int_{\{\varphi_0 \leq -1 + \delta\}} F_{\log}^\delta(\varphi_0), 
	\end{multline*}
	and so it is only necessary to bound the last three terms.
	Firstly, from the definition of $F_{\log}(\cdot)$, it is straightforward to see that
	\[ \frac{\theta}{2\varepsilon} \int_{\{|\varphi_0|< 1 - \delta\}} F_{\log}(\varphi_0) \leq \frac{\theta\log(2)|\Gamma_0|}{\varepsilon}.  \]
	For the terms involving $F_{\log}^\delta(\cdot)$ we use the set they're integrating over to bound them, for example
	\begin{align*}
		\int_{ \{ \varphi_0 \geq 1-\delta \} }  (1-\varphi_0)\log(\delta) + (1+\varphi_0)\log(2-\delta) + \frac{(1-\varphi_0)^2}{2 \delta} + \frac{(1+\varphi_0)^2}{2(2-\delta)}\\
		\leq \int_{\Gamma_0} -\delta \log(\delta) + 2\log(2) + \frac{\delta}{2} + 2 \leq \left( \frac{1}{e} + 2\log(2) + \frac{5}{2} \right)|\Gamma_0|,
	\end{align*}
	and the other term is bounded similarly.
	Thus we see $\Echd[\varphi_0;0]$ is bounded independent of $\delta$ and we obtain \eqref{regenergyestimate}.
\end{proof}
Notice that this proof also gave us uniform bounds for $\mu^\delta$ in $L^2_{L^2}$.
We now use this energy estimate to obtain some further $\delta$-independent bounds.
\begin{lemma}
	There exists a constant $C$ independent of $\delta$ such that:
	\begin{enumerate}
		\item \begin{align*}
			\int_0^T \| \normdev \ut^\delta \|_{\divfree{t}'}^2 + \int_0^T \| \normdev \varphi^\delta\|_{H^{-1}(\Gamma(t))}^2 \leq C,
		\end{align*}
		\item \begin{align*}
			\int_0^T \| \fd(\varphi^\delta)\|_{L^2(\Gamma(t))}^2 \leq C.
		\end{align*}
	\end{enumerate}
	
\end{lemma}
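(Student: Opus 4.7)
My plan is to treat the two claims separately. For the $H^{-1}$/$\mbf{V}_\sigma'$ bounds on the normal derivatives, I will mimic the argument of Lemma~\ref{smooth derivatives bound}, only now applied to the triple $(\varphi^\delta,\mu^\delta,\ut^\delta)$ satisfying \eqref{regweakTNSCH1}--\eqref{regweakTNSCH3}; the new ingredient is that the nonlinear term $\frac{\theta}{2\varepsilon}m(\fd(\varphi^\delta),\phi)-\frac{1}{\varepsilon}m(\varphi^\delta,\phi)$ does not appear in the weak formulation for $\normdev\varphi^\delta$ or $\normdev\ut^\delta$, so the argument is essentially unchanged. The key is to rearrange each weak equation so that $\normdev\varphi^\delta$ (resp.\ $\normdev\ut^\delta$) is isolated, then divide by $\|\phi\|_{H^1}$ (resp.\ $\|\boldsymbol{\phi}\|_{\mbf{H}^1}$) and bound the resulting right-hand sides using Ladyzhenskaya's inequality, the Sobolev embedding $H^1\hookrightarrow L^4$ and the smoothness of the geometric data. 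After squaring and integrating in time, the uniform bounds from \eqref{regenergyestimate} and the uniform $L^2_{L^2}$ bound on $\mu^\delta$ already obtained deliver the desired $\delta$-independent control.

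The second claim is the one requiring a little more care because the regularised logarithmic part $\fd$ itself is the object to be estimated. The plan is to test \eqref{regweakTNSCH3} with $\phi=\fd(\varphi^\delta)\in H^1(\Gamma(t))$ (which is a legitimate test function since $F_{\log}^\delta\in C^2(\mbb{R})$, so $\fd$ is Lipschitz and $\fd(\varphi^\delta)$ inherits the $H^1$ regularity of $\varphi^\delta$). This gives
\[ \frac{\theta}{2\varepsilon}\|\fd(\varphi^\delta)\|_{L^2(\Gamma(t))}^2 = m(\mu^\delta,\fd(\varphi^\delta)) - \varepsilon a(\varphi^\delta,\fd(\varphi^\delta)) + \frac{1}{\varepsilon} m(\varphi^\delta,\fd(\varphi^\delta)). \]
By the chain rule $\gradg \fd(\varphi^\delta) = (\fd)'(\varphi^\delta)\gradg\varphi^\delta$, and since $F_{\log}^\delta$ is convex, $(\fd)'\geq 0$, so $a(\varphi^\delta,\fd(\varphi^\delta))\geq 0$ and this term may be dropped from the right-hand side.

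The remaining two terms are controlled by Cauchy--Schwarz and Young's inequality:
\[ |m(\mu^\delta,\fd(\varphi^\delta))| \leq \|\mu^\delta\|_{L^2(\Gamma(t))}\|\fd(\varphi^\delta)\|_{L^2(\Gamma(t))}, \qquad |m(\varphi^\delta,\fd(\varphi^\delta))|\leq \|\varphi^\delta\|_{L^2(\Gamma(t))}\|\fd(\varphi^\delta)\|_{L^2(\Gamma(t))}. \]
Choosing the Young coefficient to absorb a copy of $\frac{\theta}{4\varepsilon}\|\fd(\varphi^\delta)\|_{L^2(\Gamma(t))}^2$ into the left-hand side yields
\[ \|\fd(\varphi^\delta)\|_{L^2(\Gamma(t))}^2 \leq C\bigl(\|\mu^\delta\|_{L^2(\Gamma(t))}^2 + \|\varphi^\delta\|_{L^2(\Gamma(t))}^2\bigr). \]
Integrating in time and invoking $\mu^\delta\in L^2_{L^2}$ and $\varphi^\delta\in L^\infty_{H^1}\hookrightarrow L^\infty_{L^2}$ from \eqref{regenergyestimate} (and the remark following its proof) gives the required $\delta$-independent bound.

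The main obstacle I anticipated was controlling $\fd(\varphi^\delta)$ directly, since $\fd$ blows up as $|\varphi^\delta|\to 1$ and the regularised polynomial growth constants degenerate as $\delta\to 0$. The crucial observations that dispatch this are (i) convexity of $F_{\log}^\delta$, which gives the sign on the $a$-term, and (ii) that the $L^2_{L^2}$ control on $\mu^\delta$ established in the preceding energy estimate is $\delta$-independent; together these are enough to close the estimate without appealing to any $\delta$-dependent quantity.
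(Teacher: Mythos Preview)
Your proposal is correct and follows essentially the same approach as the paper: part 1 is handled by repeating the argument of Lemma~\ref{smooth derivatives bound}, and part 2 by testing \eqref{regweakTNSCH3} with $\fd(\varphi^\delta)$, dropping the $a$-term via convexity of $F_{\log}^\delta$, and absorbing the remaining terms with Young's inequality and the $\delta$-independent $L^2_{L^2}$ bounds on $\mu^\delta$ and $\varphi^\delta$.
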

\begin{proof}
	The proof for the bounds on the time derivatives is identical to that of Lemma \ref{smooth derivatives bound}, and is hence omitted.
	The bound for the regularised potential term was not required in the setting of a smooth potential as we used the polynomial growth conditions and Sobolev embeddings to bound this term.
	Testing \eqref{regweakTNSCH3} against $\fd(\varphi^\delta) \in H^1(\Gamma(t))$ yields
	\[ \|\fd(\varphi^\delta)\|_{L^2(\Gamma(t)}^2 = \frac{2\varepsilon}{\theta} m(\mu^\delta, \fd(\varphi^\delta)) - \frac{2 \varepsilon^2}{\theta} a(\varphi^\delta, \fd(\varphi^\delta)) + \frac{2}{\theta} m(\varphi^\delta, \fd(\varphi^\delta)). \]
	Now we notice that as $(\fd)'(\cdot) \geq 0$ one has
	\[ -a(\varphi^\delta, \fd(\varphi^\delta)) = -\int_{\Gamma(t)} (\fd)'(\varphi^\delta) \gradg \varphi^\delta \cdot \gradg \varphi^\delta \leq 0, \]
	hence by using Young's inequality it is clear that
	\[\|\fd(\varphi^\delta)\|_{L^2(\Gamma(t)}^2 \leq \frac{8}{\theta^2}\left( \|\varphi^\delta\|_{L^2(\Gamma(t))}^2 + \varepsilon^2 \|\mu^\delta\|_{L^2(\Gamma(t))}^2 \right).\]
	The bound then follows from integrating in time and using the uniform $L^2_{L^2}$ bounds.
\end{proof}
\subsubsection{Passage to the limit}
Now we want to pass to the limit as $\delta \rightarrow 0$.
We have established uniform bounds for $\ut^\delta$ in $ L^\infty_{\mbf{L}^2}, L^2_{\mbf{H^1}}$ and $H^1_{\mbf{V}_\sigma'}$, hence there is a limiting function $\ut$ such that
\begin{gather*}
	\ut^\delta \rightharpoonup \ut, \text{ weakly in } L^2_{\mbf{V}_\sigma},\\
	\ut^\delta \overset{*}{\rightharpoonup} \ut, \text{ weak-}* \text{ in } L^\infty_{\mbf{L}^2},\\
	\ut^\delta \rightharpoonup \ut, \text{ weakly in } H^1_{\mbf{V}_\sigma'}.
\end{gather*}
Likewise $\varphi^\delta$ is uniformly bounded in $L^\infty_{H^1}$ and $H^1_{H^{-1}}$, and $\mu^\delta$ is uniformly bounded in $L^2_{H^1}$.
Thus, there exist limiting functions $\varphi, \mu$ such that
\begin{gather*}
	\varphi^\delta \overset{*}{\rightharpoonup} \varphi, \text{ weak-}* \text{ in } L^\infty_{H^1},\\
	\varphi^\delta \rightharpoonup \varphi, \text{ weakly in } H^1_{H^{-1}},\\
	\mu^\delta \rightharpoonup \mu, \text{ weakly in } L^2_{H^1}.
\end{gather*}
Lastly, the uniform bounds for $\fd(\varphi^\delta)$ in $L^2_{L^2}$ imply the existence of some $\tilde{f} \in L^2_{L^2}$ such that
\begin{gather*}
	\fd(\varphi^\delta) \rightharpoonup \tilde{f}, \text{ weakly in } L^2_{L^2}.
\end{gather*}
One also has the same strong convergence properties as seen for smooth potentials, which follow from the relevant compact embeddings.\\

It remains for us to show that the limiting functions are such that $|\varphi(t)|< 1$ almost everywhere on $\Gamma(t)$ for almost all $t \in [0,T]$, and that $\tilde{f} = f(\varphi)$.
From the piecewise definition of $\fd$ it is clear that these two issues are related.
We firstly recall a result from \cite{caetano2021cahn}.
\begin{lemma}[\cite{caetano2021cahn}, Lemma 5.8]
	There exist constants $C_1,C_2$ independent of $\delta, t$ such that
	\[ \int_{\Gamma(t)} [\varphi^\delta(t) - 1]_+ + \int_{\Gamma(t)} [-\varphi^\delta(t)-1]_+ \leq C_1 \delta + \frac{C_2}{|\log(\delta)|},  \]
	for almost all $t \in [0,T]$.
\end{lemma}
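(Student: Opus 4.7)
The plan is to exploit the explicit piecewise definition of $F^\delta_{\log}$, which is linear-plus-quadratic outside the interval $[-1+\delta, 1-\delta]$, and to combine this with the uniform $L^1$ control on $F^\delta_{\log}(\varphi^\delta)$ coming from the energy estimate \eqref{regenergyestimate}. The crucial structural observation is that for $\delta\in(0,1)$ we have $\log\delta<0$, so in the formula for $F^\delta_{\log}(r)$ on $r\geq 1-\delta$ the leading term $(1-r)\log\delta$ is non-negative as soon as $r\geq 1$ and in fact equals $(r-1)|\log\delta|$. Every other term in that formula, namely $(1+r)\log(2-\delta)$, $(1-r)^2/(2\delta)$, and $(1+r)^2/(2(2-\delta))$, is manifestly non-negative on $\{r\geq 1\}$. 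This yields the pointwise bound
\[
F^\delta_{\log}(r)\geq (r-1)|\log\delta|,\qquad r\geq 1,
\]
and, by the symmetric calculation using the branch $r\leq -1+\delta$, also $F^\delta_{\log}(r)\geq(-r-1)|\log\delta|$ for $r\leq -1$.

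Next I would dispose of the complementary region. Since $[\varphi^\delta-1]_+$ and $[-\varphi^\delta-1]_+$ both vanish on the set $\{|\varphi^\delta|<1\}$, and since $F^\delta_{\log}$ is bounded below by a constant $-M$ independent of $\delta$ (on $(-1+\delta,1-\delta)$ it coincides with $F_{\log}$, which is bounded; the two quadratic branches are $\geq 0$), dividing through by $|\log\delta|$ gives a pointwise bound
\[
[\varphi^\delta-1]_+ + [-\varphi^\delta-1]_+ \leq \frac{F^\delta_{\log}(\varphi^\delta)+M}{|\log\delta|}.
\]
Integrating over $\Gamma(t)$ and using the energy estimate \eqref{regenergyestimate}, together with mass conservation and Poincar\'e to absorb the $\int(1-(\varphi^\delta)^2)$ contribution in $\mathcal{E}^{\text{CH},\delta}$, produces a uniform $L^1$ bound $\int_{\Gamma(t)} F^\delta_{\log}(\varphi^\delta)\leq C$, delivering the $C_2/|\log\delta|$ part of the claimed estimate.

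To extract the additional $C_1\delta$ term I would refine the above on the transition interval $[1-\delta,1]$, where $[\varphi^\delta-1]_+=0$ but where the estimate $F^\delta_{\log}(r)\geq(r-1)|\log\delta|$ degenerates. The idea is to keep the quadratic part of the regularisation on $\{r\geq 1-\delta\}$ and use the sharper bound $F^\delta_{\log}(r)\geq (r-(1-\delta))^2/(2\delta)$, which via Cauchy--Schwarz controls $\int_{\{\varphi^\delta\geq 1-\delta\}}(\varphi^\delta-(1-\delta))$ by $(C\delta)^{1/2}$, and then to re-express
\[
[\varphi^\delta-1]_+ \leq [\varphi^\delta-(1-\delta)]_+ - \delta\,\chi_{\{\varphi^\delta\geq 1\}} + \delta\,\chi_{\{\varphi^\delta\in[1-\delta,1]\}},
\]
so that the comparison between the two positive parts gives a genuine $\mathcal{O}(\delta)$ correction, rather than $\mathcal{O}(\sqrt{\delta})$; an alternative is to test \eqref{regweakTNSCH3} against $[\varphi^\delta-1]_+\in H^1(\Gamma(t))$, exploit the monotonicity $f^\delta(r)\geq f^\delta(1)=|\log\delta|+\mathcal{O}(1)$ on $\{r\geq 1\}$, and bound the resulting $\mu^\delta$- and $\varphi^\delta$-terms by the $L^2$ estimates already at hand.

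The main obstacle is the $C_1\delta$ refinement: the straight ``energy divided by $|\log\delta|$'' argument only delivers $C/|\log\delta|$, and one must either combine it with the quadratic part of $F^\delta_{\log}$ or pass through a variational test in the $\mu^\delta$-equation to see that the threshold $r=1-\delta$ rather than $r=1$ is what is naturally controlled, the gap between them contributing the $\delta$-term. All of this is uniform in $t\in[0,T]$ since the energy bound \eqref{regenergyestimate} is, and $|\Gamma(t)|$ is constant by the inextensibility discussed in Section \ref{derivations}.
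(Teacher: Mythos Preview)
The paper does not prove this lemma; it quotes it from \cite{caetano2021cahn} and remarks that the proof is unchanged by the Navier--Stokes coupling. So there is no in-paper argument to compare against, only the strategy of the cited reference.

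Your core argument is correct and is essentially the standard one. The pointwise lower bound $F^\delta_{\log}(r) \geq (r-1)|\log\delta|$ for $r \geq 1$ (and its symmetric counterpart for $r \leq -1$) follows exactly as you say, $F^\delta_{\log}$ is uniformly bounded below, and the energy estimate \eqref{regenergyestimate} together with Poincar\'e and mass conservation gives $\int_{\Gamma(t)} F^\delta_{\log}(\varphi^\delta) \leq C$ uniformly in $\delta$ and $t$. This already yields
\[
\int_{\Gamma(t)} [\varphi^\delta-1]_+ + \int_{\Gamma(t)} [-\varphi^\delta-1]_+ \leq \frac{C}{|\log\delta|},
\]
which is the stated lemma with $C_1 = 0$.

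Your discussion of the ``$C_1\delta$ refinement'' as the main obstacle is a misreading of the inequality. The bound $C_1\delta + C_2/|\log\delta|$ is \emph{weaker} than $C_2/|\log\delta|$ alone, since both summands are non-negative; once you have the $1/|\log\delta|$ estimate the lemma is proved. The $C_1\delta$ term in the original statement simply reflects a slightly different organisation of constants in \cite{caetano2021cahn} and does not require a separate argument. Your paragraphs on the transition layer $[1-\delta,1]$, the quadratic refinement, and testing \eqref{regweakTNSCH3} against $[\varphi^\delta-1]_+$ are therefore unnecessary.
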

The proof of this result is unchanged with the coupling with the Navier-Stokes equations, and hence follows without any adaptations.
In particular, in the limit $\delta \rightarrow 0$, we find that $|\varphi(t)| \leq 1$ almost everywhere on $\Gamma(t)$ for almost all $t \in [0,T]$.
There is still the issue of the set of values such that $|\varphi(t)| = 1$, which we discuss using the same arguments as in \cite{caetano2021cahn}.
\begin{lemma}
	For almost all $t \in [0,T]$ the set $\{ x \in \Gamma(t) \mid |\varphi(x,t)| = 1 \}$ has $\mathcal{H}^2$ measure 0.
\end{lemma}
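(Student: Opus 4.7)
The plan is to argue by contradiction, exploiting the uniform $L^2_{L^2}$-bound on the regularised nonlinearity $f^\delta(\varphi^\delta)$ established in the previous lemma, together with the fact that $f^\delta$ blows up near $\pm 1$ at a rate that becomes arbitrarily singular as $\delta \to 0$. First, from the strong convergence $\varphi^\delta \to \varphi$ in $L^2_{L^2}$ (obtained through the usual compact embedding argument on $\mathcal{G}_T$) I may extract a further subsequence, not relabelled, such that $\varphi^\delta(x,t) \to \varphi(x,t)$ for $(\mathcal{H}^2 \otimes dt)$-a.e.\ $(x,t) \in \mathcal{G}_T$.

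Set
\[ A := \{ (x,t) \in \mathcal{G}_T \mid |\varphi(x,t)| = 1 \}, \qquad A^{\pm} := \{(x,t) \in \mathcal{G}_T \mid \varphi(x,t) = \pm 1\}, \]
so $A = A^+ \cup A^-$. Suppose for contradiction that $A$ has positive $(\mathcal{H}^2 \otimes dt)$-measure, and (up to symmetry) that $A^+$ does. The key step is to verify that $f^\delta(\varphi^\delta(x,t)) \to +\infty$ pointwise on $A^+$ along the a.e.\ convergent subsequence. For $(x,t) \in A^+$ at which $\varphi^\delta(x,t) \to 1$, I split cases: if $\varphi^\delta(x,t) \in (-1+\delta, 1-\delta)$ then $f^\delta(\varphi^\delta) = f(\varphi^\delta) = \log\bigl(\frac{1+\varphi^\delta}{1-\varphi^\delta}\bigr) \to +\infty$; if instead $\varphi^\delta(x,t) \geq 1-\delta$, a direct computation using the affine-plus-quadratic form of $F_{\log}^\delta$ gives
\[ f^\delta(\varphi^\delta) = -\log \delta + \log(2-\delta) - \frac{1-\varphi^\delta}{\delta} + \frac{1+\varphi^\delta}{2-\delta} \geq \log\!\Bigl(\frac{2-\delta}{\delta}\Bigr) - 1 \to +\infty. \]
Either way, $\liminf_{\delta \to 0} f^\delta(\varphi^\delta(x,t))^2 = +\infty$ on $A^+$.

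Now I apply Fatou's lemma:
\[ \int_{A^+} \liminf_{\delta \to 0} f^\delta(\varphi^\delta)^2 \, d(\mathcal{H}^2 \otimes dt) \leq \liminf_{\delta \to 0} \int_{A^+} f^\delta(\varphi^\delta)^2 \leq \liminf_{\delta \to 0} \int_0^T \|f^\delta(\varphi^\delta)\|_{L^2(\Gamma(t))}^2 \, dt \leq C, \]
by the previous lemma. The left-hand side is $+\infty$ if $A^+$ has positive measure, a contradiction. The same argument applied to $A^-$ shows $A^-$ has measure zero. Hence $A$ itself has $(\mathcal{H}^2 \otimes dt)$-measure zero, and by Fubini the time-slice $\{ x \in \Gamma(t) \mid |\varphi(x,t)| = 1\}$ has $\mathcal{H}^2$-measure zero for a.e.\ $t \in [0,T]$.

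The main technical point, and the only place where care is really needed, is the pointwise blow-up of $f^\delta(\varphi^\delta)$ on $A^\pm$: one must rule out the scenario in which $\varphi^\delta$ approaches $1$ along the ``outer'' branch $\varphi^\delta > 1-\delta$ fast enough that the competing terms $-(1-\varphi^\delta)/\delta$ could conceivably compensate the $-\log\delta$ factor. The explicit inequality above, valid for all $\varphi^\delta \geq 1-\delta$, closes this possibility and is what makes the argument robust. The rest is bookkeeping and an application of Fatou, and Fubini to descend from the space-time statement to the a.e.-$t$ statement claimed.
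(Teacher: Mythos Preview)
Your argument is correct and follows the same Fatou-based contradiction as the paper. The only cosmetic difference is that the paper applies Fatou to the non-negative quantity $\varphi^\delta f^\delta(\varphi^\delta)$ (bounded uniformly by testing the chemical-potential equation against $\varphi^\delta$) rather than to $(f^\delta(\varphi^\delta))^2$, and it outsources the pointwise blow-up to a citation instead of giving your explicit case analysis on the branches of $f^\delta$.
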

\begin{proof}
	To begin, we claim that up to a subsequence of $\delta \rightarrow 0$ we have for almost all $t \in [0,T]$ that
	\[ \lim_{\delta \rightarrow 0} \fd(\varphi^\delta(t)) = \begin{cases}
		f(\varphi(t)), & \text{ if } |\varphi(t)| < 1 \text{ almost everywhere on } \Gamma(t),\\
		\infty, & \text{ otherwise.}
	\end{cases} \]
	This is in fact exactly the content of \cite{caetano2021cahn}, Lemma 5.10 - and so we do not prove this.
	Then we have that
	\[ \lim_{\delta \rightarrow 0} \varphi^\delta(t)\fd(\varphi^\delta(t)) = \begin{cases}
		\varphi(t) f(\varphi(t)), & \text{ if } |\varphi(t)| < 1 \text{ almost everywhere on } \Gamma(t),\\
		\infty, & \text{ otherwise,}
	\end{cases} \]
	pointwise, almost everywhere on $\Gamma(t)$ for almost all $t \in [0,T]$.
	Now by testing \eqref{regweakTNSCH3} against $\varphi^\delta$, integrating in time, and using the established uniform bounds for $\varphi^\delta$ and $\mu^\delta$ one finds
	\[ \int_0^T \int_{\Gamma(t)} \varphi^\delta \fd(\varphi^\delta)  \leq C,\]
	for a constant $C$ independent of $\delta$.
	Now noting that $r\fd(r) \geq 0$, by Fatou's lemma one has
	\[ \int_0^T \int_{\Gamma(t)} \liminf_{\delta \rightarrow 0} \varphi^\delta \fd(\varphi^\delta)  \leq \liminf_{\delta \rightarrow 0} \int_0^T \int_{\Gamma(t)} \varphi^\delta \fd(\varphi^\delta)  \leq C.  \]
	From the claim it is now evident that one necessarily has that for almost all $t \in [0,T]$ the set $\{ x \in \Gamma(t) \mid |\varphi(x,t)| = 1 \}$ has measure 0.
\end{proof}
In particular, we see that $\fd(\varphi^\delta(x,t)) \rightarrow f(\varphi(x,t))$ for almost all $x \in \Gamma(t), t \in [0,T]$.
Hence, again by using Fatou's lemma, one finds
\[ \int_0^T \|f(\varphi)\|_{L^2(\Gamma(t))}^2 = \int_0^T \int_{\Gamma(t)} \liminf_{\delta \rightarrow 0} |\fd(\varphi^\delta)|^2  \leq \liminf_{\delta \rightarrow 0} \int_0^T \|\fd(\varphi^\delta)\|_{L^2(\Gamma(t))}^2\leq C,\]
and hence $f(\varphi) \in L^2_{L^2}$.
Now by the uniqueness of weak limits and a suitable variant of the dominated convergence theorem for evolving surfaces (see \cite{caetano2021cahn}, Theorem B.2), one finds that $f(\varphi) = \tilde{f}$.
\section{Proof of uniqueness}
\label{Unique}

\subsection{Uniqueness for the regular potential}In this section we prove the uniqueness of solutions to \eqref{weakTNSCH1}-\eqref{weakTNSCH3}.
As a preliminary result, we note that by elliptic regularity theory one has that
\begin{align}
	\int_0^T \|\varphi\|_{H^2(\Gamma(t))}^2 \leq C\int_0^T \left( \|\mu\|_{L^2(\Gamma(t))}^2 + \|F'(\varphi)\|_{L^2(\Gamma(t))}^2 \right).
	\label{extra regularity}
\end{align}
This $L^2_{H^2}$  regularity of $\varphi$ is invaluable for proving the uniqueness of solutions, as it allows one to eliminate $\mu$ from \eqref{weakTNSCH1}.
To see this we notice that for almost all $t \in [0,T]$, and all $\boldsymbol{\phi} \in \divfree{t}$
\[ \mbf{c}_2(\mu,\varphi, \boldsymbol{\phi}) = \int_{\Gamma(t)} \mu \gradg \varphi \cdot \boldsymbol{\phi} = \int_{\Gamma(t)} \left(-\varepsilon \lapg \varphi + \frac{1}{\varepsilon} F'(\varphi)\right) \gradg \varphi \cdot \boldsymbol{\phi}.\]
Now, as observed in the derivation, one formally calculates that
\[ -\varepsilon \lapg \varphi \gradg \varphi = -\varepsilon \gradg \cdot (\gradg \varphi \otimes \gradg \varphi)  + \frac{\varepsilon}{2} \gradg |\gradg \varphi|^2 - \varepsilon\left( \gradg \varphi \cdot \mbb{H} \gradg \varphi \right) \boldsymbol{\nu},\]
almost everywhere on $\Gamma(t)$.
Thus one finds
\[ \mbf{c}_2(\mu,\varphi, \boldsymbol{\phi}) = \int_{\Gamma(t)}  \gradg \left( \frac{\varepsilon}{2} |\gradg \varphi|^2 + \frac{1}{\varepsilon} F(\varphi)\right) \cdot \boldsymbol{\phi} -\varepsilon \gradg \cdot (\gradg \varphi \otimes \gradg \varphi) \cdot \boldsymbol{\phi}  ,\]
where the normal term has vanished as $\boldsymbol{\phi}$ is tangential.
Now using integration by parts, and the fact that $\boldsymbol{\phi}$ is solenoidal, it is clear that
\begin{multline*}
	\int_{\Gamma(t)}  \gradg \left( \frac{\varepsilon}{2} |\gradg \varphi|^2 + \frac{1}{\varepsilon} F(\varphi)\right) \cdot \boldsymbol{\phi}= \int_{\Gamma(t)}  \divg \left( \frac{\varepsilon}{2} |\gradg \varphi|^2 \boldsymbol{\phi} + \frac{1}{\varepsilon} F(\varphi)\boldsymbol{\phi} \right)\\
	- \int_{\Gamma(t)}  \left( \frac{\varepsilon}{2} |\gradg \varphi|^2 + \frac{1}{\varepsilon} F(\varphi)\right) \divg \boldsymbol{\phi} = 0,
\end{multline*}
and
\begin{align*}
	-\varepsilon\int_{\Gamma(t)} \gradg \cdot (\gradg \varphi \otimes \gradg \varphi)) \cdot \boldsymbol{\phi} = \varepsilon\int_{\Gamma(t)} (\gradg \varphi \otimes \gradg \varphi): \gradg \boldsymbol{\phi}.
\end{align*}
Hence we find
\[ \mbf{c}_2(\mu,\varphi, \boldsymbol{\phi})= \varepsilon \mbf{c}_3(\varphi,\varphi,\boldsymbol{\phi}),\]
where the trilinear form $\mbf{c}_3$ is defined as
\[ \mbf{c}_3(t; \phi, \psi, \boldsymbol{\chi}) := \int_{\Gamma(t)} (\gradg \phi \otimes \gradg \psi): \gradg \boldsymbol{\chi} .\]
\\
The structure of this proof is similar to that in \cite{giorgini2019uniqueness}, with relevant modifications for an evolving surface - as discussed in Appendix \ref{inversestokes}.
\begin{theorem}\label{smooth uniqueness}
	Let $\Gamma(t), F$ be such that the assumptions in Theorem \ref{smooth existence} hold.
	Moreover, assume $F_2'$ is Lipschitz continuous.
	Then the solution triple, $(\varphi,\mu,\ut)$ solving \eqref{weakTNSCH1}-\eqref{weakTNSCH3} is unique.
\end{theorem}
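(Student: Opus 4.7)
The plan is to adapt the strategy of Giorgini (2019) to the evolving surface setting, exploiting the key identity $\mbf{c}_2(\mu,\varphi,\boldsymbol{\phi}) = \varepsilon\mbf{c}_3(\varphi,\varphi,\boldsymbol{\phi})$ derived above so that the chemical potential $\mu$ disappears entirely from the momentum balance. Let $(\varphi_i, \mu_i, \ut^i)$, $i=1,2$, be two solution triples sharing the same initial data, and set $\bar{\varphi} := \varphi_1 - \varphi_2$, $\bar{\mu} := \mu_1 - \mu_2$, $\bar{\ut} := \ut^1 - \ut^2$. Testing \eqref{weakTNSCH2} against $\phi \equiv 1$ shows that the mean of $\varphi_i$ is conserved along the flow, hence $m(\bar{\varphi}(t),1)=0$ for a.e.\ $t \in [0,T]$, and so the inverse Laplacian $\mathcal{G}\bar{\varphi}(t) \in H^1(\Gamma(t))$ is well defined. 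We shall work with the natural energy quantity
\[ f(t) := \tfrac{1}{2}\|\bar{\ut}(t)\|_{\mbf{L}^2(\Gamma(t))}^2 + \tfrac{1}{2}a(\mathcal{G}\bar{\varphi}(t),\mathcal{G}\bar{\varphi}(t)), \qquad g(t) := \tfrac{\eta_*}{2}\|\mbb{E}(\bar{\ut})\|_{\mbf{L}^2(\Gamma(t))}^2 + \tfrac{\varepsilon}{2}\|\gradg\bar{\varphi}\|_{L^2(\Gamma(t))}^2. \]

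The main step is to test the difference of \eqref{weakTNSCH1} against $\boldsymbol{\phi}=\bar{\ut} \in \divfree{t}$ and the difference of \eqref{weakTNSCH2} against $\phi = \mathcal{G}\bar{\varphi} \in H^1(\Gamma(t))$, and add the resulting identities. The transport theorem (Lemma \ref{transport theorem}) converts the time derivative pieces into $\tfrac{d}{dt}f$ plus lower order curvature contributions controlled by $\|HV_N\|_{L^\infty}$; Korn's inequality \eqref{korn1} supplies coercivity of the viscous bilinear form, yielding the dissipation $\|\mbb{E}(\bar{\ut})\|_{\mbf{L}^2}^2$. From \eqref{weakTNSCH3} one gets $m(\bar{\mu},\bar{\varphi}) = \varepsilon\|\gradg\bar{\varphi}\|_{L^2}^2 + \tfrac{1}{\varepsilon} m(F'(\varphi_1)-F'(\varphi_2),\bar{\varphi})$, and since $F_1'$ is monotone while $F_2'$ is Lipschitz, the right-hand side is bounded below by $\varepsilon\|\gradg\bar{\varphi}\|_{L^2}^2 - C\|\bar{\varphi}\|_{L^2}^2$; by interpolation between $H^{-1}$ and $H^1$, the last term is absorbed by $g$ plus $Cf$. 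The advective pieces $\mbf{c}_1(\bar{\ut},\ut^1,\bar{\ut}) + \mbf{c}_1(\ut^2,\bar{\ut},\bar{\ut})$ and the phase transport differences $\mbf{c}_2(\mathcal{G}\bar{\varphi},\bar{\varphi},\ut^1) + \mbf{c}_2(\mathcal{G}\bar{\varphi},\varphi_2,\bar{\ut})$ as well as the viscosity perturbation coming from the Lipschitz dependence of $\eta$ on $\varphi$ are controlled using \eqref{ladyzhenskaya1}--\eqref{ladyzhenskaya2} and the $L^\infty_{H^1}\cap L^2_{H^2}$ regularity of $\varphi_i$ guaranteed by \eqref{extra regularity}.

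The critical contribution is the capillary/Korteweg stress difference, which after the $\mbf{c}_3$ reformulation reads
\[ \varepsilon\bigl(\mbf{c}_3(\bar{\varphi},\varphi_1,\bar{\ut}) + \mbf{c}_3(\varphi_2,\bar{\varphi},\bar{\ut})\bigr) \;\leq\; C \|\gradg\bar{\varphi}\|_{L^2(\Gamma(t))}\,\bigl(\|\gradg\varphi_1\|_{L^\infty(\Gamma(t))} + \|\gradg\varphi_2\|_{L^\infty(\Gamma(t))}\bigr)\,\|\gradg\bar{\ut}\|_{\mbf{L}^2(\Gamma(t))}. \]
In two dimensions the embedding $H^1 \hookrightarrow L^\infty$ fails, and one recovers the missing control by the Brezis--Gallou\"et--Wainger inequality (Lemma \ref{brezisgallouet}) applied to $\gradg\varphi_i$, giving $\|\gradg\varphi_i\|_{L^\infty} \lesssim \|\varphi_i\|_{H^2}\bigl(1 + \log^{1/2}(1 + C\|\varphi_i\|_{H^3}/\|\varphi_i\|_{H^2})\bigr)$ (after a mild bootstrap, or more economically by instead pairing $\|\gradg\bar{\varphi}\|_{L^\infty}$ with $\|\gradg\varphi_i\|_{L^4}$ and absorbing $\|\bar{\varphi}\|_{H^2}$ via $g$). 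Young's inequality then casts the bound into the form
\[ \tfrac{d}{dt} f + g \;\leq\; m_1(t) f + m_2(t)\bigl(f\,g\,\log^+(S(t)/g)\bigr)^{1/2}, \]
with $m_1 \in L^1(0,T)$, $m_2 \in L^2(0,T)$, and $S \in L^1(0,T)$ depending only on the already-established regularity of $\varphi_i$ and $\ut^i$. Since $f(0)=0$, Lemma \ref{gronwalltype} forces $f\equiv 0$ on $[0,T]$, which together with \eqref{weakTNSCH3} and the Lipschitz property of $F'$ yields $\bar{\mu}\equiv 0$, completing the argument. The main obstacle is precisely the logarithmic endpoint in this capillary term: circumventing the failure of $H^1\hookrightarrow L^\infty$ is what forces us to use \eqref{extra regularity} together with Lemma \ref{brezisgallouet} and the sharpened Gr\"onwall Lemma \ref{gronwalltype} rather than a standard Gr\"onwall argument.
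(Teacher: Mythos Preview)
Your overall architecture—eliminating $\mu$ via the $\mbf{c}_3$ identity, pairing the velocity equation with the $\bar\varphi$ equation tested against $\mathcal{G}\bar\varphi$, and invoking the log-Gr\"onwall Lemma \ref{gronwalltype}—matches the paper's strategy.  The gap is in your choice of test function for the momentum equation.  You test against $\bar\ut$ itself, so the energy is $f\sim\|\bar\ut\|_{\mbf L^2}^2+\|\bar\varphi\|_{-1}^2$ and the dissipation is $g\sim\|\mbb E(\bar\ut)\|_{\mbf L^2}^2+\|\gradg\bar\varphi\|_{L^2}^2$.  With this splitting the viscosity difference $\hat{\mbf a}(\eta(\varphi_1)-\eta(\varphi_2),\ut^1,\bar\ut)$ is bounded only by $C\|\bar\varphi\|_{L^\infty}\|\mbb E(\ut^1)\|_{L^2}\|\mbb E(\bar\ut)\|_{L^2}$ (there is no $L^4$ control on $\mbb E(\ut^1)$, only $L^2_{\mbf H^1}$).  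After Brezis--Gallou\"et on $\bar\varphi$ this becomes $C\,m_2(t)\,g\,\log^{1/2}(S/g)$: both small factors come from $g$, none from $f$.  That shape does \emph{not} fit Lemma \ref{gronwalltype}, which requires the troublesome term to carry a factor $f^{1/2}$; without it, one cannot conclude $f\equiv 0$ from $f(0)=0$, since $g\log(S/g)$ need not be absorbable into the left-hand dissipation.

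The paper cures this by testing the momentum difference against the inverse Stokes image $\Sperp\bar\ut$ (together with the Killing component $\PK\bar\ut$), so that the energy carries the dual norm $\|\PKperp\bar\ut\|_\perp^2$.  Then $\|\mbb E(\Sperp\bar\ut)\|_{\mbf L^2}=\|\PKperp\bar\ut\|_\perp\sim f^{1/2}$, and the viscosity term becomes $C\,m_2(t)\,(fg\log^+(S/g))^{1/2}$, precisely the structure of Lemma \ref{gronwalltype}.  The same dual testing also handles the capillary term $\mbf c_3$ without any endpoint inequality: the $\mbf H^2$ regularity \eqref{stokes regularity} of $\Sperp\bar\ut$ gives $\|\Sperp\bar\ut\|_{\mbf H^{1,4}}\leq C\|\PKperp\bar\ut\|_\perp^{1/2}\|\bar\ut\|_{\mbf L^2}^{1/2}$, so only $\varphi_i\in L^4_{H^{1,4}}$ is needed.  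Your proposed routes for the capillary term both fail with the available regularity: Brezis--Gallou\"et on $\gradg\varphi_i$ or $\gradg\bar\varphi$ would require $H^3$ control that has not been established, and your alternative ``absorbing $\|\bar\varphi\|_{H^2}$ via $g$'' cannot work since your $g$ contains only $\|\gradg\bar\varphi\|_{L^2}^2$.  In short, the missing idea is to measure the velocity difference in an $H^{-1}$-type (inverse Stokes) norm; once you do that, the Brezis--Gallou\"et step is applied to $\bar\varphi$ in the viscosity term, not in the Korteweg term.
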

    
    The first step is to observe that if we have two solution triples, $(\varphi^i, \mu^i, \ut^i)$, $i = 1,2$, with the same initial data and defining
    \begin{gather*}
		\bar{\varphi} := \varphi^1 - \varphi^2,\\
		\bar{\mu} := \mu^1 - \mu^2,\\
		\bar{\ut} := \ut^1 - \ut^2,
	\end{gather*}
    then these solve the system
	\begin{gather}
		\begin{split}
			\mbf{m}_*\left(\normdev {\bar{\ut}}, \boldsymbol{\phi} \right) + \hat{\mbf{a}}(\eta(\varphi^1), \ut^1, \boldsymbol{\phi}) - \hat{\mbf{a}}(\eta(\varphi^2), \ut^2, \boldsymbol{\phi}) + \mbf{c}_1(\ut^1,\ut^1, \boldsymbol{\phi}) - \mbf{c}_1(\ut^2,\ut^2, \boldsymbol{\phi})\\
			+ \mbf{l}(\bar{\ut}, \boldsymbol{\phi}) + \mbf{d}_1(\bar{\ut}, \boldsymbol{\phi}) +\mbf{d}_2(\eta(\varphi^1) - \eta(\varphi^2), \boldsymbol{\phi})\\
			=  \varepsilon \mbf{c}_3(\varphi^1,\varphi^1,\boldsymbol{\phi}) - \varepsilon \mbf{c}_3(\varphi^2,\varphi^2,\boldsymbol{\phi})
		\end{split}, \label{uniqueness1}\\
		m_*\left(\normdev {\bar{\varphi}}, \phi\right) + a(\bar{\mu}, \phi) + \mbf{c}_2(\phi,\varphi^1,\ut^1) - \mbf{c}_2(\phi,\varphi^2,\ut^2) + \mbf{c}_2(\phi,\bar{\varphi},\widetilde{\ut}) = 0,\label{uniqueness2}\\
		m(\bar{\mu},\phi) = \varepsilon a(\bar{\varphi}, \phi) + \frac{1}{\varepsilon} m(F'(\varphi^1) - F'(\varphi^2), \phi), \label{uniqueness3}
	\end{gather}
	for almost all $t \in [0,T]$, and all $\phi \in H^1(\Gamma(t)), \boldsymbol{\phi} \in \divfree{t}$.
	This proof firstly requires obtaining bounds for $\bar{\varphi}$ and $\bar{\ut}$ in appropriate norms.
    We refer to Appendix \ref{inversestokes} for the definition of one of the norms we will use.\\
	
	The proof relies on proving the following differential inequalities.
	\begin{lemma}\label{uniqueineq}
	$\bar{\ut}$ is such that
	 \begin{multline}
		\frac{1}{2} \frac{d}{dt} \| \bar{\ut} \|_{\mathcal{S}}^2 + \frac{\eta_*}{2} \| \bar{\ut} \|_{\mbf{L}^2(\Gamma(t))}^2 \leq \frac{\varepsilon}{4} \| \gradg \bar{\varphi} \|_{L^2(\Gamma(t))}+ K_1(t) \| \bar{\ut}\|_{\mathcal{S}}^2+ C_{\log}(\bar{\varphi}, \ut^1)\| \bar{\ut}\|_{\mathcal{S}},\label{uniquenessu}
	\end{multline}
	where
	\begin{multline*}
		K_1(t) = C \left( 1 +  \|\mbb{E}(\ut^1)\|_{\mbf{L}^2(\Gamma(t))}^2 + \| \ut^1 \|_{\mbf{L}^4(\Gamma(t))}^4 + \| \ut^2 \|_{\mbf{L}^4(\Gamma(t))}^4 \right)\\
		+ C\left(\| \varphi^1\|_{H^{1,4}(\Gamma(t))}^4 + \| \varphi^2\|_{H^{1,4}(\Gamma(t))}^4\right),
	\end{multline*}
	and
	\[C_{\log}(\bar{\varphi}, \ut^1) = C\log\left( \frac{C_2 (\| \gradg \bar{\varphi} \|_{L^2(\Gamma(t))} +  \|\bar{\varphi}\|_{H^2(\Gamma(t))})}{\|\gradg \bar{\varphi}\|_{L^2(\Gamma(t))}^2} \right)^\frac{1}{2} \| \gradg \bar{\varphi} \|_{L^2(\Gamma(t))}\|\mbb{E}(\ut^1)\|_{\mbf{L}^2(\Gamma(t))}. \]
	
	Likewise,	$\bar{\varphi}$ is such that
	\begin{multline}
			\frac{1}{2} \frac{d}{dt} \| \bar{\varphi} \|_{-1}^2  + \varepsilon \| \gradg \bar{\varphi} \|_{L^2(\Gamma(t))}^2 \leq \frac{\varepsilon}{4} \| \gradg \bar{\varphi} \|_{L^2(\Gamma(t))}^2 + \frac{\eta_*}{2} \| \bar{\ut}\|_{\mbf{L}^2(\Gamma(t))}^2+ K_2(t) \| \bar{\varphi} \|_{-1}^2 , \label{uniquenessphi}
	\end{multline}
	where 
	
	\[K_2(t) = C \left(1 + \|\widetilde{\ut}\|_{\mbf{L}^\infty(\Gamma(t))}^2 + \| \ut^1 \|_{\mbf{L}^4(\Gamma(t))}^2 + \|\varphi^2\|_{L^\infty(\Gamma(t))}^2 \right).\]
\end{lemma}
		
\begin{proof}[Proof of uniqueness]
With these bounds we are now in a position to show uniqueness.
	Taking the sum of \eqref{uniquenessu} \eqref{uniquenessphi} one finds
	\begin{multline*}
		\frac{1}{2} \frac{d}{dt} \left(\| \bar{\ut}\|_{\mathcal{S}}^2 + \| \bar{\varphi} \|_{-1}^2 \right) + \frac{\varepsilon}{2} \| \gradg \bar{\varphi} \|_{L^2(\Gamma(t))}^2
		\leq K(t)\left(\| \bar{\ut}\|_{\mathcal{S}}^2 + \| \bar{\varphi} \|_{-1}^2 \right)\\
		+ C_{\log}(\bar{\varphi}, \ut^1)\left(\| \bar{\ut}\|_{\mathcal{S}}^2 + \| \bar{\varphi} \|_{-1}^2 \right)^{\frac{1}{2}},
	\end{multline*}
	where
	\[ K(t) = K_1(t) + K_2(t). \]
	Now recall that we have $\ut^i \in L^\infty_{\mbf{L}^2} \cap L^2_{\mbf{H}^1}, \varphi^i \in L^\infty_{H^1}\cap L^2_{H^2}$, and in particular this implies $\varphi^i \in L^4_{H^{1,4}}$ as
	\begin{align*}
		\int_0^T \|\varphi^i\|_{H^{1,4}(\Gamma(t))}^4 &\leq C \int_0^T \|\varphi^i\|_{H^{1}(\Gamma(t))}^2 \|\varphi^i\|_{H^2(\Gamma(t))}^2\\
		&\leq C \left(\sup_{t \in [0,T]} \| \varphi^i\|_{H^1(\Gamma(t))}^2\right)\int_0^T \|\varphi^i\|_{H^2(\Gamma(t))}^2 < \infty.
	\end{align*}
	Moreover as $\| \bar{\ut}(0)\|_{\mathcal{S}}^2 + \| \bar{\varphi}(0) \|_{-1}^2 = 0$ by definition, we see that we may use Lemma \ref{gronwalltype} to see that $ \| \bar{\ut}\|_{\mathcal{S}}$ and $\| \bar{\varphi} \|_{-1}$ vanish on $[0,T]$.
	Hence it follows that $\bar{\ut}, \bar{\varphi}$ vanish for almost all $t$, and from this one can readily show that $\bar{\mu} = 0$ a.e. on $[0,T]$ and hence determine uniqueness of weak solutions.
\end{proof}
		
\subsubsection{Proof of Lemma \ref{uniqueineq}}
\begin{proof}
	We begin by showing \eqref{uniquenessu}.
	Testing \eqref{uniqueness1} with $\mathcal{S} \bar{\ut}$, as defined in Appendix \ref{inversestokes}, and rewriting terms in a suitable way we find that
	\begin{multline}
		\mbf{m}_*\left(\normdev {\bar{\ut}}, \mathcal{S} \bar{\ut} \right) + \hat{\mbf{a}}(\eta(\varphi^1) - \eta(\varphi^2), \ut^1, \mathcal{S} \bar{\ut}) + \hat{\mbf{a}}(\eta(\varphi^2), \bar{\ut}, \mathcal{S} \bar{\ut})\\
		+ \mbf{c}_1(\bar{\ut},\ut^1, \mathcal{S} \bar{\ut}) + \mbf{c}_1(\ut^2,\bar{\ut}, \mathcal{S} \bar{\ut})+ \mbf{l}(\bar{\ut}, \mathcal{S} \bar{\ut}) + \mbf{d}_1(\bar{\ut}, \mathcal{S} \bar{\ut})\\
		+\mbf{d}_2(\eta(\varphi^1) - \eta(\varphi^2), \mathcal{S} \bar{\ut})
		=  \varepsilon \mbf{c}_3(\bar{\varphi},\varphi^1,\mathcal{S} \bar{\ut}) + \varepsilon \mbf{c}_3(\varphi^2,\bar{\varphi},\mathcal{S} \bar{\ut}). \label{uniqueness4}
	\end{multline}
	Firstly, we claim that
	\begin{multline}
		\mbf{m}_*\left(\normdev {\bar{\ut}}, \mathcal{S} \bar{\ut} \right) = \frac{1}{2} \frac{d}{dt} \| \bar{\ut} \|_{\mathcal{S}}^2 + \frac{1}{2} \mbf{m}(\mathcal{S}\bar{\ut}, \mathcal{S}\bar{\ut}) + \frac{1}{2}\mbf{b}(\mathcal{S}\bar{\ut},\mathcal{S}\bar{\ut}) - \mbf{m}(\bar{\ut} HV_N, \mathcal{S}\bar{\ut}), \label{uniqueness5}
	\end{multline}
	where we are using the notation from Appendix \ref{inversestokes}.
	To see this we write
	\[ \mbf{m}_*\left(\normdev {\bar{\ut}}, \mathcal{S} \bar{\ut} \right) = \frac{d}{dt} \mbf{m}(\bar{\ut}, \mathcal{S} \bar{\ut}) - \mbf{m}\left(\bar{\ut}, \normdev{ \mathcal{S} \bar{\ut}}\right) - \mbf{m}\left(\bar{\ut}HV_N, \mathcal{S} \bar{\ut}\right).\]
	By the definition of $\mathcal{S}$ we have
	\[ \frac{d}{dt} \mbf{m}(\bar{\ut}, \mathcal{S} \bar{\ut}) = \frac{d}{dt}\|\bar{\ut}\|_{\mathcal{S}}^2, \]
	and it remains to rewrite $\mbf{m}\left(\bar{\ut}, \normdev \mathcal{S} \bar{\ut}\right)$.
	For this we see that
	\begin{align*}
		\mbf{m}\left(\bar{\ut}, \normdev \mathcal{S} \bar{\ut}\right) &= \mbf{m}\left(\mathcal{S}\bar{\ut}, \normdev \mathcal{S} \bar{\ut}\right) +\mbf{a}\left(\mathcal{S}\bar{\ut}, \normdev \mathcal{S} \bar{\ut}\right)\\
		&= \frac{1}{2} \frac{d}{dt} \|\bar{\ut}\|_{\mathcal{S}}^2 - \frac{1}{2} \mbf{m}(\mathcal{S}\bar{\ut}, \mathcal{S}\bar{\ut} H V_N) - \frac{1}{2} \mbf{b}(\mathcal{S}\bar{\ut},\mathcal{S}\bar{\ut}),
	\end{align*}
	from which we see the claim holds.\\
	
	Next we rewrite the second $\hat{\mbf{a}}$ term in \eqref{uniqueness4} by using integration by parts.
	To do this we note that $\mathcal{S}\bar{\ut} \in L^2_{H^2}$ and we have the bound \eqref{stokes regularity}.
	Integration by parts yields
	\begin{multline*}
		\hat{\mbf{a}}(\eta(\varphi^2), \bar{\ut}, \mathcal{S}\bar{\ut}) = \int_{\Gamma(t)} \eta(\varphi^2) \mbb{E}(\bar{\ut}) : \mbb{E}(\mathcal{S}\bar{\ut}) = -\int_{\Gamma(t)} \bar{\ut} \cdot \mbb{P} \gradg  \cdot\left( \eta(\varphi^2) \mbb{E}(\mathcal{S}\bar{\ut}) \right)\\
		= -\int_{\Gamma(t)}\eta'(\varphi^2) \bar{\ut} \cdot  \mbb{E}(\mathcal{S}\bar{\ut}) \gradg \varphi^2 + \int_{\Gamma(t)} \eta(\varphi^2) \bar{\ut} \cdot (\bar{\ut} - \mathcal{S}\bar{\ut}),
	\end{multline*}
	where we have used the fact that $\mathcal{S}\bar{\ut}-\mbb{P} \divg \mbb{E}(\mathcal{S}\bar{\ut}) = \bar{\ut}$ a.e. on $\Gamma(t)$.
	Hence one obtains
	\begin{multline}
		\hat{\mbf{a}}(\eta(\varphi^2), \bar{\ut}, \mathcal{S}\bar{\ut}) = -\int_{\Gamma(t)}\eta'(\varphi^2) \bar{\ut} \cdot  \mbb{E}(\mathcal{S}\bar{\ut}) \gradg \varphi^2 + \int_{\Gamma(t)} \eta(\varphi^2) \bar{\ut} \cdot \bar{\ut}\\
		- \int_{\Gamma(t)} \eta(\varphi^2) \bar{\ut} \cdot \mathcal{S}\bar{\ut}. \label{uniqueness6}
	\end{multline}
	Next we bound the other $\hat{\mbf{a}}$ term.
	To do this we recall that $\eta(\cdot)$ is Lipschitz continuous, and so we obtain the bound
	\[\hat{\mbf{a}}(\eta(\varphi^1) - \eta(\varphi^2), \ut^1, \mathcal{S}\bar{\ut}) \leq C \| \bar{\varphi} \|_{L^\infty(\Gamma(t))} \|\mbb{E}(\ut^1)\|_{\mbf{L}^2(\Gamma(t))} \| \bar{\ut}\|_{\mathcal{S}} . \]
	Now we use Lemma \ref{brezisgallouet}, and Poincar\'e's inequality, to see that
	\[ \| \bar{\varphi}\|_{L^\infty(\Gamma(t))} \leq C \| \gradg \bar{\varphi} \|_{L^2(\Gamma(t))} \left( 1 + \log\left( 1 + \frac{C \|\bar{\varphi}\|_{H^2(\Gamma(t))}}{\|\gradg \bar{\varphi}\|_{L^2(\Gamma(t))}} \right)^\frac{1}{2}\right),\]
	where we note from \eqref{extra regularity} that $\bar{\varphi}$ has sufficient regularity.
	Now from the $L^\infty_{H^1}$ bounds for $\varphi_1, \varphi_2$ we see that there is a constant, $C_1$, such that $\|\gradg \bar{\varphi} \|_{L^2(\Gamma(t))} \leq C_1$ for almost all $t \in [0,T]$.
	Hence one can find a sufficiently large constant $C_2$ so that
	\[ \log\left( 1 + \frac{C \|\bar{\varphi}\|_{H^2(\Gamma(t))}}{\|\gradg \bar{\varphi}\|_{L^2(\Gamma(t))}} \right) \leq \log\left( \frac{C_2 (\| \gradg \bar{\varphi} \|_{L^2(\Gamma)} +  \|\bar{\varphi}\|_{H^2(\Gamma(t))})}{\|\gradg \bar{\varphi}\|_{L^2(\Gamma(t))}^2} \right),\]
	such that this logarithmic term is positive, and we will ultimately be able to apply Lemma \ref{gronwalltype}.
	All in all this gives us the bound
	\begin{multline}
		|\hat{\mbf{a}}(\eta(\varphi^1) - \eta(\varphi^2), \ut^1, \mathcal{S}\bar{\ut})| \leq \frac{\varepsilon}{12}\| \gradg \bar{\varphi} \|_{L^2(\Gamma(t))}^2 + C\|\mbb{E}(\ut^1)\|_{\mbf{L}^2(\Gamma(t))}^2 \| \bar{\ut}\|_{\mathcal{S}}^2\\
		+ C\log\left( \frac{C_2 (\| \gradg \bar{\varphi} \|_{L^2(\Gamma(t))} +  \|\bar{\varphi}\|_{H^2(\Gamma(t))})}{\|\gradg \bar{\varphi}\|_{L^2(\Gamma(t))}^2} \right)^\frac{1}{2} \| \gradg \bar{\varphi} \|_{L^2(\Gamma(t))}\|\mbb{E}(\ut^1)\|_{\mbf{L}^2(\Gamma(t))} \| \bar{\ut}\|_\mathcal{S}, \label{uniqueness7}
	\end{multline}
	where we have used Young's inequality where appropriate.\\
	
	Now we bound the $\mbf{c}_1$ terms.
	Firstly we note that
	\[ |\mbf{c}_1(\bar{\ut}, \ut^1, \mathcal{S}\bar{\ut})| = |\mbf{c}_1(\mathcal{S}\bar{\ut}, \ut^1, \bar{\ut})| \leq \|\mathcal{S}\bar{\ut}\|_{\mbf{H}^{1,4}(\Gamma(t))} \| \ut^1 \|_{\mbf{L}^4(\Gamma(t))} \| \bar{\ut}\|_{\mbf{L}^2(\Gamma(t))} .\]
	We recall the interpolation inequality\footnote{See \cite{Aub82}, and note that a $C^3$ surface is sufficiently smooth for this to hold.
	This can be extended to evolving surfaces with a time independent constant as in \cite{OlsReuZhi22} Lemma 3.4.},
	\[\|\mathcal{S}\bar{\ut}\|_{\mbf{H}^{1,4}(\Gamma(t))} \leq C \|\mathcal{S}\bar{\ut}\|_{\mbf{H}^1(\Gamma(t))}^{\frac{1}{2}} \| \mathcal{S}\bar{\ut} \|_{\mbf{H}^2(\Gamma(t))}^{\frac{1}{2}},\]
	and use \eqref{korn1}, \eqref{stokes regularity} to see that
	\begin{align}
		\|\mathcal{S}\bar{\ut}\|_{\mbf{H}^{1,4}(\Gamma(t))} \leq C \| \bar{\ut}\|_{\mathcal{S}}^{\frac{1}{2}} \| \bar{\ut} \|_{\mbf{L}^2(\Gamma(t))}^{\frac{1}{2}}. \label{interpolation inequality}
	\end{align}
	Hence we observe that
	\[ |\mbf{c}_1(\bar{\ut}, \ut^1, \mathcal{S}\bar{\ut})| \leq C \| \ut^1 \|_{\mbf{L}^4(\Gamma(t))} \| \bar{\ut}\|_{\mathcal{S}}^{\frac{1}{2}} \| \bar{\ut} \|_{\mbf{L}^2(\Gamma(t))}^{\frac{3}{2}},  \]
	and hence Young's inequality yields
	\begin{align}
		|\mbf{c}_1(\bar{\ut}, \ut^1, \mathcal{S}\bar{\ut})| \leq C \| \ut^1 \|_{\mbf{L}^4(\Gamma(t))}^4 \| \bar{\ut}\|_{\mathcal{S}}^2 + \frac{\eta_*}{12}\| \bar{\ut} \|_{\mbf{L}^2(\Gamma(t))}^2.\label{uniqueness8}
	\end{align}
	An identical argument yields 
	\begin{align}
		|\mbf{c}_1(\ut^2,\bar{\ut}, \mathcal{S}\bar{\ut})| \leq C \| \ut^2 \|_{\mbf{L}^4(\Gamma(t))}^4 \|\bar{\ut}\|_{\mathcal{S}}^2 + \frac{\eta_*}{12}\| \bar{\ut} \|_{\mbf{L}^2(\Gamma(t))}^2.\label{uniqueness9}
	\end{align}
	We now turn to the contributions from the evolution of the surface, that is the terms involving $\mbf{l}, \mbf{d}_1, \mbf{d}_2$, which would vanish for a stationary surface.
	The simplest of these terms is
	\begin{align}
		\mbf{l}(\bar{\ut}, \mathcal{S}\bar{\ut}) \leq C \| \bar{\ut}\|_{\mbf{L}^2(\Gamma(t))}\| \mathcal{S}\bar{\ut}\|_{\mbf{L}^2(\Gamma(t))}\leq \frac{\eta_*}{12}\|\bar{\ut}\|_{\mbf{L}^2(\Gamma(t))}^2 + C \| \bar{\ut} \|_{\mathcal{S}}^2,
		\label{uniqueness10}
	\end{align}
	which follows from Young's inequality.
	Next we look at
	\begin{align}
		\mbf{d}_2(\eta(\varphi^1) - \eta(\varphi^2), \mathcal{S}\bar{\ut}) \leq C \|\bar{\varphi}\|_{L^2(\Gamma(t))} \| \bar{\ut}\|_{\mathcal{S}} \leq \frac{\varepsilon}{12} \|\gradg \bar{\varphi} \|_{L^2(\Gamma(t))}^2 +C \| \bar{\ut} \|_{\mathcal{S}}^2, \label{uniqueness11}
	\end{align}
	which follows similarly to the above inequality, but we have also used the Lipschitz continuity of $\eta(\cdot)$, Poincar\'e's inequality, and the uniform bounds on $\widetilde{\ut}$.
	Finally to bound the $\mbf{d}_1$ term we see
	\begin{align*}
		\mbf{d}_1(\bar{\ut}, \mathcal{S}\bar{\ut}) &= \mbf{c}_1(\bar{\ut}, \widetilde{\ut}, \mathcal{S}\bar{\ut}) + \mbf{c}_1(\widetilde{\ut},\bar{\ut}, \mathcal{S}\bar{\ut})\\
		&=-\mbf{c}_1(\mathcal{S}\bar{\ut}, \widetilde{\ut}, \bar{\ut}) - \mbf{c}_1(\mathcal{S}\bar{\ut},\bar{\ut}, \widetilde{\ut}) + 2\mbf{m}(\bar{\ut} HV_N, \mathcal{S}\bar{\ut}),
	\end{align*}
	where we have used the antisymmetry of $\mbf{c}_1$ (for solenoidal functions), and the extra term comes from the fact that $\divg \widetilde{\ut} = -HV_N$.
	From this one readily sees that
	\[ |\mbf{d}_1(\bar{\ut}, \mathcal{S}\bar{\ut})| \leq C\| \bar{\ut}\|_{\mathcal{S}} \|\bar{\ut}\|_{\mbf{L}^2(\Gamma(t))} \|\widetilde{\ut}\|_{\mbf{L}^\infty(\Gamma(t))} + C \|\bar{\ut}\|_{\mbf{L}^2(\Gamma(t))} \| \bar{\ut} \|_{\mathcal{S}}. \]
	This clearly yields
	\begin{align}
		|\mbf{d}_1(\bar{\ut}, \mathcal{S}\bar{\ut})| \leq C\left(1 + \|\widetilde{\ut}\|_{\mbf{L}^\infty(\Gamma(t))}^2\right) \| \bar{\ut}\|_{\mathcal{S}}^2 + \frac{\eta_*}{12} \|\bar{\ut}\|_{\mbf{L}^2(\Gamma(t))}^2. \label{uniqueness12} 
	\end{align}
	
	The last terms for us to bound are the $\mbf{c}_3$ contributions, from which one readily sees that Young's inequality gives us
	\begin{multline*}
		\varepsilon |\mbf{c}_3(\bar{\varphi},\varphi^1,\mathcal{S} \bar{\ut})| + \varepsilon |\mbf{c}_3(\varphi^2,\bar{\varphi},\mathcal{S} \bar{\ut})| \leq \frac{\varepsilon}{12} \| \gradg \bar{\varphi} \|_{L^2(\Gamma(t))}^2\\
		+ C\left(\| \varphi^1\|_{H^{1,4}(\Gamma(t))}^2 + \| \varphi^2\|_{H^{1,4}(\Gamma(t))}^2\right)\|\mathcal{S} \bar{\ut}\|_{\mbf{H}^{1,4}(\Gamma(t))}^2.
	\end{multline*}
	We then recall \eqref{interpolation inequality} to see that the above yields
	\begin{multline}
		\varepsilon |\mbf{c}_3(\bar{\varphi},\varphi^1,\mathcal{S} \bar{\ut})| + \varepsilon |\mbf{c}_3(\varphi^2,\bar{\varphi},\mathcal{S} \bar{\ut})| \leq \frac{\varepsilon}{12} \| \gradg \bar{\varphi} \|_{L^2(\Gamma(t))}^2 + \frac{\eta_*}{12} \|\bar{\ut}\|_{\mbf{L}^2(\Gamma(t))}^2\\
		+ C\left(\| \varphi^1\|_{H^{1,4}(\Gamma(t))}^4 + \| \varphi^2\|_{H^{1,4}(\Gamma(t))}^4\right)\| \bar{\ut}\|_{\mathcal{S}}^2,
		\label{uniqueness13}
	\end{multline}
	where we note that $\varphi^i \in L^4_{H^{1,4}}$ as shown above.\\
	
	Now we use \eqref{uniqueness5}-\eqref{uniqueness13} in \eqref{uniqueness4} to see that
	\begin{multline*}
		\frac{1}{2} \frac{d}{dt} \| \bar{\ut} \|_{\mathcal{S}}^2 + \eta_* \|\bar{\ut} \|_{\mbf{L}^2(\Gamma(t))}^2 \leq \frac{\varepsilon}{4}\| \gradg \bar{\varphi} \|_{L^2(\Gamma(t))}^2 + \frac{5\eta_*}{12} \|\bar{\ut}\|_{\mbf{L}^2(\Gamma(t))}^2+\frac{1}{2} \mbf{m}(\mathcal{S}\bar{\ut}, \mathcal{S}\bar{\ut})\\
		+ \frac{1}{2}|\mbf{b}(\mathcal{S}\bar{\ut},\mathcal{S}\bar{\ut})| + |\mbf{m}(\bar{\ut} HV_N, \mathcal{S}\bar{\ut})| + \left|\int_{\Gamma(t)}\eta'(\varphi^2) \bar{\ut} \cdot  \mbb{E}(\mathcal{S}\bar{\ut}) \gradg \varphi^2\right|\\
		+ \left| \int_{\Gamma(t)} \eta(\varphi^2) \bar{\ut} \cdot \mathcal{S}\bar{\ut} \right| + K_1(t)\| \bar{\ut}\|_{\mathcal{S}}^2+ C_{\log}(\bar{\varphi}, \ut^1) \| \bar{\ut}\|_\mathcal{S}.
	\end{multline*}
	\eqref{uniquenessu} then follows by noting the bound
	\begin{multline*}
		\frac{1}{2} |\mbf{m}(\mathcal{S}\bar{\ut}, \mathcal{S}\bar{\ut} HV_N)| + \frac{1}{2}|\mbf{b}(\mathcal{S}\bar{\ut},\mathcal{S}\bar{\ut})| + |\mbf{m}(\bar{\ut} HV_N, \mathcal{S}\bar{\ut})|\\
        + \left|\int_{\Gamma(t)}\eta'(\varphi^2) \bar{\ut} \cdot  \mbb{E}(\mathcal{S}\bar{\ut}) \gradg \varphi^2\right|
		+ \left| \int_{\Gamma(t)} \eta(\varphi^2) \bar{\ut} \cdot \mathcal{S}\bar{\ut} \right|\\
  \leq \frac{\eta_*}{12}\| \bar{\ut}\|_{\mbf{L}^2(\Gamma(t))}^2
  + C(1 + \|\varphi^2\|_{H^{1,4}(\Gamma(t))}^4) \| \bar{\ut}\|_{\mathcal{S}}^2,
	\end{multline*}
	where we have used \eqref{interpolation inequality} so that
	\[ \left|\int_{\Gamma(t)}\eta'(\varphi^2) \bar{\ut} \cdot  \mbb{E}(\mathcal{S}\bar{\ut}) \gradg \varphi^2\right| \leq C \|\varphi^2\|_{H^{1,4}(\Gamma(t))} \|\bar{\ut}\|_{\mbf{L}^2(\Gamma(t))}^{\frac{3}{2}}\|\bar{\ut}\|_{\mathcal{S}}^{\frac{1}{2}}. \]

	It remains to establish \eqref{uniquenessphi}.
	To do this we test \eqref{uniqueness2} with $\mathcal{G} \bar{\varphi}$, which we note is well defined as $\mval{\bar{\varphi}}{\Gamma(t)} = 0$ for almost all $t \in [0,T]$, which yields
	\begin{align}
		m_*\left(\normdev {\bar{\varphi}}, \mathcal{G} \bar{\varphi}\right) + a(\bar{\mu}, \mathcal{G} \bar{\varphi}) + \mbf{c}_2(\mathcal{G} \bar{\varphi},\bar{\varphi},\ut^1) + \mbf{c}_2(\mathcal{G} \bar{\varphi},\varphi^2,\bar{\ut}) + \mbf{c}_2(\mathcal{G} \bar{\varphi},\bar{\varphi},\widetilde{\ut})= 0. \label{uniqueness20}
	\end{align}
	The first term can be expressed as
	\begin{align}
		m_*\left( \normdev{\bar{\varphi}}, \mathcal{G} \bar{\varphi} \right) = \frac{1}{2} \frac{d}{dt} \| \bar{\varphi} \|_{-1}^2 - m(\bar{\varphi} HV_N, \mathcal{G} \bar{\varphi}) + \frac{1}{2} b(\mathcal{G} \bar{\varphi}, \mathcal{G} \bar{\varphi}). \label{uniqueness21}
	\end{align}
	To see this we express this term as
	\[ m_*\left( \normdev{\bar{\varphi}}, \mathcal{G} \bar{\varphi} \right) = \frac{d}{dt} m\left( \bar{\varphi}, \mathcal{G} \bar{\varphi} \right) - m\left( \bar{\varphi}, \normdev{\mathcal{G} \bar{\varphi}} \right) - m(\bar{\varphi} HV_N, \mathcal{G} \bar{\varphi}),\]
	and note $m(\bar{\varphi}, \mathcal{G} \bar{\varphi}) = \| \bar{\varphi}\|_{-1}^2$, and that
	\[ m\left( \bar{\varphi}, \normdev{\mathcal{G} \bar{\varphi}} \right) = a\left(\mathcal{G} \bar{\varphi}, \normdev{\mathcal{G} \bar{\varphi}} \right) = \frac{1}{2}\frac{d}{dt} \|\bar{\varphi}\|_{-1}^2 - \frac{1}{2} b(\mathcal{G} \bar{\varphi}, \mathcal{G} \bar{\varphi}). \]
	
	To bound the second term, we see from the definition of the inverse Laplacian that $a(\bar{\mu}, \mathcal{G} \bar{\varphi}) = m(\bar{\mu}, \bar{\varphi})$, and hence testing \eqref{uniqueness2} with $\bar{\varphi}$ we see
	\[a(\bar{\mu}, \mathcal{G} \bar{\varphi}) = \varepsilon a(\bar{\varphi}, \bar{\varphi}) + \frac{1}{\varepsilon}m(F'(\varphi^1)-F'(\varphi^2), \bar{\varphi}).\]
	We recall that $F = F_1 + F_2$ where $F_1$ is convex, so that
	\[m(F'(\varphi^1)-F'(\varphi^2), \bar{\varphi}) \geq m(F_2'(\varphi^1)-F_2'(\varphi^2), \bar{\varphi}).\]
	By using the Lipschitz continuity of $F_2'$, and the definition of the inverse Laplacian, one readily sees that
	\begin{align}
		|m(F_2'(\varphi^1)-F_2'(\varphi^2), \bar{\varphi})| \leq C \| \bar{\varphi}\|_{L^2(\Gamma(t))}^2 = C a(\bar{\varphi}, \mathcal{G}\bar{\varphi}) \leq \frac{\varepsilon}{16} \| \gradg \bar{\varphi} \|_{L^2(\Gamma(t))} + C \| \bar{\varphi} \|_{-1}^2. \label{uniqueness22}
	\end{align}
	It remains to bound the various $\mbf{c}_2$ terms.
	Firstly we find that
	\[|\mbf{c}_2(\mathcal{G} \bar{\varphi}, \bar{\varphi}, \ut^1)| \leq \| \mathcal{G} \bar{\varphi}\|_{L^4(\Gamma(t))} \|\gradg\bar{\varphi}\|_{L^2(\Gamma(t))} \| \ut^1 \|_{\mbf{L}^4(\Gamma(t))},\]
	and by using the embedding $H^1(\Gamma(t)) \hookrightarrow L^4(\Gamma(t))$, Poincar\'e's inequality and Young's inequality we find that
	\begin{align}
		|\mbf{c}_2(\mathcal{G} \bar{\varphi}, \bar{\varphi}, \ut^1)| \leq \frac{\varepsilon}{16} \| \gradg \bar{\varphi} \|_{L^2(\Gamma(t))}^2 + C  \| \ut^1 \|_{\mbf{L}^4(\Gamma(t))}^2 \| \bar{\varphi} \|_{-1}^2. \label{uniqueness23}
	\end{align}
	The other $\mbf{c}_2$ term is similar, but now we use the antisymmetry of $\mbf{c}_2$ in the first two arguments so that
	\[ |\mbf{c}_2(\mathcal{G} \bar{\varphi}, \varphi^2, \bar{\ut})| = |\mbf{c}_2( \varphi^2, \mathcal{G} \bar{\varphi},\bar{\ut})| \leq \| \varphi^2 \|_{L^\infty(\Gamma(t))} \| \bar{\varphi} \|_{-1} \| \bar{\ut} \|_{\mbf{L}^2(\Gamma(t))}.\]
	It is then clear from Young's inequality that
	\begin{align}
		|\mbf{c}_2(\mathcal{G} \bar{\varphi}, \varphi^2, \bar{\ut})| \leq \frac{\eta_*}{4} \| \bar{\ut} \|_{\mbf{L}^2(\Gamma(t))}^2 + C\|\varphi^2\|_{L^\infty(\Gamma(t))}^2 \| \bar{\varphi} \|_{-1}^2. \label{uniqueness24}
	\end{align}
	Finally we bound the term involving $\widetilde{\ut}$.
	To do this, we observe that
	\[ |\mbf{c}_2(\mathcal{G} \bar{\varphi},\bar{\varphi},\widetilde{\ut})| \leq |\mbf{c}_2( \bar{\varphi},\mathcal{G}\bar{\varphi},\widetilde{\ut})| + |m(\bar{\varphi} HV_N, \mathcal{G} \bar{\varphi})|, \]
	so that by similar arguments to the above
	\begin{align}
		|\mbf{c}_2(\mathcal{G} \bar{\varphi},\bar{\varphi},\widetilde{\ut})| \leq \frac{\varepsilon}{16} \| \gradg \bar{\varphi} \|_{L^2(\Gamma(t))}^2 + C \left(1 + \|\widetilde{\ut}\|_{\mbf{L}^\infty(\Gamma(t))}^2\right) \| \bar{\varphi} \|_{-1}^2. \label{uniqueness25}
	\end{align}
	Hence using \eqref{uniqueness21}-\eqref{uniqueness25} in \eqref{uniqueness20} one obtains
	\begin{multline*}
		\frac{1}{2} \frac{d}{dt} \| \bar{\varphi} \|_{-1}^2 \leq |m(\bar{\varphi} HV_N, \mathcal{G} \bar{\varphi})| + \frac{1}{2} |b(\mathcal{G} \bar{\varphi}, \mathcal{G} \bar{\varphi})| +  \frac{3\varepsilon}{16} \| \gradg \bar{\varphi} \|_{L^2(\Gamma(t))} + \frac{\eta_*}{4} \| \bar{\ut} \|_{\mbf{L}^2(\Gamma(t))}^2 \\ 
		+ C \left(1+ \| \ut^1 \|_{\mbf{L}^4(\Gamma(t))}^2 +  \|\varphi^2\|_{L^\infty(\Gamma(t))}^2 + \|\widetilde{\ut}\|_{\mbf{L}^\infty(\Gamma(t))}^2\right)\| \bar{\varphi} \|_{-1}^2.
	\end{multline*}
    \eqref{uniquenessphi} then follows from the bound
	\[ |m(\bar{\varphi} HV_N, \mathcal{G} \bar{\varphi})| + |b(\mathcal{G} \bar{\varphi}, \mathcal{G} \bar{\varphi})| \leq \frac{\varepsilon}{16}\| \gradg \bar{\varphi} \|_{L^2(\Gamma(t))}^2 + C \|\bar{\varphi}\|_{-1}^2. \]
\end{proof}

Next we show a stability result for the case of constant viscosity.
This also provides a simpler proof for uniqueness in this special case, where we no longer require Lemma \ref{brezisgallouet}.
\begin{proposition}
	\label{smooth stability}
	Let $(\varphi^i, \mu^i, \ut^i)$ denote the solution triple corresponding to some choice of initial data $\varphi_0^i \in H^1(\Gamma_0), \mbf{u}_{T,0}^i \in \mbf{H}_\sigma$, for $i=1,2$, where $\mval{\varphi_0^1}{\Gamma_0} = \mval{\varphi_2^1}{\Gamma_0}$.
	Then, under the same assumptions as the preceding theorem, we have
	\begin{multline}
		\| \ut^1(t) - \ut^2(t)\|_{\mathcal{S}}^2 + \| \varphi^1(t) - \varphi^2(t) \|_{-1}^2 \leq C\left(\| \mbf{u}_{T,0}^1 - \mbf{u}_{T,0}^2\|_{\mathcal{S}}^2 + \| \varphi_0^1 - \varphi_0^2 \|_{-1}^2\right), \label{stability bound}
	\end{multline}
	for a constant $C$ which depends on $t, \Gamma$ and the initial data.
\end{proposition}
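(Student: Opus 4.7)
The plan is to reuse the differential inequality machinery developed for Lemma \ref{uniqueineq}, exploiting the fact that constant viscosity eliminates the delicate logarithmic term. As before, I set $\bar{\varphi} = \varphi^1 - \varphi^2$, $\bar{\mu} = \mu^1 - \mu^2$, $\bar{\ut} = \ut^1 - \ut^2$, noting that by the mass conservation property (testing the equation for $\varphi^i$ against $\phi \equiv 1$) and the equality of means in the initial data, $\mval{\bar{\varphi}(t)}{\Gamma(t)} = 0$ for almost all $t$, so the inverse Laplacian $\mathcal{G}\bar{\varphi}$ and the norm $\|\bar{\varphi}\|_{-1}$ are well defined. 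The triple $(\bar{\varphi}, \bar{\mu}, \bar{\ut})$ still solves \eqref{uniqueness1}--\eqref{uniqueness3}.

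The key observation is that when $\eta(\cdot)$ is constant, $\hat{\mbf{a}}(\eta(\varphi^1) - \eta(\varphi^2), \ut^1, \Sperp \bar{\ut}) = 0$ and $\mbf{d}_2(\eta(\varphi^1) - \eta(\varphi^2), \Sperp \bar{\ut}) = 0$, so the bound \eqref{uniqueness7} which required Lemma \ref{brezisgallouet} (and produced the $C_{\log}$ term) is no longer needed. Repeating the derivation of \eqref{uniquenessPKperp} verbatim but dropping these two terms yields a clean inequality
\[
\tfrac{1}{2}\tfrac{d}{dt}\|\PKperp\bar{\ut}\|_\perp^2 + \tfrac{\eta_*}{2}\|\PKperp\bar{\ut}\|_{\mbf{L}^2(\Gamma(t))}^2 \leq C\|\PK\bar{\ut}\|_{\mbf{L}^2(\Gamma(t))}^2 + \tfrac{\varepsilon}{4}\|\gradg\bar{\varphi}\|_{L^2(\Gamma(t))}^2 + K_1(t)\|\PKperp\bar{\ut}\|_\perp^2,
\]
while \eqref{uniquenessPK} and \eqref{uniquenessu} go through unchanged.

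Adding the three inequalities gives
\[
\tfrac{1}{2}\tfrac{d}{dt}\Bigl(\|\PKperp\bar{\ut}\|_\perp^2 + \|\PK\bar{\ut}\|_{\mbf{L}^2(\Gamma(t))}^2 + \|\bar{\varphi}\|_{-1}^2\Bigr) + \tfrac{\varepsilon}{4}\|\gradg\bar{\varphi}\|_{L^2(\Gamma(t))}^2 \leq K(t)\Bigl(\|\PKperp\bar{\ut}\|_\perp^2 + \|\PK\bar{\ut}\|_{\mbf{L}^2(\Gamma(t))}^2 + \|\bar{\varphi}\|_{-1}^2\Bigr),
\]
with $K(t) = K_1(t) + K_2(t) + K_3(t)$. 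Crucially, the energy estimate \eqref{energyestimate} ensures $K_1, K_2, K_3 \in L^1(0,T)$ (using $\varphi^i \in L^4_{H^{1,4}}$ as established in the uniqueness argument, together with $\ut^i \in L^2_{\mbf{H}^1} \cap L^\infty_{\mbf{L}^2}$ interpolated to $L^4_{\mbf{L}^4}$), and this bound depends only on the initial data and the fixed data of the problem. A standard Gr\"onwall inequality then yields \eqref{stability bound} with the desired form, after noting that at $t=0$ one has the identities $\|\PKperp\bar{\ut}(0)\|_\perp^2 + \|\PK\bar{\ut}(0)\|_{\mbf{L}^2(\Gamma_0)}^2 = \|\PKperp(\mbf{u}_{T,0}^1 - \mbf{u}_{T,0}^2)\|_\perp^2 + \|\PK(\mbf{u}_{T,0}^1 - \mbf{u}_{T,0}^2)\|_{\mbf{L}^2(\Gamma_0)}^2$ and $\|\bar{\varphi}(0)\|_{-1} = \|\varphi_0^1 - \varphi_0^2\|_{-1}$.

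The main obstacle, already confronted in Theorem \ref{smooth uniqueness}, is verifying that $K \in L^1(0,T)$; the present constant-viscosity setting avoids the more delicate issue (the Brezis-Gallou\"et argument leading to Lemma \ref{gronwalltype}) and reduces to a routine Gr\"onwall, so no additional regularity beyond what the energy estimate already provides is required.
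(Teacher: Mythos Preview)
Your proposal is correct and follows essentially the same approach as the paper: exploit the constant viscosity to drop the $\hat{\mbf{a}}(\eta(\varphi^1)-\eta(\varphi^2),\cdot,\cdot)$ and $\mbf{d}_2$ difference terms (which is what eliminates the logarithmic contribution and the need for Lemma~\ref{brezisgallouet}), then sum the simplified version of \eqref{uniquenessPKperp} with \eqref{uniquenessPK} and \eqref{uniquenessu} and apply the standard Gr\"onwall inequality rather than Lemma~\ref{gronwalltype}. The paper additionally observes that with $\eta$ constant the term $\hat{\mbf{a}}(\eta(\varphi^2),\bar{\ut},\Sperp\bar{\ut})$ collapses directly to $\eta\,\|\PKperp\bar{\ut}\|_{\mbf{L}^2(\Gamma(t))}^2$ without the integration by parts \eqref{uniqueness6}, but this is a cosmetic simplification and your route works just as well.
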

\begin{proof}
	This proof is largely the same as that of the previous theorem, but now there are simpler terms regarding the viscosity.
	We use the same notation as before, except now we denote $(\varphi^i, \mu^i, \ut^i)$ as the solution corresponding to some choice of initial data $\varphi_0^i \in H^1(\Gamma), \mbf{u}_{T,0}^i \in \mbf{H}_\sigma$.
	We define $(\bar{\varphi}, \bar{\mu}, \bar{\ut})$ as before, and note that instead of \eqref{uniqueness1} we find that $(\bar{\varphi}, \bar{\mu}, \bar{\ut})$ solves
	\begin{multline}
		\mbf{m}_*\left(\normdev {\bar{\ut}}, \boldsymbol{\phi} \right) + \eta\mbf{a}(\bar{\ut}, \boldsymbol{\phi})+ \mbf{c}_1(\ut^1,\ut^1, \boldsymbol{\phi}) - \mbf{c}_1(\ut^2,\ut^2, \boldsymbol{\phi}) + \mbf{l}(\bar{\ut}, \boldsymbol{\phi}) + \mbf{d}_1(\bar{\ut}, \boldsymbol{\phi})\\
		=  \varepsilon \mbf{c}_3(\varphi^1,\varphi^1,\boldsymbol{\phi}) - \varepsilon \mbf{c}_3(\varphi^2,\varphi^2,\boldsymbol{\phi}),\label{stability1}
	\end{multline}
	but \eqref{uniqueness2} and \eqref{uniqueness3} are still satisfied.
	As before we test \eqref{stability1} with $\mathcal{S} \bar{\ut}$ to see
	\begin{multline*}
		\frac{1}{2} \frac{d}{dt} \| \bar{\ut} \|_{\mathcal{S}}^2 + \eta \| \bar{\ut} \|_{\mbf{L}^2(\Gamma(t))}^2 + \mbf{c}_1(\bar{\ut},\ut^1, \mathcal{S} \bar{\ut}) + \mbf{c}_1(\ut^2,\bar{\ut}, \mathcal{S} \bar{\ut}) + \mbf{l}(\bar{\ut}, \mathcal{S} \bar{\ut})\\
		+ \mbf{d}_1(\bar{\ut}, \mathcal{S} \bar{\ut})
		=  \varepsilon \mbf{c}_3(\bar{\varphi},\varphi^1,\mathcal{S} \bar{\ut}) + \varepsilon \mbf{c}_3(\varphi^2,\bar{\varphi},\mathcal{S} \bar{\ut}) - \frac{1}{2} \mbf{m}(\mathcal{S}\bar{\ut}, \mathcal{S}\bar{\ut})\\
		- \frac{1}{2}\mbf{b}(\mathcal{S}\bar{\ut},\mathcal{S}\bar{\ut}) + \mbf{m}(\bar{\ut} HV_N, \mathcal{S}\bar{\ut}) + \eta \mbf{m}(\bar{\ut}, \mathcal{S} \bar{\ut})
	\end{multline*}
	where we have used \eqref{uniqueness5} and the definition of $\mathcal{S}$.
	Now by arguing as we did for \eqref{uniqueness8}-\eqref{uniqueness13} it is straightforward to see that
	\begin{align}
		\frac{1}{2} \frac{d}{dt} \| \bar{\ut}\|_{\mathcal{S}}^2 + \eta \| \bar{\ut} \|_{\mbf{L}^2(\Gamma(t))}^2 \leq \frac{\eta}{2}\| \bar{\ut} \|_{\mbf{L}^2(\Gamma(t))}^2 + \frac{\varepsilon}{4} \| \gradg \bar{\varphi} \|_{L^2(\Gamma(t))}^2 + K_1(t)\| \bar{\ut}\|_{\mathcal{S}}^2, \label{stability2}
	\end{align}
	where $K_1$ is as before.
	Notice that the requirement $\mval{\varphi_0^1}{\Gamma} = \mval{\varphi_0^2}{\Gamma}$ allows us to define $\mathcal{G} \bar{\varphi}$, and so related calculations from the preceding theorem still hold.
	By summing \eqref{uniquenessphi} and \eqref{stability2} one finds
	\[\frac{1}{2} \frac{d}{dt} \left(\| \bar{\ut}\|_{\mathcal{S}}^2 + \| \bar{\varphi} \|_{-1}^2 \right) \leq K(t) \left(\| \bar{\ut}\|_{\mathcal{S}}^2 + \| \bar{\varphi} \|_{-1}^2 \right),\]
	where $K \in L^1([0,T])$.
	An application of Gr\"onwall's inequality then yields \eqref{stability bound}.
\end{proof}

\begin{remark}
	\label{stability remark}
	One can improve this result on a stationary surface as follows.
	Firstly we note that depending on the surface, $\Gamma$, there may be a nontrivial, finite-dimensional kernel
	\[ \mathcal{K} := \left\{ \boldsymbol{\phi} \in \mbf{V}_\sigma \mid \mbb{E}(\boldsymbol{\phi}) = 0 \right\}, \]
	consisting of the Killing vectors of $\Gamma$.
	We then define the subspace $\mathcal{K}^\perp$ such that $\mbf{H}_\sigma = \mathcal{K} \oplus \mathcal{K}^\perp$, and for $\boldsymbol{\phi} \in \mathcal{K}^\perp$ we define $\Sperp \boldsymbol{\phi} \in \mathcal{K}^\perp \cap \mbf{V}_\sigma$ to be the unique solution of
	\[ \mbf{a}(\Sperp \boldsymbol{\phi}, \boldsymbol{\psi}) = \mbf{m}(\boldsymbol{\phi}, \boldsymbol{\psi}), \]
	for all $\boldsymbol{\psi} \in \mbf{V}_\sigma$.
	It is shown in \cite{JanOlsReu18} that this is well-defined.\\
	
	One can then decompose $\bar{\ut} = \PK \bar{\ut} + \PKperp \bar{\ut}$, where $ \PK \bar{\ut} \in \mathcal{K}, \PKperp \bar{\ut} \in \mathcal{K}^\perp$, for which one can now show stability for $ \PK \bar{\ut}$ in $\mbf{L}^2(\Gamma)$, and $ \PKperp \bar{\ut}$ in $\mbf{V}_\sigma'$ - we omit the calculations here.
	An important note is that the dimension of $\mathcal{K}$ is not a topological invariant, and so under arbitrary (but area conserving) normal evolution the dimension of $\mathcal{K}$ can vary.
	In particular, this means there is not necessarily an isomorphism $\mathcal{K}(t) \rightarrow \mathcal{K}(s)$ for $t, s \in [0,T]$.
	Hence this argument does not hold for evolving surfaces without some extra assumptions based on the dimension of $\mathcal{K}(t)$ - which can be understood as geometric constraint based on the symmetries of $\Gamma(t)$.
\end{remark}

\subsection{Uniqueness for the logarithmic potential}
It is clear that \eqref{extra regularity} still holds, that is $\varphi \in L^2_{H^2}$, since we know $\mu, f(\varphi) \in L^2_{L^2},$ and so we may use the $\mbf{c}_3$ bilinear form as before.
With this at hand, the proofs of Theorem \ref{smooth uniqueness} and Proposition \ref{smooth stability} follow.
That is, for $\Gamma(t)$ a $C^3$ evolving surface, and initial data $\varphi_0 \in \mathcal{I}_0, \mbf{u}_{T,0} \in \Hdivfree{0}$ the solution triple $(\varphi,\mu,\ut)$ solving \eqref{logweakTNSCH1}-\eqref{logweakTNSCH3} is unique.
Moreover, for a constant viscosity, and $\varphi_0^i \in \mathcal{I}_0$ such that where $\mval{\varphi_0^1}{\Gamma_0} = \mval{\varphi_0^2}{\Gamma_0}$, then we have stability bound similar to \eqref{stability bound}.
We note that our proof does not require $\gradg \varphi \in L^2_{L^\infty}$, as is assumed in \cite{giorgini2019uniqueness}.

\begin{remark}
    The results for the logarithmic potential extend to more general singular potentials of the form $F(r) = F_1(r) - \frac{\theta}{2} r^2$, with $F_1 \in C^2((a,b)) \cap C^0([a,b])$ for some $a,b \in \mbb{R}$ under some necessary assumptions we do not expand upon.
    Potentials of this form are treated on a Euclidean domain in \cite{abels2009diffuse,giorgini2019uniqueness}, but here we have only covered the thermodynamically relevant logarithmic potential - which is still illustrative of the general case.
\end{remark}

\section{Reintroducing the surface pressure}\label{Pressure}
We end our discussion by reintroducing the surface pressure and the correct divergence condition.
We now consider the mixed formulation, with a regular potential, where one finds a solution $(\varphi,\mu,\ut, p)$, with $\varphi \in H^{1}_{H^{-1}} \cap L^2_{H^1}, \mu \in L^2_{H^1}, \ut \in H^1_{\mbf{H}^{-1}} \cap L^2_{\mbf{H}^1}, p \in L^2_{L^2}$, solving
\begin{gather}
	\begin{split}
		\langle \normdev \ut, \boldsymbol{\phi}\rangle_{\mbf{H}^{-1}(\Gamma(t)), \mbf{H}^1(\Gamma(t))} + \hat{\mbf{a}}(\eta(\varphi), \ut, \boldsymbol{\phi}) + \mbf{c}_1(\ut,\ut, \boldsymbol{\phi}) + \mbf{l}(\ut, \boldsymbol{\phi})+ \mbf{d}_1(\ut, \boldsymbol{\phi})\\
		+\mbf{d}_2(\eta(\varphi), \boldsymbol{\phi}) = m(p, \divg \boldsymbol{\phi}) + \mbf{m}(\mbf{B}, \boldsymbol{\phi}) +  \mbf{c}_2(\mu,\varphi,\boldsymbol{\phi})
	\end{split},\label{mixedTNSCH1}\\
	m(q,\divg \ut) = 0,\label{mixedTNSCH2}\\
	m_*(\normdev \varphi, \phi) + a(\mu, \phi) + \mbf{c}_2(\phi,\varphi,\ut) + \mbf{c}_2(\phi,\varphi, \widetilde{\ut}) = 0 ,\label{mixedTNSCH3}\\
	m(\mu,\phi) = \varepsilon a(\varphi, \phi) + \frac{1}{\varepsilon} m(F'(\varphi), \phi),\label{mixedTNSCH4}
\end{gather}
for all $q \in L^2(\Gamma(t)), \phi \in H^1(\Gamma(t)), \boldsymbol{\phi} \in \mbf{H}^1(\Gamma(t))$ for almost all $t \in [0,T]$.
Here the initial data is $\varphi_0 \in H^1(\Gamma_0), \mbf{u}_{T,0} \in \Hdivfree{0}$, so one has $\varphi(0) = \varphi_0, \ut(0) = \mbf{u}_{T,0}$ almost everywhere on $\Gamma_0$.\\

Before proving the existence and uniqueness of this system, we recall the uniform inf-sup condition of \cite{OlsReuZhi22}.
\begin{lemma}[\cite{OlsReuZhi22}, Lemma 3.3]
	There exists a constant, $C$, independent of time such that for all $q \in L_0^2(\Gamma(t)):=\left\{ \phi \in L^2(\Gamma(t)) \mid \mval{\phi}{\Gamma(t)} = 0 \right\}$,
	\begin{align}
		\| \gradg q \|_{\mbf{H}^{-1}(\Gamma(t))} := \sup_{\boldsymbol{\phi} \in \mbf{H}^1(\Gamma(t)) \setminus \{ 0 \}} \frac{\int_{\Gamma(t)} q \divg \boldsymbol{\phi}}{\| \boldsymbol{\phi}\|_{\mbf{H}^1(\Gamma(t))} } \geq C \|q \|_{L^2(\Gamma(t))}. \label{infsup condition}
	\end{align}
\end{lemma}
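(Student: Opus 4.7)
The plan is to prove the inf--sup inequality by exhibiting an essentially optimal test function constructed from the inverse of the Laplace--Beltrami operator on $\Gamma(t)$, and then to obtain time--independence of the constant by pulling back to the reference surface $\Gamma_0$. Given $q \in L_0^2(\Gamma(t))$, I would first solve the zero--mean Poisson problem $-\lapg \psi = q$ on $\Gamma(t)$ with $\mval{\psi}{\Gamma(t)} = 0$; this is uniquely solvable because $q$ has zero mean and $\Gamma(t)$ is closed, and $H^2$ elliptic regularity yields $\|\psi\|_{H^2(\Gamma(t))} \leq C(t) \|q\|_{L^2(\Gamma(t))}$. Then $\boldsymbol{\phi} := -\gradg \psi \in \mbf{H}^1(\Gamma(t))$ is tangential, satisfies $\divg \boldsymbol{\phi} = -\lapg \psi = q$, and obeys $\|\boldsymbol{\phi}\|_{\mbf{H}^1(\Gamma(t))} \leq C(t) \|q\|_{L^2(\Gamma(t))}$. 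Using this choice in the supremum gives
\[ \|\gradg q\|_{\mbf{H}^{-1}(\Gamma(t))} \geq \frac{\int_{\Gamma(t)} q \divg \boldsymbol{\phi}}{\|\boldsymbol{\phi}\|_{\mbf{H}^1(\Gamma(t))}} = \frac{\|q\|_{L^2(\Gamma(t))}^2}{\|\boldsymbol{\phi}\|_{\mbf{H}^1(\Gamma(t))}} \geq \frac{1}{C(t)} \|q\|_{L^2(\Gamma(t))}, \]
which is the inf--sup condition with constant $1/C(t)$.

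The main (indeed, the only) obstacle is to show that $C(t)$ can be chosen uniformly in $t \in [0,T]$. To handle this I would transfer the elliptic problem to the fixed reference surface $\Gamma_0$ via the $C^3$--diffeomorphism $\Phi_t^n$. The compatibility of the pairs $(H^{k,p}(\Gamma(t)), \Phi_t^n)$ and $(\mbf{H}^{k,p}(\Gamma(t)), \Phi_t^n)$ in the sense of Alphonse--Elliott--Stinner (as recalled in the excerpt) guarantees that pushforward and pullback are Banach space isomorphisms whose operator norms are continuous in $t$, hence uniformly bounded on the compact interval $[0,T]$. Under pullback, the Poisson equation on $\Gamma(t)$ becomes an equivalent linear elliptic equation on $\Gamma_0$ whose coefficients are built from $D\Phi_t^n$, the Jacobian $J(\cdot,t)$, and the normal $\boldsymbol{\nu}$; the $C^3$ regularity of $\Phi_t^n$ together with uniform invertibility of its differentials gives uniform coefficient bounds and uniform ellipticity on $[0,T]$. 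Standard $H^2$ elliptic regularity on the fixed surface $\Gamma_0$ then applies with a single constant, and transferring back through the uniformly bounded isomorphisms produces an elliptic estimate on $\Gamma(t)$ with a constant independent of $t$. Feeding this into the displayed inequality above yields the desired time--uniform inf--sup constant.
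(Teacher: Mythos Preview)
The paper does not actually prove this lemma; it is quoted from \cite{olshanskii2022tangential}, Lemma 3.3, and used as a black box in the proof of Theorem \ref{mixed form theorem 1}. So there is no ``paper's own proof'' to compare against.

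Your argument is the standard and correct one: construct a right inverse of the surface divergence via the mean-zero Poisson problem $-\lapg \psi = q$, take $\boldsymbol{\phi} = -\gradg \psi$, and use $H^2$ elliptic regularity to bound $\|\boldsymbol{\phi}\|_{\mbf{H}^1(\Gamma(t))}$ by $\|q\|_{L^2(\Gamma(t))}$. The time-uniformity of the constant via pullback to $\Gamma_0$ and the compatibility of the pairs $(H^k(\Gamma(t)),\Phi_t^n)$ is exactly the mechanism the paper invokes elsewhere (e.g.\ in the proof of Lemma \ref{brezisgallouet} and in Appendix \ref{evolvinglaplace}), and is the same device used in \cite{olshanskii2022tangential} for their time-uniform inequalities. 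Nothing is missing; the $C^3$ regularity of the evolution is enough for the $H^2$ elliptic estimate on $\Gamma_0$ with uniformly controlled coefficients.
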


\begin{theorem}
	\label{mixed form theorem 1}
	There exists a unique solution, $(\varphi,\mu,\ut, p)$, with $\varphi \in H^{1}_{H^{-1}} \cap L^2_{H^1}, \mu \in L^2_{H^1}, \ut \in H^1_{\mbf{H}^{-1}} \cap L^2_{\mbf{H}^1}, p \in L^2_{L^2}$, of \eqref{mixedTNSCH1}-\eqref{mixedTNSCH4}.
\end{theorem}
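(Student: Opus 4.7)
The plan is to piggy-back on the divergence-free weak theory: invoke Theorem \ref{smooth existence} to obtain $(\varphi, \mu, \ut)$ and Theorem \ref{smooth uniqueness} for uniqueness of this triple, then construct $p$ as a Lagrange multiplier enforcing incompressibility via the uniform inf-sup condition \eqref{infsup condition}. Equations \eqref{mixedTNSCH2}-\eqref{mixedTNSCH4} come for free: \eqref{mixedTNSCH2} is equivalent to $\divg \ut = 0$, which is already known from $\ut \in L^2_{\mbf{V}_\sigma}$, and \eqref{mixedTNSCH3}-\eqref{mixedTNSCH4} coincide verbatim with \eqref{weakTNSCH2}-\eqref{weakTNSCH3}. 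The real work is in producing $p$ and in upgrading $\normdev \ut$ from $L^2_{\mbf{V}_\sigma'}$ to $L^2_{\mbf{H}^{-1}}$ so that \eqref{mixedTNSCH1} makes sense against arbitrary $\boldsymbol{\phi} \in \mbf{H}^1(\Gamma(t))$.

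Concretely, for a.e. $t \in [0,T]$ I would define the functional
\begin{multline*}
L(t)(\boldsymbol{\phi}) := \mbf{m}(\mbf{B}, \boldsymbol{\phi}) + \mbf{c}_2(\mu,\varphi,\boldsymbol{\phi}) - \hat{\mbf{a}}(\eta(\varphi), \ut, \boldsymbol{\phi}) - \mbf{c}_1(\ut,\ut, \boldsymbol{\phi}) \\ - \mbf{l}(\ut, \boldsymbol{\phi}) - \mbf{d}_1(\ut, \boldsymbol{\phi}) - \mbf{d}_2(\eta(\varphi), \boldsymbol{\phi})
\end{multline*}
on $\boldsymbol{\phi} \in \mbf{H}^1(\Gamma(t))$. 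Reading off the estimates in Lemma \ref{smooth derivatives bound} (none of which use divergence-freeness of the test function) gives $L \in L^2_{\mbf{H}^{-1}}$; by \eqref{weakTNSCH1}, $L(t)$ agrees with $\mbf{m}_*(\normdev \ut(t),\cdot)$ on $\divfree{t}$. Let $\mathcal{E}(t) : \mbf{V}_\sigma'(t) \to \mbf{H}^{-1}(\Gamma(t))$ denote the canonical norm-preserving extension, namely $\mathcal{E}(t)\ell := \ell \circ P_t$ where $P_t$ is the $\mbf{H}^1$-orthogonal projection onto $\divfree{t}$, and set $\widetilde{\normdev \ut}(t) := \mathcal{E}(t)(\normdev \ut(t))$; uniform boundedness of $\mathcal{E}(t)$ in $t$ yields $\widetilde{\normdev \ut} \in L^2_{\mbf{H}^{-1}}$, hence $\ut \in H^1_{\mbf{H}^{-1}}$. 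By construction $L(t) - \widetilde{\normdev \ut}(t) \in \mbf{H}^{-1}(\Gamma(t))$ vanishes on $\divfree{t}$, so the inf-sup condition \eqref{infsup condition} combined with Banach's closed range theorem furnishes a unique $p(t) \in L^2_0(\Gamma(t))$ with
\[ L(t)(\boldsymbol{\phi}) - \widetilde{\normdev \ut}(t)(\boldsymbol{\phi}) = m(p(t), \divg \boldsymbol{\phi}) \qquad \forall \boldsymbol{\phi} \in \mbf{H}^1(\Gamma(t)),\]
which rearranges to \eqref{mixedTNSCH1}. The inf-sup bound $\| p(t) \|_{L^2(\Gamma(t))} \leq C (\|L(t)\|_{\mbf{H}^{-1}(\Gamma(t))} + \|\normdev \ut(t)\|_{\mbf{V}_\sigma'(t)})$ is integrable in $t$, so $p \in L^2_{L^2}$.

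For uniqueness, any two solutions of the mixed system restrict on $\boldsymbol{\phi} \in \divfree{t}$ to solutions of \eqref{weakTNSCH1}-\eqref{weakTNSCH3}, so Theorem \ref{smooth uniqueness} pins down $(\varphi,\mu,\ut)$; subtracting the two momentum equations then gives $m(p^1-p^2, \divg \boldsymbol{\phi}) = 0$ for all $\boldsymbol{\phi} \in \mbf{H}^1(\Gamma(t))$, and surjectivity of the divergence onto $L^2_0$ forces $p^1 - p^2$ to be a time-dependent constant, which disappears once one normalises by $\mval{p}{\Gamma(t)} = 0$. The main obstacle I anticipate is the measurability and time-integrability bookkeeping when passing from the pointwise-in-$t$ de Rham construction to the $L^2_{L^2}$ regularity of $p$: this is precisely where the \emph{time-uniformity} of the inf-sup constant asserted by \eqref{infsup condition} is indispensable, and where measurable dependence of $\mathcal{E}(t)$ on $t$ (inherited from the $C^3$ regularity of the flow and the compatibility of the pair $(\mbf{H}^1(\Gamma(t)), \mathcal{P}_t)$) has to be checked with some care.
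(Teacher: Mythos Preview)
Your overall architecture matches the paper's: take $(\varphi,\mu,\ut)$ from Theorems \ref{smooth existence}--\ref{smooth uniqueness}, show that the residual momentum functional lies in $L^2_{\mbf{H}^{-1}}$ and annihilates $\divfree{t}$, then invoke the closed range theorem together with \eqref{infsup condition} to recover $p\in L^2_{L^2_0}$. Two points need correction, however. First, it is not true that the estimates in Lemma \ref{smooth derivatives bound} ``do not use divergence-freeness of the test function'': the bounds there on $\mbf{c}_1(\ut,\ut,\boldsymbol{\phi})$, $\mbf{c}_2(\mu,\varphi,\boldsymbol{\phi})$ and $\mbf{d}_1(\ut,\boldsymbol{\phi})$ rely on the antisymmetry identities, which require $\boldsymbol{\phi}\in\divfree{t}$. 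The paper addresses this by writing out the divergence corrections explicitly, e.g.\ $\mbf{c}_1(\ut,\ut,\boldsymbol{\phi}) = -\mbf{c}_1(\boldsymbol{\phi},\ut,\ut) - \int_{\Gamma(t)} (\ut\cdot\ut)\,\divg\boldsymbol{\phi}$, and bounding the extra piece by $\|\ut\|_{\mbf{L}^4}^2\|\boldsymbol{\phi}\|_{\mbf{H}^1}$.

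The more substantial gap is the step ``hence $\ut\in H^1_{\mbf{H}^{-1}}$''. Your extension $\widetilde{\normdev\ut}:=(\normdev\ut)\circ P_t$ is merely \emph{some} element of $L^2_{\mbf{H}^{-1}}$ that restricts to $\normdev\ut$ on $\divfree{t}$; it is not the weak $\mbf{H}^{-1}$ normal derivative, which is characterised by the transport identity against \emph{all} $\mbf{H}^1$ test functions. On an evolving surface the projector $P_t$ does not commute with $\normdev$, so that identity cannot be verified for $\widetilde{\normdev\ut}$ by your construction. Consequently the equation you arrive at is \eqref{mixedTNSCH1} with $\widetilde{\normdev\ut}$ in place of $\langle\normdev\ut,\cdot\rangle_{\mbf{H}^{-1},\mbf{H}^1}$, and your $p$ need not be the one sought by the theorem. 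The paper avoids this by invoking the equivalence $\normdev\boldsymbol{\phi}\in L^2_{\mbf{V}_\sigma'} \Leftrightarrow \normdev\boldsymbol{\phi}\in L^2_{\mbf{H}^{-1}}$ for $\boldsymbol{\phi}\in L^2_{\mbf{V}_\sigma}$, established in \cite{olshanskii2022tangential}; with $\normdev\ut\in L^2_{\mbf{H}^{-1}}$ available directly, it packages everything---including the genuine derivative---into a single functional $\mathcal{F}\in L^2_{\mbf{H}^{-1}}$ vanishing on $\divfree{t}$, after which the de~Rham step produces the correct $p$.
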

\begin{proof}
	To begin, let $(\varphi,\mu,\ut)$ be the unique solution of \eqref{weakTNSCH1}-\eqref{weakTNSCH3}, and define $\mathcal{F}(t)$ by
	\begin{multline*}
		\langle \mathcal{F}(t), \boldsymbol{\phi} \rangle_{\mbf{H}^{-1}(\Gamma(t)), \mbf{H}^1(\Gamma(t))}:= \langle \normdev \ut, \boldsymbol{\phi}\rangle_{\mbf{H}^{-1}(\Gamma(t)), \mbf{H}^1(\Gamma(t))} + \hat{\mbf{a}}(\eta(\varphi), \ut, \boldsymbol{\phi}) + \mbf{c}_1(\ut,\ut, \boldsymbol{\phi})\\
		+ \mbf{l}(\ut, \boldsymbol{\phi})
		+ \mbf{d}_1(\ut, \boldsymbol{\phi})+\mbf{d}_2(\eta(\varphi), \boldsymbol{\phi})- \mbf{m}(\mbf{B}, \boldsymbol{\phi}) - \mbf{c}_2(\mu,\varphi,\boldsymbol{\phi}),
	\end{multline*}
	for $\boldsymbol{\phi} \in \mbf{H}^1(\Gamma(t))$.
	We claim that $\mathcal{F} \in L^2_{\mbf{H}^{-1}}$.
	To see this, we recall the equivalence,
	\[ \normdev \boldsymbol{\phi} \in L^2_{\mbf{V}_\sigma'} \Leftrightarrow \normdev \boldsymbol{\phi} \in L^2_{\mbf{H}^{-1}}, \ \boldsymbol{\phi} \in L^2_{\mbf{V}_\sigma} \]
	from \cite{OlsReuZhi22}, and repeat various estimates we have used throughout.
	We elaborate on the estimates for $\mbf{c}_1(\ut,\ut, \boldsymbol{\phi}),\mbf{c}_2(\mu,\varphi,\boldsymbol{\phi})$, but skip further calculations.
	As $\boldsymbol{\phi} \in \mbf{H}^1(\Gamma(t))$, and not necessarily $\divfree{t}$, we cannot use the properties, $\mbf{c}_1(\ut, \ut, \boldsymbol{\phi}) = -\mbf{c}_1(\boldsymbol{\phi}, \ut, \ut)$ and $\mbf{c}_2(\mu,\varphi,\boldsymbol{\phi}) = -\mbf{c}_2(\varphi,\mu,\boldsymbol{\phi})$.
	However, by using the divergence theorem and the fact that $\partial\Gamma(t) = \emptyset$, one finds
	\begin{gather*}
		c_1(\ut,\ut,\boldsymbol{\phi}) = - c_1(\boldsymbol{\phi}, \ut, \ut) - \int_{\Gamma(t)} (\ut \cdot \ut) \divg \boldsymbol{\phi},\\
		c_2(\mu,\varphi,\boldsymbol{\phi}) = -c_2(\varphi,\mu,\boldsymbol{\phi})- \int_{\Gamma(t)} \varphi \mu \divg \boldsymbol{\phi},
	\end{gather*}
	and hence
	\begin{gather*}
		\int_0^T |c_1(\ut,\ut, \boldsymbol{\phi})| \leq C \sup_{t \in [0,T]} \| \ut \|_{\mbf{L}^2(\Gamma(t))} \left( \int_0^T \|\ut\|_{\mbf{H}^1(\Gamma(t))} \right)^{\frac{1}{2}} \left( \int_0^T \|\boldsymbol{\phi}\|_{\mbf{H}^1(\Gamma(t))} \right)^{\frac{1}{2}},\\
		\int_0^T |c_2(\mu,\varphi,\boldsymbol{\phi})|  \leq C\sup_{t \in [0,T]} \|\varphi\|_{H^1(\Gamma(t))} \left( \int_0^T \| \mu \|_{H^1(\Gamma(t))}^2 \right)^\frac{1}{2} \left( \int_0^T \| \boldsymbol{\phi} \|_{\mbf{H}^1(\Gamma(t))}^2 \right)^\frac{1}{2},
	\end{gather*}
	where we have used Sobolev embeddings and \eqref{ladyzhenskaya2} as appropriate.
	The other terms follow similar, but simpler, arguments.\\
	
	We now observe that from the inf-sup condition \eqref{infsup condition} that the distributional divergence, $\gradg :L_0^2(\Gamma(t)) \rightarrow \mbf{H}^{-1}(\Gamma(t)),$ has a closed range $R(\gradg) \subset \mbf{H}^{-1}(\Gamma(t))$.
    This follows from \eqref{infsup condition} and continuity of $\gradg$ as an operator.
    Now by the closed range theorem, see for example \cite{yosida2012functional} VII.5, we find that
	\[ R(\gradg) = \ker(\gradg^*)^\perp, \text{ where } \ker(\gradg^*) = \divfree{t}, \]
	where $\gradg^*$ is the adjoint of $\gradg$.\\
	
    Since $(\varphi,\mu,\ut)$ solves \eqref{weakTNSCH1}-\eqref{weakTNSCH3} for almost all $t \in [0,T]$, we see that \[\langle \mathcal{F}(t), \boldsymbol{\phi} \rangle_{\mbf{H}^{-1}(\Gamma(t)), \mbf{H}^1(\Gamma(t))} = 0\]
    for all $\boldsymbol{\phi} \in \divfree{t}$.
	Hence from the above we see that $\mathcal{F}(t) \in R(\gradg)$ for almost all $t \in [0,T]$.
	Thus there exists some $p \in L^2_0(\Gamma(t))$ such that $\gradg p = \mathcal{F}(t)$ in the distributional sense.
	The map $t \mapsto \| p\|_{L^2(\Gamma(t))}$ is measurable by the same logic as in the proof of \cite{OlsReuZhi22}, Theorem 4.2.
	Moreover, by using \eqref{infsup condition} we see that $p$ is unique and one has
	\[ \int_0^T \|p \|_{L^2(\Gamma(t))}^2 \leq C \int_0^T \|\mathcal{F} \|_{\mbf{H}^{-1}(\Gamma(t))}^2, \]
	where the latter term can be expressed in terms of $\varphi,\mu, \ut$.
\end{proof}

Lastly we want to return to the setting of non-solenoidal vectors.
Letting $(\varphi,\mu,\hatut, p)$ be the solution from the previous theorem, then by our construction of $\widetilde{\ut}$ and the bilinear forms $\mbf{d}_1, \mbf{d}_2$, it is clear that $\ut := \hatut - \widetilde{\ut}$ is such that
\begin{gather*}
	\begin{split}
		\langle \normdev \ut, \boldsymbol{\phi} \rangle_{\mbf{H}^{-1}(\Gamma(t)), \mbf{H}^1(\Gamma(t))} + \hat{\mbf{a}}(\eta(\varphi), \ut, \boldsymbol{\phi}) + \mbf{c}_1(\ut,\ut, \boldsymbol{\phi}) + \mbf{l}(\ut, \boldsymbol{\phi}) = \mbf{m}(\mbf{F}_T, \boldsymbol{\phi})\\
		+ m(p, \divg \boldsymbol{\phi}) +  \mbf{c}_2(\mu,\varphi,\boldsymbol{\phi}),
	\end{split}\\
	m(q,\divg \ut) = -m(q,HV_N),\\
	m_*(\normdev \varphi, \phi) + a(\mu, \phi) + \mbf{c}_2(\phi,\varphi,\ut)=0,\\
	m(\mu,\phi) = \varepsilon a(\varphi, \phi) + \frac{1}{\varepsilon} m(F'(\varphi), \phi),
\end{gather*}
for all $q \in L^2(\Gamma(t)), \phi \in H^1(\Gamma(t)), \boldsymbol{\phi} \in \mbf{H}^1(\Gamma(t))$ for almost all $t \in [0,T]$.
Moreover we find that $\varphi \in H^{1}_{H^{-1}} \cap L^2_{H^1}, \mu \in L^2_{H^1}, \ut \in H^1_{\mbf{H}^{-1}} \cap L^2_{\mbf{H}^1}, p \in L^2_{L^2}$.\\

The initial condition for $\varphi$ is unchanged, but the initial condition for $\ut$ is required to be such that $\ut(0) = \mbf{u}_{T,0} \in \widetilde{\ut} + \Hdivfree{0}$.
One deduces the appropriate regularity for $\ut$ from the regularity of $\hatut, \widetilde{\ut}$.
The above arguments also work for the logarithmic potential, but we omit further details.

\begin{remark}
	In this section we have not discriminated between the pressure, $p$, and the modified pressure, $\tilde{p}$, as it is largely beside the point - that is the existence of some Lagrange multiplier enforcing the divergence condition.
    The distinction between these two pressures is discussed in Section \ref{derivations}.
    Moreover, it is straightforward to establish that $p \in L^2_{L^2} \Leftrightarrow \tilde{p} \in L^2_{L^2}$.
\end{remark}

\section{Concluding remarks} \label{Conc}
We have derived a system coupling the Navier-Stokes equations with the Cahn-Hilliard equations on an evolving surface, and shown the well-posedness for a prescribed, sufficiently smooth normal evolution.
There is still much work to be done on this topic, which we expound upon here.\\

Firstly, for the (evolving surface) Cahn-Hilliard equations with a logarithmic potential one observes a ``separation from the pure phases'' where after some small time the solution, $\varphi$, is such that $|\varphi|<1-\xi$ for some small $\xi$ - as was shown in \cite{caetano2023regularization}.
This has been established for a Navier-Stokes-Cahn-Hilliard system on a stationary domain in \cite{giorgini2019uniqueness}, and so it seems reasonable it would extend to our setting.\\

If one does not prescribe the normal component of the velocity then the system \eqref{NSCH1}-\eqref{NSCH4} also contains a geometric evolution equation, \eqref{normalcomponent}, which one must solve.
Unlike more standard geometric evolution equations, for example mean curvature flow, this flow is essentially second order in time as one considers the material derivative of the normal velocity.
Indeed, even if one ignores the Cahn-Hilliard component of \eqref{NSCH1}-\eqref{NSCH4} there are, to the authors' knowledge, no results on the well-posedness of the evolving surface Navier-Stokes equations (with unknown normal component) as discussed in \cite{brandner2022derivations,OlsReuZhi22}.\\

Moreover, the model we have considered is a diffuse interface model - and depends strongly on the choice of the interface width, $\varepsilon$.
It is known that, in the sharp interface limit, $\varepsilon \rightarrow 0$, the zero-level set of the solution of the Cahn-Hilliard equation (with a constant mobility) converges in a suitably weak sense to the Mullins-Sekerka system, see \cite{alikakos1994convergence}.
Likewise, it is known that the analogous zero-level set from the Navier-Stokes-Cahn-Hilliard system converges to a coupled Navier-Stokes-Mullins-Sekerka system - see for instance \cite{abels2009existence,abels2013well}.
However such results, or even formal asymptotics, have not been obtained for the corresponding systems on an evolving surface - or even on a stationary surface, to our knowledge.
In particular, it would be interesting to study the sharp interface limit of \eqref{NSCH1}-\eqref{NSCH4}, as the limiting system should consist of a coupling been a Navier-Stokes type equation for the surface velocity coupled with the Mullins-Sekerka problem.\\

Lastly, there is interest in the numerical simulation of the system we have considered (with or without a prescribed normal velocity).
There has recently (see \cite{olshanskii2024eulerian}) been some numerical analysis of the tangential Navier-Stokes equations, where the authors discretise by using the TraceFEM method - but this has not yet been considered for the system \eqref{TNSCH1}-\eqref{TNSCH4}.
It therefore would be interesting to see how existing results for a stationary domain, for instance \cite{styles2008finite}, adapt to an evolving surface.

\subsection*{Acknowledgements}
The authors would like to thank Achilleas Mavrakis, Andrea Poiatti, and Arnold Reusken for discussions surrounding an earlier version of this paper.
Thomas Sales is supported by the Warwick Mathematics Institute Centre for Doctoral Training, and gratefully acknowledges funding from the University of Warwick and the UK Engineering and Physical Sciences Research Council (Grant number:EP/TS1794X/1).
For the purpose of open access, the author has applied a Creative Commons Attribution (CC BY) licence to any Author Accepted Manuscript version arising from this submission.

\appendix
\section{Laplace's equation on an evolving surface}
\label{evolvinglaplace}
In this appendix we consider the regularity of the solution of Laplace's equation on an evolving domain.
For $t \in [0,T]$ we define $\Psi(t)$ to be the unique weak solution of
\[-\lapg \Psi(t) = H(t)V_N(t),\]
on $\Gamma(t)$, subject to the constraint $\mval{\Psi}{\Gamma(t)}=0$.
We note that this is well defined since
\[\int_{\Gamma(t)} H(t) V_N(t) = \frac{d}{dt} |\Gamma(t)| = 0,\]
by assumption.\\

We recall the normal pushforward map as $\Phi_t^n : \Gamma_0 \rightarrow \Gamma(t),$ and $\Phi_{-t}^n$ denoting its inverse.
As these are $C^2$ diffeomorphisms the differentials $D \Phi_t^n(p) : T_p \Gamma_0 \rightarrow T_{\Phi_t^n(p)} \Gamma(t)$ are invertible.
We recall the notation $J(p,t) = \det( D \Phi_t^n(p))$, $J^{-1}(x,t) = \det( D \Phi_{-t}^n(x)) = J(t, \Phi_{-t}^n x)^{-1}$, $\mbb{D}(p,t) =  D \Phi_t^n(p) \mbb{P}(p,0)$, and $\mbb{D}^{-1}(x,t) =  D \Phi_{-t}^n(x) \mbb{P}(x,t)$.
These matrices are such that $\mbb{D}\mbb{D}^{-1} = \mbb{D}^{-1} \mbb{D} = \mbb{P}$.\\

\begin{lemma}
	Let $\Psi$ be as above, and $\Gamma(t)$ be a $C^3$ evolving surface with $|\Gamma(t)| = |\Gamma_0|$ for all $t \in [0,T]$.
 Then $\Psi \in C^0_{H^{3,p}} \cap C^1_{H^{1,p}},$ for all $p \in [1,\infty)$.
	
\end{lemma}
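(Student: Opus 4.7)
The plan is to pull the PDE back to the reference manifold $\Gamma_0$ via the normal pushforward $\Phi_t^n$, reducing it to an elliptic equation on a fixed $C^3$ manifold with $t$-dependent coefficients, and then invoke standard elliptic regularity theory on $\Gamma_0$ before transporting the regularity forward.

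First I would set $\tilde{\Psi}(p,t) := \Psi(\Phi_t^n(p), t)$ and $\tilde{g}(p,t) := (HV_N)(\Phi_t^n(p), t)$. Since $V_N \in C^3(\mathcal{G}_T)$ and $\Gamma(t)$ is a $C^3$ evolving surface, $H \in C^1(\mathcal{G}_T)$, so $\tilde{g}(\cdot,t) \in C^1(\Gamma_0) \hookrightarrow H^{1,p}(\Gamma_0)$ with continuous $t$-dependence. Using the chain rule together with the change-of-variable formulas encoded in $\mbb{D}, J$ (recalling $\mbb{D}, J \in C^2(\Gamma_0 \times [0,T])$), the equation $-\lapg \Psi(t) = HV_N(t)$ rewrites as
\[ L(t)\tilde{\Psi}(t) = \tilde{g}(t), \qquad \mval{\tilde{\Psi}(t)}{\Gamma_0} = 0, \]
on $\Gamma_0$, where $L(t)$ is a uniformly elliptic second-order operator on $\Gamma_0$ whose coefficients are algebraic expressions in $\mbb{D}, J$ and their first spatial derivatives, and so lie in $C^1(\Gamma_0 \times [0,T])$.

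Next I would invoke standard $L^p$-elliptic regularity (as in Aubin or Hebey) for second-order elliptic operators on the compact $C^3$ manifold $\Gamma_0$; the zero-mean constraint kills the constant kernel of $L(t)$, yielding the pointwise estimate
\[ \|\tilde{\Psi}(t)\|_{H^{3,p}(\Gamma_0)} \leq C\|\tilde{g}(t)\|_{H^{1,p}(\Gamma_0)}, \]
with $C$ independent of $t$ by compactness of $[0,T]$ and continuity of the coefficients of $L(t)$. Continuity of $t \mapsto \tilde{\Psi}(t)$ in $H^{3,p}(\Gamma_0)$ then follows from the identity
\[ L(t)(\tilde{\Psi}(t) - \tilde{\Psi}(s)) = (\tilde{g}(t) - \tilde{g}(s)) + (L(s) - L(t))\tilde{\Psi}(s), \]
whose right-hand side tends to zero in $H^{1,p}(\Gamma_0)$ as $s \to t$ thanks to the continuity of the coefficients and the data.

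For differentiability in $t$ I would pass to difference quotients $D_h\tilde{\Psi} := (\tilde{\Psi}(t+h)-\tilde{\Psi}(t))/h$, which satisfy
\[ L(t+h)(D_h\tilde{\Psi}) = D_h\tilde{g} - (D_hL)\tilde{\Psi}(t). \]
Since $L$ and $\tilde{g}$ are $C^1$ in $t$ with coefficients of $\partial_tL$ continuous on $\Gamma_0\times[0,T]$, the right-hand side converges in $H^{-1,p}(\Gamma_0)$ to $\partial_t\tilde{g} - (\partial_tL)\tilde{\Psi}(t)$, where the $H^{3,p}$-regularity of $\tilde{\Psi}$ is what allows $(\partial_tL)\tilde{\Psi}(t)$ to land in $H^{-1,p}$. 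Elliptic regularity then promotes this to $D_h\tilde{\Psi} \to \partial_t\tilde{\Psi}$ in $H^{1,p}(\Gamma_0)$, and repeating the continuity argument of the previous step gives $\partial_t\tilde{\Psi} \in C^0([0,T];H^{1,p}(\Gamma_0))$. Transporting these statements back to $\Gamma(t)$ via the $C^3$-diffeomorphism $\Phi_t^n$ and the compatibility of the relevant function-space pairs yields the claim. The main obstacle, as I see it, is the bookkeeping of spatial regularity under differentiation in $t$: the coefficients of $\partial_tL$ come from second spatial derivatives of $\Phi_t^n$, so the $C^3$ assumption is used with no slack and one has to verify carefully that the difference-quotient right-hand side converges in $H^{-1,p}$ rather than only $H^{-2,p}$.
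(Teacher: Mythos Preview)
Your proposal is correct and follows essentially the same strategy as the paper: pull back to $\Gamma_0$, apply $L^p$ elliptic regularity for the spatial estimate, and use difference quotients to handle the time derivative. One small slip: the zero-mean condition $\int_{\Gamma(t)}\Psi=0$ pulls back to $\int_{\Gamma_0} J\tilde{\Psi}=0$, not $\mval{\tilde{\Psi}}{\Gamma_0}=0$; the paper absorbs this by working with $\psi=J\,\Phi_{-t}^n\Psi$ instead of $\tilde{\Psi}$, but your formulation still gives a well-posed problem (the constraint is just $t$-dependent) and the argument goes through unchanged.
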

\begin{proof}
	Let $\chi \in H^1(\Gamma_0)$, then by the weak formulation of the above PDE and the compatibility of $(H^1(\Gamma(t)), \Phi_t^n)$ we find that
	\[ \int_{\Gamma(t)} \gradg \Psi(t) \cdot \gradg \Phi_t^n \chi = \int_{\Gamma{t}} H(t)V_N(t) \Phi_t^n \chi .\]
	Hence by pulling back the integrals onto $\Gamma_0$ we see that
	\[ \int_{\Gamma(t)} J(t) \mbb{D} \gradg \Phi_{-t}^n \Psi(t) \cdot \mbb{D} \gradg \chi = \int_{\Gamma_0} J(t)\Phi_{-t}^n(HV_N) \chi,\]
	where the operators now are $\nabla_{\Gamma_0}$.
	Similarly, the mean value condition transforms as
	\[0 = \int_{\Gamma(t)} \Psi(t) = \int_{\Gamma_0} J(t) \Phi_{-t}^n \Psi(t),\]
	and as such we focus on the function $\psi(t) := J(t) \Phi_{-t}^n \Psi(t)$, where we see $\psi(t) \in H^1(\Gamma_0)$ for all $t \in [0,T]$.
	We similarly write $f(t) := J(t) \Phi_{-t}^n (HV_n) \in H^1(\Gamma_0)$.
	It is then clear that $\psi(t)$ solves the PDE
	\begin{align}
		\int_{\Gamma_0} \tilde{\mbb{D}}(t) \gradg \psi(t) \cdot \gradg \chi  + \psi(t) \boldsymbol{\omega}(t) \cdot \gradg \chi = \int_{\Gamma_0} f(t) \chi, \label{gamma0pde}
	\end{align}
	for all $\chi \in H^1(\Gamma_0)$, where
	\[\tilde{\mbb{D}} = \mbb{D}^T\mbb{D}, \qquad \boldsymbol{\omega} = J(t)\tilde{\mbb{D}} \gradg (J(t)^{-1}).\]
	We note that clearly $\tilde{\mbb{D}}$ is positive definite, and the uniqueness of $\Psi$ implies uniqueness of $\psi$.\\
	
	Then our assumptions on $\Phi_t^n$ imply we have sufficient smoothness so that we may apply elliptic regularity theory to see that
	\[\| \psi(t) \|_{H^{3,p}(\Gamma_0)} \leq C \| f(t) \|_{H^{1,p}(\Gamma_0)},\]
    for $p \in [1,\infty)$ and $C$ depends on $p,\Gamma_0, \tilde{\mbb{D}}(t), \boldsymbol{\omega}(t)$.
	It is straightforward to see that by considering \eqref{gamma0pde} at two times $t,s \in [0,T]$, and noting that $\tilde{\mbb{D}}, \boldsymbol{\omega}$ are $C^2$ in $t$ and $f$ is $C^1$ in $t$, that the map $t \mapsto \| \psi(t) \|_{H^{3,p}(\Gamma_0)}$ is continuous on $[0,T]$.
	We omit further details on this calculation.\\
	
	Next we show that $\psi$ has a strong derivative.
	By considering \eqref{gamma0pde} at times $t \in [0,T)$ and $t+h$ for some small $h > 0$ so that $t+h \in (0,T)$ we find that
	\begin{multline*}
		\frac{1}{h}\int_{\Gamma_0} \left(\tilde{\mbb{D}}(t+h) \gradg \psi(t+h) - \tilde{\mbb{D}}(t) \gradg \psi(t)\right)\cdot \gradg \chi  + \left(\psi(t+h) \boldsymbol{\omega}(t+h) - \psi(t) \boldsymbol{\omega}(t) \right)\cdot \gradg \chi\\
		= \frac{1}{h}\int_{\Gamma_0} \left(f(t+h) - f(t)\right) \chi,  
	\end{multline*}
	for all $\chi \in H^1(\Gamma_0)$.
	We write this in terms of difference quotients as
	\begin{multline*}
		\int_{\Gamma_0} \left(\Delta_h \tilde{\mbb{D}}(t) \gradg \psi(t+h) + \tilde{\mbb{D}}(t) \gradg \Delta_h \psi(t)\right)\cdot \gradg \chi  + \left(\psi(t+h) \Delta_h\boldsymbol{\omega}(t) + \Delta_h\psi(t) \boldsymbol{\omega}(t) \right)\cdot \gradg \chi\\
		= \int_{\Gamma_0} \Delta_h f(t) \chi, 
	\end{multline*}
	where $\Delta_h X(t) = \frac{X(t+h) - X(t)}{h}$ for some quantity $X$.
    Now by letting $h, h' > 0$ be sufficiently small one readily finds that
	\begin{align*}
		\langle L(t)(\Delta_h \psi(t) - \Delta_{h'} \psi(t)), \Delta_h \psi(t) - \Delta_{h'} \psi(t) \rangle &\leq C\|\Delta_{h'} \tilde{\mbb{D}}(t) \gradg \psi(t+h') - \Delta_h \tilde{\mbb{D}}(t) \gradg \psi(t+h)\|_{L^2(\Gamma_0)}^2\\
		&+ C \|\psi(t+h') \Delta_{h'}\boldsymbol{\omega}(t) - \psi(t+h) \Delta_{h}\boldsymbol{\omega}(t)\|_{\mbf{L}^2(\Gamma_0)}^2\\
		&+ C \|\Delta_h f(t) - \Delta_{h'} f(t)\|_{L^2(\Gamma_0)}^2\\
		&+ \gamma \| \gradg (\Delta_h \psi(t) - \Delta_{h'} \psi(t))\|_{L^2(\Gamma_0)}^2,
	\end{align*}
	for some small $\gamma$ to be determined.
	Here $L(t) \in \mathcal{L}(H^1(\Gamma_0) \cap L_0^2(\Gamma_0),(H^1(\Gamma_0) \cap L_0^2(\Gamma_0))')$ is the operator defined so that
	\[\langle L(t)\zeta, \chi\rangle = \int_{\Gamma_0} \tilde{\mbb{D}}(t) \gradg \zeta \cdot \gradg \chi  + \zeta \boldsymbol{\omega}(t) \cdot \gradg \chi,\]
	where $L^2_0(\Gamma_0)$ is the subspace of $L^2(\Gamma_0)$ containing elements such that $\mval{\phi}{\Gamma_0} = 0$.
	By pushing the integral forward onto $\Gamma(t)$, in the reverse to the beginning of the proof, we can observe $L(t)$ is elliptic by the ellipticity of $-\Delta_{\Gamma(t)}$ on $H^1(\Gamma(t)) \cap L_0^2(\Gamma(t)),$ and moreover the ellipticity constant is independent of $t$.
	Thus there exists some constant $\kappa$ such that
	\[\langle L(t)(\Delta_h \psi(t) - \Delta_{h'} \psi(t)), \Delta_h \psi(t) - \Delta_{h'} \psi(t) \rangle \geq \kappa \| \gradg (\Delta_h \psi(t) - \Delta_{h'} \psi(t)) \|_{L^2(\Gamma_0)}^2,\]
	and hence choosing $\gamma = \frac{\kappa}{2}$, and using Poincar\'e's inequality on $\Gamma_0$ we see
	\begin{align*}
		\|\Delta_h \psi(t) - \Delta_{h'} \psi(t)\|_{H^1(\Gamma_0)}^2 &\leq C\|\Delta_{h'} \tilde{\mbb{D}}(t) \gradg \psi(t+h') - \Delta_h \tilde{\mbb{D}}(t) \gradg \psi(t+h)\|_{L^2(\Gamma_0)}^2\\
		&+ C \|\psi(t+h') \Delta_{h'}\boldsymbol{\omega}(t) - \psi(t+h) \Delta_{h}\boldsymbol{\omega}(t)\|_{\mbf{L}^2(\Gamma_0)}^2\\
		&+ C \|\Delta_h f(t) - \Delta_{h'} f(t)\|_{L^2(\Gamma_0)}^2,
	\end{align*}
	for some constants $C(t)$ depending on $\Gamma_0$, and the ellipticity of $L(t)$.
	Now, by the differentiability of $\tilde{\mbb{D}}, \boldsymbol{\omega}, f$, and the continuity of $\psi$ it is clear that by taking $h, h'$ sufficiently small that we can make $\|\Delta_h \psi(t) - \Delta_{h'} \psi(t)\|_{H^1(\Gamma_0)}$ arbitrarily small.
	Thus $\Delta_h \psi(t)$ is a Cauchy sequence in $H^1(\Gamma_0)$ and a right time derivative of $\psi$ exists at $t \in [0,T)$.
    A similar calculation verifies that a left time derivative exists too.\\
	
	Differentiating \eqref{gamma0pde} in time we find
	\begin{multline*}
		\int_{\Gamma_0} \ddt{\tilde{\mbb{D}}}(t) \gradg \psi(t) \cdot \gradg \chi + \tilde{\mbb{D}}(t) \gradg \ddt{\psi}(t) \cdot \gradg \chi  + \ddt{\psi}(t) \boldsymbol{\omega}(t) \cdot \gradg \chi + \psi(t) \ddt{\boldsymbol{\omega}}(t) \cdot \gradg \chi\\
		= \int_{\Gamma_0} \ddt{f}(t) \chi,
	\end{multline*}
	for all $\chi \in H^1(\Gamma_0)$, $t \in [0,T]$.
	As above, by noting that $\tilde{\mbb{D}}, \boldsymbol{\omega}$ are $C^2$ in $t$ and $f$ is $C^1$ in $t$, one can now readily observe that the map $t \mapsto \| \ddt{\psi}(t) \|_{H^1(\Gamma_0)}$ is continuous on $[0,T]$.
	Applying elliptic regularity theory we find that $\psi \in C^0([0,T];H^{3,p}(\Gamma_0)) \cap C^1([0,T]; H^{1,p}(\Gamma_0))$, and hence using the compatibility of $(H^{3,p}(\Gamma(t)), \Phi_t^n)$ (and uniform bounds on $J(t)$ where needed) it follows that $\Psi \in C^0_{H^{3,p}} \cap C^1_{H^{1,p}}$.
\end{proof}

\section{An inverse Stokes-type operator}
\label{inversestokes}
In this appendix we discuss a solution operator related to the surface Stokes equation.
We refer the reader to \cite{bonito2020divergence,JanOlsReu18} for further details.
For $t\in [0,T]$, and a given $\boldsymbol{\phi} \in \Hdivfree{t}$ we are interested in finding a solution $\mathcal{S}\boldsymbol{\phi} \in \divfree{t}$ solving 
\[ \mathcal{S} \boldsymbol{\phi} -\mbb{P} \gradg \cdot (2 \mbb{E}(\mathcal{S}\boldsymbol{\phi}) ) = \boldsymbol{\phi}, \text{ on } \Gamma(t),\]
in a weak sense.\\

For $\boldsymbol{\phi} \in \Hdivfree{t}$ we define $\mathcal{S} \boldsymbol{\phi} \in \divfree{t}$ to be the unique solution  to
\[\mbf{m}(\mathcal{S} \boldsymbol{\phi}, \boldsymbol{\psi}) + \mbf{a}(\mathcal{S} \boldsymbol{\phi},\boldsymbol{\psi} )= \mbf{m}(\boldsymbol{\phi},\boldsymbol{\psi}),\]
for all $\boldsymbol{\psi} \in \divfree{t}$.
This is clearly well-defined by \eqref{korn1}, and the Lax-Milgram theorem.
With this norm we define a norm on $\Hdivfree{t}$ by
\[ \| \boldsymbol{\phi}\|_{\mathcal{S}} := \left( \mbf{m}(\mathcal{S} \boldsymbol{\phi}, \mathcal{S} \boldsymbol{\phi}) + \mbf{a}(\mathcal{S} \boldsymbol{\phi},\mathcal{S} \boldsymbol{\phi} ) \right)^{\frac{1}{2}} = \mbf{m}( \boldsymbol{\phi},\mathcal{S} \boldsymbol{\phi})^{\frac{1}{2}}, \]
where it is straightforward to see that
\[ \| \boldsymbol{\phi}\|_{\mathcal{S}} \leq C \| \boldsymbol{\phi}\|_{\mbf{L}^2(\Gamma(t))}, \]
for a constant independent of $t$.
We now prove a result on the time-differentiability of this operator, analogous to \cite{elliott2015evolving}, Lemma 4.3.
\begin{lemma}
	If $\boldsymbol{\phi} \in L^2_{\mbf{H}_\sigma} \cap H^1_{\mbf{V}_\sigma'}$ then $\mathcal{S} \boldsymbol{\phi} \in H^1_{\mbf{H}^1}$, such that
	\begin{align}
		\int_0^T \|\normdev \mathcal{S} \boldsymbol{\phi}\|_{\mbf{H}^1(\Gamma(t))}^2 \leq C \left( \int_0^T \| \normdev \boldsymbol{\phi} \|_{\divfree{t}'}^2 + \| \boldsymbol{\phi} \|_{\mbf{L}^2(\Gamma(t))}^2 + \| \mathcal{S} \boldsymbol{\phi} \|_{\mbf{H}^1(\Gamma(t))}^2 \right).
	\end{align}
\end{lemma}
\begin{proof}
	To begin we formally differentiate the equation defining $\mathcal{S}$, with a test function $\boldsymbol{\psi} \in H^1_{\mbf{V}_\sigma}$, to obtain
	\begin{multline*}
		\mbf{m}(\normdev \mathcal{S} \boldsymbol{\phi}, \boldsymbol{\psi}) + \mbf{m}(\mathcal{S} \boldsymbol{\phi}, \normdev \boldsymbol{\psi}) + \mbf{g}( \mathcal{S} \boldsymbol{\phi}, \boldsymbol{\psi}) + \mbf{a}(\normdev \mathcal{S} \boldsymbol{\phi}, \boldsymbol{\psi}) + \mbf{a}( \mathcal{S} \boldsymbol{\phi}, \normdev \boldsymbol{\psi}) + \mbf{b}( \mathcal{S} \boldsymbol{\phi}, \boldsymbol{\psi})\\
		= \mbf{m}_*(\normdev \boldsymbol{\phi}, \boldsymbol{\psi}) + \mbf{m}(\boldsymbol{\phi}, \normdev \boldsymbol{\psi}) + \mbf{g}(\boldsymbol{\phi}, \boldsymbol{\psi}).
	\end{multline*}
	This then simplifies to
	\begin{align*}
		\mbf{m}(\normdev \mathcal{S} \boldsymbol{\phi}, \boldsymbol{\psi}) + \mbf{a}(\normdev \mathcal{S} \boldsymbol{\phi}, \boldsymbol{\psi})  = \mbf{m}_*(\normdev \boldsymbol{\phi}, \boldsymbol{\psi}) + \mbf{g}(\boldsymbol{\phi}, \boldsymbol{\psi}) - \mbf{g}( \mathcal{S} \boldsymbol{\phi}, \boldsymbol{\psi}) - \mbf{b}( \mathcal{S} \boldsymbol{\phi}, \boldsymbol{\psi}),
	\end{align*}
    or equivalently
    \begin{multline*}
		\mbf{m}(\pioladev \mathcal{S} \boldsymbol{\phi}, \boldsymbol{\psi}) + \mbf{a}(\pioladev \mathcal{S} \boldsymbol{\phi}, \boldsymbol{\psi})  = \mbf{m}_*(\normdev \boldsymbol{\phi}, \boldsymbol{\psi}) + \mbf{g}(\boldsymbol{\phi}, \boldsymbol{\psi}) - \mbf{g}( \mathcal{S} \boldsymbol{\phi}, \boldsymbol{\psi}) - \mbf{b}( \mathcal{S} \boldsymbol{\phi}, \boldsymbol{\psi})\\
  - \mbf{m}(\bar{\mbb{A}} \mathcal{S} \boldsymbol{\phi}, \boldsymbol{\psi}) - \mbf{a}(\bar{\mbb{A}} \mathcal{S} \boldsymbol{\phi}, \boldsymbol{\psi}),
	\end{multline*}
	which we see extends to $\boldsymbol{\psi} \in L^2_{\mbf{V}_\sigma}$.
 We use the formulation involving $\pioladev$, as it is not clear that one would have $\mbb{P}\normdev \mathcal{S} \boldsymbol{\phi} \in \divfree{t}$, but this is the case for $\pioladev \mathcal{S} \boldsymbol{\phi}$ by construction.
	From \eqref{korn1} and the Lax-Milgram theorem one finds that there exists a unique $\pioladev \mathcal{S} \boldsymbol{\phi} \in L^2_{\mbf{V}_\sigma}$, and hence $\mbb{P}\normdev \mathcal{S} \boldsymbol{\phi} \in L^2_{\mbf{H}^1}$ such that
	\begin{align*}
		\int_0^T \|\normdev \mathcal{S} \boldsymbol{\phi}\|_{\mbf{H}^1(\Gamma(t))}^2 \leq C \left( \int_0^T \| \normdev \boldsymbol{\phi} \|_{\divfree{t}'}^2 + \| \boldsymbol{\phi} \|_{\mbf{L}^2(\Gamma(t))}^2 + \| \mathcal{S} \boldsymbol{\phi} \|_{\mbf{H}^1(\Gamma(t))}^2 \right).
	\end{align*}
	Moreover it is straightforward to see that $\normdev \mathcal{S} \boldsymbol{\phi}$ is indeed the weak time derivative of $\mathcal{S} \boldsymbol{\phi}$.
\end{proof}

As in \cite{bonito2020divergence}, we have a sufficiently smooth surface, $\Gamma(t)$, so that one has improved regularity
\begin{align}
	\| \mathcal{S} \boldsymbol{\phi} \|_{\mbf{H}^2(\Gamma(t))} \leq C \|\boldsymbol{\phi}\|_{\mbf{L}^2(\Gamma(t)}, \label{stokes regularity}
\end{align}
and the constant $C$ is independent of $t$ by the usual arguments.

\bibliographystyle{acm}
\bibliography{LibraryBibdeskRefs_2023}

\begin{thebibliography}{10}

\bibitem{abels2009diffuse}
{\sc Abels, H.}
\newblock On a diffuse interface model for two-phase flows of viscous,
  incompressible fluids with matched densities.
\newblock {\em Archive for Rational Mechanics and Analysis 194}, 2 (2009),
  463--506.

\bibitem{abels2012thermodynamically}
{\sc Abels, H., Garcke, H., and Gr{\"u}n, G.}
\newblock Thermodynamically consistent, frame indifferent diffuse interface
  models for incompressible two-phase flows with different densities.
\newblock {\em Mathematical Models and Methods in Applied Sciences 22}, 03
  (2012), 1150013.

\bibitem{abels2009existence}
{\sc Abels, H., and R{\"o}ger, M.}
\newblock Existence of weak solutions for a non-classical sharp interface model
  for a two-phase flow of viscous, incompressible fluids.
\newblock {\em Annales de l'Institut Henri Poincar{\'e} C 26}, 6 (2009),
  2403--2424.

\bibitem{abels2013well}
{\sc Abels, H., and Wilke, M.}
\newblock Well-posedness and qualitative behaviour of solutions for a two-phase
  {Navier--Stokes-Mullins--Sekerka system}.
\newblock {\em Interfaces and Free Boundaries 15}, 1 (2013), 39--75.

\bibitem{alikakos1994convergence}
{\sc Alikakos, N.~D., Bates, P.~W., and Chen, X.}
\newblock Convergence of the {Cahn-Hilliard} equation to the {Hele-Shaw} model.
\newblock {\em Archive for Rational Mechanics and Analysis 128\/} (1994),
  165--205.

\bibitem{AlpCaeDju23}
{\sc Alphonse, A., Caetano, D., Djurdjevac, A., and Elliott, C.~M.}
\newblock Function spaces, time derivatives and compactness for evolving
  families of {Banach} spaces with applications to {PDEs}.
\newblock {\em Journal of Differential Equations 353\/} (2023), 268--338.

\bibitem{AlpEllSti15a}
{\sc Alphonse, A., Elliott, C.~M., and Stinner, B.}
\newblock An abstract framework for parabolic {PDEs} on evolving spaces.
\newblock {\em Portugaliae Mathematica 71}, 1 (2015), 1--46.

\bibitem{anderson1998diffuse}
{\sc Anderson, D.~M., McFadden, G.~B., and Wheeler, A.~A.}
\newblock Diffuse-interface methods in fluid mechanics.
\newblock {\em Annual review of fluid mechanics 30}, 1 (1998), 139--165.

\bibitem{antanovskii1995phase}
{\sc Antanovskii, L.~K.}
\newblock A phase field model of capillarity.
\newblock {\em Physics of fluids 7}, 4 (1995), 747--753.

\bibitem{arroyo2009relaxation}
{\sc Arroyo, M., and DeSimone, A.}
\newblock Relaxation dynamics of fluid membranes.
\newblock {\em Physical Review E---Statistical, Nonlinear, and Soft Matter
  Physics 79}, 3 (2009), 031915.

\bibitem{Aub82}
{\sc Aubin, T.}
\newblock {\em Nonlinear analysis on manifolds. {Monge}-{Amp\`ere} equations},
  vol.~252 of {\em Grundlehren der mathematischen Wissenschaften}.
\newblock Springer-Verlag, Berlin and New York,, 1982.

\bibitem{bachini2023derivation}
{\sc Bachini, E., Krause, V., Nitschke, I., and Voigt, A.}
\newblock Derivation and simulation of a two-phase fluid deformable surface
  model.
\newblock {\em Journal of Fluid Mechanics 977\/} (2023), A41.

\bibitem{bachini2023interplay}
{\sc Bachini, E., Krause, V., and Voigt, A.}
\newblock The interplay of geometry and coarsening in multicomponent lipid
  vesicles under the influence of hydrodynamics.
\newblock {\em Physics of Fluids 35}, 4 (2023).

\bibitem{barrett2017finite}
{\sc Barrett, J.~W., Garcke, H., and N{\"u}rnberg, R.}
\newblock Finite element approximation for the dynamics of fluidic two-phase
  biomembranes.
\newblock {\em ESAIM: Mathematical Modelling and Numerical Analysis 51}, 6
  (2017), 2319--2366.

\bibitem{bihari1956generalization}
{\sc Bihari, I.}
\newblock A generalization of a lemma of {Bellman} and its application to
  uniqueness problems of differential equations.
\newblock {\em Acta Mathematica Hungarica 7}, 1 (1956), 81--94.

\bibitem{bonito2020divergence}
{\sc Bonito, A., Demlow, A., and Licht, M.}
\newblock A divergence-conforming finite element method for the surface
  {Stokes} equation.
\newblock {\em SIAM Journal on Numerical Analysis 58}, 5 (2020), 2764--2798.

\bibitem{brandner2022derivations}
{\sc Brandner, P., Reusken, A., and Schwering, P.}
\newblock On derivations of evolving surface {Navier}--{Stokes} equations.
\newblock {\em Interfaces and Free Boundaries 24}, 4 (2022), 533--563.

\bibitem{brezis1979nonlinear}
{\sc Brezis, H., and Gallouet, T.}
\newblock Nonlinear {S}chr{\"o}dinger evolution equations.
\newblock {\em Nonlinear Analysis: Theory, Methods \& Applications 4\/} (1980),
  677--681.

\bibitem{caetano2021cahn}
{\sc Caetano, D., and Elliott, C.~M.}
\newblock {Cahn--Hilliard} equations on an evolving surface.
\newblock {\em European Journal of Applied Mathematics 32}, 5 (2021),
  937--1000.

\bibitem{caetano2023regularization}
{\sc Caetano, D., Elliott, C.~M., Grasselli, M., and Poiatti, A.}
\newblock Regularization and separation for evolving surface {Cahn--Hilliard}
  equations.
\newblock {\em SIAM Journal on Mathematical Analysis 55}, 6 (2023), 6625--6675.

\bibitem{DecDziEll05}
{\sc Deckelnick, K., Dziuk, G., and Elliott, C.~M.}
\newblock Computation of geometric partial differential equations and mean
  curvature flow.
\newblock {\em Acta Numerica 14\/} (2005), 139--232.

\bibitem{deserno2015fluid}
{\sc Deserno, M.}
\newblock Fluid lipid membranes: From differential geometry to curvature
  stresses.
\newblock {\em Chemistry and physics of lipids 185\/} (2015), 11--45.

\bibitem{DjuGraHer23}
{\sc Djurdjevac, A., Gr{\"a}ser, C., and Herbert, P.~J.}
\newblock An evolving space framework for {Oseen} equations on a moving domain.
\newblock {\em ESAIM: Mathematical Modelling and Numerical Analysis 57}, 5
  (2023), 3113--3138.

\bibitem{elliott1991generalized}
{\sc Elliott, C., and Luckhaus, S.}
\newblock A generalized equation for phase separation of a multi-component
  mixture with interfacial free energy.
\newblock {\em preprint SFB 256\/} (1991), 195.

\bibitem{elliott2015evolving}
{\sc Elliott, C.~M., and Ranner, T.}
\newblock Evolving surface finite element method for the {Cahn--Hilliard}
  equation.
\newblock {\em Numerische Mathematik 129}, 3 (2015), 483--534.

\bibitem{evans2015measure}
{\sc Evans, L.~C., and Gariepy, R.~F.}
\newblock {\em Measure Theory and Fine Properties of Functions}.
\newblock Chapman and Hall/CRC, 2015.

\bibitem{fan2010hydrodynamic}
{\sc Fan, J., Han, T., and Haataja, M.}
\newblock Hydrodynamic effects on spinodal decomposition kinetics in planar
  lipid bilayer membranes.
\newblock {\em The {Journal} of Chemical Physics 133}, 23 (2010).

\bibitem{giorgini2019uniqueness}
{\sc Giorgini, A., Miranville, A., and Temam, R.}
\newblock Uniqueness and regularity for the {Navier--Stokes--Cahn--Hilliard}
  system.
\newblock {\em SIAM Journal on Mathematical Analysis 51}, 3 (2019), 2535--2574.

\bibitem{gorka2008brezis}
{\sc Gorka, P.}
\newblock Br{\'e}zis-{Wainger} inequality on {Riemannian} manifolds.
\newblock {\em Journal of Inequalities and Applications 2008\/} (2008), 1--6.

\bibitem{gurtin1996two}
{\sc Gurtin, M.~E., Polignone, D., and Vinals, J.}
\newblock Two-phase binary fluids and immiscible fluids described by an order
  parameter.
\newblock {\em Mathematical Models and Methods in Applied Sciences 6}, 06
  (1996), 815--831.

\bibitem{hatcher2020phase}
{\sc Hatcher, L.}
\newblock {\em Phase field models for small deformations of biomembranes
  arising as {H}elfrich energy equilibria}.
\newblock PhD thesis, University of Warwick, 2020.

\bibitem{hebey2000nonlinear}
{\sc Hebey, E.}
\newblock {\em Nonlinear Analysis on Manifolds: {Sobolev} Spaces and
  Inequalities: {Sobolev} Spaces and Inequalities}, vol.~5.
\newblock American Mathematical Soc., 2000.

\bibitem{hohenberg1977theory}
{\sc Hohenberg, P.~C., and Halperin, B.~I.}
\newblock Theory of dynamic critical phenomena.
\newblock {\em Reviews of Modern Physics 49}, 3 (1977), 435.

\bibitem{JanOlsReu18}
{\sc Jankuhn, T., Olshanskii, M.~A., and Reusken, A.}
\newblock Incompressible fluid problems on embedded surfaces: modeling and
  variational formulations.
\newblock {\em Interfaces and Free Boundaries 20}, 3 (2018), 353--377.

\bibitem{jost2008riemannian}
{\sc Jost, J.}
\newblock {\em Riemannian geometry and geometric analysis}.
\newblock Universitext. Springer, 2008.

\bibitem{julicher1996shape}
{\sc J{\"u}licher, F., and Lipowsky, R.}
\newblock Shape transformations of vesicles with intramembrane domains.
\newblock {\em Physical Review E 53}, 3 (1996), 2670.

\bibitem{lam2018thermodynamically}
{\sc Lam, K.~F., and Wu, H.}
\newblock Thermodynamically consistent {Navier--Stokes--Cahn--Hilliard} models
  with mass transfer and chemotaxis.
\newblock {\em European Journal of Applied Mathematics 29}, 4 (2018), 595--644.

\bibitem{li2016tropical}
{\sc Li, J., and Titi, E.~S.}
\newblock A tropical atmosphere model with moisture: global well-posedness and
  relaxation limit.
\newblock {\em Nonlinearity 29}, 9 (2016), 2674.

\bibitem{mcmahon2005membrane}
{\sc McMahon, H.~T., and Gallop, J.~L.}
\newblock Membrane curvature and mechanisms of dynamic cell membrane
  remodelling.
\newblock {\em Nature 438}, 7068 (2005), 590--596.

\bibitem{miura2017zero}
{\sc Miura, T.-H.}
\newblock Zero width limit of the heat equation on moving thin domains.
\newblock {\em Interfaces and Free Boundaries 19}, 1 (2017), 31--77.

\bibitem{miura2018singular}
{\sc Miura, T.-H.}
\newblock On singular limit equations for incompressible fluids in moving thin
  domains.
\newblock {\em Quarterly of Applied Mathematics 76}, 2 (2018), 215--251.

\bibitem{miura2024thin}
{\sc Miura, T.-H.}
\newblock Thin-film limit of the {G}inzburg-{L}andau heat flow in a curved thin
  domain.
\newblock {\em arXiv preprint arXiv:2404.14703\/} (2024).

\bibitem{olshanskii2022comparison}
{\sc Olshanskii, M., Palzhanov, Y., and Quaini, A.}
\newblock A comparison of {Cahn--Hilliard} and {Navier--Stokes--Cahn--Hilliard}
  models on manifolds.
\newblock {\em Vietnam Journal of Mathematics 50}, 4 (2022), 929--945.

\bibitem{olshanskii2024eulerian}
{\sc Olshanskii, M., Reusken, A., and Schwering, P.}
\newblock An {E}ulerian finite element method for tangential {N}avier-{S}tokes
  equations on evolving surfaces.
\newblock {\em Mathematics of Computation 93}, 349 (2024), 2031--2065.

\bibitem{OlsReuZhi22}
{\sc Olshanskii, M.~A., Reusken, A., and Zhiliakov, A.}
\newblock Tangential {Navier-Stokes} equations on evolving surfaces: {Analysis}
  and simulations.
\newblock {\em Mathematical Models and Methods in Applied Sciences 14\/}
  (2022), 2817--2852.

\bibitem{palzhanov2021decoupled}
{\sc Palzhanov, Y., Zhiliakov, A., Quaini, A., and Olshanskii, M.}
\newblock A decoupled, stable, and linear fem for a phase-field model of
  variable density two-phase incompressible surface flow.
\newblock {\em Computer Methods in Applied Mechanics and Engineering 387\/}
  (2021), 114167.

\bibitem{styles2008finite}
{\sc Styles, V., Kay, D., and Welford, R.}
\newblock Finite element approximation of a {Cahn--Hilliard--Navier--Stokes}
  system.
\newblock {\em Interfaces and Free Boundaries 10}, 1 (2008), 15--43.

\bibitem{sun2022modeling}
{\sc Sun, M., Xiao, X., Feng, X., and Wang, K.}
\newblock Modeling and numerical simulation of surfactant systems with
  incompressible fluid flows on surfaces.
\newblock {\em Computer Methods in Applied Mechanics and Engineering 390\/}
  (2022), 114450.

\bibitem{temam2001navier}
{\sc Temam, R.}
\newblock {\em {Navier-Stokes} equations: theory and numerical analysis},
  vol.~343.
\newblock American Mathematical Soc., 2001.

\bibitem{yang2020phase}
{\sc Yang, J., and Kim, J.}
\newblock A phase-field model and its efficient numerical method for two-phase
  flows on arbitrarily curved surfaces in 3d space.
\newblock {\em Computer Methods in Applied Mechanics and Engineering 372\/}
  (2020), 113382.

\bibitem{yosida2012functional}
{\sc Yosida, K.}
\newblock {\em Functional analysis}.
\newblock Springer Science \& Business Media, 2012.

\bibitem{zimmermann2019isogeometric}
{\sc Zimmermann, C., Toshniwal, D., Landis, C.~M., Hughes, T.~J., Mandadapu,
  K.~K., and Sauer, R.~A.}
\newblock An isogeometric finite element formulation for phase transitions on
  deforming surfaces.
\newblock {\em Computer Methods in Applied Mechanics and Engineering 351\/}
  (2019), 441--477.

\end{thebibliography}

\end{document}